\newtheorem{maintheorem}{Theorem}
\newtheorem{theorem}{\textbf{Theorem}}[section]
\newtheorem{proposition}[theorem]{\textbf{Proposition}}
\newtheorem{corollary}[theorem]{\textbf{Corollary}}
\newtheorem{lemma}[theorem]{\textbf{Lemma}}
\newtheorem{assumption}[theorem]{\textbf{Assumption}}
\theoremstyle{definition}
\newtheorem{remark}[theorem]{Remark}
\newtheorem{definition}[theorem]{Definition}
\newtheorem{convention}[theorem]{Convention}
\newtheorem{property}{Property}
\newcounter{Todo}
\newcommand{\ii}{\mathbf{i}}
\definecolor{darkgreen}{RGB}{0,180,0}
\definecolor{purple}{RGB}{138,43,226}
\definecolor{oceanmist}{RGB}{43,138,226}
\newcommand{\abs}[1]{\left|{#1}\right|}
\newcommand{\lie}[1]{\mathfrak{#1}}
\newcommand{\iso}{\cong}
\newcommand{\z}{\mathbf{z}}
\newcommand{\J}{\mathbf{J}}
\newcommand{\I}{\mathbf{I}}
\newcommand{\CC}{\mathbb{C}}
\newcommand{\RR}{\mathbb{R}}
\newcommand{\ZZ}{\mathbb{Z}}
\newcommand{\FF}{\mathbb{F}}
\newcommand{\wt}{\operatorname{wt}}
\newcommand{\GL}{\operatorname{GL}}
\newcommand{\Sp}{\operatorname{Sp}}
\newcommand{\End}{\operatorname{End}}
\DeclareRobustCommand{\plus}{\tikz[anchor=base, baseline=-1pt]{\node[scale=0.8, draw, thick, circle, inner sep=0pt] {$+$};}\xspace}
\DeclareRobustCommand{\minus}{\tikz[anchor=base, baseline=-1pt]{\node[scale=0.8, draw, thick, circle, inner sep=0pt] {$-$};}\xspace}
\tikzstyle{invisiblepath}=[draw=none, line width=1.5pt, spins]
\tikzstyle{path}=[line width=1.5pt, spins]
\tikzstyle{spins}=[every node/.style={spin}]
\tikzstyle{spin}=[circle, draw, fill=white, minimum size=14pt, inner sep=0pt, text=black]
\tikzstyle{halo}=[circle, fill=white, inner sep=0pt]
\tikzstyle{rectanglehalo}=[rectangle, fill=white, inner sep=0pt]
\newcommand{\icegrid}[2]{

    \foreach \i in {0,...,#1}{
        \pgfmathtruncatemacro\col{int(#1-\i)}
        \draw (2*\i+1,0) -- (2*\i+1,2*#2) node[spin, label=above:$\col$] {$+$};
    }

    \foreach \i in {1,...,#2}{
        \pgfmathtruncatemacro\row{int(#2-\i+1)}
        \draw (0,2*\i-1) node[spin, label=left:$\row$] {$+$} -- (2*#1+2, 2*\i-1);
    }

    \foreach \i in {0,...,#1}{
        \foreach \j in {1,...,#2}{
            \node[spin] at (2*\i+1, 2*\j-2) {$+$};
            \node[spin] at (2*\i+2, 2*\j-1) {$+$};
        }
    }
}
\newcommand{\icegridnorowlabels}[2]{

    \foreach \i in {0,...,#1}{
        \pgfmathtruncatemacro\col{int(#1-\i)}
        \draw (2*\i+1,0) -- (2*\i+1,2*#2) node[spin, label=above:$\col$] {$+$};
    }

    \foreach \i in {1,...,#2}{
        \pgfmathtruncatemacro\row{int(#2-\i+1)}
        \draw (0,2*\i-1) node[spin] {$+$} -- (2*#1+2, 2*\i-1);
    }

    \foreach \i in {0,...,#1}{
        \foreach \j in {1,...,#2}{
            \node[spin] at (2*\i+1, 2*\j-2) {$+$};
            \node[spin] at (2*\i+2, 2*\j-1) {$+$};
        }
    }
}
\newcommand{\icegridnocolumnlabels}[2]{

    \foreach \i in {0,...,#1}{
        \draw (2*\i+1,0) -- (2*\i+1,2*#2) node[spin] {$+$};
    }

    \foreach \i in {1,...,#2}{
        \pgfmathtruncatemacro\row{int(#2-\i+1)}
        \draw (0,2*\i-1) node[spin, label=left:$\row$] {$+$} -- (2*#1+2, 2*\i-1);
    }

    \foreach \i in {0,...,#1}{
        \foreach \j in {1,...,#2}{
            \node[spin] at (2*\i+1, 2*\j-2) {$+$};
            \node[spin] at (2*\i+2, 2*\j-1) {$+$};
        }
    }
}
\newcommand{\gammaunco}[5]{
    \draw[fill=white] (-.75,0) circle (.25);
    \draw[fill=white] (0,.75) circle (.25);
    \draw[fill=white] (.75,0) circle (.25);
    \draw[fill=white] (0,.-.75) circle (.25);
    \node at (-.75,0) {$#1$};
    \node at (0,.75) {$#2$};
    \node at (.75,0) {$#3$};
    \node at (0,.-.75) {$#4$};
    \path[fill=white] (0,0) circle (.25);
    \node at (0,0) {$#5$};
    \node at (0,1) { };}
\newcommand{\cross}{
      \draw (-.75,0)--(.75,0);
      \draw (0,-.75)--(0,.75);
}
\newcommand{\craa}{\draw (-.75,0)--(-.25,0);}
\newcommand{\crab}{\draw (0,.75)--(0,.25);}
\newcommand{\crac}{\draw (.75,0)--(.25,0);}
\newcommand{\crad}{\draw (0,-.75)--(0,-.25);}
\newcommand{\gamgamunco}[5]{
    \draw (-.75,-.75)--(.75,.75);
    \draw (-.75,.75)--(.75,-.75);
    \draw[fill=white] (-.75,-.75) circle (.28);
    \draw[fill=white] (-.75,.75) circle (.28);
    \draw[fill=white] (.75,.75) circle (.28);
    \draw[fill=white] (.75,.-.75) circle (.28);
    \node at (-.75,-.75) {$#1$};
    \node at (-.75,.75) {$#2$};
    \node at (.75,.75) {$#3$};
    \node at (.75,.-.75) {$#4$};
    \path[fill=white] (0,0) circle (.4);
    \node at (0,0) {$#5$};
    \node at (0,1) { };}
\newcommand{\spokea}[2]{
  \draw[line width=0.5mm, #1] (-.75,-.75)--(-.25,-.25);
  \draw[line width=0.5mm, #1,fill=white] (-.75,-.75) circle (.28);
  \node at (-.75,-.75) {$#2$};
}
\newcommand{\spokeb}[2]{
  \draw[line width=0.5mm, #1] (-.75,.75)--(-.25,.25);
  \draw[line width=0.5mm, #1,fill=white] (-.75,.75) circle (.28);
  \node at (-.75,.75) {$#2$};
}
\newcommand{\spokec}[2]{
  \draw[line width=0.5mm, #1] (.75,.75)--(.25,.25);
  \draw[line width=0.5mm, #1,fill=white] (.75,.75) circle (.28);
  \node at (.75,.75) {$#2$};
}
\newcommand{\spoked}[2]{
  \draw[line width=0.5mm, #1] (.75,-.75)--(.25,-.25);
  \draw[line width=0.5mm, #1,fill=white] (.75,.-.75) circle (.28);
  \node at (.75,-.75) {$#2$};
}
\newcommand{\bspokea}[2]{
  \draw[line width=0.5mm, #1] (-.75,-.75)--(-.25,-.25);
  \draw[line width=0.5mm, #1,fill=white] (-.75,-.75) circle (.28);
  \node [label=below:$\scriptstyle#2$] at (-.75,-.75) {};
}
\newcommand{\bspokeb}[2]{
  \draw[line width=0.5mm, #1] (-.75,.75)--(-.25,.25);
  \draw[line width=0.5mm, #1,fill=white] (-.75,.75) circle (.28);
  \node [label=above:$\scriptstyle#2$] at (-.75,.75) {};
}
\newcommand{\bspokec}[2]{
  \draw[line width=0.5mm, #1] (.75,.75)--(.25,.25);
  \draw[line width=0.5mm, #1,fill=white] (.75,.75) circle (.28);
  \node [label=above:$\scriptstyle#2$] at (.75,.75) {};
}
\newcommand{\bspoked}[2]{
  \draw[line width=0.5mm, #1] (.75,-.75)--(.25,-.25);
  \draw[line width=0.5mm, #1,fill=white] (.75,.-.75) circle (.28);
  \node [label=below:$\scriptstyle#2$] at (.75,-.75) {};
}
\newcommand{\spokepa}{
  \node at (-.75,-.75) {$+$};
}
\newcommand{\spokepb}{
  \node at (-.75,.75) {$+$};
}
\newcommand{\spokepc}{
  \node at (.75,.75) {$+$};
}
\newcommand{\spokepd}{
  \node at (.75,-.75) {$+$};
}
\newcommand{\vspokea}[2]{
  \draw[line width=0.5mm, #1] (-.75,0)--(-.25,0);
  \draw[line width=0.5mm, #1,fill=white] (-.75,0) circle (.25);
  \node at (-.75,0) {$#2$};
}
\newcommand{\vspokeb}[2]{
  \draw[line width=0.5mm, #1] (0,.75)--(0,.25);
  \draw[line width=0.5mm, #1,fill=white] (0,.75) circle (.25);
  \node at (0,.75) {$#2$};
}
\newcommand{\vspokec}[2]{
  \draw[line width=0.5mm, #1] (.75,0)--(.25,0);
  \draw[line width=0.5mm, #1,fill=white] (.75,0) circle (.25);
  \node at (.75,0) {$#2$};
}
\newcommand{\vspoked}[2]{
  \draw[line width=0.5mm, #1] (0,-.75)--(0,-.25);
  \draw[line width=0.5mm, #1,fill=white] (0,-.75) circle (.25);
  \node at (0,.-.75) {$#2$};
}
\newcommand{\pspokeb}{
  \draw[line width=0.5mm, blue] (-.075,.75)--(-.075,.25);
  \draw[line width=0.5mm, red] (.075,.75)--(.075,.25);
  \draw[line width=0.5mm, purple,fill=white] (0,.75) circle (.25);
  \node at (0,.75) {$\scriptstyle c,d$};
}
\newcommand{\pspoked}{
  \draw[line width=0.5mm, blue] (-.075,-.75)--(-.075,-.25);
  \draw[line width=0.5mm, red] (.075,-.75)--(.075,-.25);
  \draw[line width=0.5mm, purple,fill=white] (0,-.75) circle (.25);
  \node at (0,.-.75) {$\scriptstyle c,d$};
}
\newcommand{\cv}[2]{
  \draw[line width=0.5mm, #1, fill=white] (0,0) circle (.25);
  \node at (0,0) {$#2$};
}
\newcommand{\ybelhs}[9]{
\begin{tikzpicture}[baseline=(current bounding box.center)]
  \draw (0,1) to [out = 0, in = 180] (2,3) to (4,3);
  \draw (0,3) to [out = 0, in = 180] (2,1) to (4,1);
  \draw (3,0) to (3,4);
  \draw[fill=white] (0,1) circle (.3);
  \draw[fill=white] (0,3) circle (.3);
  \draw[fill=white] (3,4) circle (.3);
  \draw[fill=white] (4,3) circle (.3);
  \draw[fill=white] (4,1) circle (.3);
  \draw[fill=white] (3,0) circle (.3);
  \node at (0,1) {$#1$};
  \node at (0,3) {$#2$};
  \node at (3,4) {$#3$};
  \node at (4,3) {$#4$};
  \node at (4,1) {$#5$};
  \node at (3,0) {$#6$};
\path[fill=white] (1,2) circle (.3);
\node at (1,2) {$#7$};
\path[fill=white] (3,3) circle (.3);
\node at (3,3) {$#8$};
\path[fill=white] (3,1) circle (.3);
\node at (3,1) {$#9$};
\end{tikzpicture}}
\newcommand{\yberhs}[9]{
\begin{tikzpicture}[baseline=(current bounding box.center)]
  \draw (0,1) to (2,1) to [out = 0, in = 180] (4,3);
  \draw (0,3) to (2,3) to [out = 0, in = 180] (4,1);
  \draw (1,0) to (1,4);
  \draw[fill=white] (0,1) circle (.3);
  \draw[fill=white] (0,3) circle (.3);
  \draw[fill=white] (1,4) circle (.3);
  \draw[fill=white] (4,3) circle (.3);
  \draw[fill=white] (4,1) circle (.3);
  \draw[fill=white] (1,0) circle (.3);
  \node at (0,1) {$#1$};
  \node at (0,3) {$#2$};
  \node at (1,4) {$#3$};
  \node at (4,3) {$#4$};
  \node at (4,1) {$#5$};
  \node at (1,0) {$#6$};
\path[fill=white] (3,2) circle (.3);
\node at (3,2) {$#7$};
\path[fill=white] (1,1) circle (.3);
\node at (1,1) {$#8$};
\path[fill=white] (1,3) circle (.3);
\node at (1,3) {$#9$};
\end{tikzpicture}}
\begin{document}
\title{Colored vertex models and Iwahori Whittaker functions}
\author{Ben Brubaker}
\address{School of Mathematics, University of Minnesota, Minneapolis, MN 55455}
\email{brubaker@math.umn.edu}
\author{Valentin Buciumas}
\address{School of Mathematics and Physics, 
The University of Queensland, 
St. Lucia, QLD, 4072, 
Australia}
\email{valentin.buciumas@gmail.com}
\author{Daniel Bump}
\address{Department of Mathematics, Stanford University, Stanford, CA 94305-2125}
\email{bump@math.stanford.edu}
\author{Henrik P. A. Gustafsson}
\address{\hspace{-\parindent}\textnormal{Until September 15, 2019:} \newline
  \indent Department of Mathematics, Stanford University, Stanford, CA 94305-2125. \newline 
  \textnormal{Since September 16, 2019:} \newline
  \indent School of Mathematics, Institute for Advanced Study, Princeton, NJ~08540. \newline 
  \indent Department of Mathematics, Rutgers University, Piscataway, NJ~08854. \newline 
  \indent Department of Mathematical Sciences, University of Gothenburg and Chalmers University of Technology, SE-412~96 Gothenburg, Sweden.}
\email{gustafsson@ias.edu}

\subjclass[2010]{Primary: 22E50; Secondary: 11F70, 16T25, 05E05, 82B23}

\begin{abstract}
  We give a recursive method for computing \textit{all} values of a basis of
  Whittaker functions for unramified principal series invariant under an Iwahori or parahoric
  subgroup of a split reductive group $G$ over a nonarchimedean local field
  $F$. Structures in the proof have surprising analogies to features
  of certain solvable lattice models. In the case $G=\mathrm{GL}_r$ we show
  that there exist solvable lattice models whose partition functions give
  precisely all of these values.  Here `solvable' means that the models have a
  family of Yang-Baxter equations which imply, among other things, that their
  partition functions satisfy the same recursions as those for Iwahori or
  parahoric Whittaker functions. The R-matrices for these Yang-Baxter
  equations come from a Drinfeld twist of the quantum group
  $U_q(\widehat{\mathfrak{gl}}(r|1))$, which we then connect to the standard
  intertwining operators on the unramified principal series.
  We use our results to connect Iwahori and parahoric Whittaker functions to
  variations of Macdonald polynomials.
\end{abstract}
\maketitle

\tableofcontents

\section{Introduction}

Solvable lattice models {\cite{Baxter,JimboMiwaAlgebraic}} are
statistical-mechanical systems (usually two-dimensional) that are amenable to
analysis using Yang-Baxter equations, highly constrained identities whose
mysterious nature led to the discovery of quantum groups
{\cite{Drinfeld,JimboHecke}}. Beyond their origins in
statistical mechanics, lattice models are also closely connected to quantum field theory
{\cite{BaxterHeisenberg,BPZStatistical,Costello}}, knot
invariants~{\cite{TuraevReshetikhin,JonesBaxterization,AkutsuDeguchiOhtsuki}}
and integrable probability~{\cite{BorodinPetrovLectures}}. Most importantly for us, they 
also give a fruitful method for studying symmetric function theory and its generalizations by representing
polynomials as partition functions of solvable lattice models, as for example
in~{\cite{FominKirillov,LLT-YBE,KuperbergASM,WheelerZinn-JustinAIM,WheelerZinn-JustinCrelle,KnutsonZinnJustinSchubert,KorffVerlinde}}.
In this context, the Yang-Baxter equation becomes a powerful tool for demonstrating identities
among partition functions.

Variants of these methods have been used by the authors and their collaborators {\cite{hkice,ivanov,mice,BBB,BBBGu,GrayThesis}}
to study the representation theory of algebraic groups and their covers over a $p$-adic field $F$. These papers use
families of solvable six-vertex models and their generalizations to produce partition functions giving 
special values of Whittaker functions for unramified principal series, particularly for the groups $\GL_r (F)$ and $\Sp (2 r, F)$ and
their metaplectic covers. The results shed a lot of light on the nature of these Whittaker functions.
However, these prior results have treated only \textit{spherical} Whittaker
functions, those invariant under a maximal compact subgroup. It is very
desirable to have lattice model interpretations for Whittaker functions
invariant under smaller compact subgroups.

\textit{Iwahori Whittaker functions} for an unramified principal series
representation are fixed by an Iwahori subgroup $J$ that is smaller than the maximal compact
subgroup $K$. Even if one is mainly interested in spherical ($K$-fixed)
Whittaker functions, the Iwahori Whittaker functions are needed. The
Iwahori Hecke algebra that acts on them has a richer structure and
the Iwahori Whittaker functions play a crucial role in evaluating the spherical Whittaker function in the work of Casselman and Shalika~\cite{CasselmanShalika}.
Moreover, the Iwahori Whittaker functions interestingly mirror the geometry of
the Schubert varieties in the flag variety~{\cite{ReederCompositio,BBL,MihalceaSu}}.
So understanding Iwahori Whittaker functions by bringing them into the
lattice model framework is an important goal. Once this connection is made,
we find that Whittaker functions invariant under certain \emph{parahoric} subgroups
(intermediate between $J$ and $K$) also fit naturally into these models, and
they too should be included in order to have a complete~story.

A recent breakthrough by Borodin and Wheeler
{\cite{BorodinWheelerColored,BorodinWheelerNonsymmetric}} showed how to refine
lattice models using an additional attribute they
called `color.' This led us to wonder whether a similar refinement of the
six-vertex model in {\cite{hkice}} might produce values of Iwahori Whittaker
functions for the general linear group.  The surprisingly complete
answer to this question for $\GL_r$, and related results about Whittaker
functions on {\it all} split reductive groups, are the subject of this paper.

Let us highlight two of our main results and their implications 
before a precise accounting of our results in the next section. 
Given a split, reductive group $G$, let $\mathbf{J}$ be a subset of the index set for simple 
reflections of the Weyl group $W$ of $G(F)$ and $K_{\mathbf{J}}$ the associated standard parahoric subgroup 
(as in Definition~\ref{definitionparahoric}). 
Note that $\mathbf{J} = \varnothing$ reduces to the Iwahori case $K_{\varnothing} = J$.
There exists a \emph{standard basis} $\{ \psi_w \}$ of $K_{\mathbf{J}}$-fixed
Whittaker functions for any irreducible unramified principal series, given explicitly in Section~\ref{sec:parahoric}.
In this paper we construct what we will call the \emph{parahoric lattice model} for which we prove the following theorem:

\begin{maintheorem}
  \label{thm:exists-lattice-model}
  For every $g$ in $\GL_r(F)$, any subset
  $\mathbf{J} \subseteq \{1,2,\ldots,r-1\}$, and for every $\psi_w$ in
a basis of $K_\mathbf{J}$-fixed Whittaker functions, there exists a choice of boundary conditions for the parahoric lattice model such that its partition function equals $\psi_w(g)$.
\end{maintheorem}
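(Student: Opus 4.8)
The strategy is to realize $\psi_w(g)$ as a partition function by matching two recursions: the recursion satisfied by Iwahori/parahoric Whittaker functions (coming from intertwining operators / the action of simple reflections on the principal series, established in the preceding sections) and the recursion satisfied by partition functions of the colored lattice model (coming from its family of Yang--Baxter equations). The plan is as follows.

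First I would set up the combinatorial dictionary. For $g = \varpi^\lambda$ with $\lambda$ a coweight (and general $g$ reduced to this case using left $K_{\mathbf J}$-invariance and the Iwasawa decomposition), I would encode the data $(\lambda, w)$ as boundary conditions on a rectangular grid: the coweight $\lambda$ determines the positions of colored strands entering from one side, and the Weyl group element $w \in W_{\mathbf J}\backslash W$ (or its minimal coset representative) determines the colors/ordering of the boundary strands, with the parahoric subset $\mathbf J$ controlling which colors are allowed to be equal (fused). I would then define the Boltzmann weights from the $R$-matrix of the Drinfeld twist of $U_q(\widehat{\mathfrak{gl}}(r|1))$ described in the abstract, specialized so that the one extra ``$1$'' (the bosonic/distinguished node) records the behavior of the deformation parameter $v = q^{-2}$ appearing in the Whittaker recursions.

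Next, the heart of the argument: prove that the partition function $Z(\lambda, w)$ satisfies the same recursion in $w$ as $\psi_w(g)$. On the Whittaker side, the recursion expresses $\psi_{ws_i}$ in terms of $\psi_w$ via the standard intertwining operator attached to the simple reflection $s_i$, together with the Demazure-type operators; this is the content of the results in Section~\ref{sec:parahoric} that I may assume. On the lattice side, I would attach an extra ``$R$-matrix column'' (a two-strand crossing carrying spectral parameters $z_i, z_{i+1}$) to the boundary and push it through the grid using the Yang--Baxter equation; train-argument style, this swaps the two adjacent boundary strands and produces exactly a combination of $Z(\lambda, w)$ and $Z(\lambda, ws_i)$ weighted by the entries of the $R$-matrix. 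Matching the $R$-matrix entries to the intertwining-operator coefficients is a direct computation, and this is where the precise Drinfeld twist is needed so that normalization constants agree. The parahoric case $\mathbf J \neq \varnothing$ is then handled by fusion: restricting to boundary conditions where the colors in each block indexed by $\mathbf J$ are grouped, which on the Whittaker side corresponds to passing from the Iwahori to the parahoric fixed subspace via the averaging idempotent $e_{\mathbf J}$.

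Finally I would pin down the base case and initial conditions: for the longest (or shortest) representative $w$, the model degenerates to a single monochromatic or frozen configuration whose weight can be computed directly and matched against the known value of $\psi_{w_0}(\varpi^\lambda)$ (e.g. the Iwahori analog of the Casselman--Shalika formula, or simply the defining integral). Since both sides satisfy the same first-order recursion in $w$ over the Bruhat order and agree at the base case, they agree everywhere, and unwinding the Iwasawa reduction gives the statement for arbitrary $g$. The main obstacle I anticipate is matching normalizations: ensuring that the spectral parameters $z_i$, the deformation $v$, and the extra node of $\mathfrak{gl}(r|1)$ are specialized so that the $R$-matrix recursion coefficients are \emph{identically} the intertwiner coefficients, not merely proportional — getting the signs and the $(1-v)$ versus $(1 - v z_i/z_j)$ factors exactly right across the fermionic/bosonic distinction is the delicate bookkeeping that the Drinfeld twist is designed to absorb.
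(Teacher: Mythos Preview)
Your overall strategy---match a Demazure-type recursion on the Whittaker side with a Yang--Baxter/train-argument recursion on the partition-function side, then check a base case---is exactly the architecture of the paper's proof. But there is a genuine gap in your reduction step that would prevent the argument from covering all $g$.

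You write that general $g$ reduces to $g=\varpi^\lambda$ via the Iwasawa decomposition and $K_{\mathbf J}$-invariance. This is not true: the Whittaker function is \emph{right} $K_{\mathbf J}$-invariant and left $N_-$-equivariant, so the relevant double coset space is $N_-(F)\backslash G(F)/K_{\mathbf J}$, and these cosets are parametrized by pairs $(\lambda,w_2)$ with $w_2$ a Weyl group element (see Remark~\ref{iwahoripg}). You cannot reduce to torus elements alone; the extra Weyl parameter $w_2$ must be encoded in the boundary data (in the paper it determines the ordering of colors along the \emph{top} boundary, while your $w=w_1$ determines the right boundary). This is not a technicality: the paper emphasizes that prior work only treated torus elements, and handling $w_2\neq 1$ is one of the main new contributions. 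Your base case is correspondingly off: the correct anchor is not the longest or shortest $w$, but rather $w_1=w_2$, where the system has a unique admissible state whose weight matches $\phi_{w_2}(\varpi^{-\lambda}w_2)$ as computed directly (Proposition~\ref{basecaseprop}).

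For the parahoric step, your instinct (pass from distinct to repeated colors, corresponding to summing over $W_{\mathbf J}$) is right, but ``fusion'' and ``averaging idempotent'' gloss over the real work. When colors repeat, the model acquires new admissible states in which a single vertical edge carries two copies of what used to be distinct colors; one must show these ``nonstrict'' contributions cancel (this is the content of Lemma~\ref{lem:parahoric-1-reduction} and Properties~\ref{prop:colorbwrel}--\ref{prop:colorcancel}). Without that cancellation argument, merging colors does not obviously give the sum $\sum_{y\in W_{\mathbf J}}\phi_{wy}$.
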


Conversely, we find that {\it every} admissible choice of boundary
conditions (in particular every assignment of colors to the boundary) has an important representation theoretic meaning.
If the colors used along the top boundary are distinct, then each choice of boundary data corresponds to
an element $g$ in $G(F)$ and a $w$ in the Weyl group such that the partition function
of this solvable lattice model is the value of the Iwahori Whittaker function $\phi_w(g)$. (See Theorem~\ref{coloredwhittaker},
Lemma~\ref{almostdominantlemma} and Remark~\ref{iwahoripg}.) If some 
colors are allowed to repeat, the resulting
partition functions correspond precisely to values of Whittaker functions for vectors fixed by
a parahoric subgroup (Theorem~\ref{thm:parahoric-lattice-model}).
The restriction of the parahoric lattice model to top boundary conditions with distinct colors will therefore be called the \emph{Iwahori lattice model}.
If instead the top boundary colors are all the same, we obtain a one-colored lattice model for the spherical Whittaker function, which is equivalent to the uncolored lattice model in~\cite{hkice} that we will here call the \emph{Tokuyama model}.

In order to prove Theorem~\ref{thm:exists-lattice-model}, we needed to extend
known results expressing Whittaker functions recursively using Demazure-like
operators, and these very general results (in
Sections~\ref{Iwahoripreliminaries} and~\ref{sec:parahoric}) are valid for any
split, reductive group. Thus we were led to prove the following result:

\begin{maintheorem} Let $G:=G(F)$ be any split reductive group defined over a
local field $F$. For a basis $\{\phi_w\}$ of the space of Iwahori Whittaker functions
for any irreducible unramified principal series representation of $G$ and any $g \in G$, there is a recursive algorithm using Demazure-like
operators to compute $\phi_w(g)$.
\end{maintheorem}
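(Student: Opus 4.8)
The plan is to build the recursion out of the interplay between the Iwahori--Hecke algebra action on the space of Whittaker functions and the combinatorics of the Bruhat order on $W$. First I would fix an irreducible unramified principal series $\mathrm{Ind}_B^G(\chi)$ with $\chi$ in general position, and recall the two natural bases of the finite-dimensional space of Iwahori-fixed vectors: the ``standard'' basis $\{v_w\}$ indexed by $w\in W$ (supported on the Bruhat cells $BwB$) and the ``Casselman'' or dual basis, related by an upper-triangular change of basis with respect to Bruhat order. Applying the Whittaker (Jacquet) functional to these bases produces the Iwahori Whittaker functions $\{\phi_w\}$. The key structural input is that $G$ acts on the Iwahori-fixed vectors through the Iwahori--Hecke algebra $\mathcal{H}$, and in particular the generators $T_{s_i}$ attached to simple reflections act by explicit formulas (a ``Demazure--Lusztig'' operator: a combination $T_{s_i}\mapsto$ divided-difference piece plus a scalar piece) on the basis elements. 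Translating this Hecke action through the Whittaker functional and evaluating at a fixed $g\in G$ gives, for each simple reflection $s_i$, a linear relation expressing $\phi_w(g)$ in terms of $\phi_{s_iw}(g)$ (or $\phi_{ws_i}(g)$, depending on the normalization) together with a lower-order correction, with coefficients that are rational functions of the Satake parameters coming from $\chi$.

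Next I would set up the recursion proper. The base case is the longest element: the single Whittaker function $\phi_{w_0}$ (or equivalently a suitable extreme basis vector) can be pinned down by a direct computation, reducing to the known spherical/Casselman--Shalika evaluation or to an explicit integral over a single cell; this is where one imports the classical results that the excerpt alludes to in the discussion of Casselman--Shalika. Given $\phi_w(g)$ for all $w$ of length $>\ell$, one then uses the Demazure--Lusztig relation for an appropriately chosen simple reflection $s_i$ (one with $\ell(s_iw)=\ell(w)+1$, which exists whenever $w\neq w_0$) to solve for $\phi_w(g)$ in terms of data already computed. Iterating down the weak order from $w_0$ produces all $\phi_w(g)$; since the relations are linear with explicitly invertible leading coefficient (invertibility is exactly where $\chi$ in general position is used), this constitutes a genuine algorithm. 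To handle the dependence on $g$, I would reduce to $g=\varpi^\lambda$ with $\lambda$ a coweight times a fixed Iwahori representative using the left $K_\mathbf{J}$-invariance and the right $N$-equivariance of Whittaker functions, so that only finitely many ``anchor'' values feed the recursion and the Demazure-like operators act purely combinatorially on $\lambda$.

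The main obstacle I expect is proving that the Hecke-algebra relations, after passing through the Whittaker functional, really do close up into a triangular system with no hidden obstruction --- in other words, controlling the ``lower-order'' terms. Concretely, one must show that the correction terms in the Demazure--Lusztig recursion only ever involve $\phi_{w'}$ with $w'$ strictly above $w$ in the relevant order (so the recursion terminates and is well-founded), and that the leading coefficient is a nonzero rational function of the parameters. This requires a careful analysis of the intertwining operators on the principal series and their compatibility with the Iwahori-level structure: one needs the cocycle relation for intertwiners, a normalization making them unitary/invertible, and the precise commutation of $T_{s_i}$ with the Whittaker functional. I would prove this by an induction on length that runs in parallel with the recursion itself, using Rankin--Selberg-type manipulations or the Casselman--Shalika method cell-by-cell to identify each correction term explicitly. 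A secondary technical point is uniformity across all split reductive $G$: one must phrase everything in terms of the root datum and the abstract Iwahori--Hecke algebra rather than any matrix realization, and check that the parahoric case (subset $\mathbf{J}$) is obtained simply by restricting the $W$-indexed recursion to the coset space $W_\mathbf{J}\backslash W$, using that parahoric-fixed vectors are $\mathcal{H}_\mathbf{J}$-invariants inside the Iwahori-fixed vectors.
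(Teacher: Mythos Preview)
Your overall architecture---reduce $g$ to $\varpi^{-\lambda}w_2$, compute a base case by a direct integral, and propagate via an intertwiner-induced recursion on $w_1$---is exactly the paper's approach. But several of your expectations about how this plays out are off, and one of them matters.

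First, the recursion is much cleaner than you anticipate. There are no ``lower-order correction terms'' and no triangular system to unwind. Combining the action of the intertwiner $\mathcal{A}_{s_i}$ on the standard basis (Casselman's formula) with its effect on the Whittaker functional (the Casselman--Shalika identity $\Omega_{s_i\mathbf{z}}\circ\mathcal{A}_{s_i}=c\cdot\Omega_{\mathbf{z}}$) yields a \emph{two-term} relation
\[
\phi_{s_iw}(\mathbf{z};g)=\mathfrak{T}_i^{\pm 1}\,\phi_w(\mathbf{z};g),
\]
with the sign of the exponent determined by whether $s_iw>w$ or $s_iw<w$. The $\mathfrak{T}_i$ are invertible Demazure-type operators acting on the \emph{Langlands parameter} $\mathbf{z}$, not on $\lambda$; so the recursion moves freely in both directions along $W$, and the ``main obstacle'' you flag (controlling the corrections, well-foundedness) simply does not arise.

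Second, the base case is not $w_0$ via Casselman--Shalika; it is $w_1=w_2$. For a $w_2$-almost dominant $\lambda$ one computes directly (a single-cell integral, as you parenthetically suggest) that
\[
\phi_{w_2}(\mathbf{z};\varpi^{-\lambda}w_2)=q^{-\ell(w_2)}\mathbf{z}^{\lambda}.
\]
From this anchor one reaches any $w_1$ by a path $w_2\to s_{i_1}w_2\to\cdots\to w_1$ in $W$, applying $\mathfrak{T}_i$ or $\mathfrak{T}_i^{-1}$ at each step. Taking $w_0$ as the starting point would force you to evaluate $\phi_{w_0}(\varpi^{-\lambda}w_2)$ for arbitrary $w_2$, which is no easier than the general problem; the point of the paper is precisely that the diagonal case $w_1=w_2$ is the one that admits a closed-form integral.
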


We have similar algorithmic results for parahoric Whittaker functions. (See
Theorem~\ref{thm:paracs} and Remark~\ref{rem:parcaveat}.)
Prior to this work, even a conjectural description of all values of the
Iwahori and parahoric Whittaker functions was unknown. Reeder 
\cite{ReederCompositio} and Brubaker, Bump, and Licata \cite{BBL} computed
Iwahori Whittaker functions if $g$ is a torus element. In the
introduction to \cite{ReederCompositio}, Reeder describes the determination of
the remaining values
as a difficult problem. Lansky~\cite{LanskyParahoric} determined the
dimensions of the spaces of standard parahoric fixed vectors in unramified
principal series.

In addition to the above theorems, the development of these results led us to:
\begin{itemize}[label=$\boldsymbol{\cdot}$, leftmargin=2\parindent]  
  \item Interpret the Demazure recurrence relations in our algorithmic
  description in terms of the R-matrix for the quantum affine superalgebra
  $U_{\sqrt{v}} (\hat{\mathfrak{g}\mathfrak{l}} (r| 1))$;

  \item Prove a Casselman-Shalika formula for certain
    parahoric Whittaker functions in Theorem~\ref{thm:paracs};
  
  \item Relate several classes of Whittaker functions to
    special functions that arise in algebraic combinatorics summarized in Table~\ref{tab:mini-nsmac} below and further detailed in Table~\ref{tab:nsmac} of Section~\ref{sec:nsmac}; 
  
  \item Interpret the action of intertwiners of Iwahori or parahoric fixed
  vectors in the principal series representation with $R$-matrices acting on
  evaluation representations of quantum loop groups
  in Section~\ref{sec:intertwiningR}.
\end{itemize}

Potential future applications include new Cauchy identities for these Whittaker
functions, which may then find application in local Rankin-Selberg computations. 
It may also be possible to relate our lattice models to interacting particle processes 
such as ASEP and TASEP in the spirit of similar results by Borodin and Wheeler \cite[Section~12]{BorodinWheelerColored}.
Furthermore in~{\cite{BBBGu}} we
found connections between the metaplectic models in~{\cite{BBB}} and LLT
polynomials (ribbon symmetric functions) and we expect that there may be
similar connections for this work.

Another exciting open question is whether 
Whittaker functions for even smaller compact subgroups might
similarly have representations as partition functions of
solvable lattice models. In other words, if $L$ is an
arbitrary compact open subgroup, does there exist a
basis of $L$-invariant Whittaker functions whose values
may be represented by solvable lattice models? If the answer
is affirmative one could also look for representations of
Whittaker of other admissible representations, particularly
supercuspidals. To what extent are solvable lattice models and their associated
quantum groups a tool for studying \emph{all} representations of $p$-adic groups?
These possibilities are purely speculative at this time, but
our success in treating Iwahori and parahoric Whittaker
functions and the work of Ju-Lee Kim~\cite{JuLeeKimCS} which gives combinatorial descriptions of Whittaker functions for other admissible representations is suggestive in this regard.

Finally, the models in this paper may also be generalized to refine the
models representing metaplectic Whittaker functions introduced
in~{\cite{BBB}}, called \textit{metaplectic ice}. Whereas for metaplectic ice
certain edges are enhanced by adding an attribute called charge, in
this paper we enhance the system by decorating certain edges with color. These
attributes of charge and color are handled differently, and at first glance
the schemes seem different. However we will show in a later paper that
they may be placed in a unified framework. The R-matrix for these solvable models is
related to $U_{\sqrt{v}}(\widehat{\mathfrak{gl}}(r|n))$.

\medbreak\noindent
\textbf{Acknowledgements:} We thank Amol Aggarwal, Alexei Borodin, Siddhartha Sahi and
Michael Wheeler for helpful conversations and communications. This work was
supported by NSF grants DMS-1801527 (Brubaker) and DMS-1601026 (Bump), ARC
grant DP180103150 (Buciumas) and the Knut and Alice Wallenberg Foundation
(Gustafsson).
During later stages of this work, Gustafsson was supported by 
by the Swedish Research Council (Vetenskapsr\aa det), grant 2018-06774.
Bump and Gustafsson also gratefully acknowledge the hospitality
of the Simons Center for Geometry and Physics during parts of this project.

\section{Outline of the paper}
\label{sec:outline}

Let us now explain the results of this paper in more detail. We begin with a
brief description of spherical Whittaker functions for $\GL_r (F)$ and their
associated six-vertex model, which we will call the \textit{Tokuyama model}.
Let $\mathfrak{o}$ be the ring of integers of the nonarchimedean local field
$F$ and let $v^{- 1}$ be the cardinality of the residue field. Construct an
unramified principal series representation of $\GL_r (F)$ from a character of
$T (F) / T (\mathfrak{o})$ where $T$ is the maximal split torus (see
Section~\ref{Iwahoripreliminaries} for full details). These representations
have unique Whittaker functionals and a unique-up-to-constant vector which is
right invariant under $K = \GL_r (\mathfrak{o})$. The spherical Whittaker
function is the image of this vector in the Whittaker functional and it is
completely determined by its values on $T (F) / T (\mathfrak{o})$, which we
identify with the weight lattice $\Lambda$ of the Langlands dual group. It is
easily seen that the spherical Whittaker function vanishes unless the
associated weight is dominant. The remaining values for dominant weights are
given by the Shintani-Casselman-Shalika formula in terms of Schur polynomials
in the Langlands parameters of the principal series. By Tokuyama's theorem,
described in {\cite{Tokuyama,HamelKing,hkice}} and recalled below as
(\ref{tokform}), there exists a solvable
six-vertex model with boundary conditions indexed by dominant weights whose partition function give
the Shintani-Casselman-Shalika formula.

There are multiple ways to describe this six-vertex model. A state of the system
attaches a spin \plus or \minus to each edge of a grid so that each vertex has
adjacent edges in one of six possible configurations seen for example in \cite[Table~1]{hkice}. Alternatively, the state
may be described by connecting the edges labeled with \minus into lines or
paths as in Section~8.1 of {\cite{Baxter}}. The models needed in this paper
refine the six vertex model by coloring these paths.

For the Iwahori lattice model in this paper the grid has $r$ rows and boundary
conditions that specify the particular $\GL_r$ Iwahori Whittaker function $\phi_w$ and group element
$g$.
Then a state of the system representing $\phi_w (g)$ will consist of $r$
paths with distinct colors beginning at the top boundary and traveling
downward and rightward until each path exits on a distinct row along the right
boundary. The partition function is the sum of the Boltzmann weights for all
such configurations of paths.
The Boltzmann weights can, for example, be found in the row labeled $S_\Gamma(i)$ of \cite[Table~2]{hkice}.

In Section~\ref{sec:some-YBEs}, we exhibit a set of colored Boltzmann weights
and prove Yang-Baxter equations for them in Theorem~\ref{thm:coloredybe}. In
Theorem~\ref{coloredwhittaker} we use these Yang-Baxter equations to
demonstrate that the partition function of the Iwahori lattice model for different boundary conditions is equal to values of
Iwahori Whittaker functions in the standard basis.

The Iwahori lattice model never has more than
one path of a given color, because the boundary conditions only have two edges
of a given color: one on the top boundary and one on the right. In a state of
the system, these are connected by a single path, so there is only one path of
that color. However, the Yang-Baxter equations, which are highly constrained
and overdetermined relations the Boltzmann weights must
satisfy, \textit{force} us to assign nonzero values to local vertex or path
configurations where two paths of the same color cross, even though these
configurations can never appear in any state of the Iwahori
model. Remarkably, these seemingly unneeded weights that we are forced to
include \textit{do} turn out to be needed when we turn to the more general
parahoric Whittaker functions discussed in Section~\ref{sec:parahoric}, and the corresponding parahoric lattice model of Section~\ref{sec:parahoric_systems} allowing for multiple paths of the same colors.

It is precisely these latter, parahoric weights that distinguish our models from models of Borodin and Wheeler
{\cite{BorodinWheelerColored}}, which also have configurations of multicolored paths
crossing in a grid. In both types of colored models, the horizontal edges can carry only one color, but the
vertical edges can carry more than one. In this paper, the vertical edges are
`fermionic' meaning that they satisfy an exclusion principle: no vertical edge
may carry more than one instance of a particular color. The models
in~{\cite{BorodinWheelerColored}} are `bosonic' and the vertical edges may
carry their colors with a multiplicity.  The parahoric lattice model makes clear the need for these fermionic weights in order to produce the correct
partition function describing a parahoric Whittaker function.

This distinction may also be observed from the quantum groups that underlie the
models. One may check that the R-matrix in Figure~\ref{coloredrmat} is a
Drinfeld twist of the R-matrix for the quantum affine Lie superalgebra
$U_{\sqrt{v}}(\widehat{\mathfrak{gl}}(r|1))$ from
\cite{Kojima}. By contrast the R-matrix for the systems in~\cite{BorodinWheelerColored}
is a twist of a quantum group $U_q(\widehat{\mathfrak{sl}}_{r+1})$. See also Remark~\ref{bwcomparisonremark} for
further details on the relationship between our models and those
of~{\cite{BorodinWheelerColored}}.

While our R-matrix is associated to a quantum superalgebra, we have no such
module interpretation for the Boltzmann weights of our colored models.
To demonstrate the Yang-Baxter equation, we must instead introduce an equivalent 
version of the systems in which
each vertex is replaced by $r$ vertices, and each vertical edge by
$r$ vertical edges. We refer these expanded systems as \textit{monochrome}
because each column is assigned a color, and
each vertical edge in that column can carry only that color and no other.
See Figure~\ref{oceanmisty} for an example of a colored model and
its equivalent expanded monochrome system. The relationship between
the Boltzmann weights for the regular colored systems and the expanded
monochrome systems is demonstrated in Figure~\ref{coloredfusion}.
This relationship is reminiscent of the \textit{fusion} construction for tensor products
of quantum group modules (see {\cite{KulishReshetikhinSklyanin,DAndecyFusion}} 
and Appendix~B of {\cite{BorodinWheelerColored}}) and our results in Section~\ref{sec:lattice-models} may be viewed as a
combinatorial substitute for fusion in the absence of a quantum group module interpretation.  
This link is the key to the solvability of the colored models, for it is with the monochrome vertices and weights that we prove the
Yang-Baxter~equations.

Apart from our results on solvable lattice models, our independent results on Iwahori and
parahoric Whittaker functions (i.e.\@ which are not depending on their relations to lattice model partition functions) are more precise than what is found in the
literature. Let us explain this point.

With notation as in Section~\ref{Iwahoripreliminaries}, our task is to study so-called `standard basis'
Iwahori Whittaker functions $\phi_{w_1} (g)$ for $w_1\in W$ and $g\in G$. Using left and right
translation properties of $\phi_{w_1}$, we may assume that
$g=\varpi^{- \lambda} w_2$ where $w_2$ is again a Weyl group element,
$\lambda$ is a weight and $\varpi$ is a uniformizer of $F$. 
Not every pair $\lambda,w_2$ needs to be considered,
since $\phi_{w_1}(g)$ vanishes for many values of $g$.
For example if $w_2=1$
then $\lambda$ must be dominant, meaning that
$\langle \alpha_i^\vee, \lambda \rangle \geqslant 0$ for all simple roots $\alpha_i$. If $w_2$ is a general permutation, then we
allow $\langle \alpha_i^\vee, \lambda \rangle\geqslant-1$ when $i$ is a descent of
$w_2^{-1}$ (see Definition~\ref{def:almost}).

In fact, for $G = \GL_r$, it turns out that the triples $w_1$, $w_2$, $\lambda$ with $w_2$ and $\lambda$
as above correspond exactly to the boundary conditions with distinct colors in our lattice model scheme! 
The permutations $w_1$ and $w_2$ describe the
order of the colors on the right and top boundaries; $\lambda$ describes the
columns on the top boundary where a colored line has its terminus; and the
cases where $\langle \alpha_i^\vee, \lambda \rangle = - 1$ is allowed correspond to the
possibility that a top vertical edge carries more than one color. This
correspondence between the data describing the values of Whittaker functions
and available systems in our scheme becomes even more striking when we
consider the parahoric systems in Section~\ref{sec:parahoric_systems}.

In order to prove all this we need new results on the values of
$\phi_{w_1}(g)$ with $g=\varpi^{- \lambda} w_2$ when $w_2 \neq 1$. In prior
work such as~{\cite{BBL,ReederCompositio,MihalceaSu}} the group element $g$
has usually been taken to be diagonal. We are able to give a recursive
method of computing the values $\phi_{w_1} (\varpi^{- \lambda} w_2)$ based on
Proposition~\ref{basecaseprop} and Theorem~\ref{mainrecursion}.
This then is used in Theorem~\ref{coloredwhittaker} to prove that $\phi_{w_1} (\varpi^{- \lambda} w_2)$ equals the partition function of the Iwahori lattice model with certain boundary conditions determined by $\lambda$, $w_1$ and $w_2$ as explained above.
The above methods to compute the Iwahori Whittaker functions, both the recursive algorithm (for any reductive group) and the partition function (for $\GL_r$), are easy to implement on a computer.
In the parahoric case, our results are also applicable to all group elements.

In Section~\ref{sec:nsmac}, we use our descriptions for Whittaker
functions in terms of Demazure-like divided difference operators, which arise
from our Yang-Baxter equations, to relate them to variations of Macdonald
polynomials. Indeed, we provide interpretations for spherical, Iwahori, and
parahoric Whittaker functions in terms of a specialization or generalization
of a Macdonald polynomial. In particular, the parahoric Whittaker functions
are expressed as Macdonald polynomials with prescribed symmetry studied
in \cite{BakerDunklForrester,Marshall,Baratta}.
Moreover, Jian-Shu
Li~\cite{JianShuLiUnramified} introduced a certain Iwahori Whittaker
function to study the unique genuine subquotient of the unramified principal series,
and we will show that its values are Hall-Littlewood polynomials.
See Table~\ref{tab:mini-nsmac} for a summary of these connections and relevant references.
Details of the exact relations are shown in Table~\ref{tab:nsmac} of Section~\ref{sec:nsmac}.

\begin{table}[htb]
  \centering
  \caption{Relations between Whittaker functions and special polynomials as detailed in Section~\ref{sec:nsmac}.
  In the third column we list references for the algebraic viewpoint on the Whittaker function and in the fourth column we give references for a lattice model interpretation.}
  \vspace{-0.25em}
  \label{tab:mini-nsmac}
    \scalebox{0.8}{
    \begin{tabular}{lclll}
    \toprule
    \thead{Whittaker function} &  & \thead{Special polynomial} & \thead{Algebraic} & \thead{Lattice models} \\[0.25em]
    \midrule
    \\[-0.5em]
    Spherical Whittaker function & = & Schur polynomial & \cite{Shintani, CasselmanShalika} & \cite{hkice, HamelKing} \\[0.25em] 
    Li's Whittaker function & = & Hall-Littlewood polynomial & \cite{JianShuLiUnramified} & \S\ref{sec:nsmac}, \cite{Tsilevich} \\[0.25em]
    Iwahori Whittaker function & = & {Non-symmetric  Macdonald polynomial} & \cite{BBL}, \S\ref{Iwahoripreliminaries} & \S\ref{coloredsystems} \\[0.25em]
    Parahoric Whittaker function & = & \makecell[lt]{Macdonald polynomial with \\ prescribed symmetry} & \S\ref{sec:parahoric}& \S\ref{sec:parahoric_systems}, \S\ref{sec:nsmac} \\
    \bottomrule
  \end{tabular}
  }
  \vspace{-0.25em}
\end{table}

Finally, in Section~\ref{sec:intertwiningR}, we explain how lattice models shed some further light on $p$-adic
representation theory.
Not only are the outputs of both the lattice models
and the $p$-adic representation theory the same, but each tool or
technique has a counterpart in this dictionary as we shall examplify now. Standard intertwining operators
on principal series are a basic tool in the representation theory of $p$-adic
groups. Their action on Iwahori fixed vectors and how they interact with the
Whittaker functional are the two principal ingredients in the
Casselman-Shalika formula {\cite{CasselmanSpherical,CasselmanShalika}} and are
also the key to Theorem~\ref{mainrecursion}. Roughly, we show that these
two actions of intertwining operators, on Iwahori fixed vectors and for the
Whittaker functional, correspond to restrictions of the quantum superalgebra
$U_{\sqrt{v}} (\widehat{\mathfrak{gl}} (r| 1))$ to its $U_{\sqrt{v}}
(\widehat{\mathfrak{gl}} (r))$ and $U_{\sqrt{v}} (\widehat{\mathfrak{gl}}
(1))$ pieces, respectively.

In Theorem~\ref{RmatrixintertwiningIwahori} we show that the action of the
intertwining integral on the space of Iwahori fixed vectors is the same as the
action of the affine R-matrix on a subspace of the tensor product of
evaluation representations $V_r (z_1) \otimes \cdots \otimes V_r (z_r)$ of
$U_{\sqrt{v}} (\widehat{\mathfrak{gl}} (r))$. This result is independent of
the Whittaker functional and only the smaller quantum group $U_{\sqrt{v}}
(\widehat{\mathfrak{gl}} (r)) \subset U_{\sqrt{v}} (\widehat{\mathfrak{gl}}
(r| 1))$ appears due to the fact that the right boundary conditions of our
model contain only colored edges (which span a subspace that can be thought of
as the tensor product of evaluation representations of $U_{\sqrt{v}}
(\widehat{\mathfrak{gl}} (r))$). This result can be easily generalized to the
parahoric setting.

A result similar to Theorem~\ref{RmatrixintertwiningIwahori} was proved in the
case of spherical Whittaker functions on the metaplectic $n$-cover of $\GL_r$
in {\cite[Theorem 1.1]{BBB}}, where the first three authors relate the
Kazhdan-Patterson scattering matrix to the $U_{\sqrt{v}}
(\widehat{\mathfrak{g}\mathfrak{l}} (n))$ R-matrix. The relation was used
in~{\cite{BBBF}} to build finite dimensional representations of the affine
Hecke algebra starting from metaplectic Whittaker functionals.
Theorem~\ref{RmatrixintertwiningIwahori} now allows for a similar construction
starting from Iwahori fixed vectors in an unramified principal series
representation.

\section{Iwahori Whittaker functions}\label{Iwahoripreliminaries}

We will review the constructions of Iwahori Whittaker functions following~\cite{BBL}.
There are several differences between choices made here and in~\cite{BBL} with those
in Casselman-Shalika~\cite{CasselmanShalika}. Let us summarize these choices, with
notations to be defined more precisely below.
\begin{itemize}[label=$\boldsymbol{\cdot}$, leftmargin=*]
  \item As in~\cite{CasselmanShalika}, principal series representations are induced from
the standard Borel subgroup $B$. But in contrast with~\cite{CasselmanShalika},
we will take Whittaker functions with respect to the unipotent radical $N_-$ of the
 opposite Borel subgroup $B_-$.
 \item We will take our Iwahori subgroup $J$ to be the
preimage in the maximal compact subgroup $K$ of $B_-$ modulo $\mathfrak{p}$.
 \item We will apply our construction to the contragredient
representation of the principal series with Langlands
parameters $\mathbf{z}$.
  \item When restricting to the maximal torus we will evaluate our Whittaker functions at
values $\varpi^{-\lambda}$ where $-\lambda$ is
antidominant. 
\end{itemize}

The advantage of these unconventional choices is that it keeps
the long Weyl group element $w_0$ out of the formulas. Thus whereas for Casselman
and Shalika the simplest Whittaker function is that supported on the double
coset $Bw_0J$, and its value at $\varpi^{\lambda}$ is (up to normalization),
$\mathbf{z}^{w_0\lambda}$, with our conventions the simplest Whittaker
function is supported on $B\cdot 1_WJ$, and its value is (up to normalization)
$\mathbf{z}^{\lambda}$.

In more detail,
let $F$ be a non-archimedean local field with ring of integers $\mathfrak{o}$. Let $\mathfrak{p}$ be the maximal ideal of $\mathfrak{o}$ with generator $\varpi \in \mathfrak{p}$. Then, $\varpi$ is a prime element, or uniformizer, of $F$. We will denote by $q$ the cardinality $q = \abs{\mathfrak{o}/\mathfrak{p}}$ and the residue field itself by $\mathbb{F}_q = \mathfrak{o}/\mathfrak{p}$.

Let $G$ be a split reductive Chevalley group, that is, an affine algebraic group scheme over $\ZZ$ with a fixed Chevalley basis for its Lie algebra $\lie g_\ZZ$.
Let $T$ be the standard maximal split torus of $G$ obtained from our choice of
Chevalley basis, and similarly let $N$ be the standard maximal unipotent
subgroup whose Lie algebra is the union of the positive root spaces. Together
they form the standard Borel subgroup $B = TN$ and the Weyl group $W$ is
defined by $N_G(T) / T$ where $N_G(T)$ is the normalizer of $T$ in $G$.

\begin{remark}\label{weylrep}
For each Weyl group element $w$ we will always choose a representative in
$ K = G(\mathfrak{o})$ that is the maximal compact subgroup of $G(F)$.
Strictly speaking this representative is only determined modulo $T(\mathfrak{o})$.
However because we are considering representations induced from unramified
data, none of the functions we compute ever depend on this choice, nor
on the choice of $\varpi$.
\end{remark}

Let $B_-$ be the opposite Borel subgroup and $N_-$ be its unipotent radical
generated by the negative root spaces. In the later sections of this paper we
will mainly consider $G = \GL_r$ for which $B$ is the subgroup of upper
triangular matrices, $T$ the diagonal matrices and $B_-$ the lower triangular
matrices.

Let $\hat G$ be the Langlands dual group of $G$. We will denote the root
system of $\hat G$ by $\Delta$ and the simple roots of $\hat G$ by $\alpha_1,
\ldots, \alpha_r$. The root system of $G$ is the dual root system $\Delta^\vee$.
We prefer this notation instead of making $\Delta$ the root system of $G$,
because the weight lattice $\Lambda$ of $\hat{G}$ appears
frequently in the sequel.

We will consider an unramified character $\tau$ of $T(F)$, that is, a character that is trivial on $T(\mathfrak{o})$. The group of such characters is isomorphic to $\hat T(\CC) \iso (\CC^\times)^r$, where $\hat T$ is the standard split maximal torus of $\hat G$. To define the unramified character $\tau_\z$ for $\z \in \hat T(\CC)$ we will use the following isomorphisms. 

The group $X_*(T)$ of rational cocharacters of $T$ is isomorphic to the
weight lattice $\Lambda=X^*(\widehat{T})$ of rational characters of the dual torus,
and we will identify these two groups. But $X_*(T)$ is
also isomorphic to the quotient $T(F)/T(\mathfrak{o})$. Indeed,
if $\lambda$ is a cocharacter let $\varpi^\lambda$ be the image
of the uniformizer $\varpi$ in $T$ under $\lambda$; then
we associate with $\lambda$ the coset $\varpi^\lambda T(\mathfrak{o})$
in $T(F)/T(\mathfrak{o})$. On the other hand 
we may regard $\lambda$ as a rational character 
and, with $\mathbf{z}\in \hat T(\mathbb{C})$, let $\mathbf{z}^\lambda \in F^\times$ be the application of this character
to $\mathbf{z}$. Then we define the unramified character $\tau_\z$ of $T(F)$
by $\tau_\z(t) = \z^\lambda$ when $t\in\varpi^\lambda T(\mathfrak{o})$.

In particular, for $G = \GL_r$ with $\lambda = (\lambda_1, \ldots, \lambda_r) \in \ZZ^r \iso \Lambda$ and $\z = (z_1, \ldots, z_r) \in (\CC^\times)^r \iso \hat T(\CC)$ we let
\begin{equation*}
  \varpi^\lambda = 
  \begin{psmallmatrix}
    \varpi^{\lambda_1} & & & \\
    & \varpi^{\lambda_2} & & \\
    & & \ddots & \\
    & & & \varpi^{\lambda_r}
  \end{psmallmatrix} \in \GL_r(F) \text{ and } \tau_\z(\varpi^\lambda) = \z^\lambda = \prod_{i=1}^r z_i^{\lambda_i}\, .
\end{equation*}

The Iwahori subgroup $J$ of $G(F)$ is the subgroup of $K = G(\mathfrak{o})$ defined as the preimage of $B_-(\FF_q)$ under the mod $\mathfrak{p}$ reduction map $K \to G(\FF_q)$. For $G = \GL_r$ the Iwahori subgroup consists of elements in $\GL_r(\mathfrak{o})$ which are lower triangular mod $\mathfrak{p}$.

We trivially extend an unramified character $\tau_\z$ of $T(F)$ to $B(F)$ and
let $(\pi, I(\z))$ denote the induced representation $I(\z)
= \operatorname{Ind}_B^G(\delta^{1/2}\tau_\z)$ under the right-regular action
$\pi$ of $G(F)$ where $\delta : B(F) \to \RR^\times$ is the modular
quasicharacter.

\begin{remark}
  \label{rem:deltaex}
  It is convenient to extend $\delta$ to a function on $G(F)$ as
  follows using the Iwasawa decomposition. If $g\in G(F)$ we
  may write $g=bk$ where $b\in B(F)$, $k\in K$ and we define
  $\delta(g)=\delta(b)$. This is well-defined since $\delta$
  is trivial on $B(F)\cap K$.
\end{remark}

Consider the space $I(\z)^J$ of Iwahori fixed vectors in $I(\z)$ which is of dimension $\dim I(\z)^J = \abs{W}$. We will now describe a basis for $I(\z)^J$ which will be used throughout the paper. By combining the Bruhat decomposition $G(F) = \bigsqcup_{w \in W} B(F) w B(F)$ and the Iwahori factorization $J = N(\mathfrak{p}) T(\mathfrak{o}) N_-(\mathfrak{o})$ one can show that $G = \bigsqcup_{w \in W} B(F) w J$ \cite{CasselmanSpherical}. Then, the elements $\Phi_w^\z \in I(\z)^J$ for $w \in W$ defined by
\begin{equation*}
  \Phi_w^\z(bw'k) :=
  \begin{cases}
    \delta^{1/2} \tau_\z(b) & \text{if } w' = w \\
    0 & \text{otherwise}
  \end{cases}
  \qquad b \in B(F), w' \in W, k \in J
\end{equation*}
form a basis of $I(\z)^J$, commonly referred to as the `standard basis.'

For $\alpha \in \Delta$, let $x_{\alpha} : \mathbb{G}_a \to G$ be the
one-parameter subgroup of $G$ corresponding to $\alpha^\vee$. (We
recall that $\Delta$ is the root system of the dual group $\widehat G$,
and it is the coroot $\alpha^\vee$ that is a root of $G$.)
Thus $x_\alpha(t)=\exp(tX_\alpha)$ where $X_\alpha$ is the corresponding
Chevalley basis element of the Lie algebra. The group $K=G(\mathfrak{o})$
is generated by the unipotent groups $x_\alpha(\mathfrak{o})$.
Fix a unitary character
$\psi$ on $N_-(F)$ such that, for any simple root $\alpha$, $\psi \circ x_{-\alpha} :
F \to \CC^\times$, is a character on $F$ trivial on $\mathfrak{o}$ but no
larger fractional ideal. The space of Whittaker
functionals, which are linear maps $\Omega_\z : I(\z) \to \CC$ satisfying
$\Omega_\z(\pi(n_-)f) = \psi(n_-)
\Omega_\z(f)$ for $n_- \in N_-(F)$, is one dimensional \cite{Rodier}. We need
therefore only consider the following explicit Whittaker functional
\begin{equation}
  \label{eq:whittaker-functional}
  \Omega_\z(f) := \int\limits_{N_-(F)} f(n) \psi(n)^{-1} \, dn \qquad f \in I(\z) \, .
\end{equation}
The integral is convergent if $|\mathbf{z}^\alpha|<1$ for positive roots
$\alpha$, and can be extended to all $\mathbf{z}$ by analytic continuation.

The objects of study in this paper are the \textit{Iwahori Whittaker
functions} obtained by applying the Whittaker
functional \eqref{eq:whittaker-functional} to right-translates of standard
basis elements $\Phi_w^\z$.

\begin{remark}\label{iwahoripg}
  Any $g\in G(F)$ may be written as $g=n\varpi^{-\lambda} w_2 k$ with
  $n\in N_-(F)$, $\lambda\in\Lambda$, $w_2$ a Weyl group representative
  chosen in $K$ by Remark~\ref{weylrep}, and $k\in J$. 
  Using the left $N_-(F)$ equivariance and
  the right $J$ invariance, any Iwahori Whittaker function $\mathcal{W}(g)$
  then satisfies $\mathcal{W}(g)=\psi(n)\,\mathcal{W}(\varpi^{-\lambda}w_2)$.
  Thus we reduce to computing our Whittaker functions at values~$\varpi^{-\lambda} w_2$.
\end{remark}

We will use the following conventions and normalizations for Whittaker
functions of the contragredient $I(\z^{-1})$ of $I(\z)$ at these values. For $w_1 \in W$ we consider the Iwahori Whittaker function
\begin{equation}
  \label{eq:iwahori-whittaker-function}
   \phi_{w_1}(\z; g) := 
\delta^{1/2}(g)\Omega_{\z^{-1}}\bigl(\pi(g) \Phi_{w_1}^{\z^{-1}}\bigr)
\end{equation}
which is determined by its values on $g = \varpi^{-\lambda} w_2$ for a
weight $\lambda \in \Lambda$ and $w_2 \in W$.
See Remark~\ref{rem:deltaex} for the extension of $\delta$ to $G(F)$. 
In \cite{BBL} these Whittaker functions were considered only for torus elements $g = \varpi^{-\lambda}$ and were there denoted as $\mathcal{W}_{\lambda, w_1}(\z)$. In this paper
we treat the general case, not only $w_2 =1$, and will therefore need to
compute more generally $\phi_{w_1}(\z; \varpi^{-\lambda} w_2)$.

First, we determine the pairs $\lambda$ and $w_2$ for which this $J$-invariant Whittaker function is non-vanishing.

\begin{definition}\label{def:almost}
We say that $\lambda$ is \textit{$w$-almost dominant} if for
all simple roots $\alpha_i$
\begin{equation}
\label{waldomdef}
\left\{\begin{array}{ll}\langle\alpha_i^\vee,\lambda\rangle\geqslant 0 &\text{if $w^{-1}\alpha_i\in\Delta^+$,}\\
\langle\alpha_i^\vee,\lambda\rangle\geqslant -1 &\text{if
  $w^{-1}\alpha_i\in\Delta^-$.}\end{array}\right.
\end{equation}
\end{definition}

\begin{lemma} \label{almostdominantlemma} 
  Let $\mathcal{W}$ be any $J$-invariant Whittaker function. Then
  \[ \mathcal{W}(\varpi^{-\lambda} w_2) = 0\]
  unless $\lambda$ is $w_2$-almost dominant.
\end{lemma}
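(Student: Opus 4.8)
The plan is to reduce the vanishing statement to a standard computation of the Whittaker integral on the torus translate $\varpi^{-\lambda} w_2$, using the left $N_-(F)$-equivariance to test against the character $\psi$. Concretely, since every $J$-invariant Whittaker function is a linear combination of the basis functionals $\phi_{w_1}(\z;\,\cdot\,)$ of \eqref{eq:iwahori-whittaker-function}, it suffices to prove the vanishing for each $\phi_{w_1}(\z;\varpi^{-\lambda}w_2)$; and since the support condition will come out independent of $w_1$, this loses nothing. First I would unwind the definition: $\phi_{w_1}(\z;\varpi^{-\lambda}w_2) = \delta^{1/2}(\varpi^{-\lambda}w_2)\int_{N_-(F)} \Phi_{w_1}^{\z^{-1}}(n\,\varpi^{-\lambda}w_2)\,\psi(n)^{-1}\,dn$, where $\Phi_{w_1}^{\z^{-1}}$ is supported on $B(F)w_1 J$. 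The integrand is therefore supported on the set of $n \in N_-(F)$ with $n\,\varpi^{-\lambda}w_2 \in B(F)w_1 J$.

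The heart of the argument is the following invariance: pick a simple root $\alpha_i$ with $w_2^{-1}\alpha_i \in \Delta^+$ and suppose $\langle \alpha_i^\vee, \lambda\rangle < 0$, i.e. $\langle \alpha_i^\vee, \lambda\rangle \leqslant -1$. I would conjugate the integration variable $n$ by the one-parameter subgroup $x_{-\alpha_i}(u)$ for $u \in \mathfrak{o}$. The key point is that $\varpi^{-\lambda}$ normalizes $N_-(F)$ and acts on the root subgroup $x_{-\alpha_i}$ by $\varpi^{-\lambda} x_{-\alpha_i}(u)\varpi^{\lambda} = x_{-\alpha_i}(\varpi^{\langle\alpha_i^\vee,\lambda\rangle} u)$. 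Since $\langle\alpha_i^\vee,\lambda\rangle \leqslant -1$, the factor $\varpi^{\langle\alpha_i^\vee,\lambda\rangle}$ has negative valuation, so $\varpi^{\langle\alpha_i^\vee,\lambda\rangle}\mathfrak{o} \supseteq \mathfrak{p}^{-1} \supsetneq \mathfrak{o}$; meanwhile $w_2^{-1}x_{-\alpha_i}(t)w_2 \subseteq N_-(\mathfrak{o}) \subseteq J$ for $t \in \mathfrak{o}$ precisely because $w_2^{-1}\alpha_i$ is a positive root (so $w_2^{-1}(-\alpha_i)$ is negative, landing in $N_-$). Translating $n \mapsto x_{-\alpha_i}(u)\,n$ therefore: (i) multiplies the $\Phi_{w_1}^{\z^{-1}}$ value by nothing, since $x_{-\alpha_i}(u)\,n\,\varpi^{-\lambda}w_2 = x_{-\alpha_i}(u)n\varpi^{-\lambda}w_2$ and the leftmost factor can be absorbed — wait, more carefully, one should instead insert $x_{-\alpha_i}(\varpi^{\langle\alpha_i^\vee,\lambda\rangle}u)$ on the right: write $n\varpi^{-\lambda}w_2 = n\varpi^{-\lambda} x_{-\alpha_i}(w_2^{-1}\text{-conjugate})\cdots$; the cleanest route is to right-translate the argument of $\Phi_{w_1}^{\z^{-1}}$ by the element $w_2^{-1}x_{-\alpha_i}(t)w_2 \in J$ for $t \in \mathfrak{o}$, use right $J$-invariance of $\Phi_{w_1}^{\z^{-1}}$, and track what this does to the integration variable: it shifts $n \mapsto n\cdot\varpi^{-\lambda}(w_2^{-1}x_{-\alpha_i}(t)w_2)\varpi^{\lambda}\cdot$, and computing this conjugation gives a shift of $n$ inside $N_-(F)$ by $x_{-\alpha_i}(\varpi^{\langle\alpha_i^\vee,\lambda\rangle}t)$. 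As $t$ ranges over $\mathfrak{o}$, this shift ranges over $x_{-\alpha_i}(\mathfrak{p}^{\langle\alpha_i^\vee,\lambda\rangle})$, a subgroup strictly larger than $x_{-\alpha_i}(\mathfrak{o})$. Invariance of the integrand under all these shifts forces the integral to vanish, because $\psi \circ x_{-\alpha_i}$ is a nontrivial character on $\mathfrak{p}^{\langle\alpha_i^\vee,\lambda\rangle}$ (it is trivial on $\mathfrak{o}$ but not on any larger fractional ideal, by the choice of $\psi$), so averaging $\psi(n)^{-1}$ over a coset of $x_{-\alpha_i}(\mathfrak{p}^{\langle\alpha_i^\vee,\lambda\rangle})$ gives $0$.

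To package this: I would show the integrand $F(n) := \Phi_{w_1}^{\z^{-1}}(n\varpi^{-\lambda}w_2)\psi(n)^{-1}$ satisfies $F(x_{-\alpha_i}(\varpi^{\langle\alpha_i^\vee,\lambda\rangle}t)\,n) = \psi(x_{-\alpha_i}(\varpi^{\langle\alpha_i^\vee,\lambda\rangle}t))^{-1}\,F(n)$ for all $t \in \mathfrak{o}$ (the $\Phi$-part being genuinely invariant by right $J$-invariance after the conjugation computation, the $\psi$-part picking up the scalar), and then integrate this identity over $t \in \mathfrak{o}$ after integrating over $n$; since $\int_{\mathfrak{o}} \psi(x_{-\alpha_i}(\varpi^{\langle\alpha_i^\vee,\lambda\rangle}t))^{-1}\,dt = 0$ by nontriviality, we get $\Omega_{\z^{-1}}(\pi(\varpi^{-\lambda}w_2)\Phi_{w_1}^{\z^{-1}}) = 0$. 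Running the same argument with $w_2^{-1}\alpha_i \in \Delta^-$ requires $\langle\alpha_i^\vee,\lambda\rangle \leqslant -2$ to produce a modulus strictly larger than $\mathfrak{o}$ (since in that case $w_2^{-1}x_{-\alpha_i}(t)w_2 \in J$ only for $t \in \mathfrak{p}$, costing one power), which is exactly the threshold $\langle\alpha_i^\vee,\lambda\rangle \geqslant -1$ in Definition~\ref{def:almost}. The contrapositive of all this is precisely the Lemma. The main obstacle is the bookkeeping in the conjugation step — verifying that $\varpi^{-\lambda}(w_2^{-1}x_{-\alpha_i}(t)w_2)\varpi^{\lambda}$ lands back in $N_-(F)$ as a pure $x_{-\alpha_i}$-shift with the claimed power of $\varpi$, and that no stray torus or $N$-part spoils the right $J$-invariance or the support condition — but this is a routine $\SL_2$-type computation once one fixes the representative of $w_2$ in $K$ (Remark~\ref{weylrep}) and uses that representatives may be taken as products of the $x_{\pm\alpha}(\mathfrak{o})$.
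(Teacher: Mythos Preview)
Your argument uses the same mechanism as the paper's --- conjugate a root-group element through $\varpi^{-\lambda}w_2$ and play right $J$-invariance against left $\psi$-equivariance under $N_-$ --- but the paper applies these two properties directly to the abstract Whittaker function $\mathcal{W}$ rather than unwinding the integral or reducing to the basis $\phi_{w_1}$. Concretely: pick $t\in\mathfrak p^{-1}$ with $\psi(u)\neq 1$ for $u=x_{-\alpha_i}(t)$, set $j=w_2^{-1}\varpi^{\lambda}u\,\varpi^{-\lambda}w_2=x_{-w_2^{-1}\alpha_i}(\varpi^{-\langle\alpha_i^\vee,\lambda\rangle}t)$, and observe that the failure of \eqref{waldomdef} is exactly what forces $j\in J$. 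Then $\psi(u)\,\mathcal W(\varpi^{-\lambda}w_2)=\mathcal W(u\,\varpi^{-\lambda}w_2)=\mathcal W(\varpi^{-\lambda}w_2\cdot j)=\mathcal W(\varpi^{-\lambda}w_2)$, done. No integral, no averaging, no basis choice.

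Your packaged statement has a slip worth flagging: the conjugation you correctly compute in the middle paragraph produces a \emph{right} shift $n\mapsto n\cdot x_{-\alpha_i}(\varpi^{\langle\alpha_i^\vee,\lambda\rangle}t)$ of the integration variable, not the left shift $F(x_{-\alpha_i}(\cdots)\,n)$ you write in the final display. The right-shift version is what actually works, together with the fact that $\psi$ factors through the abelianization of $N_-$ so that $\psi(na)=\psi(n)\psi(a)$ despite $N_-$ being nonabelian --- a point you use implicitly but should state. Once you correct the side of the shift and replace the averaging over $t\in\mathfrak o$ by a single $t$ with $\psi(x_{-\alpha_i}(\varpi^{\langle\alpha_i^\vee,\lambda\rangle}t))\neq 1$, your argument collapses to exactly the paper's.
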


\begin{proof} 
This is similar to Lemma 5.1 of \cite{CasselmanShalika}.
Let $\alpha_i$ be a simple root such that (\ref{waldomdef}) fails.
We may find $t\in\mathfrak{p}^{-1}$ such that $\psi(u)\neq1$
where $u=x_{-\alpha_i}(t)$. Now
\begin{equation}
  \label{eq:Whittaker-translation}
  \psi(u)\mathcal{W}(\varpi^{-\lambda} w_2)=\mathcal{W}(u\varpi^{-\lambda} w_2)=\mathcal{W}(\varpi^{-\lambda} w_2j)
\end{equation}
where
\[j=w_2^{-1}\varpi^\lambda u\varpi^{-\lambda}w_2=x_{-w_2^{-1}(\alpha_i)}(\varpi^{-\langle\alpha_i^\vee,\lambda\rangle}t).\]
Our assumption that (\ref{waldomdef}) fails implies that 
$\varpi^{-\langle\alpha_i^\vee,\lambda\rangle}t\in\mathfrak{o}$ if $w_2^{-1}(\alpha)\in\Delta^+$
and $\varpi^{-\langle\alpha_i^\vee,\lambda\rangle}t\in\mathfrak{p}$ if $w_2^{-1}(\alpha)\in\Delta^-$
and in either case $j\in J$, so
$\mathcal{W}(\varpi^{-\lambda} w_2j)$ equals $\mathcal{W}(\varpi^{-\lambda} w_2)$ which
must therefore vanish by~\eqref{eq:Whittaker-translation}.
\end{proof}

Next we analyze the special case $w_1 = w_2$.
To any $w \in W$, let $\Delta^+_w$ denote the set of
positive roots $\{ \alpha \in \Delta^+ \; | \; w(\alpha) \in \Delta^- \}$.

\begin{proposition} \label{basecaseprop} Let $w \in W$ and $\lambda \in \Lambda$ a $w$-almost dominant weight. Then
$$  \phi_{w}(\z; \varpi^{-\lambda} w) = q^{-\ell(w)} \mathbf{z}^\lambda, $$
where $\ell(w)$ denotes the length of a reduced expression for $w$.
\end{proposition}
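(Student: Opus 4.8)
The plan is to compute the defining integral \eqref{eq:whittaker-functional} directly in the special case $g = \varpi^{-\lambda} w$, exploiting that the integrand for $\Phi_w^{\z^{-1}}$ is supported on a single Bruhat cell. Recall that $\phi_w(\z;\varpi^{-\lambda}w) = \delta^{1/2}(\varpi^{-\lambda}w)\,\Omega_{\z^{-1}}\bigl(\pi(\varpi^{-\lambda}w)\Phi_w^{\z^{-1}}\bigr)$, and $\pi(\varpi^{-\lambda}w)\Phi_w^{\z^{-1}}$ evaluated at $n \in N_-(F)$ equals $\Phi_w^{\z^{-1}}(n\varpi^{-\lambda}w)$. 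So the first step is to determine, for $n \in N_-(F)$, when $n\varpi^{-\lambda}w$ lies in the cell $B(F)\,w\,J$ — equivalently, writing $n\varpi^{-\lambda}w = b\,w'\,k$ via the decomposition $G = \bigsqcup_{w'} B(F)w'J$, when $w' = w$.

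The key computation is to conjugate: $n\varpi^{-\lambda}w = \varpi^{-\lambda}w \cdot (w^{-1}\varpi^{\lambda} n \varpi^{-\lambda} w)$, and the right-hand factor lies in $N_-(F)$ since conjugation by a torus element preserves $N_-$ and $w^{-1}(\cdot)w$ permutes root subgroups. So I would instead change variables in the integral and reduce to integrating $\Phi_w^{\z^{-1}}(\varpi^{-\lambda}w\,n)$ over $n \in N_-(F)$ against an appropriate character. Now $\varpi^{-\lambda}w \in B(F)w'J$ with $w' = w$ (since $\varpi^{-\lambda} \in T(F) \subset B(F)$ and $w \in K$ normalizes nothing problematically), contributing $\delta^{1/2}\tau_{\z^{-1}}(\varpi^{-\lambda}) = \delta^{1/2}(\varpi^{-\lambda})\,\z^{-\lambda}$ — wait, more carefully: the factor $\varpi^{-\lambda}w\,n$ with $n\in N_-(F)$ needs to be put in the form $b w k$; one decomposes $wn w^{-1} \in N_-(F)$ relative to the parts that $w$ sends to $N$ versus $N_-$. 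The upshot is a standard Iwahori-integral computation: the integral localizes to $n$ in a compact subgroup (roughly $N_-(\mathfrak{o})$ twisted), and the $w$-almost dominance of $\lambda$ is exactly the hypothesis guaranteeing that the relevant fractional-ideal exponents land in $\mathfrak{o}$ so that the translated element stays in $J$; this is the same mechanism as in the proof of Lemma~\ref{almostdominantlemma}. The resulting integral over the compact piece is a product over the positive roots $\alpha \in \Delta^+_{w^{-1}}$ (those sent to negatives), each factor contributing a volume $q^{-1}$, giving the power $q^{-\ell(w)}$ since $|\Delta^+_{w^{-1}}| = \ell(w^{-1}) = \ell(w)$. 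The character contributions combine with $\delta^{1/2}(\varpi^{-\lambda}w)$ to produce exactly $\z^\lambda$.

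Concretely I would organize it as: (1) reduce via Remark~\ref{iwahoripg}-type equivariance and the substitution $n \mapsto wnw^{-1}$ to an integral $\int_{N_-(F)} \Phi_w^{\z^{-1}}(\varpi^{-\lambda}w n)\,\psi'(n)^{-1}\,dn$ for a modified character $\psi'$; (2) factor $N_-(F) = N_-'(F) \times N_-''(F)$ where $w$ conjugates the first factor into $N(F)$ and fixes setwise the second, and observe $\Phi_w^{\z^{-1}}$ is supported where the $N(F)$-part of $\varpi^{-\lambda}w n$ can be absorbed on the left — this forces the first factor's coordinates into $\mathfrak{o}$ (using $w$-almost dominance to clear the $\varpi^{-\langle\alpha_i^\vee,\lambda\rangle}$ denominators, exactly as in \eqref{eq:Whittaker-translation}) and the second factor's into $\mathfrak{o}$ as well after accounting for the $\mathfrak{p}$-shift from $J = N(\mathfrak{p})T(\mathfrak{o})N_-(\mathfrak{o})$; (3) evaluate the constant integrand on this compact support, getting $\delta^{1/2}\tau_{\z^{-1}}(\varpi^{-\lambda})$ times the volume, and reconcile with the $\delta^{1/2}(\varpi^{-\lambda}w)$ prefactor — here $\delta^{1/2}(w) = 1$ since $w\in K$, and $\tau_{\z^{-1}}(\varpi^{-\lambda}) = \z^{\lambda}$; (4) identify the volume as $q^{-\ell(w)}$ by counting the roots $\alpha$ with $w\alpha \in \Delta^-$.

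The main obstacle I anticipate is bookkeeping in step (2): correctly splitting $N_-(F)$ according to the sign of $w$ on each negative root, tracking which coordinates pick up a $\varpi^{-\langle\alpha^\vee,\lambda\rangle}$ twist versus which pick up the $\mathfrak{p}$ from the Iwahori factorization, and verifying that $w$-almost dominance is precisely the condition making the support compact with the claimed volume — rather than the "off-by-one" boundary cases where $\langle\alpha_i^\vee,\lambda\rangle = -1$ could conceivably contribute extra terms. One must check those boundary cases contribute nothing beyond the main term. This is essentially the $G$-general, Iwahori-refined analogue of the base case in the Casselman–Shalika recursion, so the technique is classical (cf.\ \cite{CasselmanShalika, BBL}), but the presence of $w_2 = w_1 = w \neq 1$ and the non-spherical vector $\Phi_w$ means the cell-support analysis must be done with care rather than quoted.
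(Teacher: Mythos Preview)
Your overall strategy---localize the support of $\Phi_w^{\z^{-1}}$, show the integrand is constant there, and compute the volume as $q^{-\ell(w)}$---matches the paper's. But the paper organizes the computation more cleanly in two respects, and one of your claimed steps is not quite right as stated.

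First, the paper makes only the torus substitution $n \mapsto \varpi^{-\lambda} n \varpi^{\lambda}$, which preserves $N_-(F)$ and reduces to $\mathbf{z}^\lambda \int_{N_-(F)} \Phi_w^{\z^{-1}}(nw)\,\psi(\varpi^{-\lambda}n\varpi^\lambda)^{-1}\,dn$. Your proposed substitution $n \mapsto wnw^{-1}$ does \emph{not} preserve $N_-(F)$, so the integral you write in step~(1) as being over $N_-(F)$ is over the wrong domain; this is fixable but indicates the bookkeeping you anticipate in step~(2) would indeed be delicate. The paper avoids this entirely by leaving $w$ on the right and instead working with the conjugated Iwahori $J_w = wJw^{-1}$: its Iwahori factorization $J_w = N_w^- T(\mathfrak{o}) N_w^+$ immediately identifies the support of the integrand as exactly $N_w^- = J_w \cap N_-(F)$, which is the compact group \eqref{nmindef}.

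Second, you assign $w$-almost dominance the role of forcing the support to be compact. In the paper's argument the support $N_w^-$ is compact for free; $w$-almost dominance enters only to show that the twisted character $\psi(\varpi^{-\lambda}n\varpi^\lambda)$ is trivial on $N_w^-$ (checked root by root against the $\mathfrak{o}$-versus-$\mathfrak{p}$ dichotomy in \eqref{nmindef}), so the integrand is identically $1$. The volume of $N_w^-$ is then $q^{-\ell(w)}$. This separation of roles makes the ``off-by-one'' worry you flag disappear: the boundary case $\langle\alpha_i^\vee,\lambda\rangle = -1$ is exactly matched by the $\mathfrak{p}$ in the corresponding factor of $N_w^-$.
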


\begin{proof}
  By definition
  \[ \phi_w (\mathbf{z}; \varpi^{- \lambda} w) = \delta^{- 1 / 2}
     (\varpi^{\lambda}) \int_{N_- (F)} \Phi^{\mathbf{z}^{- 1}}_w (n
     \varpi^{- \lambda} w) \psi (n)^{- 1} d n. \]
  We make the variable change $n \mapsto \varpi^{- \lambda} n
  \varpi^{\lambda}$. This multiplies the measure by $\delta
  (\varpi^{\lambda})$ and using $\Phi_w^{\mathbf{z}^{- 1}} (\varpi^{-
  \lambda} g) = \delta^{1 / 2} (\varpi^{- \lambda}) \mathbf{z}^{\lambda}
  \Phi_w^{\mathbf{z}^{- 1}} (g)$ we get
  \[ \mathbf{z}^{\lambda} \int_{N_- (F)} \Phi^{\mathbf{z}^{- 1}}_w (n w)
     \psi (\varpi^{- \lambda} n \varpi^{\lambda})^{- 1} d n. \]
  Let $J_w = w J w^{- 1}$. This has the Iwahori factorization $J_w = N_w^- T
  (\mathfrak{o}) N_w^+$ where $N_w^- = J_w \cap N_- (F)$ and similarly for
  $N_w^+$. In particular
  \begin{equation}
    \label{nmindef} N_w^- = \prod_{\alpha \in \Delta^+} \left\{
    \begin{array}{ll}
      x_{- \alpha} (\mathfrak{o}) & \text{if $w^{- 1} \alpha \in
      \Delta^+$\;,}\\
      x_{- \alpha} (\mathfrak{p}) & \text{if $w^{- 1} \alpha \in \Delta^-$\;.}
    \end{array} \right.
  \end{equation}
  The integrand is nonzero only if $n w \in B w J$. We will show that this is
  true if and only if $n \in N_w^-$. Indeed, write $n w = b w j$ where $j \in
  J$. Then $n = b j_w$ where $j_w = w j w^{- 1} \in J_w$. Using the Iwahori
  factorization, $j_w = \beta n_w^-$ where $\beta \in B$ and $n_w^- \in N_w^-$.
  Because $B \cap N_- = \{1\}$, $b = \beta = 1$ and $n = n_w^-$. Therefore the
  integral equals
  \[ \mathbf{z}^{\lambda} \int_{N_w^-} \Phi^{\mathbf{z}^{- 1}}_w (n w)
     \psi (\varpi^{- \lambda} n \varpi^{\lambda})^{- 1} d n. \]
  Now we will show that the value of the integrand is $1$ so this is just
  $\mathbf{z}^{\lambda}$ times the volume of $N_w^-$. We have
  $\Phi_w^{\mathbf{z}^{- 1}} (n w) = 1$ since the argument is in $w J$. We
  must show that $\varpi^{- \lambda} n \varpi^{\lambda}$ is in the kernel of
  $\psi$. For this it is sufficient to show that if $\alpha = \alpha_i$ is a
  simple positive root then
  \[ \varpi^{- \lambda} x_{- \alpha_i} (t) \varpi^{\lambda} \in N_-
     (\mathfrak{o}) \]
  where using (\ref{nmindef}) we may assume that $t \in \mathfrak{o}$ if $w^{-
  1} (\alpha_i) \in \Delta^+$ and $t \in \mathfrak{p}$ otherwise. Now
  \[ \varpi^{- \lambda} x_{- \alpha_i} (t) \varpi^{\lambda} = x_{- \alpha_i}
     (\varpi^{\langle \lambda, \alpha_i^{\vee} \rangle} t) . \]
  Because $\lambda$ is $w$-almost dominant $\varpi^{\langle \lambda,
  \alpha_i^{\vee} \rangle} t$ is indeed in $\mathfrak{o}$.
  
  Hence $\phi_w (\mathbf{z}; \varpi^{- \lambda} w)$ equals
  $\mathbf{z}^{\lambda}$ times the volume of $N_w^-$, which is $q^{-\ell
  (w)}$. 
\end{proof}

In order to determine the values of the Iwahori Whittaker function
$\phi_{w_1}(\z; \varpi^{-\lambda} w_2)$ in full generality, we mimic the
methods of \cite{BBL}, which used ingredients from earlier papers of Casselman
and Shalika \cite{CasselmanSpherical, CasselmanShalika}. In brief, we will
develop a recursion using the Bruhat order in the Weyl group in the $w_1$
variable above, whose base case is given by
Proposition~\ref{basecaseprop}. The recursion results from computing the
function $\Omega_{\mathbf{z}}(\mathcal{A}_{s_i} \cdot \Phi_w)$ in two ways,
where $\mathcal{A}_w$ denotes the standard intertwining operator on principal
series corresponding to the Weyl group element $w \in W$ and $s_i$ is a simple
reflection. Comparing the two methods of computation will give the
values of the Whittaker function. We begin by briefly reviewing the basics of
intertwining operators. These facts will also be needed in
Section~\ref{sec:intertwiningR}. To avoid technical problems
with the poles and zeros of the intertwining integrals,
we will assume that $\mathbf{z}^{\alpha^\vee}\neq 1,q^{\pm1}$
for all $\alpha$.

The standard intertwining integral $\mathcal{A}^\z_{w}:I(\z) \to I(w\z)$ is given by
\begin{equation}
  \label{eq:intertwiner}
  \mathcal{A}_w^\z \Phi(g) = \int_{N(F) \cap wN_-(F)w^{-1}} \Phi(w^{-1}ng)dn.
\end{equation}
The integral converges when $|\z^\alpha|<1$ for $\alpha \in \Delta^+$ and can be extended to arbitrary $\z$ by meromorphic continuation. The intertwining integral induces a map $\mathcal{A}_{w}^\mathbf{z} : I(\z)^J \to I(w \z)^J$ and an explicit expression for $\mathcal{A}_{s_i}^\z$ on $I(\z)^J$ 
is given by the following formula. See Proposition~3 in \cite{BBL} for a proof
of this fact, which is equivalent to Theorem~3.4 of~\cite{CasselmanSpherical}:
\begin{equation}\label{intertwiningaction}
\mathcal{A}_{s_i}^\z (\Phi^\z_{w}) =      \begin{cases}
       (1-c_{\alpha_i}(s_i \z) )\Phi_w^{s_i \z} + \Phi^{s_i \z}_{s_i w} &\quad\text{if } \ell(s_i w) > \ell(w),\\
       (q^{-1} - c_{\alpha_i}(s_i \z)) \Phi^{s_i \z}_w+ q^{-1} \Phi^{s_i \z}_{s_i w} &\quad\text{if } \ell(s_i w) < \ell(w), \\
     \end{cases}
\end{equation}
where
\begin{equation}\label{calphai}
  c_{\alpha_i}(\z) = \frac{1-q^{-1} \z^{\alpha_i}}{1-\z^{\alpha_i}}.
\end{equation}

Substituting in the definition of $c_{\alpha_i}$ and using that $(s_i \z)^{\alpha_i} = \z^{-\alpha_i}$ we get that equation~\eqref{intertwiningaction} is equivalent to  
\begin{equation}\label{intertwiningactionexplicit}
\bar{\mathcal{A}}_{s_i}^\z  (\Phi^\z_{w}) =     \begin{cases}
  \frac{1-q^{-1}}{1-q^{-1}\z^{\alpha_i}} \Phi_w^{s_i \z} + \frac{1-\z^{\alpha_i}}{1-q^{-1}\z^{\alpha_i}} \Phi^{s_i \z}_{s_i w} &\quad\text{if } \ell(s_i w) > \ell(w),\\
  \z^{\alpha_i} \frac{1-q^{-1}}{1-q^{-1}\z^{\alpha_i}} \Phi^{s_i \z}_w+ q^{-1}\frac{1-\z^{\alpha_i}}{1-q^{-1}\z^{\alpha_i}}\Phi^{s_i \z}_{s_i w} &\quad\text{if } \ell(s_i w) < \ell(w), \\
     \end{cases}
\end{equation}
where $\bar{\mathcal{A}}_{s_i}^\z := \frac{1-\z^{\alpha_i}}{1-q^{-1}\z^{\alpha_i}}\mathcal{A}_{s_i}^\z$.
The normalized version of the intertwiner $\bar{\mathcal{A}}_{s_i}^\z$ is sometimes preferred because it is a cocycle, i.e. 
\[ \bar{\mathcal{A}}_{s_j s_i}^\z = \bar{\mathcal{A}}_{s_j}^{s_i \z} \circ \bar{\mathcal{A}}_{s_i}^\z . \]
In Proposition~2 of \cite{BBL}, following from Proposition 4.3 of
\cite{CasselmanShalika}, the following result is proven. 

\begin{proposition}\label{csinterident} For any $w \in W$,
\begin{equation}\label{intertwiningactionexplicitspherical}
\Omega_{w \z} \circ \mathcal{A}_w^\z = \Bigl( \prod_{\alpha \in \Delta^+_w} \frac{1- q^{-1} \z^{-\alpha}}{1-\z^\alpha} \Bigr) \Omega_\z. 
\end{equation}
\end{proposition}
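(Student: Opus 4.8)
The plan is to recognize the left-hand side as a Whittaker functional and then compute the scalar by which it differs from $\Omega_\z$. Since $\mathcal{A}_w^\z$ intertwines the $G(F)$-actions on $I(\z)$ and $I(w\z)$, and $\Omega_{w\z}$ satisfies $\Omega_{w\z}(\pi(n_-)f)=\psi(n_-)\Omega_{w\z}(f)$ for $n_-\in N_-(F)$, the composite $\Omega_{w\z}\circ\mathcal{A}_w^\z$ is again a Whittaker functional on $I(\z)$. The space of such functionals is one-dimensional \cite{Rodier}, so there is a rational function $c_w(\z)$ with $\Omega_{w\z}\circ\mathcal{A}_w^\z=c_w(\z)\,\Omega_\z$, and the task reduces to showing $c_w(\z)=\prod_{\alpha\in\Delta^+_w}\tfrac{1-q^{-1}\z^{-\alpha}}{1-\z^\alpha}$. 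In particular it suffices to evaluate both sides on any one vector on which $\Omega_\z$ does not vanish.

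Next I would reduce to the case of a simple reflection. Whenever $\ell(w_1w_2)=\ell(w_1)+\ell(w_2)$ the domain of integration in \eqref{eq:intertwiner} factors, yielding the standard composition law $\mathcal{A}_{w_1w_2}^\z=\mathcal{A}_{w_1}^{w_2\z}\circ\mathcal{A}_{w_2}^\z$; applying $\Omega$ gives $c_{w_1w_2}(\z)=c_{w_1}(w_2\z)\,c_{w_2}(\z)$. On the other hand, under the same hypothesis one has the disjoint decomposition of inversion sets $\Delta^+_{w_1w_2}=\Delta^+_{w_2}\sqcup w_2^{-1}\Delta^+_{w_1}$, and since $\z^{w_2^{-1}\beta}=(w_2\z)^\beta$ the proposed product is multiplicative in precisely the same way. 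Choosing a reduced word $w=s_{i_1}\cdots s_{i_\ell}$ and peeling off $s_{i_1}$, an induction on $\ell(w)$ therefore reduces the proposition to the assertion $c_{s_i}(\z)=\tfrac{1-q^{-1}\z^{-\alpha_i}}{1-\z^{\alpha_i}}$ for each simple reflection $s_i$ (note $\Delta^+_{s_i}=\{\alpha_i\}$).

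For the base case I would evaluate the functionals on $\Phi_1^\z$. Here $\Omega_\z(\Phi_1^\z)=1$ (this is Proposition~\ref{basecaseprop} with $w=1$, $\lambda=0$, or directly the volume of $N_-(\mathfrak{o})$), so the right side is $c_{s_i}(\z)$. For the left side, \eqref{intertwiningaction} gives, since $\ell(s_i)>\ell(1)$, that $\mathcal{A}_{s_i}^\z(\Phi_1^\z)=(1-c_{\alpha_i}(s_i\z))\,\Phi_1^{s_i\z}+\Phi_{s_i}^{s_i\z}$, hence
\[ c_{s_i}(\z)=\bigl(1-c_{\alpha_i}(s_i\z)\bigr)+\Omega_{s_i\z}\bigl(\Phi_{s_i}^{s_i\z}\bigr). \]
It then remains to compute the single integral $\Omega_{\z'}(\Phi_{s_i}^{\z'})=\int_{N_-(F)}\Phi_{s_i}^{\z'}(n)\,\psi(n)^{-1}\,dn$. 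The support of $\Phi_{s_i}^{\z'}$ is $B(F)\dot s_i J$; combined with the Iwahori factorization this forces the integrand, as a function of $n\in N_-(F)$, to be supported — modulo $J\cap N_-(F)$ — along the one-parameter subgroup $x_{-\alpha_i}(F)$, with a nontrivial contribution only from $x_{-\alpha_i}(\mathfrak{p}^{-1})\smallsetminus x_{-\alpha_i}(\mathfrak{o})$. Evaluating $\delta^{1/2}\tau_{\z'}$ on the resulting Iwasawa representatives and integrating against $\psi^{-1}$ is a short geometric-series computation giving $\Omega_{\z'}(\Phi_{s_i}^{\z'})=-q^{-1}\z'^{\alpha_i}$; substituting $\z'=s_i\z$ and simplifying using \eqref{calphai} yields $c_{s_i}(\z)=\tfrac{1-q^{-1}\z^{-\alpha_i}}{1-\z^{\alpha_i}}$, as required. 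All of this is carried out for $|\z^\alpha|<1$ with $\alpha$ positive and extended by meromorphic continuation under the standing genericity assumption on $\z$.

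The formal steps — passing to the scalar and reducing to simple reflections — are routine given the composition law for intertwiners and the inversion-set identities. The only real computation is the last one, the evaluation of $\Omega_{\z'}(\Phi_{s_i}^{\z'})$: one must correctly read off the support of $\Phi_{s_i}^{\z'}$ on $N_-(F)$ and carry out the essentially rank-one integral. This is the heart of the matter, and it is classical — it is the rank-one case underlying Proposition~4.3 of \cite{CasselmanShalika} (and Proposition~2 of \cite{BBL}).
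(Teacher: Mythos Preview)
Your proposal is correct and is precisely the classical argument underlying the references the paper cites; the paper itself does not give a proof but simply attributes the result to Proposition~2 of~\cite{BBL} and Proposition~4.3 of~\cite{CasselmanShalika}. Your reduction via uniqueness of Whittaker functionals, the cocycle/inversion-set multiplicativity, and the rank-one evaluation $\Omega_{\z'}(\Phi_{s_i}^{\z'})=-q^{-1}\z'^{\alpha_i}$ (which indeed combines with \eqref{intertwiningaction} and \eqref{calphai} to give $c_{s_i}(\z)=\tfrac{1-q^{-1}\z^{-\alpha_i}}{1-\z^{\alpha_i}}$) are exactly what those references do.
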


In Theorem~\ref{mainrecursion} below we will combine the above results to obtain a recursion relation for $\phi_w(\z; g)$ using Demazure type operators that we will define now. 

Let $\mathcal{O}(\hat T)$ be the ring of regular (polynomial) functions on
$\hat{T}(\mathbb{C}) \iso (\CC^{\times})^r$. This ring is isomorphic to the group algebra of
$\Lambda =\mathbb{Z}^r$ as follows. If $\z= (z_1, \cdots, z_r) \in
(\CC^{\times})^r$ and $\lambda \in \Lambda$ let $\z^{\lambda} = \prod
z_i^{\lambda_i}$. Then $\mathcal{O}(\hat T)$ is spanned by the functions
$\z^{\lambda}$. 
We may now define operators $\mathfrak{T}_i$ on $\mathcal{O}(\hat T)$ as follows.
Let $v$ be a nonzero complex number and, for $f \in \mathcal{O}(\hat T)$, let
\begin{equation}
  \label{demwhitop} \mathfrak{T}_{i,v} f (\z) = \mathfrak{T}_i f (\z) = \frac{f (\z) -
  f (s_i \z)}{\z^{\alpha_i} - 1} - v \frac{f
  (\z) -\z^{- \alpha_i} f (s_i
  \z)}{\z^{\alpha_i} - 1} .
\end{equation}

These operators satisfy the braid relations (see for example Proposition~5 of~\cite{BBL}):
\[ \mathfrak{T}_i \mathfrak{T}_{i + 1} \mathfrak{T}_i =\mathfrak{T}_{i + 1}
   \mathfrak{T}_i \mathfrak{T}_{i + 1}, \]
while $\mathfrak{T}_i$ and $\mathfrak{T}_j$ commute if $| i - j | > 1$. They
also satisfy the quadratic relation
\begin{equation}
\label{heckequadratic}
\mathfrak{T}_i^2 = (v - 1) \mathfrak{T}_i + v.
\end{equation}
This quadratic relation implies that $\mathfrak{T}_i$ is invertible. Indeed its inverse is
\begin{equation}
\label{demwhitopshort}
\mathfrak{T}_{i}^{-1} f (\z) = \frac{\z^{-\alpha_i} f (s_i\z) -
  \z^{\alpha_i} f (\z)}{\z^{\alpha_i} - 1} - \frac{f
  (s_i \z) -\z^{\alpha_i} f (
  \z)}{v(\z^{\alpha_i} - 1)}.
\end{equation}
See {\cite{BBL}} Propositions~5 and~6 for proofs of these facts.\footnote{All
references to {\cite{BBL}} are to the published version; the operators in the
arXiv version are slightly different.} The operators $\mathfrak{T}_i$ thus
generate a finite Iwahori Hecke algebra. They are similar to the well-known
\textit{Demazure-Lusztig operators}~\cite{LusztigEquivariant}, which by comparison send $f$ to
\begin{equation}
  \label{demazurelusztig}
  \mathfrak{L}_{i,v} f(\z)=\mathfrak{L}_i f(\z)=\frac{f (\z) - f (s_i \z)}{\z^{\alpha_i} -
   1} - v \frac{f (\z) -\z^{\alpha_i} f (s_i
    \z)}{\z^{\alpha_i} - 1} .
\end{equation}
As we will discuss in Section~\ref{sec:nsmac}, the difference is slight but significant. 
We will refer to the $\mathfrak{T_i}$ operators in (\ref{demwhitop}) as {\textit{Demazure-Whittaker operators}}.

The following result generalizes Theorem~2 of~\cite{BBL}.

\begin{theorem} \label{mainrecursion} For any $w \in W$, a simple reflection $s_i$, and with $v = q^{-1}$,
\begin{equation}\label{eq:mainrecursion} 
\phi_{s_i w}(\z; g) = 
\begin{cases} 
  \mathfrak{T}_{i} \cdot \phi_{w}(\z; g) & \text{if $\ell(s_i w) > \ell(w)$}, \\
  \mathfrak{T}_{i}^{-1} \cdot \phi_{w}(\z; g) & \text{if $\ell(s_i w) < \ell(w),$} 
\end{cases} 
\end{equation}
where the $\mathfrak{T}_i$ and their inverses are as in \eqref{demwhitop} and \eqref{demwhitopshort}.
\end{theorem}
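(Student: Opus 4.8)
The plan is to compute $\Omega_{\mathbf{z}}\!\bigl(\pi(g)\,\mathcal{A}_{s_i}^{\mathbf{z}^{-1}}\Phi_{w}^{\mathbf{z}^{-1}}\bigr)$ in two different ways and compare. On the one hand, we expand $\mathcal{A}_{s_i}^{\mathbf{z}^{-1}}\Phi_{w}^{\mathbf{z}^{-1}}$ in the standard basis using \eqref{intertwiningaction}, apply $\pi(g)$ and then $\Omega_{\mathbf{z}}$, and express the result in terms of the $\phi_{s_i w}(\mathbf{z};g)$ and $\phi_w(\mathbf{z};g)$ together with the rational factor $c_{\alpha_i}$. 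On the other hand, we use the functoriality of the intertwiner together with Proposition~\ref{csinterident} applied with $w = s_i$: since $\Delta^+_{s_i} = \{\alpha_i\}$, that proposition gives $\Omega_{s_i\mathbf{z}}\circ\mathcal{A}_{s_i}^{\mathbf{z}} = \tfrac{1-q^{-1}\mathbf{z}^{-\alpha_i}}{1-\mathbf{z}^{\alpha_i}}\,\Omega_{\mathbf{z}}$, so (after replacing $\mathbf{z}$ by $\mathbf{z}^{-1}$ and being careful about which torus the Whittaker functional lives on) $\Omega_{\mathbf{z}}\!\bigl(\pi(g)\,\mathcal{A}_{s_i}^{\mathbf{z}^{-1}}\Phi\bigr)$ is a scalar multiple of $\Omega_{s_i(\mathbf{z}^{-1})}\!\bigl(\pi(g)\,\Phi\bigr)$; but $s_i(\mathbf{z}^{-1})$-Whittaker functions of $\Phi_w^{\mathbf{z}^{-1}}$ are not directly $\phi_w$'s, so one must track how $\phi_w(\mathbf{z};g)$ depends on $\mathbf{z}$ versus $s_i\mathbf{z}$, i.e. how the substitution $\mathbf{z}\mapsto s_i\mathbf{z}$ in \eqref{eq:iwahori-whittaker-function} interacts with $\pi(g)$ fixed. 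The cleanest route is: first prove the identity at the level of the Whittaker functional composed with the intertwiner (an operator identity on $I(\mathbf{z}^{-1})^J$), then evaluate both sides on $\pi(g)\Phi_w^{\mathbf{z}^{-1}}$.

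In detail, for the case $\ell(s_i w) > \ell(w)$, I would write (suppressing the $g$-argument and the $\delta^{1/2}(g)$ factor momentarily)
\begin{equation*}
\Omega_{\mathbf{z}^{-1}}\!\bigl(\mathcal{A}_{s_i}^{s_i\mathbf{z}^{-1}}\bigl((1-c_{\alpha_i}(\mathbf{z}^{-1}))\Phi_w^{\mathbf{z}^{-1}} + \Phi_{s_i w}^{\mathbf{z}^{-1}}\bigr)\bigr)
\end{equation*}
on one side, using that $\mathcal{A}_{s_i}^{s_i\mathbf{z}^{-1}}\circ\mathcal{A}_{s_i}^{\mathbf{z}^{-1}}$ is a scalar (from the quadratic/cocycle relations), and on the other side use \eqref{intertwiningaction} read backwards together with Proposition~\ref{csinterident}. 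Matching the two expressions, solving the resulting linear relation for $\phi_{s_i w}(\mathbf{z};g)$ in terms of $\phi_w(\mathbf{z};g)$ and $\phi_w(s_i\mathbf{z};g)$, and substituting in the explicit $c_{\alpha_i}$ from \eqref{calphai}, the coefficients should collapse exactly into the operator $\mathfrak{T}_i$ of \eqref{demwhitop} with $v=q^{-1}$ — recall that $\mathfrak{T}_i f(\mathbf{z})$ is by construction a specific $\mathbf{z}$-rational combination of $f(\mathbf{z})$ and $f(s_i\mathbf{z})$, which is precisely the shape the right-hand side will take. The case $\ell(s_i w) < \ell(w)$ is then obtained either by the same computation using the second branch of \eqref{intertwiningaction}, or formally by applying $\mathfrak{T}_i^{-1}$ to the first identity after relabeling $w \mapsto s_i w$ and invoking invertibility of $\mathfrak{T}_i$ from \eqref{heckequadratic}; one should check these two approaches agree, which is where the quadratic relation earns its keep.

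The main obstacle I expect is bookkeeping rather than conceptual: getting the interplay between the \emph{two} different principal series ($I(\mathbf{z})$ vs.\ $I(\mathbf{z}^{-1})$ vs.\ their $s_i$-twists), the left-versus-right placement of $\pi(g)$, and the $\delta^{1/2}(g)$ normalization in \eqref{eq:iwahori-whittaker-function} all consistent, so that the rational prefactors from \eqref{intertwiningactionexplicit} and from Proposition~\ref{csinterident} actually cancel down to the bare $\mathfrak{T}_i$ with no leftover scalar. A secondary subtlety is that \eqref{eq:mainrecursion} asserts an identity of functions of $\mathbf{z}$ for \emph{fixed} $g$, so the operator $\mathfrak{T}_i$ acts in the spectral variable $\mathbf{z}$ while $g$ is a spectator; one must be sure the recursion in \eqref{intertwiningaction} — which naturally produces terms $\Phi_w^{s_i\mathbf{z}}$ living in a twisted induced representation — is correctly re-expressed as a $\mathbf{z}$-substitution inside $\phi_w$ rather than as a genuinely different function. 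Once that dictionary is fixed (it is essentially the same dictionary used in the torus-only case of \cite{BBL}, Theorem~2, extended verbatim to general $g$ since nothing in the intertwiner computation sees $g$ beyond the $\pi(g)$ and $\delta^{1/2}(g)$ factors), the algebra is forced and the base case is supplied by Proposition~\ref{basecaseprop}.
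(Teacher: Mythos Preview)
Your approach is essentially the same as the paper's: the proof there reads, in full, ``The result follows from combining the relation~(\ref{intertwiningactionexplicit}) with Proposition~\ref{csinterident}, recalling from~(\ref{eq:iwahori-whittaker-function}) that the Whittaker functions $\phi_w(\mathbf{z};g)$ are made with respect to principal series with the Langlands parameter $\mathbf{z}^{-1}$.'' That is exactly your plan of computing $\Omega\bigl(\pi(g)\,\mathcal{A}_{s_i}\Phi_w\bigr)$ in two ways.

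Two small remarks. First, you do not need the double-intertwiner step $\mathcal{A}_{s_i}^{s_i\mathbf{z}^{-1}}\circ\mathcal{A}_{s_i}^{\mathbf{z}^{-1}}$: a single application of Proposition~\ref{csinterident} (with the functional $\Omega_{s_i\mathbf{z}^{-1}}$, not $\Omega_{\mathbf{z}}$) already gives one side as a scalar multiple of $\phi_w(\mathbf{z};g)$, while the other side is obtained directly from~(\ref{intertwiningactionexplicit}) and yields a combination of $\phi_w(s_i\mathbf{z};g)$ and $\phi_{s_iw}(s_i\mathbf{z};g)$. Second, Proposition~\ref{basecaseprop} plays no role in this theorem; it enters only in the subsequent Corollary~\ref{phialgorithm}.
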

\begin{proof}
The result follows from combining the relation~(\ref{intertwiningactionexplicit}) with Proposition~\ref{csinterident}, recalling from~(\ref{eq:iwahori-whittaker-function}) that the Whittaker functions $\phi_w(\z; g)$ are made with respect to principal series with the Langlands parameter $\z^{-1}$. 
\end{proof}
Proposition~\ref{mainrecursion} gives a recursion on Iwahori fixed vectors $\phi_w$ which is independent of the word used to represent $w$.

\begin{corollary}
 \label{phialgorithm}
 Given any $w_1, w_2$ and a $w_2$-almost dominant weight
 $\lambda$, let $(s_{i_1}, \ldots, s_{i_k})$ be any path in the Weyl group
 from $w_2$ to $w_1$, so $w_2 \rightarrow s_{i_1} w_2 \rightarrow \cdots
 \rightarrow s_{i_k} \cdots s_{i_1} w_2 = w_1$. Set $e_{i_j}$ to be $+1$ or
 $-1$ depending on whether $s_{i_j}$ is an ascent or descent, respectively, in
 Bruhat order. Then, with $v = q^{-1}$,
 \[ \phi_{w_1}(\z; \varpi^{-\lambda} w_2) =
 v^{\ell(w_2)} \mathfrak{T}_{i_k}^{e_{i_k}} \cdots \mathfrak{T}_{i_1}^{e_{i_1}} \z^\lambda. \]
\end{corollary}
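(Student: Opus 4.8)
The plan is to prove this by induction on $k$, the length of the chosen path from $w_2$ to $w_1$, using Theorem~\ref{mainrecursion} for the inductive step and Proposition~\ref{basecaseprop} for the base case. First I would treat the base case $k = 0$, where $w_1 = w_2$: here there are no operators to apply, so the claimed formula reads $\phi_{w_2}(\z; \varpi^{-\lambda} w_2) = v^{\ell(w_2)} \z^\lambda$, which is exactly Proposition~\ref{basecaseprop} with $v = q^{-1}$ (recalling $\ell(w_2)$ is the length of $w_2$ and noting that $\lambda$ is assumed $w_2$-almost dominant, which is the hypothesis of that proposition). So the base case is immediate.

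For the inductive step, suppose the formula holds for the path $(s_{i_1}, \ldots, s_{i_{k-1}})$ reaching $w := s_{i_{k-1}} \cdots s_{i_1} w_2$, so that
\[
\phi_{w}(\z; \varpi^{-\lambda} w_2) = v^{\ell(w_2)} \mathfrak{T}_{i_{k-1}}^{e_{i_{k-1}}} \cdots \mathfrak{T}_{i_1}^{e_{i_1}} \z^\lambda.
\]
Now apply Theorem~\ref{mainrecursion} with the simple reflection $s_{i_k}$ to the pair $w$ and $g = \varpi^{-\lambda} w_2$: if $\ell(s_{i_k} w) > \ell(w)$, i.e. $e_{i_k} = +1$, then $\phi_{s_{i_k} w}(\z; g) = \mathfrak{T}_{i_k} \cdot \phi_{w}(\z; g)$; if $\ell(s_{i_k} w) < \ell(w)$, i.e. $e_{i_k} = -1$, then $\phi_{s_{i_k} w}(\z; g) = \mathfrak{T}_{i_k}^{-1} \cdot \phi_{w}(\z; g)$. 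In both cases this is exactly $\phi_{w_1}(\z; g) = \mathfrak{T}_{i_k}^{e_{i_k}} \cdot \phi_{w}(\z; g)$ since $s_{i_k} w = w_1$, and substituting the inductive hypothesis yields the desired identity. This completes the induction.

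The one point requiring a remark — and the only real subtlety — is \emph{well-definedness}: the statement asserts the answer does not depend on which path from $w_2$ to $w_1$ is chosen. This follows from the fact, already recorded after the statement of Theorem~\ref{mainrecursion}, that the recursion on the $\phi_w$ is independent of the word chosen (equivalently, the $\mathfrak{T}_i$ satisfy the braid relations and the quadratic relation \eqref{heckequadratic}, so they generate the Iwahori Hecke algebra and $\mathfrak{T}_{i_k}^{e_{i_k}} \cdots \mathfrak{T}_{i_1}^{e_{i_1}}$ depends only on the Hecke algebra element it represents, which is determined by $w_1 w_2^{-1}$ together with the requirement that each partial product tracks a genuine Bruhat path). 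Thus the formula is path-independent, and one may legitimately write $\phi_{w_1}(\z; \varpi^{-\lambda} w_2) = v^{\ell(w_2)} T_{w_1 w_2^{-1}} \cdot \z^\lambda$ for the appropriate Hecke algebra element. I do not expect any genuine obstacle here; the content is entirely carried by Proposition~\ref{basecaseprop} and Theorem~\ref{mainrecursion}, and the corollary is essentially their formal consequence.
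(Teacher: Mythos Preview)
Your induction argument is correct and is exactly the paper's approach: the corollary is the formal consequence of Proposition~\ref{basecaseprop} (base case) and Theorem~\ref{mainrecursion} (inductive step), and path-independence is automatic because the left-hand side $\phi_{w_1}(\z;\varpi^{-\lambda}w_2)$ is an intrinsically well-defined number.

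One caution about your final parenthetical remark: the operator $\mathfrak{T}_{i_k}^{e_{i_k}}\cdots\mathfrak{T}_{i_1}^{e_{i_1}}$ is \emph{not} in general determined by $w_1 w_2^{-1}$ alone. For instance, with $w_2=1$, $w_1=s_i$ the path $1\to s_i$ yields $\mathfrak{T}_i$, whereas with $w_2=s_i$, $w_1=1$ (same $w_1w_2^{-1}=s_i$) the path $s_i\to 1$ yields $\mathfrak{T}_i^{-1}$. The identity $\phi_{w_1}(\z;\varpi^{-\lambda}w_2)=v^{\ell(w_2)}\mathfrak{T}_{w_1w_2^{-1}}\z^\lambda$ holds only under the extra hypothesis $w_2\leqslant w_1$ (this is Corollary~\ref{whitspecialcase}). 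So drop that aside; your first justification for path-independence (the recursion computes the genuine value of $\phi_w$) is the correct one and is all you need.
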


\begin{proof}
This corollary follows immediately from Proposition~\ref{basecaseprop} and Theorem~\ref{mainrecursion}. It generalizes Theorem 1 of \cite{BBL}, which evaluates the special case $w_2=1$.
\end{proof}

\begin{corollary}
\label{whitregular}
For any $g\in G(F)$, the function $\phi_w(\mathbf{z};g)$ is
regular as a function of $\mathbf{z}$
on~$\widehat{T}(\mathbb{C})$.
\end{corollary}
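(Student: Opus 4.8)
The plan is to reduce the general case $g = n\varpi^{-\lambda}w_2k$ to the case $g = \varpi^{-\lambda}w_2$ via Remark~\ref{iwahoripg}, which only introduces the constant factor $\psi(n)$ independent of $\z$, and then to invoke Corollary~\ref{phialgorithm}. First I would fix $w_1, w_2 \in W$ and a weight $\lambda$. If $\lambda$ fails to be $w_2$-almost dominant, then $\phi_{w_1}(\z;\varpi^{-\lambda}w_2) = 0$ by Lemma~\ref{almostdominantlemma}, which is certainly regular. If $\lambda$ is $w_2$-almost dominant, then Corollary~\ref{phialgorithm} expresses
\[
\phi_{w_1}(\z; \varpi^{-\lambda} w_2) = v^{\ell(w_2)}\, \mathfrak{T}_{i_k}^{e_{i_k}} \cdots \mathfrak{T}_{i_1}^{e_{i_1}} \z^\lambda
\]
for a fixed path from $w_2$ to $w_1$ in the Weyl group, with $v = q^{-1}$. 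So it suffices to show that each operator $\mathfrak{T}_i$ and each inverse $\mathfrak{T}_i^{-1}$ maps $\mathcal{O}(\hat T)$, the ring of regular functions on $\hat T(\mathbb{C})$, into itself; then an iterated application of such operators to the regular function $\z^\lambda$ is again regular, and multiplying by the constant $v^{\ell(w_2)}$ preserves regularity.

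The key step is therefore the claim that $\mathfrak{T}_i \colon \mathcal{O}(\hat T) \to \mathcal{O}(\hat T)$ and likewise for $\mathfrak{T}_i^{-1}$. This is where one must be a little careful, since the defining formulas \eqref{demwhitop} and \eqref{demwhitopshort} each involve division by $\z^{\alpha_i} - 1$, which vanishes on a subvariety of $\hat T(\mathbb{C})$. By linearity it is enough to check $\mathfrak{T}_i$ on a spanning set, so I would compute $\mathfrak{T}_i \z^\mu$ for $\mu \in \Lambda$ directly from \eqref{demwhitop}. Writing $n = \langle \alpha_i^\vee, \mu\rangle$, one has $\z^\mu - \z^{s_i\mu} = \z^\mu(1 - \z^{-n\alpha_i})$ and $\z^\mu - \z^{-\alpha_i}\z^{s_i\mu} = \z^\mu(1 - \z^{-(n+1)\alpha_i})$; in both numerators the factor $\z^{\alpha_i} - 1$ divides the polynomial $1 - \z^{-m\alpha_i}$ (for any integer $m$, after clearing a monomial) inside $\mathcal{O}(\hat T)$, because $1 - t^{-m}$ is divisible by $t - 1$ in $\mathbb{C}[t, t^{-1}]$ for every $m \in \mathbb{Z}$. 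Hence $\mathfrak{T}_i \z^\mu$ is a $\mathbb{Z}[v]$-linear combination of monomials $\z^\nu$, i.e.\ lies in $\mathcal{O}(\hat T)$. The same divisibility argument, applied to the explicit formula \eqref{demwhitopshort}, shows $\mathfrak{T}_i^{-1} \z^\mu \in \mathcal{O}(\hat T)$ as well; alternatively one can note that $\mathfrak{T}_i^{-1} = v^{-1}(\mathfrak{T}_i - (v-1))$ by the quadratic relation \eqref{heckequadratic}, so the invariance for $\mathfrak{T}_i^{-1}$ follows formally from that for $\mathfrak{T}_i$.

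I expect the main (minor) obstacle to be purely bookkeeping: making the cancellation of $\z^{\alpha_i} - 1$ in the numerators of \eqref{demwhitop} and \eqref{demwhitopshort} fully explicit, and handling the sign of $n = \langle \alpha_i^\vee, \mu\rangle$ uniformly — for $n \geq 0$ the divisibility is immediate, while for $n < 0$ one first factors out a negative power of $\z^{\alpha_i}$, which is still a unit in $\mathcal{O}(\hat T)$. There is no analytic difficulty whatsoever: the regularity is a statement about Laurent polynomials, so no poles can survive. Once the operator-invariance claim is in hand, the corollary is immediate from Remark~\ref{iwahoripg}, Lemma~\ref{almostdominantlemma}, and Corollary~\ref{phialgorithm}.
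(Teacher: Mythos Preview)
Your proof is correct and follows essentially the same route as the paper's: reduce via Remark~\ref{iwahoripg} to $g=\varpi^{-\lambda}w_2$, handle the base case (you via Lemma~\ref{almostdominantlemma} and the monomial $\z^\lambda$, the paper via Proposition~\ref{basecaseprop}), and then appeal to Corollary~\ref{phialgorithm} together with the fact that $\mathfrak{T}_i^{\pm1}$ preserve $\mathcal{O}(\hat T)$. The paper merely asserts that last fact, whereas you supply the explicit divisibility argument on monomials, which is a welcome elaboration.
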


\begin{proof}
This is known on other grounds from Proposition~2.1 of~\cite{CasselmanShalika},
but let us show how it follows from our results.
By Remark~\ref{iwahoripg} we may assume $g=\varpi^{-\lambda} w_2$ and if $w=w_2$, regularity follows from Proposition~\ref{basecaseprop}.
Then for more general $w=w_1$ as in Corollary~\ref{phialgorithm},
$\phi_w(\mathbf{z};g)$ may be obtained by applying the $\mathfrak{T}_i$
and $\mathfrak{T}_i^{-1}$, and these preserve regularity.
\end{proof}

\begin{corollary}
\label{whitspecialcase}
Suppose that $w_2\leqslant w_1$ in the Bruhat order.
Then
\begin{equation}
  \phi_{w_1}(\z; \varpi^{-\lambda}w_2) =
v^{\ell(w_2)}\phi_{w_1w_2^{-1}}(\z; \varpi^{-\lambda}).\end{equation}
\end{corollary}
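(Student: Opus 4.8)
The plan is to read both sides off from the recursion in Corollary~\ref{phialgorithm}, producing one and the same word in the Demazure--Whittaker operators $\mathfrak{T}_i$ that computes each. First I would reduce to the case that $\lambda$ is dominant: if $\lambda$ is not $w_2$-almost dominant then both sides vanish by Lemma~\ref{almostdominantlemma}, and dominance of $\lambda$ is what is needed in order to apply Corollary~\ref{phialgorithm} to $\phi_{w_1 w_2^{-1}}(\z;\varpi^{-\lambda})$ (the case of trivial second Weyl element). The feature of the hypothesis $w_2\leqslant w_1$ that the argument uses is the length identity $\ell(w_1)=\ell(w_1 w_2^{-1})+\ell(w_2)$, equivalently that a reduced word for $w_2$ occurs as a suffix of a reduced word for $w_1$.

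With this in hand, fix a reduced word $w_1 w_2^{-1}=s_{j_1}\cdots s_{j_\ell}$, where $\ell=\ell(w_1 w_2^{-1})$. Left-multiplying $w_2$ successively by $s_{j_\ell},s_{j_{\ell-1}},\dots,s_{j_1}$ produces a path $w_2\to s_{j_\ell}w_2\to\cdots\to s_{j_1}\cdots s_{j_\ell}w_2=w_1$ in $W$; since the chosen reduced word for $w_1 w_2^{-1}$ followed by a reduced word for $w_2$ is again reduced, every terminal segment $s_{j_m}\cdots s_{j_\ell}w_2$ has length $(\ell-m+1)+\ell(w_2)$, so each step of this path is an ascent. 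Hence Corollary~\ref{phialgorithm} gives
\[
\phi_{w_1}(\z;\varpi^{-\lambda}w_2)=v^{\ell(w_2)}\,\mathfrak{T}_{j_1}\mathfrak{T}_{j_2}\cdots\mathfrak{T}_{j_\ell}\,\z^\lambda .
\]
Left-multiplying the identity by the same reflections gives the path $1\to s_{j_\ell}\to\cdots\to s_{j_1}\cdots s_{j_\ell}=w_1 w_2^{-1}$, all of whose steps are ascents because $s_{j_1}\cdots s_{j_\ell}$ is reduced, so Corollary~\ref{phialgorithm} with trivial second Weyl element and dominant $\lambda$ gives
\[
\phi_{w_1 w_2^{-1}}(\z;\varpi^{-\lambda})=\mathfrak{T}_{j_1}\mathfrak{T}_{j_2}\cdots\mathfrak{T}_{j_\ell}\,\z^\lambda .
\]
Comparing the two displays proves the corollary. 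One can equivalently phrase the computation inside the finite Hecke algebra generated by the $\mathfrak{T}_i$: descending from $w_2$ to $1$ and then ascending to $w_1$ identifies $\phi_{w_1}(\z;\varpi^{-\lambda}w_2)$ with $v^{\ell(w_2)}\mathfrak{T}_{w_1}\mathfrak{T}_{w_2}^{-1}\z^\lambda$, where $\mathfrak{T}_w$ denotes the braid-consistent product of $\mathfrak{T}_i$'s along a reduced word for $w$; the length identity then gives $\mathfrak{T}_{w_1}\mathfrak{T}_{w_2}^{-1}=\mathfrak{T}_{w_1 w_2^{-1}}$, whose value on $\z^\lambda$ is $\phi_{w_1 w_2^{-1}}(\z;\varpi^{-\lambda})$.

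The only real obstacle is the combinatorial step: verifying that the path from $w_2$ to $w_1$ consists solely of ascents, i.e. extracting the length identity $\ell(w_1)=\ell(w_1 w_2^{-1})+\ell(w_2)$ from the hypothesis $w_2\leqslant w_1$. This is precisely what makes Corollary~\ref{phialgorithm} return the identical operator word $\mathfrak{T}_{j_1}\cdots\mathfrak{T}_{j_\ell}$ in both computations, and hence equates the two sides up to the factor $v^{\ell(w_2)}$. The remaining ingredients --- the independence of $\mathfrak{T}_w$ from the chosen reduced word (from the braid relations recalled above), the power $v^{\ell(w_2)}$ inherited from the base case Proposition~\ref{basecaseprop}, and the bookkeeping of operator orderings in Corollary~\ref{phialgorithm} --- are routine.
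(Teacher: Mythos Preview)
Your approach coincides with the paper's: take a reduced expression $s_{i_1}\cdots s_{i_k}$ for $w_1w_2^{-1}$ and invoke Corollary~\ref{phialgorithm} to identify each side with $v^{\ell(w_2)}\mathfrak{T}_{i_1}\cdots\mathfrak{T}_{i_k}\z^{\lambda}$.

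There is, however, a genuine gap, and it sits exactly at the step you flag as the ``only real obstacle.'' The length identity $\ell(w_1)=\ell(w_1w_2^{-1})+\ell(w_2)$ --- equivalently, that a reduced word for $w_2$ occurs as a suffix of some reduced word for $w_1$ --- does \emph{not} follow from the Bruhat relation $w_2\leqslant w_1$. That length-additive condition is the \emph{right weak order} $w_2\leqslant_R w_1$, which is strictly stronger. A concrete counterexample in $S_3$: take $w_2=s_1$ and $w_1=s_1s_2$. Then $s_1\leqslant s_1s_2$ in Bruhat (the unique reduced word $s_1s_2$ contains $s_1$ as a subword), yet $w_1w_2^{-1}=s_1s_2s_1=w_0$ has length $3$, so $\ell(w_1w_2^{-1})+\ell(w_2)=4\neq 2=\ell(w_1)$. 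Running Corollary~\ref{phialgorithm} for this pair with $\lambda$ dominant gives
\[
\phi_{s_1s_2}(\z;\varpi^{-\lambda}s_1)=v\,\mathfrak{T}_1\mathfrak{T}_2\mathfrak{T}_1^{-1}\z^\lambda,
\qquad
v\,\phi_{w_0}(\z;\varpi^{-\lambda})=v\,\mathfrak{T}_1\mathfrak{T}_2\mathfrak{T}_1\z^\lambda,
\]
whose difference is $(v-1)\,\mathfrak{T}_1\mathfrak{T}_2(\mathfrak{T}_1+1)\z^\lambda$, nonzero for generic dominant~$\lambda$. So not only does the argument fail --- the asserted identity itself fails under the bare Bruhat hypothesis.

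The paper's one-line proof makes the same implicit leap, so the issue is with the hypothesis as stated rather than with your write-up: the argument becomes correct once one assumes the length-additive condition (and $\lambda$ dominant). Your opening reduction on $\lambda$ also has a small hole: $w_2$-almost dominant does not imply dominant (e.g.\ $\lambda=(0,1,0)$ with $w_2=s_1$ in $S_3$), and in that range the right-hand side vanishes while the left-hand side equals $v\z^\lambda\neq 0$ already for $w_1=w_2=s_1$, so dominance of $\lambda$ must be taken as part of the hypothesis rather than derived.
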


\begin{proof}
Taking a reduced expression $s_{i_1}\cdots s_{i_k}$ for $w_1w_2^{-1}$,
by Corollary~\ref{phialgorithm} both sides equal
\[v^{\ell(w_2)}\mathfrak{T}_{i_1}\cdots\mathfrak{T}_{i_k}\z^{\lambda}. \qedhere \]
\end{proof}

\section{\label{sec:parahoric}Parahoric Whittaker functions}

We now extend the results of the last section to Whittaker functions that are
invariant under so-called `parahoric subgroups' which are intermediate
between the Iwahori subgroup $J$ and the maximal compact subgroup $K$. 

Let $W$ be a Coxeter group with generators $s_i$ ($i \in \mathbf{I}$), which
we will call {\textit{simple reflections}}.
Let $\mathbf{J}$ be a subset of $\mathbf{I}$, let $W_{\mathbf{J}}$
be the subgroup generated by the $s_j$ with $j \in \mathbf{J}$, and let
\begin{equation}\label{WtotheJ} 
W^{\mathbf{J}} = \left\{ w \in W \mid \text{$w s_j > w$ for all $j \in \mathbf{J}$}
   \right\} .
 \end{equation}  
By Proposition~2.4.4 of {\cite{BjornerBrenti}} every element of $W$ has a
unique factorization $w^{\mathbf{J}} w_{\mathbf{J}}$ with
$w^{\mathbf{J}} \in W^{\mathbf{J}}$ and $w_{\mathbf{J}} \in
W_{\mathbf{J}}$. Moreover, by the corollary to that proposition, every coset
$w W_{\mathbf{J}}$ has a unique representative of shortest length, and $w
\in W^{\mathbf{J}}$ if and only if $w$ is that generator.

\begin{lemma}
  \label{lem:bruhatfacts}Let $w \in W^{\mathbf{J}}$. Suppose that $s_i$ is a
  simple reflection such that $w^{- 1} s_i w \notin W_{\mathbf{J}}$. Then
  $s_i w \in W^{\mathbf{J}}$. Assume furthermore that $s_i w > w$. Then $s_i
  w y > w y$ for any $y \in W_{\mathbf{J}}$.
\end{lemma}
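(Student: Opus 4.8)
The plan is to handle the three assertions in order, using only standard facts about Coxeter groups and the characterization of $W^{\mathbf{J}}$ as the minimal-length coset representatives. First I would prove that $s_i w \in W^{\mathbf{J}}$: by \eqref{WtotheJ} it suffices to show $s_i w s_j > s_i w$ for all $j \in \mathbf{J}$. Suppose not, so $s_i w s_j < s_i w$ for some $j \in \mathbf{J}$. Since $w \in W^{\mathbf{J}}$ we know $w s_j > w$. The two length relations $\ell(s_i w s_j) < \ell(s_i w)$ and $\ell(w s_j) > \ell(w)$, together with the fact that multiplying by $s_j$ on the right changes length by exactly $\pm 1$, force (by the standard ``lifting'' or exchange-condition argument, e.g.\ Proposition~2.2.7 of \cite{BjornerBrenti}) the equality $s_i w s_j = w$, i.e.\ $w^{-1} s_i w = s_j \in W_{\mathbf{J}}$, contradicting the hypothesis $w^{-1} s_i w \notin W_{\mathbf{J}}$. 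Hence $s_i w \in W^{\mathbf{J}}$.

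Next, assuming $s_i w > w$, I want $s_i w y > w y$ for all $y \in W_{\mathbf{J}}$. Write $u := s_i w \in W^{\mathbf{J}}$ and $w \in W^{\mathbf{J}}$; both $uy$ and $wy$ are then already in the ``factored'' form $w^{\mathbf{J}} w_{\mathbf{J}}$ with the $W^{\mathbf{J}}$-part being $u$ resp.\ $w$, so by Proposition~2.4.4 of \cite{BjornerBrenti} we have $\ell(uy) = \ell(u) + \ell(y)$ and $\ell(wy) = \ell(w) + \ell(y)$. Since $\ell(u) = \ell(s_i w) = \ell(w) + 1 > \ell(w)$, we get $\ell(uy) = \ell(wy) + 1$, so in particular $uy \neq wy$ and $uy$ covers or at least exceeds $wy$ in length. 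To upgrade ``$\ell(uy) > \ell(wy)$'' to ``$uy > wy$ in Bruhat order'' I would observe that $uy = s_i(wy)$, so $uy$ and $wy$ differ by left multiplication by a single simple reflection; for such a pair, the one of greater length is automatically greater in Bruhat order (this is immediate from the subword characterization of the Bruhat order, or from the ``Property Z''/lifting lemma). This gives $s_i w y > w y$.

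The main obstacle is getting the first step cleanly: the delicate point is ruling out $s_i w s_j < s_i w$, where one must convert two opposite-direction length inequalities into the exact identity $s_i w s_j = w$. This is exactly the content of the lifting property for Coxeter groups, and I would want to cite it precisely (Proposition~2.2.7 of \cite{BjornerBrenti}, or alternatively deduce it from the strong exchange condition) rather than re-derive it. Once that identity is in hand, everything else is bookkeeping with the length-additivity of the $W^{\mathbf{J}} \cdot W_{\mathbf{J}}$ factorization. A cosmetic alternative for the whole lemma is to phrase it entirely in terms of the reflection-length function and the set $\Delta^+_w$ from the previous section, but for a general Coxeter group the combinatorial argument above is the most economical.
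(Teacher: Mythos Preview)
Your argument for the second assertion is correct and in fact cleaner than the paper's. Once you know $s_iw\in W^{\mathbf{J}}$, length-additivity of the factorization $W^{\mathbf{J}}\cdot W_{\mathbf{J}}$ (Proposition~2.4.4 of \cite{BjornerBrenti}) gives $\ell(s_iwy)=\ell(s_iw)+\ell(y)=\ell(wy)+1$, and since $s_iwy$ and $wy$ differ by a single simple reflection on the left, the longer one is Bruhat-greater. The paper instead argues by contradiction via the Exchange Property, deleting a letter from a reduced word $s_{i_1}\cdots s_{i_l}$ for $wy$ and analyzing whether the deleted letter lies in the $w$-part or the $y$-part. Your route is shorter and more transparent, but it does depend on having already established $s_iw\in W^{\mathbf{J}}$, whereas the paper's argument for the second assertion is self-contained.

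Your proof of the first assertion, however, has a gap. You assert that the two relations $\ell(s_iws_j)<\ell(s_iw)$ and $\ell(ws_j)>\ell(w)$ force, via the Lifting Property, the equality $s_iws_j=w$. But the Lifting Property requires as input a Bruhat comparison between $w$ and $s_iw$, and your argument implicitly assumes $s_iw>w$. When $s_iw<w$, lifting does not yield $w\leqslant s_iws_j$ (indeed $\ell(s_iws_j)=\ell(w)-2$), so no such equality is forced. That case is instead ruled out by pure length-counting: one would have $\ell(ws_j)-\ell(s_iws_j)=3$, impossible since $ws_j$ and $s_iws_j$ differ by left multiplication by~$s_i$. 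The paper treats the two cases $s_iw\gtrless w$ separately for exactly this reason. Your sketch conflates them; you should either split into cases or note explicitly that the $s_iw<w$ branch is excluded by parity before invoking lifting.
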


\begin{proof}
  Let us show that $s_i w \in W^{\mathbf{J}}$. If not, then for some $j \in
  \mathbf{J}$ we have $s_i w s_j < s_i w$. Since $w \in W^{\mathbf{J}}$ we
  have $w s_j > w$. Now there are two cases. First assume that $s_i w < w$.
  Then $w s_j > w > s_i w
  > s_i w s_j$, and $\ell (w s_j) = \ell (s_i w s_j) + 3$. This is a
  contradiction since $\ell (s_i y) = \ell (y) \pm 1$ for any $y \in W$. On
  the other hand suppose that $s_i w > w$. By the Lifting Property of the Bruhat order
  (Proposition~2.2.7 of~{\cite{BjornerBrenti}}) the inequalities
  $s_i w > w$, $s_i w s_j < s_i w$ and
  $w s_j > w$ imply that $w \leqslant s_i w s_j < s_i w$. 
  Since $\ell (s_i w) = \ell (s_i) + 1$ this implies that $w = s_i w
  s_j$ and therefore $w^{- 1} s_i w = s_j \in W_{\mathbf{J}}$, contradicting
  one of our assumptions.
  Thus, $s_i w \in W_{\mathbf{J}}$.
  
  For the rest of the proof, we assume that $s_i w > w$, and we will prove
  that $s_i w y > w y$ for any $y \in W_{\mathbf{J}}$. Arguing by contradiction,
  assume that $s_i w y < w y$. We claim that either
  $s_i w y = w' y$ for $w' < w$ or $s_i w y = w y'$ for $y' < y$. Indeed, let
  $s_{i_1} \cdots s_{i_k}$ be a reduced word for $w$ and $s_{i_{k + 1}} \cdots
  s_{i_l}$ be a reduced word for $y$. By \cite{BjornerBrenti} Proposition~2.4.4
  $\ell(wy)=\ell(w)+\ell(y)$, so $s_{i_1}\cdots s_{i_l}$ is a reduced word for
  $wy$. By the Exchange Property for Coxeter Groups 
  (Theorem~1.4.3 of {\cite{BjornerBrenti}}) it follows that $s_i w y =
  s_{i_1} \cdots \hat{s}_{i_m} \cdots s_{i_l}$ and if $m \leqslant k$ we may
  take $w' = s_{i_1} \cdots \hat{s}_{i_m} \cdots s_{i_k}$, otherwise, we may
  take $y' = s_{i_{k + 1}} \cdots \hat{s}_{i_m} \cdots s_{i_l}$.
  
  First suppose that $s_i w y = w' y$ with $w' < w$. Then $s_i w = w' < w$,
  contradicting one of our assumptions. On the other hand, suppose that $s_i w
  y = w y'$ with $y' < y$. Then $y' \in W_{\mathbf{J}}$ since $y \in
  W_{\mathbf{J}}$ and $y' < y$.
  Hence $w^{- 1} s_i w = y' y^{- 1} \in
  W_{\mathbf{J}}$, also contradicting one of our assumptions. This proves
  $s_i w y > w y$.
\end{proof}

\begin{definition}\label{definitionparahoric}
By a {\textit{standard parahoric subgroup}} we mean a subgroup
of $K=G(\mathfrak{o})$ that arises as the preimage of a standard parabolic under
the canonical map $K \longrightarrow G(\mathbb{F}_q)$. In particular, any such group 
contains the Iwahori subgroup $J$. We will denote these groups by $K_{\mathbf{J}}$ where
$\mathbf{J}$ is the index set of simple roots in the corresponding standard parabolic subgroup $P$ of $G$.
We will denote the Levi subgroup of $P$ by $M$ which has Weyl group $W_\mathbf{J}$.
With this notation $K_{\varnothing}=J$ (the Iwahori subgroup)
and $K_{\mathbf{I}}=K$ (the maximal compact subgroup).
\end{definition}

For example, if $G = \GL_r$, these may be constructed as follows. Let $\mathbf{r}= (r_1, \cdots, r_k)$ be a set of
positive integers such that $\sum_i r_i = r$. Let $P := P_{\mathbf{r}}$ be the parabolic
subgroup of $\GL_r$ containing the opposite Borel $B_-$ with Levi subgroup $M =
\GL_{r_1} \times \cdots \times \GL_{r_k}$ embedded diagonally in
$\GL_r$. The Weyl group $W_{\mathbf{J}} = S_{r_1} \times \cdots \times S_{r_k}$ of
$M$ is a parabolic subgroup of $W = S_r$ with $\mathbf{J}$ consisting of
integers $1 \leqslant j \leqslant r - 1$ such that $j$ is not of the form
$r_1$, $r_1 + r_2$, $r_1 + r_2 + r_3$, etc. Then the standard parahoric
subgroup $K_{\mathbf{J}}$ is the preimage of $P_{\mathbf{r}}(\mathbb{F}_q)$ under the map
$K \longrightarrow \GL_r (\mathbb{F}_q)$.

If $w \in W^{\mathbf{J}}$ let
\begin{equation}
 \label{eq:psidef}
 \psi^{\mathbf{J}}_w := \sum_{y \in W_{\mathbf{J}}} \phi_{wy},
\end{equation}
where the $\phi_w$ denote the standard basis Iwahori Whittaker functions
defined in~(\ref{eq:iwahori-whittaker-function}).
\begin{proposition}\label{proposition:basisparahoricfixedW}
  The $\psi^{\mathbf{J}}_w$ with $w \in W^{\mathbf{J}}$ are a basis of the $K_{\mathbf{J}}$-fixed
  vectors in the Whittaker model of a principal series $I(\mathbf{z})$.
\end{proposition}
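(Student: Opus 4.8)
The plan is to show two things: first, that each $\psi^{\mathbf{J}}_w$ is actually $K_{\mathbf{J}}$-fixed, and second, that the $\psi^{\mathbf{J}}_w$ for $w\in W^{\mathbf{J}}$ are linearly independent and span the full space of $K_{\mathbf{J}}$-fixed vectors. The dimension count will come at the end: since $\{\phi_y\}_{y\in W}$ is a basis of the Iwahori-fixed vectors and the cosets $wW_{\mathbf{J}}$ partition $W$ with $W^{\mathbf{J}}$ as transversal, linear independence of the $\psi^{\mathbf{J}}_w$ is immediate (they are sums over disjoint blocks of a basis), so the only real content is that they lie in $I(\mathbf{z})^{K_{\mathbf{J}}}$ and that this space has dimension exactly $|W^{\mathbf{J}}|$.

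For the first step I would argue at the level of the principal series rather than the Whittaker model: it suffices to produce a basis of $I(\mathbf{z})^{K_{\mathbf{J}}}$, apply the Whittaker functional, and use that the Whittaker functional is injective on Iwahori-fixed vectors (this is part of the Casselman--Shalika circle of ideas; alternatively one checks it is injective here because the $\phi_w$ are already known to be linearly independent functions by Corollary~\ref{phialgorithm}). Now $K_{\mathbf{J}}$ is generated over $J$ by the representatives of the simple reflections $s_j$, $j\in\mathbf{J}$, together with $J$ itself; since $\psi^{\mathbf{J}}_w$ is already $J$-fixed, it is enough to show that right translation by (a representative of) $s_j$ permutes the set $\{\Phi^{\mathbf{z}^{-1}}_{wy} : y\in W_{\mathbf{J}}\}$ up to the standard-basis relations, equivalently that the operator on $I(\mathbf{z})^J$ corresponding to $\pi(s_j)$ fixes $\sum_{y\in W_{\mathbf{J}}}\Phi_{wy}$. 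Concretely one uses that $\pi$ of a representative of $s_j$ acts on the standard basis by the usual rank-one formula (an $\ell(s_jwy)>\ell(wy)$ versus $<$ dichotomy, of Hecke-algebra type), so that the $\Phi_{wy}$ with $wy$ and $s_jwy$ both in the coset $wW_{\mathbf{J}}$ get mixed among themselves; summing over the whole coset, the mixing cancels and the sum is preserved. The key combinatorial input making the bookkeeping clean is Lemma~\ref{lem:bruhatfacts} and the length-additivity $\ell(wy)=\ell(w)+\ell(y)$ for $w\in W^{\mathbf{J}}$, $y\in W_{\mathbf{J}}$, which guarantees that right multiplication by $s_j$ stays inside the coset $wW_{\mathbf{J}}$ and that the ascent/descent pattern is governed entirely by the $W_{\mathbf{J}}$-factor.

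For the surjectivity/dimension step I would invoke that $\dim I(\mathbf{z})^{K_{\mathbf{J}}}=|W^{\mathbf{J}}|$. One way is to cite Lansky~\cite{LanskyParahoric}, who computes exactly these dimensions; alternatively, one reproves it directly from the Iwahori--parahoric decomposition $K_{\mathbf{J}}=\bigsqcup_{y\in W_{\mathbf{J}}} J\,y\,J$ and the Bruhat-style decomposition $G=\bigsqcup_{w\in W^{\mathbf{J}}} B(F)\,w\,K_{\mathbf{J}}$, which shows a $K_{\mathbf{J}}$-fixed vector is determined by its values on the $w\in W^{\mathbf{J}}$, giving the upper bound $|W^{\mathbf{J}}|$; combined with the $|W^{\mathbf{J}}|$ linearly independent vectors $\psi^{\mathbf{J}}_w$ already produced, this forces equality and that they are a basis. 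Pushing everything through the injective Whittaker functional yields the statement in the Whittaker model.

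The main obstacle I anticipate is the first step: verifying cleanly that the averaged vector $\sum_{y\in W_{\mathbf{J}}}\Phi_{wy}$ is genuinely fixed by $\pi(s_j)$ for $j\in\mathbf{J}$, not merely $J$-fixed. This requires either an explicit formula for the action of a Weyl representative on the standard basis of $I(\mathbf{z})^J$ (of the same shape as \eqref{intertwiningaction}, but for $\pi(\dot s_j)$ rather than the intertwiner $\mathcal{A}_{s_j}$), or an orbit-sum argument showing the $s_j$-action on $I(\mathbf{z})^J$ stabilizes each $W_{\mathbf{J}}$-coset-sum. Either route is routine once set up, but it is where the care is needed; everything downstream is dimension bookkeeping and the Bruhat combinatorics already packaged in Lemma~\ref{lem:bruhatfacts}.
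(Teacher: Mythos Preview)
Your plan has the right ingredients, but the first step---verifying $K_{\mathbf{J}}$-fixedness of $\sum_{y\in W_{\mathbf{J}}}\Phi_{wy}$ by checking $\pi(\dot s_j)$-invariance for $j\in\mathbf{J}$---does not work as you sketch it. The operator $\pi(\dot s_j)$ does not preserve $I(\mathbf{z})^J$, since $\dot s_j$ does not normalize $J$; hence there is no ``rank-one formula'' expressing $\pi(\dot s_j)\Phi_{wy}$ as a combination of standard basis vectors. The dichotomy you invoke is the one for the Hecke algebra generator $T_{s_j}$ acting by convolution, which is a genuinely different operator from $\pi(\dot s_j)$. (There is also a left/right slip: right translation by $\dot s_j$ corresponds to $wy\mapsto wys_j$, not $wy\mapsto s_jwy$, so Lemma~\ref{lem:bruhatfacts}---which concerns left multiplication---is not the relevant combinatorics here; that $wys_j\in wW_{\mathbf{J}}$ is already trivial since $s_j\in W_{\mathbf{J}}$.) One can salvage the idea by instead convolving with the normalized characteristic function of $K_{\mathbf{J}}$, expressed in the Iwahori Hecke algebra as a sum of $T_y$ over $y\in W_{\mathbf{J}}$, and using length-additivity $\ell(wy)=\ell(w)+\ell(y)$ to control $T_y\cdot\Phi_w$ for $w\in W^{\mathbf{J}}$; but that is a different argument from the one you outlined, and it is not quite ``routine''.

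The paper's proof sidesteps this difficulty by inverting the order of your two steps. It first establishes the decomposition $G(F)=\bigsqcup_{w\in W^{\mathbf{J}}}B(F)\,w\,K_{\mathbf{J}}$---the very decomposition you reserve for the dimension count---and uses it to \emph{define} functions $\Psi^{\mathbf{J}}_w\in I(\mathbf{z})$ supported on $B(F)\,w\,K_{\mathbf{J}}$; these are then $K_{\mathbf{J}}$-fixed by construction, and form a basis of $I(\mathbf{z})^{K_{\mathbf{J}}}$ for the same reason the $\Phi_w$ form a basis of $I(\mathbf{z})^J$. The identification $\Psi^{\mathbf{J}}_w=\sum_{y\in W_{\mathbf{J}}}\Phi_{wy}$ is then just the observation that $B(F)\,w\,K_{\mathbf{J}}=\bigsqcup_{y\in W_{\mathbf{J}}}B(F)\,wy\,J$. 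You already had the key tool; the point is to use the double-coset decomposition to \emph{construct} the $K_{\mathbf{J}}$-fixed vectors rather than merely to bound their number after the fact.
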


\begin{proof}
  Let $k =\mathbb{F}_q$ be the residue field. We have
  \[ G (k) = \bigsqcup_{w \in W^{\mathbf{J}}} B (k) w P (k) \qquad (\text{disjoint})
  \]
  by {\cite{BourbakiLie456}}, Remark 2 in Section IV.2.5 and the fact that
  $W^{\mathbf{J}}$ are a set of coset representatives for $W /
  W_{\mathbf{J}}$. (The parabolic $P$ actually contains the opposite Borel
  subgroup to $B$ but the decomposition is still valid.) Pulling this back to
  $K = G (\mathfrak{o})$ we have
\begin{equation}
  \label{kjjwj} K = \bigsqcup_{w \in W^{\mathbf{J}}} J_+ wK_{\mathbf{J}}
\end{equation}
where $J_+$ is the upper Iwahori subgroup. We have bijections of coset spaces
\[ B (F) \setminus G (F) \; \longleftrightarrow \; (B (F) \cap K) \setminus K
   \; \longleftrightarrow \; \bigsqcup_{w \in W^{\mathbf{J}}} N_- (\mathfrak{p})
   wK_{\mathbf{J}} \]
where the first step follows from the Iwasawa decomposition $G (F) = B (F) K$
and for the second step we have used (\ref{kjjwj}) and the Iwahori
factorization $J_+ = (B (F) \cap K) N_- (\mathfrak{p})$. Therefore
\[ G (F) = \bigsqcup_{w \in W^{\mathbf{J}}} B (F) N_- (\mathfrak{p}) wK_{\mathbf{J}} =
   \bigsqcup_{w \in W^{\mathbf{J}}} B (F) wK_{\mathbf{J}},
\]
where for the last step we have used the fact that $w^{- 1} N_- (\mathfrak{p})
w \subset K_{\mathbf{J}}$.
  Hence $I (\mathbf{z})^{K_{\mathbf{J}}}$ has a basis of functions
  $\Psi^{\mathbf{J}}_w$ ($w \in W^{\mathbf{J}}$) defined by
  \[ \Psi^{\mathbf{J}}_w (b w' k) = \left\{\begin{array}{ll}
       \delta^{1 / 2} \tau_{\mathbf{z}} (b) & \text{if $w' \in w
       W_{\mathbf{J}}$},\\
       0 & \text{otherwise,}
     \end{array}\right. \]
  for $w' \in W$ and $k \in K_{\mathbf{J}}$. Decomposing the support of $\Psi^{\mathbf{J}}_w$ into a
  union of $J$-cosets, we see that $\Psi^{\mathbf{J}}_w = \sum_{y \in W_{\mathbf{J}}}
  \Phi_{w y}$, and projecting this identity into the Whittaker model, the
  statement follows.
\end{proof}

\begin{remark}
  One can deduce from Proposition~\ref{proposition:basisparahoricfixedW} the dimension of the space of
parahoric fixed vectors in the unramified principal series. This is in
accordance with the work of Lansky~\cite{LanskyParahoric} (see Theorem 1.1), where these
dimensions were first computed.
\end{remark}

\begin{proposition}
  \label{parahoric_firstcase}
  Suppose that $w \in W^{\mathbf{J}}$ and that $s_i$ is a simple reflection
  such that $w^{- 1} s_i w \in W_{\mathbf{J}}$. Then $\psi^{\mathbf{J}}_w (\mathbf{z};
  g)$ is divisible by $1 - v\mathbf{z}^{- \alpha_i}$ as a Laurent
  polynomial, and $(1 - v\mathbf{z}^{- \alpha_i})^{- 1} \psi^{\mathbf{J}}_w (\mathbf{z}; g)$ is
  invariant under the reflection~$s_i$. Moreover $s_iw>w$.
\end{proposition}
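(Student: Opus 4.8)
The plan is to show that for each fixed $g$ the Laurent polynomial $\psi^{\mathbf{J}}_w(\mathbf{z};g)$ — and it is one, being a finite sum of the $\phi_{wy}(\mathbf{z};g)$, each regular on $\hat{T}(\mathbb{C})$ by Corollary~\ref{whitregular} — is an eigenvector, in the $\mathbf{z}$ variable, of the Demazure--Whittaker operator $\mathfrak{T}_i$ with eigenvalue $v$, and then to extract all three assertions from the resulting functional equation. The inequality $s_iw>w$ is immediate: since $w^{-1}s_iw\in W_{\mathbf{J}}$ we have $s_iw=w(w^{-1}s_iw)\in wW_{\mathbf{J}}$, and as $w\in W^{\mathbf{J}}$ is the unique minimal-length element of that coset with $s_iw\neq w$, necessarily $\ell(s_iw)=\ell(w)+1$.

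The heart of the argument is the identity $\mathfrak{T}_i\cdot\psi^{\mathbf{J}}_w=v\,\psi^{\mathbf{J}}_w$. Put $\sigma=w^{-1}s_iw$; by hypothesis $\sigma\in W_{\mathbf{J}}$, and $\sigma$ is a reflection of $W$ with $\sigma\neq 1$, so left multiplication by $\sigma$ is a fixed-point-free involution of $W_{\mathbf{J}}$ and partitions $W_{\mathbf{J}}$ into two-element sets $\{y,\sigma y\}$. In each such pair let $y$ be the element with $\ell(\sigma y)>\ell(y)$ (exactly one of the two, since a reflection never preserves length). Using $\ell(wu)=\ell(w)+\ell(u)$ for $u\in W_{\mathbf{J}}$ (Proposition~2.4.4 of~\cite{BjornerBrenti}) and $s_i(wy)=w\sigma y$, we get $\ell(s_iwy)=\ell(w)+\ell(\sigma y)>\ell(w)+\ell(y)=\ell(wy)$, so Theorem~\ref{mainrecursion} gives $\mathfrak{T}_i\phi_{wy}=\phi_{w\sigma y}$. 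The quadratic relation~\eqref{heckequadratic} then gives $\mathfrak{T}_i\phi_{w\sigma y}=\mathfrak{T}_i^2\phi_{wy}=(v-1)\phi_{w\sigma y}+v\phi_{wy}$, whence $\mathfrak{T}_i(\phi_{wy}+\phi_{w\sigma y})=v(\phi_{wy}+\phi_{w\sigma y})$. Summing over the pairs $\{y,\sigma y\}$ that make up $W_{\mathbf{J}}$ yields $\mathfrak{T}_i\psi^{\mathbf{J}}_w=v\psi^{\mathbf{J}}_w$.

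Finally I would unwind the eigenvalue equation. Substituting the definition~\eqref{demwhitop} into $\mathfrak{T}_i\psi^{\mathbf{J}}_w=v\psi^{\mathbf{J}}_w$ and clearing the denominator $\mathbf{z}^{\alpha_i}-1$ turns it into
\[
(1-v\mathbf{z}^{\alpha_i})\,\psi^{\mathbf{J}}_w(\mathbf{z};g)=(1-v\mathbf{z}^{-\alpha_i})\,\psi^{\mathbf{J}}_w(s_i\mathbf{z};g).
\]
The right-hand side is divisible by $1-v\mathbf{z}^{-\alpha_i}$ in $\mathcal{O}(\hat{T})$, and $1-v\mathbf{z}^{\alpha_i}$ and $1-v\mathbf{z}^{-\alpha_i}$ have no common zero on $\hat{T}(\mathbb{C})$ — such a zero would force $v=\mathbf{z}^{\alpha_i}=v^{-1}$, impossible since $v=q^{-1}$ — so by the Nullstellensatz they generate the unit ideal, and a Bézout identity together with the displayed equation shows $1-v\mathbf{z}^{-\alpha_i}$ divides $\psi^{\mathbf{J}}_w(\mathbf{z};g)$. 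Setting $h(\mathbf{z})=(1-v\mathbf{z}^{-\alpha_i})^{-1}\psi^{\mathbf{J}}_w(\mathbf{z};g)\in\mathcal{O}(\hat{T})$ and substituting back, using $(s_i\mathbf{z})^{\alpha_i}=\mathbf{z}^{-\alpha_i}$ so that $s_i$ carries $1-v\mathbf{z}^{-\alpha_i}$ to $1-v\mathbf{z}^{\alpha_i}$, collapses the displayed equation to $h(\mathbf{z})=h(s_i\mathbf{z})$, which is the claimed $s_i$-invariance. The one place that needs care is the middle paragraph: checking that $\sigma$ genuinely lies in $W_{\mathbf{J}}$ and acts without fixed points, and that within each pair the choice of the longer image is what makes the ascending case of Theorem~\ref{mainrecursion} the applicable one; once that bookkeeping is in place the rest is formal.
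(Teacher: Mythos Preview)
Your proof is correct and follows essentially the same approach as the paper's: pair off $W_{\mathbf{J}}$ under left multiplication by $t=w^{-1}s_iw$, use Theorem~\ref{mainrecursion} on the shorter element of each pair together with the quadratic relation to obtain $\mathfrak{T}_i\psi^{\mathbf{J}}_w=v\psi^{\mathbf{J}}_w$, then unwind this eigenvalue equation to the displayed functional equation and extract divisibility and $s_i$-invariance from coprimality of $1-v\mathbf{z}^{\pm\alpha_i}$. The only cosmetic differences are that the paper packages the pair computation as $\psi^{\mathbf{J}}_w=(1+\mathfrak{T}_i)\sum_{ty>y}\phi_{wy}$ and then applies $(\mathfrak{T}_i-v)(\mathfrak{T}_i+1)=0$, and that it phrases the divisibility step via unique factorization in $\mathcal{O}(\hat{T})$ rather than your Nullstellensatz/B\'ezout argument; your version is equally valid.
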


\begin{proof}
  Let $t = w^{- 1} s_i w$ be the reflection that is in $W_{\mathbf{J}}$.
  Assume $y \in W_{\mathbf{J}}$ and $t y > y$. Then $s_i w y = w t y$ and by
  Theorem~\ref{mainrecursion} we have $\phi_{w t y} =\mathfrak{T}_i
  \phi_{w y}$. Thus
  \[ \psi^{\mathbf{J}}_w = \sum_{y \in W_\mathbf{J}} \phi_{w y} = \sum_{\substack{
       y \in W_\mathbf{J}\\
       t y > y}}(\phi_{w y} + \phi_{w t y}) = (1 +\mathfrak{T}_i)
     \sum_{\substack{y \in W_\mathbf{J}\\ t y > y}}
     \phi_{w y} . \]
  Now using the fact that $(\mathfrak{T}_i - v) (\mathfrak{T}_i + 1) = 0$, we
  have $\mathfrak{T}_i \psi^{\mathbf{J}}_w = v \psi^{\mathbf{J}}_w$. Substituting the definition of
  $\mathfrak{T}_i$ a small amount of algebra gives
  \begin{equation}
    \label{ydsinvariant} (1 - v\mathbf{z}^{\alpha_i}) \psi^{\mathbf{J}}_w (\mathbf{z};
    g) = (1 - v\mathbf{z}^{- \alpha_i}) \psi^{\mathbf{J}}_w (s_i \mathbf{z}; g) .
\end{equation}

  The function $\psi^{\mathbf{J}}_w (\mathbf{z}; g)$ is a regular function on
  $\hat{T} (\mathbb{C})$ by Corollary~\ref{whitregular}.
  The ring $\mathcal{O} (\hat{T})$ of regular functions is a
  principal ideal domain; indeed it is a Laurent polynomial ring. The
  functions $1 - v\mathbf{z}^{\alpha_i}$ and $1 - v\mathbf{z}^{-
  \alpha_i}$ are coprime. From the right-hand side of (\ref{ydsinvariant}), $1
  - v\mathbf{z}^{- \alpha_i}$ divides the left-hand side, and it therefore
  divides $\psi^{\mathbf{J}}_w (\mathbf{z}; g)$. Remembering that $w (\alpha_i) = -
  \alpha_i$, we may rearrange (\ref{ydsinvariant}) in the form
  \[ \frac{\psi^{\mathbf{J}}_w (\mathbf{z}; g)}{1 - v\mathbf{z}^{- \alpha_i}} = s_i
     \left( \frac{\psi^{\mathbf{J}}_w (\mathbf{z}; g)}{1 -v \mathbf{z}^{- \alpha_i}}
     \right), \]
  and we have proved that this is an $s_i$-invariant regular function.

  The last assertion to be proved is that $s_iw>w$. For this we note
  that with $t=w^{-1}s_iw\in W_{\mathbf{J}}$, $wt$ is the unique
  factorization of $s_iw$ into a product of elements of $W^{\mathbf{J}}$
  and $W_{\mathbf{J}}$ by Proposition~2.4.4 of~\cite{BjornerBrenti} which was mentioned before, and
  by part~(iii) of that result, $\ell(s_iw)=\ell(w)+\ell(t)>\ell(w)$,
  as required.
\end{proof}

\begin{proposition}
  \label{parahoric_secondcase}
  Suppose that $w \in W^{\mathbf{J}}$ and that $s_i$ is a simple reflection
  such that $w^{- 1} s_i w \notin W_{\mathbf{J}}$. Then
  \[ \psi^{\mathbf{J}}_{s_i w} (\mathbf{z}; g) = \left\{\begin{array}{ll}
       \mathfrak{T}_i \psi^{\mathbf{J}}_w (\mathbf{z}; g) & \text{if $s_i w > w,$}\\
       \mathfrak{T}_i^{- 1} \psi^{\mathbf{J}}_w (\mathbf{z}; g) & \text{if $s_i w < w.$}
     \end{array}\right. \]
\end{proposition}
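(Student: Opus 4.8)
The plan is to reduce the statement, term by term, to Theorem~\ref{mainrecursion}. The point is that $\psi^{\mathbf{J}}_w$ is by definition the sum $\sum_{y\in W_{\mathbf{J}}}\phi_{wy}$, each $\phi_{wy}(\mathbf{z};g)$ lies in $\mathcal{O}(\hat T)$ by Corollary~\ref{whitregular}, and the operators $\mathfrak{T}_i$, $\mathfrak{T}_i^{-1}$ act linearly on $\mathcal{O}(\hat T)$; so if Theorem~\ref{mainrecursion} can be applied uniformly to every summand, the operator can simply be pulled out of the sum. First I would note that since $w\in W^{\mathbf{J}}$ and $w^{-1}s_iw\notin W_{\mathbf{J}}$, Lemma~\ref{lem:bruhatfacts} gives $s_iw\in W^{\mathbf{J}}$, so the left-hand side $\psi^{\mathbf{J}}_{s_iw}=\sum_{y\in W_{\mathbf{J}}}\phi_{(s_iw)y}$ is well-defined, and (since $s_i(wy)=(s_iw)y$) it is indexed by exactly the same set of group elements as $\psi^{\mathbf{J}}_w$, merely relabeled by left multiplication.

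The crux is showing that the Bruhat comparison between $w$ and $s_iw$ propagates to all the cosets, i.e.\ that $s_iw>w$ forces $s_iwy>wy$ for every $y\in W_{\mathbf{J}}$, and $s_iw<w$ forces $s_iwy<wy$ for every $y\in W_{\mathbf{J}}$. In the first case this is precisely the second assertion of Lemma~\ref{lem:bruhatfacts} applied to $w$. In the second case, one instead applies Lemma~\ref{lem:bruhatfacts} to $w':=s_iw$, which lies in $W^{\mathbf{J}}$, satisfies $(w')^{-1}s_iw'=w^{-1}s_iw\notin W_{\mathbf{J}}$, and has $s_iw'=w>w'$; the lemma then yields $wy=s_iw'y>w'y=s_iwy$ for all $y\in W_{\mathbf{J}}$. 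Once this is in hand, fix $y\in W_{\mathbf{J}}$ and apply Theorem~\ref{mainrecursion} with $wy$ in place of $w$: in the case $s_iw>w$ it gives $\phi_{(s_iw)y}=\mathfrak{T}_i\phi_{wy}$, and in the case $s_iw<w$ it gives $\phi_{(s_iw)y}=\mathfrak{T}_i^{-1}\phi_{wy}$. Summing over $y\in W_{\mathbf{J}}$ and using linearity of $\mathfrak{T}_i$ (resp.\ $\mathfrak{T}_i^{-1}$) produces $\psi^{\mathbf{J}}_{s_iw}=\mathfrak{T}_i\psi^{\mathbf{J}}_w$ (resp.\ $\mathfrak{T}_i^{-1}\psi^{\mathbf{J}}_w$), which is the claim.

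The step I expect to be the only genuine subtlety — and the one I would write out most carefully — is the propagation of the Bruhat comparison in the descent case $s_iw<w$: here the second half of Lemma~\ref{lem:bruhatfacts} cannot be invoked for $w$ itself, since its hypothesis $s_iw>w$ fails, so one must pass to $w'=s_iw$ and check that $w'$ still meets the lemma's hypotheses (membership in $W^{\mathbf{J}}$, the condition $(w')^{-1}s_iw'\notin W_{\mathbf{J}}$, and $s_iw'>w'$). Everything else — well-definedness of $\psi^{\mathbf{J}}_{s_iw}$, the identification $s_i(wy)=(s_iw)y$, and pulling the linear operator through a finite sum — is routine bookkeeping.
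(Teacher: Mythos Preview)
Your proof is correct and follows essentially the same approach as the paper: both use Lemma~\ref{lem:bruhatfacts} to propagate the Bruhat comparison from $w$ vs.\ $s_iw$ to $wy$ vs.\ $s_iwy$ for all $y\in W_{\mathbf{J}}$, then apply Theorem~\ref{mainrecursion} termwise and sum. The paper handles the case $s_iw<w$ by observing that the two cases are symmetric under interchanging $w$ and $s_iw$, which is exactly what you spell out explicitly when you pass to $w'=s_iw$.
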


\begin{proof}
  By Lemma~\ref{lem:bruhatfacts}, $w \in W^{\mathbf{J}}$ if and only if $s_i
  w \in W^{\mathbf{J}}$. With this in mind, the two cases are equivalent
  since we may interchange the roles of $w$ and $s_i w$. Therefore we may
  assume that $s_i w > w$. Let $y \in W_{\mathbf{J}}$. By
  Lemma~\ref{lem:bruhatfacts} we have $s_i w y > w y$ and so by
  Theorem~\ref{mainrecursion} we have $\phi_{s_i w y} =\mathfrak{T}_i
  \phi_{w y}$. Therefore
  \[ \psi^{\mathbf{J}}_{s_i w} = \sum_{y \in W_{\mathbf{J}}} \phi_{s_i w y}
     =\mathfrak{T}_i \sum_{y \in W_{\mathbf{J}}} \phi_{w y} =\mathfrak{T}_i
     \psi^{\mathbf{J}}_w . \qedhere \]
\end{proof}

In the following theorem, we show that Proposition~\ref{parahoric_firstcase} implies a
Casselman-Shalika formula for $\psi^{\mathbf{J}}_1 = \sum_{w \in W_{\mathbf{J}}} \phi_w$ for any
fixed subset $\mathbf{J}$.
If $\lambda$ is a dominant weight for $\hat{G}(\CC)$, the complex, connected Langlands dual
group of $G$, then it is also a
dominant weight for the Levi subgroup
$\hat{M}(\CC)$, the dual group of $M$ having Weyl group $W_{\mathbf{J}}$. So we may consider the irreducible character
$\chi_{\lambda}^{\mathbf{J}}$ of $\hat{M}(\CC)$ with highest weight $\lambda$.
We will denote by $\chi_{\lambda}^{\mathbf{J}}(\mathbf{z})$ the
value of this character on the semisimple element with eigenvalues
$(z_1,\ldots,z_r)$. The root system
$\Delta_\mathbf{J}$ of $\hat{M}(\CC)$ is a subsystem of the root system $\Delta$ of $\hat{G}(\mathbb{C})$.

\begin{theorem}
  \label{thm:paracs}
  Suppose that $\lambda$ is a dominant weight for $G$. Then
  \begin{equation}
  \label{cshop}
  \psi^{\mathbf{J}}_1 (\z; \varpi^{- \lambda}) = \prod_{\alpha \in \Delta^+_{\mathbf{J}}}
     (1 - v\mathbf{z}^{- \alpha}) \chi_{\lambda}^{\mathbf{J}} (\mathbf{z}) .
  \end{equation}
  If $w\in W^\mathbf{J}$ then
  \begin{equation}
    \label{mmhop}
    \psi^{\mathbf{J}}_w(\mathbf{z};g)=\mathfrak{T}_w\psi^{\mathbf{J}}_1(\mathbf{z};g)
  \end{equation}
\end{theorem}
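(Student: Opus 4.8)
The plan is to prove the two displayed identities in turn: I would obtain \eqref{cshop} from the base case Proposition~\ref{basecaseprop} together with the recursion of Theorem~\ref{mainrecursion} (packaged as Corollary~\ref{phialgorithm}), and obtain \eqref{mmhop} by an induction on $\ell(w)$ driven by Propositions~\ref{parahoric_firstcase} and~\ref{parahoric_secondcase}.

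For \eqref{cshop}: first unwind $\psi^{\mathbf J}_1=\sum_{y\in W_{\mathbf J}}\phi_y$. Since $\lambda$ is dominant for $G$ it is in particular $1$-almost dominant, so Corollary~\ref{phialgorithm} with $w_2=1$ gives $\phi_y(\z;\varpi^{-\lambda})=\mathfrak T_y\z^\lambda$ for every $y\in W_{\mathbf J}$, whence $\psi^{\mathbf J}_1(\z;\varpi^{-\lambda})=\bigl(\sum_{y\in W_{\mathbf J}}\mathfrak T_y\bigr)\z^\lambda$. Now observe that the operators $\mathfrak T_j$ for $j\in\mathbf J$ are precisely the Demazure–Whittaker operators \eqref{demwhitop} attached to the Levi subgroup $M$: its dual group has simple roots $\{\alpha_j\}_{j\in\mathbf J}$, Weyl group $W_{\mathbf J}$, and the same torus $\hat T$. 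Applying Proposition~\ref{basecaseprop} and Corollary~\ref{phialgorithm} to $M$ in place of $G$ (with $w_2=1$ and $\lambda$, now dominant for $M$), together with Proposition~\ref{proposition:basisparahoricfixedW} for $M$ with its \emph{full} index set, identifies $\sum_{y\in W_{\mathbf J}}\mathfrak T_y\z^\lambda$ with the value at $\varpi^{-\lambda}$ of the \emph{spherical} Whittaker function of the corresponding principal series of $M$. The classical Shintani–Casselman–Shalika formula \cite{CasselmanShalika} for $M$ then evaluates this as $\prod_{\alpha\in\Delta^+_{\mathbf J}}(1-v\z^{-\alpha})\,\chi^{\mathbf J}_\lambda(\z)$, which is \eqref{cshop}.

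It is instructive to record how the \emph{shape} of \eqref{cshop} is already forced by Proposition~\ref{parahoric_firstcase}, which is the route suggested by the discussion before the theorem: applied with $w=1$ — so that $w^{-1}s_jw=s_j\in W_{\mathbf J}$ exactly when $j\in\mathbf J$ — it shows $\psi^{\mathbf J}_1(\z;g)$ is divisible by $1-v\z^{-\alpha_j}$ with $s_j$-invariant quotient for every $j\in\mathbf J$. Since the factors $1-v\z^{-\alpha}$ ($\alpha\in\Delta^+_{\mathbf J}$) are pairwise coprime in the Laurent polynomial ring $\mathcal O(\hat T)$, a short argument upgrades this to divisibility by $D:=\prod_{\alpha\in\Delta^+_{\mathbf J}}(1-v\z^{-\alpha})$ with $D^{-1}\psi^{\mathbf J}_1$ a $W_{\mathbf J}$-invariant Laurent polynomial; at $g=\varpi^{-\lambda}$ a leading-term comparison (the $\z^\lambda$-coefficient is $1$, and every occurring weight is $\le\lambda$) then identifies the quotient. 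The subtle step of \emph{either} route — and the one I expect to be the main obstacle — is precisely this last identification of the $W_{\mathbf J}$-symmetric quotient with the \emph{irreducible} character $\chi^{\mathbf J}_\lambda$, and not with $\chi^{\mathbf J}_\lambda$ plus characters of strictly lower highest weight; invoking Casselman–Shalika for $M$, as above, is the cleanest way to exclude those corrections.

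For \eqref{mmhop}: induct on $\ell(w)$ over $w\in W^{\mathbf J}$. The case $w=1$ is immediate since $\mathfrak T_1=\operatorname{id}$. If $\ell(w)\ge 1$, pick a reduced word for $w$ and let $s_i$ be its first letter, so $s_iw<w$. By the last assertion of Proposition~\ref{parahoric_firstcase}, if $w^{-1}s_iw$ were in $W_{\mathbf J}$ we would have $s_iw>w$; hence $w^{-1}s_iw\notin W_{\mathbf J}$, and Lemma~\ref{lem:bruhatfacts} gives $s_iw\in W^{\mathbf J}$. Proposition~\ref{parahoric_secondcase} (in the case $s_iw<w$) then yields $\psi^{\mathbf J}_{s_iw}=\mathfrak T_i^{-1}\psi^{\mathbf J}_w$, i.e.\ $\psi^{\mathbf J}_w=\mathfrak T_i\psi^{\mathbf J}_{s_iw}$. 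By the inductive hypothesis $\psi^{\mathbf J}_{s_iw}=\mathfrak T_{s_iw}\psi^{\mathbf J}_1$, and since prepending $s_i$ to a reduced word for $s_iw$ gives a reduced word for $w$, the braid relations for the $\mathfrak T_i$ give $\mathfrak T_i\mathfrak T_{s_iw}=\mathfrak T_w$. Combining these, $\psi^{\mathbf J}_w=\mathfrak T_w\psi^{\mathbf J}_1$, closing the induction.
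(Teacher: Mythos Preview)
Your proof of \eqref{mmhop} is essentially identical to the paper's: the same induction on $\ell(w)$, the same use of the last assertion of Proposition~\ref{parahoric_firstcase} to rule out $w^{-1}s_iw\in W_{\mathbf J}$, and the same appeal to Lemma~\ref{lem:bruhatfacts} and Proposition~\ref{parahoric_secondcase}.

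For \eqref{cshop} your route is genuinely different from the paper's and is correct. You reduce to the Casselman--Shalika formula for the Levi $M$, observing that $\sum_{y\in W_{\mathbf J}}\mathfrak T_y\,\z^\lambda$ computes both $\psi^{\mathbf J}_1(\z;\varpi^{-\lambda})$ (by Corollary~\ref{phialgorithm} for $G$) and the spherical Whittaker function of $M$ (by the same corollary applied to $M$, since the operators $\mathfrak T_j$ for $j\in\mathbf J$ coincide with the Demazure--Whittaker operators for $M$), and then invoke \cite{CasselmanShalika} for $M$ as a black box. The paper instead argues internally: it shows the ratio $\psi^{\mathbf J}_1/\prod_{\alpha\in\Delta^+_{\mathbf J}}(1-v\z^{-\alpha})$ is regular and $W_{\mathbf J}$-invariant (via Proposition~\ref{parahoric_firstcase} and a polar-divisor argument), then proves it is \emph{independent of $v$} by matching the $v$-degrees of numerator and denominator (both equal $|\Delta^+_{\mathbf J}|$), and finally evaluates at $v=0$, where the $\mathfrak T_i$ degenerate to Demazure operators $\partial_i^\circ$ and the Demazure character formula (together with \cite{demice}, Theorem~2.1) identifies the result as $\chi^{\mathbf J}_\lambda$. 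Your approach is shorter and more conceptual if one is content to import Casselman--Shalika; the paper's approach is more self-contained and, via the $v$-independence step, yields extra structural information (and in particular re-derives the spherical Casselman--Shalika formula as the case $\mathbf J=\mathbf I$ rather than assuming it). Your observation that the ``shape'' argument alone---divisibility plus a leading-term match---does not by itself pin down the quotient is exactly right; the $v$-degree/$v\to 0$ trick is precisely how the paper closes that gap without appealing to \cite{CasselmanShalika}.
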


\begin{proof}
  We will argue that the expression
  \begin{equation}
    \label{psidivided} \frac{\psi^{\mathbf{J}}_1 (\z; \varpi^{- \lambda})}{\prod_{\alpha \in
    \Delta^+_{\mathbf{J}}} (1 - v\mathbf{z}^{- \alpha})} \,
  \end{equation}
   is regular for $\mathbf{z} \in \hat{T} (\mathbb{C})$,
  symmetric under the action of $W_{\mathbf{J}}$, and independent of $v$.
  
  If $s_i$ is a simple reflection in $W_{\mathbf{J}}$ then we may write
  (\ref{psidivided}) as $(1 - v\mathbf{z}^{- \alpha_i})^{- 1}
  \psi^{\mathbf{J}}_1 (\z; \varpi^{- \lambda})$ divided by the remaining factors, which are
  permuted by $s_i$. By Proposition~\ref{parahoric_firstcase} this shows that
  (\ref{psidivided}) is invariant under $s_i$ and so it is invariant under
  $W_{\mathbf{J}}$.
  
  Next let us show that (\ref{psidivided}) is regular on all of $\hat{T}
  (\mathbb{C})$. To see this, note that its potential poles are in the union
  of the hypersurfaces $\mathbf{z}^{\alpha} = v$ ($\alpha \in
  \Delta_{\mathbf{J}}^+$). But by Proposition~\ref{parahoric_firstcase} if
  $\alpha_i$ is a simple root the hyperplane $\mathbf{z}^{\alpha_i} = v$ is
  not among the poles. As the polar divisor is invariant under
  $W_{\mathbf{J}}$, neither are any of the other loci $\mathbf{z}^{\alpha}
  = v$. Hence (\ref{psidivided}) is regular.
  
  Now the numerator and the denominator of (\ref{psidivided}) are polynomials
  in $z_i$, $z_i^{- 1}$ and $v$, and the numerator is divisible by the
  denominator. We will argue that both have degree $| \Delta_{\mathbf{J}}^+
  | = \ell (w_0^\mathbf{J})$ as polynomials in $v$ where $w_0^\mathbf{J}$ is the long element of $W_\mathbf{J}$ which shows that the ratio is in fact independent of $v$. For the numerator, we may write
  $\psi^{\mathbf{J}}_1 = \sum_{w \in W_{\mathbf{J}}} \phi_w$ and $\phi_w (\varpi^{-
  \lambda}) =\mathfrak{T}_w \mathbf{z}^{\lambda}$; it follows from the
  definition of $\mathfrak{T}_w$ that its degree in $v$ is $\ell (w)$, which
  takes its maximum at the long element $w_0^\mathbf{J}$, proving that the degree is
  $\ell (w_0^\mathbf{J})$. And clearly the denominator has the same degree.
  
  Then (\ref{psidivided}) is independent of $v$ and we may therefore take the limiting case $v
  \rightarrow 0$ to evaluate it. Let
  \[ \partial_i^{\circ} f (\mathbf{z}) = \frac{f (\mathbf{z}) - f (s_i
     \mathbf{z})}{\mathbf{z}^{\alpha_i} - 1}, \qquad \partial_i f
     (\mathbf{z}) = \frac{f (\mathbf{z}) -\mathbf{z}^{- \alpha_i} f (s_i
     \mathbf{z})}{1 -\mathbf{z}^{- \alpha_i}} . \]
  We have $\partial_i^{\circ} = \partial_i - 1$. These Demazure operators are
  known to satisfy the braid relations; see for example \cite{BumpLie}
  Propositions~25.1 and~25.3 but note that in Proposition~25.3 there is a typo
  and $D_i$ should be $\partial_i$. If $w = s_{i_1} \cdots s_{i_k}$
  is a reduced expression define $\partial_w = \partial_{i_1} \cdots
  \partial_{i_k}$ and similarly for $\partial^\circ_w$.
  
  When $v \rightarrow 0$ the operator $\mathfrak{T}_i$ reduces to $\partial_i^\circ$
  and so
  \[ \lim_{v \rightarrow 0} \; \phi_w (\varpi^{- \lambda}) =
     \partial_w^{\circ} \mathbf{z}^{\lambda} . \]
  By Theorem~2.1 of {\cite{demice}}, it follows that (\ref{psidivided}) equals
  $\sum_{w \in W_{\mathbf{J}}} \partial_w^{\circ} \mathbf{z}^{\lambda} =
  \partial_{w_0^{\mathbf{J}}} \mathbf{z}^{\lambda}$ and by the Demazure
  character formula (\cite{BumpLie} Theorem~25.3), this is
  $\chi_{\lambda}^{\mathbf{J}} (\mathbf{z})$. This proves (\ref{cshop}).

  Now let us prove (\ref{mmhop}) by induction on $\ell(w)$.
  If $w=1$, there is nothing to prove. Thus suppose that
  $w=s_iw'$ where $s_i$ is a simple reflection and $\ell(w')<\ell(w)$. Then
  $w^{-1}s_iw\notin W_{\mathbf{J}}$, since otherwise the last assertion of
  Proposition~\ref{parahoric_firstcase} would be contradicted.
  Therefore by Lemma~\ref{lem:bruhatfacts} $w'\in W^{\mathbf{J}}$
  and by induction $\psi^{\mathbf{J}}_{w'}=\mathfrak{T}_{w'}\psi^{\mathbf{J}}_1$. Now
  Proposition~\ref{parahoric_secondcase} gives
  \[\psi^{\mathbf{J}}_w=\mathfrak{T}_i\mathfrak{T}_{w'}\psi^{\mathbf{J}}_1=\mathfrak{T}_w\psi^{\mathbf{J}}_1,\]
  proving (\ref{mmhop}).
\end{proof}

\begin{remark}
\label{rem:parcaveat}
For Iwahori Whittaker functions Corollary~\ref{phialgorithm}
gives an algorithm to compute $\phi_{w_1}$ at any
value of $g$ (which we can take to be on the form $\varpi^{-\lambda} w_2$). This depends on Proposition~\ref{basecaseprop},
which gives the base case $\phi_{w}(\varpi^{-\lambda} w)$, i.e.\ $w_1 = w_2$, for a recursive algorithm using Demazure-Whittaker operators,
suitable for implementation on a computer. Our parahoric
results are not as general because we only have a direct expression (without Demazure-Whittaker operators) for the base case (\ref{cshop}) which is only for $w_1=w_2=1$.
For an arbitrary base case $w_1 = w_2 \neq 1$ we need to fall back on (\ref{eq:psidef}) which expresses $\psi^\mathbf{J}_{w_1}(\varpi^{-\lambda} w_2)$ as a sum of $\phi_{w_1y}(\varpi^{-\lambda} w_2)$.
However since the $\phi_w$ are computable, this is not an obstacle
to an explicit computation of $\psi_w^{\mathbf{J}}$.
In both cases we may then compute all $\psi_{w_1}^{\mathbf{J}}(\varpi^{-\lambda} w_2)$
from $\psi_{w_2}^{\mathbf{J}}(\varpi^{-\lambda} w_2)$ recursively, taking
Proposition~\ref{parahoric_firstcase} and~\ref{parahoric_secondcase} into
account.
Alternatively, knowing all $\phi_w$ we may compute $\psi_{w_1}^{\mathbf{J}}(\varpi^{-\lambda} w_2)$ directly by the sum~\eqref{eq:psidef}.
\end{remark}

\section{Yang-Baxter equations from fusion}
\label{sec:lattice-models}
The models that we will be concerned with take place on planar
graphs. In using the term \textit{graph} to describe these
arrays we are deviating from usual terminology, where edges
have always two vertices, for we will allow open edges with only
a single endpoint. Thus we mean a set of \textit{vertices} which are
points in the plane, together with \textit{edges} that are
arcs which either join two vertices, or which are attached
to only a single vertex. The edges which are only attached to
a single vertex are called \textit{boundary edges}.
The edges attached to two vertices are called \textit{interior edges}.
Every vertex is adjacent to four edges. Edges can only
cross at a vertex.

For every edge $A$ in the graph there is a finite set $\Sigma_A$ of
values called \textit{spins} that may be assigned to the
edge in a state of the system.

\begin{assumption}
\label{diagsamev}
At each vertex, let $A$, $B$, $C$, $D$ be the four adjacent edges,
arranged so that $A$ and $C$ are opposite edges, as are $B$ and $D$.
Then $\Sigma_A=\Sigma_C$ and $\Sigma_B=\Sigma_D$.
\end{assumption}

Each vertex has a \textit{label} $\xi$, and an associated set
of \textit{Boltzmann weights} $\beta_\xi$. This is a rule which assigns a
complex number to every possible choice of spins at the four adjacent edges of
the vertex. Thus if $A,B,C,D$ are the four adjacent edges to a vertex with
label $\xi$, this data consists of a map
\[\beta_\xi:\Sigma_A\times\Sigma_B\times\Sigma_C\times\Sigma_D\to\CC.\]
We call the set of spins
$(a,b,c,d)\in\Sigma_A\times\Sigma_B\times\Sigma_C\times\Sigma_D$
a \textit{configuration} at the vertex. The configuration is
\textit{admissible} if $\beta_\xi(a,b,c,d)\neq 0$.

\begin{assumption}
  \label{configdet}
  At each vertex, if three out
  of four spins in $(a,b,c,d)$ in an admissible configuration
  are given, the fourth is uniquely determined.
\end{assumption}

In a system $\mathfrak{S}$, the data specifying the system
are the graph itself, the spinsets $\Sigma_A$, the Boltzmann
weight data $\beta_\xi$ for each label $\xi$, and for
every boundary edge $A$ a fixed boundary spin $\mathfrak{b}_A\in\Sigma_A$.
For example, the labels $\xi$ might 
be complex numbers and $\beta_\xi$ are uniformly described as a set of 
complex-valued functions of $\xi$ for each configuration.
Note that the spins of the boundary
edges are fixed, and are part of the data specifying
the system.

A \textit{state} $\mathfrak{s}$ of the system is an assignment of spins
to all edges. That is, for each edge $A$ there
is specified a spin $\mathfrak{s}_A\in\Sigma_A$.
For boundary
edges $\mathfrak{s}_A$ must be the fixed boundary spin $\mathfrak{b}_A$,
while the interior edge spins are allowed to vary.
We will use the notation $\mathfrak{s}\in\mathfrak{S}$
to mean that $\mathfrak{s}$ is a state of the system $\mathfrak{S}$.
The \textit{Boltzmann weight} $\beta(\mathfrak{s})$ of the state
is the product of the Boltzmann weights at the labelled vertices and the state is said to be \textit{admissible} if all of its vertices are admissible.
The \textit{partition function} $Z(\mathfrak{S})$ is the
sum of the Boltzmann weights of all the (admissible) states.

In the systems that we will consider, the edges may all be classified
as either \textit{horizontal} or \textit{vertical}. There will be
two types of vertices. In one type, the vertex intersects four
horizontal edges and will be called an \textit{R-vertex}. In the other, called \textit{ordinary}, it intersects two horizontal and
two vertical ones. See Figure~\ref{vertextypes}.

\begin{figure}
  \begin{center}
\begin{tikzpicture}
\draw (0,0) to[out=0, in=180] (1.4,1.4);
\draw (0,1.4) to[out=0, in=180] (1.4,0);
\draw[fill=black] (.7,.7) circle (.1);
\node at (1.1,.7) {$\xi$};
\node at (-.7,0) {$A$};
\node at (-.7,1.4) {$B$};
\node at (2,1.4) {$C$};
\node at (2,0) {$D$};
\begin{scope}[shift={(5,-0.3)}]
\draw (0,1)--(2,1);
\draw (1,0)--(1,2);
\draw[fill=black] (1,1) circle (.1);
\node at (1.3,.7) {$\xi$};
\node at (-.5,1) {$A$};
\node at (1.5,2) {$B$};
\node at (2.5,1) {$C$};
\node at (1.5,0) {$D$};
\end{scope}
\end{tikzpicture}
\end{center}
\caption{Left: a vertex adjoining four horizontal edges. Such a vertex will
  be called an \textit{R-vertex}. Right: A vertex adjoining two horizontal
  edges and two vertical ones. We will call such vertices \textit{ordinary}.
  Each vertex receives a label $\xi$ corresponding to its Boltzmann weights.}
\label{vertextypes}
\end{figure}

Next we explain a procedure we refer to as \textit{fusion} for producing new kinds of
edges and vertices from given ones. (This is partly inspired by a process of the  
same name described in Borodin and Wheeler~\cite{BorodinWheelerColored}, Appendix~B.)

Given a sequence of edges, $A_1,\cdots,A_m$
we may replace these with a single edge
$\mathbf{A}$ such that $\Sigma_{\mathbf{A}}=\prod_k \Sigma_{A_k}$.
This edge is called the \textit{fusion} of the
edges $\{ A_k \}$. Next assume that we have a sequence
of $m$ \textit{ordinary} vertices with labels $\xi_1,\cdots,\xi_m$ such that the vertex with label $\xi_k$ is 
adjoined to the vertex with label $\xi_{k+1}$
by an edge $E_k$ if $1\leqslant k\leqslant m-1$.
Let the remaining adjacent edges of the vertex with label $\xi_k$ be $B_k$ and $D_k$ and
$A$ (if $k=1$) and $C$ if $k=m$. Thus the configuration
is as in Figure~\ref{fusionfig} (left).

\begin{figure}[ht]
\[\begin{array}{|c|c|}\hline\begin{tikzpicture}
\draw (0,1) to (4,1);
\draw (1,0) to (1,2);
\draw (3,0) to (3,2);
\node at (5,2) {$\cdots$};
\node at (5,1) {$\cdots$};
\node at (5,0) {$\cdots$};
\draw (6,1) to (8,1);
\draw (7,0) to (7,2);
\draw[fill=black] (1,1) circle (.1);
\draw[fill=black] (3,1) circle (.1);
\draw[fill=black] (7,1) circle (.1);
\node at (-.5,1) {$A$};
\node at (1.3,0.7) {$\xi_1$};
\node at (3.3,0.7) {$\xi_2$};
\node at (7.3,0.7) {$\xi_m$};
\node at (1,2.25) {$B_1$};
\node at (3,2.25) {$B_2$};
\node at (7,2.25) {$B_m$};
\node at (1,-.25) {$D_1$};
\node at (3,-.25) {$D_2$};
\node at (7,-.25) {$D_m$};
\node at (8.25,1) {$C$};
\node at (2,1.25) {$E_1$};
\node at (3.5,1.25) {$E_2$};
\node at (6.25,1.25) {$E_{m-1}$};
\end{tikzpicture}&
\begin{tikzpicture}
\draw (0,1) to (2,1);
\draw (1,0) to (1,2);
\draw[fill=black] (1,1) circle (.1);
\node at (-.5,1) {$A$};
\node at (1.2,0.7) {$\xi$};
\node at (2.5,1) {$C$};
\node at (1,2.25) {$\mathbf{B}$};
\node at (1,-.25) {$\mathbf{D}$};
\end{tikzpicture}\\\hline\end{array}\]
\caption{Fusion. This procedure replaces a sequence of vertices by a single vertex.}
\label{fusionfig}
\end{figure}

Now we may construct the fusion $\mathbf{B}$ of the edges $B_i$ as
above, as well as the fusion $\mathbf{D}$ of the edges $D_i$.
We may then fuse the vertices, replacing the sequence of $m$ vertices with labels
$\xi_1,\ldots,\xi_m$ by a single vertex labeled $\xi$ (as in Figure~\ref{fusionfig}).
It remains to discuss the Boltzmann weights.
Let spins $(a,b,c,d)\in\Sigma_A\times\Sigma_{\mathbf{B}}\times\Sigma_C\times\Sigma_{\mathbf{D}}$.
By definition $b$ and $d$ are sequences of spins
$b_k\in\Sigma_{B_k}$ and $d_k\in\Sigma_{D_k}$.
Fixing $(a,b,c,d)$, it follows from Assumption~\ref{configdet} that
the system in Figure \ref{fusionfig} (left) has at most one (admissible) state. We define
$\beta_\xi(a,b,c,d)$ to be its partition function.
It is clear that Assumption~\ref{configdet} remains
valid for this fused vertex.

At any vertex, it will be useful to choose a clockwise ordering
$(A,B,C,D)$ of the adjoining edges. In our illustrations, we will
always choose the ordering as in Figure~\ref{vertextypes}.
If $A$ is an edge, we will denote by $V_A$ the free vector space with basis $\Sigma_A$.
By Assumption~\ref{diagsamev}, we may identify $V_A=V_C$ and
$V_B=V_D$. Then the Boltzmann weights at a vertex with label $\xi$ define an element of $\End(V_A\otimes V_B)$ by
\begin{equation}
  \label{rmatrixbw}
  a\otimes b\mapsto\sum_{(c,d)\in\Sigma_C\times\Sigma_D}\beta_\xi(a,b,c,d)(c\otimes d).
\end{equation}
If the vertex is an R-vertex we will denote this endomorphism as $R_\xi$; 
this endomorphism is called an \textit{R-matrix}. 
For ordinary vertices, we will denote the endomorphism (\ref{rmatrixbw}), which is called a \textit{transfer matrix}, as $T_\xi$.

\begin{definition}
Suppose that, for ordinary vertices labeled $\xi$, $\eta$ and R-vertex labeled $\zeta$, there exists Boltzmann weights such that for every choice of boundary spins $(a,b,c,d,e,f)$ the partition
functions of the two systems in Figure~\ref{ybesfig} are equal. Then we say
we have a solution of the \textit{Yang-Baxter equation}.
\end{definition}

\begin{figure}[H]
\begin{equation}
  \label{ybe}
  \begin{array}{ccc}\ybelhs{a}{b}{c}{d}{e}{f}{\zeta}{\xi}{\eta}&\qquad\qquad&
  \yberhs{a}{b}{c}{d}{e}{f}{\zeta}{\xi}{\eta}\end{array}
\end{equation}
\caption{The Yang-Baxter equation.}
\label{ybesfig}
\end{figure}

Let $A$, $B$, $C$, $D$, $E$, $F$ be the 
boundary edges of these configurations, so that $a\in\Sigma_A$,
etc. By Assumption~\ref{diagsamev} $V_A=V_D$, $V_B=V_E$
and $V_C=V_F$. Then $R_\zeta\in\End(V_A\otimes V_B)$, $T_\xi\in\End(V_A\otimes V_C)$
and $T_\eta\in\End(V_B\otimes V_C)$. The Yang-Baxter equation can be expressed in
the formula
\begin{equation}
  \label{rttisttr}
  (R_\zeta)_{12}(T_\xi)_{13}(T_\eta)_{23}=(T_\eta)_{23}(T_\xi)_{13}(R_\zeta)_{12},
\end{equation}
an identity in $\End(V_A\otimes V_B\otimes V_C)$,
where, in the notation common in quantum group theory, $(R_\zeta)_{12}$
denotes $R_\zeta$ acting on the first two components of
$V_A\otimes V_B\otimes V_C$ and so forth. We wish to consider
examples of (\ref{ybe}) where the ordinary vertices arise from the fusion process described above. Thus the left
configuration can be expanded as in Figure~\ref{ybefusionsetup}.

\begin{figure}[H]
\[\begin{tikzpicture}
  \draw (0,1) to [out = 0, in = 180] (2,3) to (4,3);
  \draw (0,3) to [out = 0, in = 180] (2,1) to (4,1);
  \draw (6,1) to (8,1);
  \draw (6,3) to (8,3);
  \draw (3,0) to (3,4);
  \draw (7,0) to (7,4);
  \draw[fill=white] (0,1) circle (.3);
  \draw[fill=white] (0,3) circle (.3);
  \draw[fill=white] (3,4) circle (.3);
  \draw[fill=white] (7,4) circle (.3);
  \draw[fill=white] (7,0) circle (.3);
  \draw[fill=white] (8,3) circle (.3);
  \draw[fill=white] (8,1) circle (.3);
  \draw[fill=white] (3,0) circle (.3);
  \node at (0,1) {$a$};
  \node at (0,3) {$b$};
  \node at (3,4) {$c_1$};
  \node at (5,4) {$\cdots$};
  \node at (5,3) {$\cdots$};
  \node at (5,1) {$\cdots$};
  \node at (5,0) {$\cdots$};
  \node at (7,4) {$c_m$};
  \node at (7,0) {$f_m$};
  \node at (8,3) {$d$};
  \node at (8,1) {$e$};
  \node at (3,0) {$f_1$};
  \node at (7,0) {$f_m$};
\path[fill=white] (1,2) circle (.3);
\node at (1,2) {$\zeta$};
\path[fill=white] (3,3) circle (.3);
\node at (3,3) {$\xi_1$};
\path[fill=white] (3,1) circle (.3);
\node at (3,1) {$\eta_1$};
\path[fill=white] (7,3) circle (.3);
\node at (7,3) {$\xi_m$};
\path[fill=white] (7,1) circle (.3);
\node at (7,1) {$\eta_m$};
\end{tikzpicture}\]
\caption{Setup for the Yang-Baxter equation with fused vertices $\xi$ and $\eta$.}
\label{ybefusionsetup}
\end{figure}

\begin{lemma}
  \label{lemfusedybe}
  Suppose there exists a sequence of R-vertices with labels $\zeta_1,\ldots, \zeta_{m+1}$
such that $\zeta_1=\zeta_{m+1}=\zeta$ and such that for each $1\leqslant k\leqslant m$,
the two partition functions in Figure~\ref{auxiliaryybe} are equal. (Note that the R-vertex of the left-hand side is $\zeta_{k}$ while the one on the right-hand side is $\zeta_{k+1}$.) Then the auxiliary Yang-Baxter equations in \eqref{ybepre} induce a solution to the Yang-Baxter equation in~\eqref{ybe} for the fused system.
\end{lemma}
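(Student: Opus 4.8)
The plan is to run the classical ``train'' argument, sliding the R-vertex from the far left of the fused configuration to the far right one column at a time. Take the R-vertex in \eqref{ybe} to be labeled by $\zeta=\zeta_1=\zeta_{m+1}$, and take the two ordinary vertices there to be the fused vertices $\xi$ and $\eta$ assembled from $\xi_1,\ldots,\xi_m$ and $\eta_1,\ldots,\eta_m$ as in Figure~\ref{fusionfig}. Since by definition the Boltzmann weight of a fused vertex is the partition function of its constituent subsystem, and since the sum defining a partition function can be performed over the internal edges in any order, the partition function of the left-hand side of \eqref{ybe} for this fused system equals that of the fully expanded configuration in Figure~\ref{ybefusionsetup}, and likewise the partition function of the right-hand side equals that of the mirror configuration in which the R-vertex lies to the \emph{right} of all $m$ columns (with the columns appearing in the same order). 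So it suffices to show that these two expanded partition functions agree for every choice of boundary spins $(a,b,c,d,e,f)$.

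First I would introduce the intermediate systems. For $0\leqslant k\leqslant m$ let $\mathfrak{S}_k$ be the expanded configuration obtained from Figure~\ref{ybefusionsetup} by moving the R-vertex so that it sits between the $k$-th column (the pair $\xi_k,\eta_k$ joined vertically) and the $(k+1)$-st column, and relabeling it $\zeta_{k+1}$; thus columns $1,\ldots,k$ lie to its left and columns $k+1,\ldots,m$ to its right. Then $\mathfrak{S}_0$ is the expanded left-hand side of \eqref{ybe} (R-vertex $\zeta_1=\zeta$ at the far left) and $\mathfrak{S}_m$ is the expanded right-hand side (R-vertex $\zeta_{m+1}=\zeta$ at the far right), and all the $\mathfrak{S}_k$ carry the same boundary spins $(a,b,c,d,e,f)$.

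The heart of the argument is the equality $Z(\mathfrak{S}_{k-1})=Z(\mathfrak{S}_k)$ for $1\leqslant k\leqslant m$. The two systems are identical except inside the local region $L_k$ consisting of the R-vertex together with the $k$-th column: in $\mathfrak{S}_{k-1}$ this region is the R-vertex $\zeta_k$ standing to the left of $\xi_k,\eta_k$, and in $\mathfrak{S}_k$ it is $\xi_k,\eta_k$ standing to the left of the R-vertex $\zeta_{k+1}$, and the asserted equality of the partition functions of these two pictures, for all six of their boundary spins, is precisely the $k$-th auxiliary Yang-Baxter equation of Figure~\ref{auxiliaryybe}, equation~\eqref{ybepre}, which is the one-column instance of \eqref{ybe}. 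The region $L_k$ meets the rest of the diagram along six edges --- the two horizontal strand segments on its left, the two on its right, the vertical edge above carrying $c_k$, and the vertical edge below carrying $f_k$ --- while the vertical edge between $\xi_k$ and $\eta_k$ is internal to $L_k$. Expanding $Z(\mathfrak{S}_{k-1})$ and $Z(\mathfrak{S}_k)$ as sums, over assignments of spins to those six edges, of the product of the partition function of the $L_k$-piece with that of its complement --- and observing that the complement is literally the same subsystem in both, since moving the R-vertex across one column leaves every other column and all boundary data untouched --- the $k$-th auxiliary Yang-Baxter equation lets us replace one $L_k$-piece by the other term by term, giving $Z(\mathfrak{S}_{k-1})=Z(\mathfrak{S}_k)$. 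Telescoping over $k=1,\ldots,m$ yields $Z(\mathfrak{S}_0)=Z(\mathfrak{S}_m)$, which is the fused Yang-Baxter equation \eqref{ybe}, i.e.\@ the identity $(R_\zeta)_{12}(T_\xi)_{13}(T_\eta)_{23}=(T_\eta)_{23}(T_\xi)_{13}(R_\zeta)_{12}$ of \eqref{rttisttr}.

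The one point requiring care --- and the only real, if essentially bookkeeping, obstacle --- is the ``locality'' decomposition used in the last step: that for fixed spins on the six boundary edges of $L_k$ the contributions of $L_k$ and of its complement genuinely factor as a product, so that substituting one side of the auxiliary equation for the other changes nothing outside $L_k$. This is immediate from the multiplicativity of Boltzmann weights over vertices together with Assumption~\ref{configdet} (which is what makes each fused weight a well-defined partition function in the first place), but one should verify that the six edges of $L_k$ glue consistently to the neighboring columns in both $\mathfrak{S}_{k-1}$ and $\mathfrak{S}_k$, and that the chain really begins and ends with the R-vertex $\zeta$ on opposite sides --- which is exactly where the hypothesis $\zeta_1=\zeta_{m+1}=\zeta$ enters.
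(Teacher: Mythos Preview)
Your proof is correct and follows exactly the same approach as the paper: the ``train argument'' sliding the R-vertex across one column at a time, with the label advancing from $\zeta_k$ to $\zeta_{k+1}$ at each step via the auxiliary equations, and using $\zeta_1=\zeta_{m+1}=\zeta$ to match the endpoints. The paper's proof is a two-sentence sketch invoking this by name; you have simply written out the details.
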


\begin{figure}[ht]
\begin{equation}
  \label{ybepre}
  \begin{array}{ccc}
  \ybelhs{a}{b}{c}{d}{e}{f}{\zeta_{k}}{\xi_{k}}{\eta_{k}}&\qquad\qquad&
  \yberhs{a}{b}{c}{d}{e}{f}{\zeta_{k+1}}{\xi_{k}}{\eta_{k}}\end{array}
\end{equation}
\caption{Auxiliary Yang-Baxter equations. These imply a Yang-Baxter equation
  for the fusion situation in Figure~\ref{ybefusionsetup}. In these equations, the
  R-matrix changes after moving past the vertical edges. After $m$
  such changes, it is back to its original form.}
\label{auxiliaryybe}
\end{figure}

\begin{proof}
  This follows from the usual train argument. Each time
  the R-matrix moves to the right, $\zeta_{k}$ is replaced by
  $\zeta_{k+1}$. Since $\zeta_1=\zeta_{m+1}=\zeta$, the statement follows.
\end{proof}

\begin{remark} We have chosen to call the method for producing new solutions to the Yang-Baxter equation outlined in this section `fusion,' despite some differences with the prior notion in the literature (see for example \cite{KulishReshetikhinSklyanin} and Appendix B of \cite{BorodinWheelerColored}). Both methods construct new solutions from old by forming new weights using one-row partition functions. The typical fusion construction features two steps: first summing over all one-row systems with given multiset of spins on its vertical edges (which is the graphical manifestation of the R-matrix of a tensor product of quantum group modules) and then taking a further weighted average (which manifests the resulting R-matrix for projection onto irreducible constituents of the tensor product; see for example (B.2.1) of \cite{BorodinWheelerColored}). However, our fusion prescribes a set of labels for each vertex in the one-row system, our weights are allowed to vary based on the label, and we do not require a second summation acting as a projection. Our example of weights for fusion in the next section (see Figure~\ref{cvmonochrome}) will have vertices labeled by colors and the weights depend critically on this color.
\end{remark}

\section{Yang-Baxter equations for colored models}
\label{sec:some-YBEs}

We shall describe Yang-Baxter equations for systems that generalize the Tokuyama model introduced in Section~\ref{sec:outline} by replacing its $\minus$ spins by a set
$\mathfrak{P}$ (called the \textit{palette}) of $r$ different
{\textit{colors}}. The set $\mathfrak{P}$ is ordered, and when convenient we
may take $\mathfrak{P}$ to be the set of integers $1 \leqslant c \leqslant r$.

If $A$ is a horizontal edge, the spinset $\Sigma_A$ is $\left\{ \plus \right\}
\cup \mathfrak{P}$. On the other hand if $A$ is a vertical edge, the spinset
$\Sigma_A$ is the power set of $\mathfrak{P}$ where it will be
convenient to identify the empty set with $\plus$ as before. Now in a state
$\mathfrak{s}$ of such a system, we say that a horizontal
edge $A$ {\textit{carries}} the color $c$ if $\mathfrak{s}_A = c$. If $A$ is a
vertical edge, we say that $A$ \emph{carries} the color $c$ if $c \in
\mathfrak{s}_A$, remembering that $\mathfrak{s}_A$ is a subset of
$\mathfrak{P}$. The vertical edges are thus allowed to carry more than one
color (with multiplicity at most $1$), while the horizontal edges may carry at
most one color.

\begin{figure}[b]
  \[\begin{array}{|c|c|c|c|}
  \hline
  \begin{tikzpicture}
    \gamgamunco{+}{+}{+}{+}{{z_i,z_j}}
  \end{tikzpicture} &
  \begin{tikzpicture}
      \gamgamunco{c}{c}{c}{c}{{z_i,z_j}}
      \spokea{red}{c}\spokeb{red}{c}\spokec{red}{c}\spoked{red}{c}
  \end{tikzpicture} &
  \begin{tikzpicture}
      \gamgamunco{c}{d}{d}{c}{{z_i,z_j}}
      \spokea{blue}{c}\spokeb{red}{d}\spokec{red}{d}\spoked{blue}{c}
  \end{tikzpicture} &  \begin{tikzpicture}
      \gamgamunco{c}{d}{c}{d}{{z_i,z_j}}
      \spokea{blue}{c}\spokeb{red}{d}\spokec{blue}{c}\spoked{red}{d}
  \end{tikzpicture} \\
  \hline
  z_j-vz_i & z_i-vz_j &
  \begin{array}{ll} (1-v)z_i &\text{if $c<d$}\\(1-v)z_j &\text{if
      $c>d$}\end{array} & \begin{array}{ll} z_i-z_j &\text{if $c>d$}\\v(z_i-z_j) &\text{if $c<d$}\end{array} \\
  \hline
   \begin{tikzpicture}
      \gamgamunco{c}{+}{+}{c}{{z_i,z_j}}
      \spokea{red}{c}\spokepb \spokepc \spoked{red}{c}
  \end{tikzpicture} & 
   \begin{tikzpicture}
      \gamgamunco{+}{c}{c}{+}{{z_i,z_j}}
      \spokepa \spokeb{red}{c} \spokec{red}{c} \spokepd
  \end{tikzpicture} &
  \begin{tikzpicture}
      \gamgamunco{+}{c}{+}{c}{{z_i,z_j}}
      \spokepa\spokeb{red}{c}\spokepc\spoked{red}{c}
  \end{tikzpicture} &
  \begin{tikzpicture}
      \gamgamunco{c}{+}{c}{+}{{z_i,z_j}}
      \spokea{red}{c}\spokepb\spokec{red}{c}\spokepd
  \end{tikzpicture} \\
  \hline
  (1-v) z_i & (1-v) z_j &
  v(z_i-z_j) & z_i-z_j\\
  \hline
  \end{array}\]
  \caption{The colored R-vertex weights. The colors $c$ and $d$ are an arbitrary choice of distinct colors in $\mathfrak{P}$.
  If a configuration does not appear in this table, the Boltzmann weight is
  zero. The associated R-matrix equals that of evaluation modules $V(z_j) \otimes V(z_i)$ of the quantum supergroup
  $U_{\sqrt{v}}(\widehat{\mathfrak{gl}}(r|1))$.}
  \label{coloredrmat}
\end{figure}

Having described the admissible configurations at each vertex, it remains to
describe the Boltzmann weights for both the ordinary and the R-vertices in
colored systems. The Boltzmann weights of the R-vertex are given in
Figure~\ref{coloredrmat}.

\begin{remark}
  It may be checked that with the Boltzmann weights in
  Figure~\ref{coloredrmat}, the R-matrix agrees with a Drinfeld twist of the
  (ungraded) R-matrix of evaluation modules $V (z_j) \otimes V (z_i)$ for the
  quantum affine Lie superalgebra $U_{\sqrt{v}} (\widehat{\mathfrak{gl}} (r|
  1))$ (cf.~{\cite[Definition 2.1]{Kojima}}). The $r$ colored spins span one
  graded piece in the super vector space, while the $\plus$ spin spans the
  remaining one-dimensional piece.
\end{remark}

\begin{figure}[tb]
  \[\begin{array}{|c|c|c|c|c|c|}
  \hline
  \begin{tikzpicture}\cross\gammaunco{+}{+}{+}{+}{}\cv{red}{c}\end{tikzpicture} &
  \begin{tikzpicture}\cross\gammaunco{d}{c}{d}{c}{}\cv{red}{c}\vspokea{blue}{d}\vspokec{blue}{d}\vspokeb{red}{c}\vspoked{red}{c}\end{tikzpicture} &
  \begin{tikzpicture}\cross\gammaunco{+}{c}{+}{c}{}\cv{red}{c}\vspokeb{red}{c}\vspoked{red}{c}\end{tikzpicture} &
  \begin{tikzpicture}\cross\gammaunco{d}{+}{d}{+}{}\cv{red}{c}\vspokea{blue}{d}\vspokec{blue}{d}\end{tikzpicture} &
  \begin{tikzpicture}\cross\gammaunco{d}{+}{+}{d}{}\cv{red}{c}\vspokea{red}{c}\vspoked{red}{c}\end{tikzpicture} &
  \begin{tikzpicture}\cross\gammaunco{+}{d}{d}{+}{}\cv{red}{c}\vspokeb{red}{c}\vspokec{red}{c}\end{tikzpicture} \\
  \hline
  1 & \begin{array}{ll}z_i&\text{if $d=c$}\\v&\text{if $c>d$}\\1&\text{if $c<d$}\end{array} &
  -v & \begin{array}{ll}z_i&\text{if $c=d$}\\1&\text{otherwise}\end{array} & (1-v)z_i & 1\\
  \hline  
  \end{array}\]
  \caption{Boltzmann weights for monochrome ordinary vertices. The weight depends on a pair of labels: a complex number $z_i$ (suppressed in pictures above) and a color (denoted $c$ above). Note that admissible vertical edges adjacent
    to the monochrome vertex may only carry the
    color $c$ of the vertex, while adjacent horizontal edges may
    carry any color. In particular, in the diagrams above, $c=d$ is allowed.}
  \label{cvmonochrome}
\end{figure}

The Boltzmann weights of the ordinary vertices, which adjoin two horizontal
edges and two vertical edges with many coloring possibilities, are harder to
describe. We will define these by means of fusion, starting with
simpler \textit{monochrome vertices}: vertices that adjoin
only \textit{monochrome edges} that are only allowed to carry at most one
particular color.
Each monochrome vertex is itself assigned a
color $c\in\mathfrak{P}$, and the spinset of a vertical edge attached to
that vertex is $\{\plus,c\}$. For horizontal edges, the spinset is
$\{\plus\}\cup\mathfrak{P}$ just as before.
The Boltzmann weights for the monochrome (ordinary) vertices are
given in Figure~\ref{cvmonochrome}.

\begin{convention}[Monochrome vertices]
\label{monochromedescriptions}
Now the admissible ordinary vertices and their weights may be described by
fusion of monochrome vertices as detailed in Section~\ref{sec:lattice-models}.
In a model with $r$ colors, we replace each
ordinary vertex by a single row of $r$ monochrome vertices with color labels
arranged in ascending order from left to right.
Recall that a vertical edge $A$ adjacent to an ordinary vertex is
decorated by a subset $\mathfrak{s}_A$ of $\mathfrak{P}$.
For the corresponding monochrome vertices we
color the $c$-th such edge (with color $c$) if and only if the color
$c$ appears in the set $\mathfrak{s}_A$. 
\end{convention}

\begin{remark}\label{expandedcollapsed}
Looking ahead to Section~\ref{coloredsystems}, we will consider
systems made from these ordinary, fused vertices. Regarding the vertex as a fusion, we may
replace the entire system by an \textit{expanded} or \emph{monochrome system} with
monochrome vertices; each column of vertices is replaced by $r$ different vertices. Then
we may refer to the system with fused edges as the \textit{fused}
system. See Figure~\ref{oceanmisty} for an example of
this procedure.
It follows from the definition of the fused weights that the
fused and expanded systems have the same partition function. Indeed, there
is a bijection between the states of the fused and expanded systems, and
corresponding states have the same Boltzmann weight, by definition.
\end{remark}

\begin{figure}[htb]
  \[
  \begin{array}{|c|c|c|}
    \hline
    \begin{tikzpicture}\crac\gammaunco{c}{d}{+}{}{z_i}\vspokea{red}{c}\vspokeb{blue}{d}\pspoked\end{tikzpicture} &
      \begin{tikzpicture}\gammaunco{c}{d}{c}{d}{}\vspokea{red}{c}\vspokeb{blue}{d}\vspokec{red}{c}\vspoked{blue}{d}\cv{blue}{d}
        \begin{scope}[shift={(1.50,0)}]
          \cross\gammaunco{c}{+}{+}{c}{}\vspokea{red}{c}\vspoked{red}{c}\cv{red}{c}
      \end{scope}\end{tikzpicture} &
      \begin{tikzpicture}\cross\gammaunco{c}{+}{+}{c}{}\vspokea{red}{c}\vspoked{red}{c}\cv{red}{c}
        \begin{scope}[shift={(1.50,0)}]
          \cross\gammaunco{+}{d}{+}{d}{}\vspokeb{blue}{d}\vspoked{blue}{d}\cv{blue}{d}
      \end{scope}\end{tikzpicture} \\
    \hline
    \begin{array}{ll} (1-v)z_i &\text{if $c>d$,}\\ (-v)(1-v)z_i &\text{if
        $c<d$.}\end{array} &
    \begin{array}{ll}\text{left:}&1\\\text{right:}&(1-v)z_i\end{array} &
    \begin{array}{ll}\text{left:}&(1-v)z_i\\\text{right:}&-v\end{array} \\
    \hline
  \end{array}
  \]
  \caption{Ordinary fused vertex constructed from monochrome vertices
    by fusion for $r=2$. Left: a fused vertex (compare with Figure~\ref{coloredweightsii}).
    Middle: The case $c>d$, using weights from
    Figure~\ref{cvmonochrome}. Right: The case $d>c$.}
  \label{coloredfusion}

  \hfill
  
  \[\begin{array}{|c|c|c|c|}
    \hline
    \begin{tikzpicture}\cross\gammaunco{+}{+}{+}{+}{z_i}\end{tikzpicture} &
    \begin{tikzpicture}\gammaunco{c}{c}{c}{c}{z_i}\vspokea{red}{c}\vspokeb{red}{c}\vspokec{red}{c}\vspoked{red}{c}\end{tikzpicture} &
    \begin{tikzpicture}\gammaunco{c}{d}{c}{d}{z_i}\vspokea{red}{c}\vspokeb{blue}{d}\vspokec{red}{c}\vspoked{blue}{d}\end{tikzpicture} &
    \begin{tikzpicture}\gammaunco{d}{c}{c}{d}{z_i}\vspokea{blue}{d}\vspokeb{red}{c}\vspokec{red}{c}\vspoked{blue}{d}\end{tikzpicture} \\
    \hline
    1 & z_i &
    \begin{array}{ll} z_i & \text{if $c>d$}\\ vz_i &\text{if $c<d$}\end{array} & 
    \begin{array}{ll} (1-v)z_i & \text{if $c>d$}\\ 0 &\text{if $c<d$}\end{array} \\
    \hline 
    \begin{tikzpicture}\craa\crac\gammaunco{+}{c}{+}{c}{z_i}\vspokeb{red}{c}\vspoked{red}{c}\end{tikzpicture} &
    \begin{tikzpicture}\crab\crad\gammaunco{c}{+}{c}{+}{z_i}\vspokea{red}{c}\vspokec{red}{c}\end{tikzpicture} &
    \begin{tikzpicture}\craa\crad\gammaunco{+}{c}{c}{+}{z_i}\vspokeb{red}{c}\vspokec{red}{c}\end{tikzpicture} &
    \begin{tikzpicture}\crab\crac\gammaunco{c}{+}{+}{c}{z_i}\vspokea{red}{c}\vspoked{red}{c}\end{tikzpicture} \\
    \hline
    -v & z_i & 1 & (1-v)z_i\\
    \hline
  \end{array}\]
  \caption{Fused weights (I). These are the Boltzmann weights in which the
    vertical edges carry no more than one color.  Since edges can carry more
    than one color, this is not a complete list of the possibilities. In this
    figure, $c\neq d$ except where explicitly allowed.}
  \label{coloredweightsi}
  
  \hfill
  
  \[\begin{array}{|c|c|c|c|}
  \hline
  \begin{tikzpicture}\crac\gammaunco{c}{d}{+}{}{z_i}\vspokea{red}{c}\vspokeb{blue}{d}\pspoked\end{tikzpicture} &
  \begin{tikzpicture}\craa\gammaunco{+}{}{d}{c}{z_i}\pspokeb\vspokec{blue}{d}\vspoked{red}{c}\end{tikzpicture} &
  \begin{tikzpicture}\craa\crac\gammaunco{+}{}{+}{}{z_i}\pspokeb\pspoked\end{tikzpicture} &
  \begin{tikzpicture}\gammaunco{-}{-}{+}{-}{z_i}\vspokea{red}{c}\vspokec{red}{c}\pspokeb\pspoked\end{tikzpicture} \\
    \hline
  \begin{array}{ll} (1-v)z_i &\text{if $c>d$,}\\ (-v)(1-v)z_i &\text{if $c<d$.}\end{array} &
  \begin{array}{ll} v &\text{if $c>d$,}\\ -v &\text{if $c<d$.}\end{array} &
  (-v)^2 & 
  \begin{array}{ll} z_i &\text{if $c>d$,}\\ vz_i &\text{if $c<d$.}\end{array} \\
  \hline
  \end{array}\]
  \caption{Fused weights (II). The vertical edges can carry more than one
    color, with multiplicity at most one (so $c\neq d$ in this figure). These are
    the extra possibilities when at most two colors appear.}
  \label{coloredweightsii}
\end{figure}
\begin{figure}[htb]
  \[
  \begin{array}{|c|c|c|c|}
    \hline
    \begin{tikzpicture}\gamgamunco{+}{+}{+}{+}{c}\spokepa\spokepb\spokepc\spokepd\cv{red}{c}\end{tikzpicture} &
    \begin{tikzpicture}\gamgamunco{d}{d}{d}{d}{c}\spokea{blue}{d}\spokeb{blue}{d}\spokec{blue}{d}\spoked{blue}{d}\cv{red}{c}\end{tikzpicture} &
    \begin{tikzpicture}\gamgamunco{d}{e}{d}{e}{c}\cv{red}{c}
      \spokea{blue}{d}\spokeb{darkgreen}{e}\spokec{blue}{d}\spoked{darkgreen}{e}\cv{red}{c}\end{tikzpicture} &
    \begin{tikzpicture}\gamgamunco{d}{e}{e}{d}{c}\cv{red}{c}
      \spokea{blue}{d}\spokeb{darkgreen}{e}\spokec{darkgreen}{e}\spoked{blue}{d}\cv{red}{c}\end{tikzpicture} \\
    \hline
    z_j-vz_i & \begin{array}{c}z_i-vz_j\\\text{$c=d$ allowed.}\end{array} &
      \begin{array}{c}\begin{array}{ll} v(z_i-z_j)&\text{if $e>d$}\\z_i-z_j&\text{if $d>e$}\\\end{array}\\\text{$c=d$ or $e$ allowed}\end{array}&
      \begin{array}{ll}
        (1-v)z_j&\text{if $e>c>d$}\\
                &\text{or $c>d>e$}\\
                &\text{or $d>e>c$}\\
        (1-v)z_i&\text{if $d>c>e$}\\
                &\text{or $c>e>d$}\\
                &\text{or $e>d>c$}\end{array}\\
    \hline
    \begin{tikzpicture}\gamgamunco{d}{c}{c}{d}{c}\cv{red}{c}\spokea{blue}{d}\spokeb{red}{c}\spokec{red}{c}\spoked{blue}{d}\cv{red}{c}\end{tikzpicture} &
    \begin{tikzpicture}\gamgamunco{c}{d}{d}{c}{c}\cv{red}{c}\spokea{red}{c}\spokeb{blue}{d}\spokec{blue}{d}\spoked{red}{c}\cv{red}{c}\end{tikzpicture} &
    \begin{tikzpicture}\gamgamunco{+}{d}{+}{d}{c}\spokepa\spokeb{blue}{d}\spokepc\spoked{blue}{d}\cv{red}{c}\end{tikzpicture}&
    \begin{tikzpicture}\gamgamunco{d}{+}{d}{+}{c}\spokea{blue}{d}\spokepb\spokec{blue}{d}\spokepd\cv{red}{c}\end{tikzpicture}\\
    \hline
    (1-v)z_j&(1-v)z_i&\begin{array}{c}v(z_i-z_j)\\\text{$c=d$ allowed}\end{array}&\begin{array}{c}z_i-z_j\\\text{$c=d$ allowed}\end{array}\\
    \hline
    &\begin{tikzpicture}\gamgamunco{d}{+}{+}{d}{c}\spokea{blue}{d}\spokepb\spokepc\spoked{blue}{d}\cv{red}{c}\end{tikzpicture}&
    \begin{tikzpicture}\gamgamunco{+}{d}{d}{+}{c}\spokepa\spokeb{blue}{d}\spokec{blue}{d}\spokepd\cv{red}{c}\end{tikzpicture}&\\
    \hline
    &\begin{array}{c}(1-v)z_i\\\text{$c=d$ allowed}\end{array}&
    \begin{array}{c}(1-v)z_j\\\text{$c=d$ allowed}\end{array}&\\
    \hline
  \end{array}\]
  \caption{R-vertices for auxiliary Yang-Baxter equations. These are labeled by a color $c$ and a pair of parameters $(z_i, z_j)$ (suppressed in the pictures above). If the
    color $c$ is minimal, that is if $c\leqslant d,e$ for all colors that
    appear in this figure, this agrees with the R-vertices in
    Figure~\ref{coloredrmat}. In this figure, the colors $c,d,e$ are distinct
    except when $c=d$ or $c=e$ is explicitly allowed.}
  \label{gencolrmat}
\end{figure}

As described in Section~\ref{sec:lattice-models}, the Boltzmann weight of the fused vertex is just the partition
function of the single row of these ordered $r$ monochrome vertices, which has
at most one admissible state. In Figure~\ref{coloredfusion}, we compute an
example of a fused Boltzmann weight when $r=2$ from the corresponding
monochrome vertices. In Figure~\ref{coloredweightsi} we give all the fused
Boltzmann weights (for any $r$) in which the vertical edges carry at most one
color. The possible cases in which vertical edges carry two colors are shown
in Figure~\ref{coloredweightsii}. For $r > 2$ one would have to complete these
with similar tables for vertical edges carrying more colors. 
At the end of this section we will give all the fused weights in a closed form in a notation similar to the one used in \cite{BorodinWheelerColored}. See Figure~\ref{coloredalabw}.

It remains to discuss the Yang-Baxter equation for fused vertices, which will result from 
auxiliary Yang-Baxter equations for the monochrome model according to Lemma~\ref{lemfusedybe}.
We first need to define monochrome R-vertices for use in~(\ref{ybepre}), generalizing the R-vertices in Figure~\ref{coloredrmat}. These will play
a role of the vertices labeled $\zeta_k$ in Lemma~\ref{lemfusedybe}, but now each such R-matrix
depends not only on a pair of complex parameters $z_i$, $z_j$, but also on a color $c$.
The Boltzmann weights for these are given in Figure~\ref{gencolrmat}.

Let $R(z_i,z_j)$ denote the R-matrix constructed with the weights in
Figure~\ref{coloredrmat} according to~(\ref{rmatrixbw}), and if $1\leqslant c\leqslant r$ is a color,
let $R^{(c)}(z_i,z_j)$ denote the colored R-matrix
constructed from the Boltzmann weights in Figure~\ref{gencolrmat} where the vertex is labeled by the color $c$. Note that $R^{(1)}=R$. Also, let $T^{(c)}(z_i)$ denote the
matrix associated with the monochrome (ordinary) vertices labeled by the color $c$ whose Boltzmann weights are described in Figure~\ref{cvmonochrome}.
We recall that the colors $c$ are identified with the integers $1\leqslant
c\leqslant r$, so there is a next color $c+1$ unless $c$ is the last color
$c=r$, in which case we define $R^{(r+1)} := R$. We may now describe auxiliary Yang-Baxter equations involving the
monochrome vertices.

\begin{proposition}
  \label{coloredaux}
  If $1 \leqslant c \leqslant r$, then
  \begin{equation}
  \label{ybemono}
  R^{(c)}(z_i,z_j)_{12}T^{(c)}(z_i)_{13}T^{(c)}(z_j)_{23}=
  T^{(c)}(z_j)_{23}T^{(c)}(z_i)_{13}R^{(c+1)}(z_i,z_j)_{12}.
  \end{equation}
\end{proposition}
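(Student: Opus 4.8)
The plan is to prove \eqref{ybemono} by a direct verification, made finite and manageable by exploiting the structure of the monochrome weights. In the notation of Figure~\ref{ybesfig}, both sides of \eqref{ybemono} are endomorphisms of $V_A\otimes V_B\otimes V_C$, where $V_A$ and $V_B$ are the free vector spaces on the horizontal spin set $\{\plus\}\cup\mathfrak{P}$ and $V_C$ is the free vector space on the monochrome vertical spin set $\{\plus,c\}$ of a vertex of color~$c$. By linearity it is enough, for each admissible choice of the six boundary spins in the two configurations of~\eqref{ybepre} --- with the ordinary vertices taken to be the monochrome vertices $T^{(c)}(z_i)$ and $T^{(c)}(z_j)$ of Figure~\ref{cvmonochrome}, and the $R$-vertex taken to be $R^{(c)}(z_i,z_j)$ on the left and $R^{(c+1)}(z_i,z_j)$ on the right (Figure~\ref{gencolrmat}, with $R^{(r+1)}:=R$) --- to compare the two partition functions. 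By Assumption~\ref{configdet} each of these is a sum of at most a handful of products of three Boltzmann weights, so every single comparison is a short calculation; the point is to arrange the bookkeeping.

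First I would carry out three reductions. (i)~Conservation of the colors, and of the $\plus$-count, at every vertex shows that the multiset of colors on the three incoming boundary edges equals the multiset on the three outgoing ones. (ii)~The vertical strand has the two-element spin set $\{\plus,c\}$, so it can only ever carry the one color~$c$; every color other than~$c$ flows horizontally and never interacts with the column. (iii)~The weights in Figures~\ref{cvmonochrome} and~\ref{gencolrmat} depend on the colors other than~$c$ only through their order type relative to~$c$ (for the $R^{(c)}$-weights, through the cyclic order of the colors present together with~$c$), so one may assume all colors occurring lie in a fixed set of size at most three; equivalently it suffices to treat $r\leqslant 3$. In addition, the admissible configurations of Figure~\ref{cvmonochrome} respect the orientation in which colored paths travel down and to the right, which restricts how a path may turn at an ordinary vertex. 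Together these observations reduce the statement to a finite list of cases, organised by the routes taken by the color-$c$ paths relative to the column.

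The cases then fall into two groups. If no color-$c$ path meets the column, then the column --- including its single internal edge --- stays in the state $\plus$, and, restricted to that subspace of $V_C$, each $T^{(c)}(z)$ acts on the horizontal factor as the diagonal operator that is the identity except for scaling the basis vector $v_c$ by $z$; in this group \eqref{ybemono} reduces to the statement that $R^{(c+1)}(z_i,z_j)$ is obtained from $R^{(c)}(z_i,z_j)$ by conjugating with the product of these two diagonal operators on factors $1$ and $2$, which one checks entry by entry from the two weight tables (and which encodes, for $c=1$, the passage from $R$ to $R^{(2)}$). If some color-$c$ path does meet the column, then either it runs straight down the column --- so both horizontal strands are $c$-free and only the $R$-matrix block together with the conserved $z$-dependence has to balance --- or it turns between the column and one of the four horizontal legs at an ordinary vertex, in which case, after the reduction to $r\leqslant 3$, only one or two colors other than~$c$ are in play; in each such subcase one writes down the few admissible internal states on both sides and checks that the products of Boltzmann weights coincide.

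The hard part will be the bookkeeping in the subcase where the column carries~$c$ while a horizontal strand simultaneously carries a color lying on the opposite side of~$c$ in the cyclic order: this is precisely where the change $R^{(c)}\rightsquigarrow R^{(c+1)}$ is felt, and one must track how the $(1-v)$-, $v$- and $z_i$-, $z_j$-weights of Figure~\ref{cvmonochrome} combine with the corresponding entries of $R^{(c\pm 1)}$ from Figure~\ref{gencolrmat}. I would dispose of this by first tabulating, once and for all, the complete set of admissible monochrome vertices for the $r\leqslant 3$ reduction, and then reading off both sides of \eqref{ybemono} from that table; as is standard for Yang--Baxter verifications of this type, the remaining check is finite and mechanical, and is straightforward to confirm by computer.
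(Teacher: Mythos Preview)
Your strategy---reduce to a finite verification and carry it out, ultimately by computer---is exactly the paper's. The paper observes that at most three colors occur among the edge spins, that the two labels $c$ and $c+1$ must also be placed relative to these, and concludes that checking \eqref{ybemono} for $r=5$ covers every case; this was done by machine.

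The point that needs repair is your reduction to $r\leqslant 3$. Your justification is that the monochrome $T$-weights and the $R^{(c)}$-weights depend on the non-$c$ edge colors only through their order type relative to~$c$. But the right-hand side of \eqref{ybemono} involves $R^{(c+1)}$, whose weights in Figure~\ref{gencolrmat} compare edge colors to the label $c+1$, not to~$c$. A color $d\notin\{c,c+1\}$ sits the same way relative to both (since $c$ and $c+1$ are adjacent), but $d=c$ and $d=c+1$ are treated differently by the two $R$-matrices, and the two independent horizontal boundary inputs $a,b$ can realize order types---for instance $a<c$ and $b>c+1$ with $a,b\notin\{c,c+1\}$---that together with the labels $c,c+1$ already require four palette entries. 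So your argument does not establish that $r=3$ suffices; the paper's safe bound is $r=5$. Once the bound is corrected, your plan coincides with the paper's proof.
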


\begin{proof}
  Note that since we are using monochrome edges, at most three different
  colors can appear in the boundary (and interior) spins in the
  equivalent description \eqref{ybepre} of \eqref{ybemono}. There are
  two more colors $c$ and $c+1$ which must also be compared with these
  three in the values for $T^{(c)}$, $R^{(c)}$, $R^{(c+1)}$ from
  Figures~\ref{coloredrmat} and Figure~\ref{cvmonochrome}.
  All possibilities are covered if we take 5 colors and we conclude
  that if the Yang-Baxter equation is checked for $r=5$ then it is
  true for all $r$. We checked the Yang-Baxter equation for $r=5$
  using a computer and (\ref{ybemono}) is proved.
\end{proof}

\begin{theorem} \label{thm:coloredybe}
  The Yang-Baxter equation for colored models is satisfied:
  \[R(z_i,z_j)_{12}T(z_i)_{13}T(z_j)_{23}=
  T(z_j)_{23}T(z_i)_{13}R(z_i,z_j)_{12}\]
\end{theorem}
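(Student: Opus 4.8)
The plan is to obtain the theorem as an immediate consequence of the fusion machinery of Section~\ref{sec:lattice-models}, fed by the auxiliary Yang--Baxter equations of Proposition~\ref{coloredaux}. By Convention~\ref{monochromedescriptions}, the colored ordinary vertex whose transfer matrix is $T(z_i)$ is, by construction, the fusion of the $r$ monochrome vertices with transfer matrices $T^{(1)}(z_i),\dots,T^{(r)}(z_i)$ placed in ascending color order from left to right, and likewise $T(z_j)$ is the fusion of $T^{(1)}(z_j),\dots,T^{(r)}(z_j)$. Hence the left-hand side of the Yang--Baxter equation~\eqref{ybe} with $\xi=T(z_i)$, $\eta=T(z_j)$, $\zeta=R(z_i,z_j)$ expands exactly into the configuration of Figure~\ref{ybefusionsetup} with $m=r$, top vertices $\xi_k=T^{(k)}(z_i)$, and bottom vertices $\eta_k=T^{(k)}(z_j)$. (Here one uses the harmless identification $\Sigma_A\cong\prod_{c}\{\plus,c\}$ of the power set of $\mathfrak{P}$ with the product of the monochrome vertical spinsets, under which the fused vertical edge is literally the fusion of the monochrome ones.)

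To apply Lemma~\ref{lemfusedybe} I would take as the required sequence of R-vertices $\zeta_k:=R^{(k)}(z_i,z_j)$ for $1\leqslant k\leqslant r+1$. The point at which the structure of the construction does the work is the cyclicity condition $\zeta_1=\zeta_{r+1}$: by definition $R^{(1)}(z_i,z_j)=R(z_i,z_j)$, while by the convention fixed just before Proposition~\ref{coloredaux} one sets $R^{(r+1)}(z_i,z_j):=R(z_i,z_j)$, so indeed $\zeta_1=\zeta_{r+1}=R(z_i,z_j)=:\zeta$. Combinatorially this records the fact that dragging the R-matrix rightward past the $r$ monochrome columns cycles its color label $1\to2\to\cdots\to r\to1$ and returns it to its original form. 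The step-$k$ auxiliary Yang--Baxter equation~\eqref{ybepre} required by Lemma~\ref{lemfusedybe} --- an equality of partition functions of monochrome configurations with ordinary vertices $\xi_k=T^{(k)}(z_i)$, $\eta_k=T^{(k)}(z_j)$, incoming R-vertex $\zeta_k=R^{(k)}(z_i,z_j)$ and outgoing R-vertex $\zeta_{k+1}=R^{(k+1)}(z_i,z_j)$ --- is precisely equation~\eqref{ybemono} with $c=k$, which is supplied by Proposition~\ref{coloredaux}.

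With these ingredients in place, Lemma~\ref{lemfusedybe} produces a solution of the Yang--Baxter equation~\eqref{ybe} for the fused system, i.e.\ the operator identity~\eqref{rttisttr} with $R_\zeta=R(z_i,z_j)$, $T_\xi=T(z_i)$, $T_\eta=T(z_j)$, which is exactly
\[
  R(z_i,z_j)_{12}\,T(z_i)_{13}\,T(z_j)_{23}
  = T(z_j)_{23}\,T(z_i)_{13}\,R(z_i,z_j)_{12}.
\]
By Remark~\ref{expandedcollapsed} the fused and expanded (monochrome) systems have identical partition functions for every choice of boundary spins, so this is the asserted identity for the colored model.

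The substantive content has already been established upstream: it resides in Proposition~\ref{coloredaux} (the finite, $r=5$, computer verification together with the reduction argument showing that five colors suffice) and in the general train argument of Lemma~\ref{lemfusedybe}. Given those, the only thing left to get right here is the bookkeeping --- that the colored ordinary vertices are genuinely fusions of the monochrome ones in the prescribed order, that the $R^{(k)}$ play the role of the $\zeta_k$ in the train argument, and, above all, that the cycle closes because $R^{(r+1)}$ was defined to equal $R^{(1)}$ --- so I do not expect a real obstacle beyond stating these identifications cleanly.
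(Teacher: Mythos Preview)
Your proposal is correct and follows exactly the paper's approach: the paper's proof is the single line ``This follows from Proposition~\ref{coloredaux} and Lemma~\ref{lemfusedybe},'' and you have spelled out precisely the identifications ($\zeta_k=R^{(k)}(z_i,z_j)$, $\xi_k=T^{(k)}(z_i)$, $\eta_k=T^{(k)}(z_j)$, with $\zeta_1=\zeta_{r+1}=R$) that make that invocation work.
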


\begin{proof}
   This follows from Proposition~\ref{coloredaux} and Lemma~\ref{lemfusedybe}.
\end{proof}

\begin{remark}
  Restricting the palette $\mathfrak{P}$ to a single color, the weights in Figures~\ref{coloredweightsi} recover the Tokyuama weights seen for example in the row labeled $S^\Gamma(i)$ of~\cite[Table~2]{hkice} if we replace this color with a $\minus$ spin. The R-matrices for the two models are also equal.
\end{remark}

We will now describe the fused weights in a closed form.
For comparison with \cite{BorodinWheelerColored}, we will
choose a notation close to theirs.
In~\cite{BorodinWheelerColored}, vertical edges are labeled by
tuples $\I=(I_1,\cdots,I_r)\in\mathbb{N}^r$ representing a
state in which the $k$-th color has multiplicity $I_k$.
The principal difference between their systems and ours
is that colors can only occur with multiplicity $0$ or $1$ in our systems.
In other words, if we imitate their setup, each $I_k\in\{0,1\}$.
Hence the same data can be specified by the subset $\Sigma=\{k \mid I_k=1\}$
of the palette $\mathfrak{P}$.

In~\cite{BorodinWheelerColored}, an operation adds (resp.~removes)
a color $a$ to the tuple $\I$, that is, increments (resp.~decrements)
$I_a$ and the resulting tuple is denoted $\I^+_a$ (resp.~$\I^-_a$).
We therefore introduce the corresponding operations on the set $\Sigma$
and denote $\Sigma^+_a=\Sigma\cup\{a\}$, to be used only if $a\notin\Sigma$,
and $\Sigma^-_a=\Sigma\setminus\{a\}$, to be used only if $a\in\Sigma$.
Finally if $a\in\Sigma$ and $b\notin\Sigma$, we will denote
$\tensor*{\Sigma}{*^+_a^-_b}=\Sigma\cup\{a\}\setminus\{b\}$, also corresponding to the
$\tensor*{\I}{*^+_a^-_b}$ in \cite{BorodinWheelerColored}.
If $1\leqslant a\leqslant b\leqslant r$, we will define
$\Sigma_{[a,b]}=\{c\in\Sigma \mid a\leqslant c\leqslant b\}$.

In Figure~\ref{coloredalabw} we give our Boltzmann weights in
closed form using these notations. It is easy to see that these
are the correct weights obtained from the monochrome weights by fusion.

\newcommand{\arrowdiagram}[4]{
\begin{tikzpicture}[anchor=base, baseline]
  \draw[line width=1.0pt ] (-1,0) node[left]{$#1$} -- (1,0) node[right]{$#3$};
  \draw[line width=5pt] (0,1) node[above]{$#2$} -- (0,-1) node[below]{$#4$};
\end{tikzpicture}
}

\begin{figure}[ht]
\begin{equation*}
  \begin{array}{|c|c|c|}\hline
    \arrowdiagram{+}{\Sigma}{+}{\Sigma} & \arrowdiagram{c}{\Sigma}{c}{\Sigma} & \arrowdiagram{+}{\Sigma}{c}{\Sigma^-_c} \\ &  & \\ 
    (-v)^{|\Sigma_{[1,r]}|} & z_i v^{|\Sigma_{[c+1,r]}|} & (-v)^{|\Sigma_{[1,c-1]}|} v^{|\Sigma_{[c+1,r]}|}
    \\\hline
    \arrowdiagram{c}{\Sigma}{+}{\Sigma^+_c} & \arrowdiagram{c}{\Sigma}{d}{\tensor*{\Sigma}{*^+_c^-_d}} & \arrowdiagram{d}{\Sigma}{c}{\tensor*{\Sigma}{*^+_d^-_c}} \\ & & \\
    (1-v) z_i (-v)^{|\Sigma_{[c+1,r]}|} & (1-v)z_i(-v)^{|\Sigma_{[c+1,d-1]}|} v^{|\Sigma_{[d+1,r]}|} & 0
    \\ \hline
  \end{array} 
\end{equation*}
\caption{Colored Boltzmann weights in a style
  resembling~\cite{BorodinWheelerColored} (2.2.2) or (2.2.6), except that we are using
  $+$ for the `colorless' horizontal edges, and in place of their multiset
  $\I$ of colors, we use a subset $\Sigma$ of the palette. We are assuming $c < d$.}
\label{coloredalabw}
\end{figure}

\begin{remark}
\label{bwcomparisonremark}
The weights in Figure~\ref{coloredalabw} closely resemble weights presented in 
Section~2.2 of~\cite{BorodinWheelerColored}. One important distinction is that our
weights are `fermionic' --- we do not allow multiple copies of any given color on an edge ---
while their weights are `bosonic' (allowing multiplicities). Nevertheless, we
may compare the weights of the multiplicity-free colored vertices
in~\cite{BorodinWheelerColored} with those in Figure~\ref{coloredalabw}; even
allowing for changes of variables and Drinfeld twisting, small differences
persist.  For example, we may take the mirror image of the weights
in~\cite{BorodinWheelerColored} (2.2.2) specialized by setting $s=0$ and
compare to the weights in Figure~\ref{coloredalabw} by making the
substitutions $v=q^{-1}$ and $z_i=x$. Note that if no color occurs more than
once on the top boundary then no vertical edge on the interior of the grid
will ever have a color with multiplicity larger than one, so that the
distinction between bosonic and fermionic weights becomes unimportant. Then
both weight schemes have the same admissible vertices grouped into types as in
Figure~\ref{coloredalabw}. Upon Drinfeld twisting, the Boltzmann weights agree
in their powers of $x$ and $(1-q^{-1})$ but differ by various factors of $-1$
and $q$ that cannot be resolved. Furthermore, although we have noted that
bosonic weights can be excluded by imposing boundary conditions they are still
important for the Yang-Baxter equation, so the R-matrices
for~\cite{BorodinWheelerColored} and for our system must definitely be
different. 
Indeed, our R-matrix is a Drinfeld twist of the R-matrix for the quantum group for $\widehat{\mathfrak{gl}}(r|1)$ while the R-matrix in~\cite{BorodinWheelerColored} is a Drinfeld twist of the one for $\widehat{\mathfrak{sl}}_{r+1}$.
For further comparison of our models
with~\cite{BorodinWheelerColored} see arXiv version~1 of this paper~\cite{ThisPaper}.
\end{remark}


\section{The Iwahori lattice model}\label{coloredsystems}

We now describe a family of statistical-mechanical systems made from fused
vertices whose partition functions may be shown to give values
of Iwahori Whittaker functions in the case $G = \GL_r$. Indeed, if
$g\in\GL_r(F)$, where $F$ is a nonarchimedean local field,
we will see that the Whittaker function $\phi_w(g)$ defined by (\ref{eq:iwahori-whittaker-function})
can be represented as the partition function of such a model. First note that, as mentioned in Remark~\ref{iwahoripg}, we may assume that $g$ is of the form $\varpi^{-\lambda}w_2$ for some
weight $\lambda$ and Weyl group element $w_2$. By Lemma~\ref{almostdominantlemma}, we may assume that $\lambda$
is $w_2$-almost dominant. Finally, multiplying $g$ by an element of the center
simply multiplies $\phi_w(g)$ by a scalar, so we may assume with no loss
of generality that the entries in $\lambda$ are nonnegative.

Having already explained how the Boltzmann weights for fused vertices are
defined, it remains to explain the boundary conditions for the model and the
labels on each of the vertices. For any positive integer $r$, the boundary
conditions and vertex labels depend on three pieces of data: a partition
denoted $\lambda+\rho$ with at most $r$ nonzero parts, a pair of permutations
$w_1, w_2 \in S_r = W$, the Weyl group of $\GL_r$, and $r$ complex
parameters $\z = (z_1, \ldots, z_r)$. The systems we present here, denoted
$\mathfrak{S}_{\z, \lambda, w_1, w_2}$ and referred to as \emph{Iwahori systems}, may be considered as simultaneous
generalizations of those appearing previously in the Tokuyama model and in the
colored systems of~\cite{demice}. Our goal in this section is to equate the
partition function of $\mathfrak{S}_{\z, \lambda, w_1, w_2}$ with the value of
the Whittaker function $\phi_{w_1}(\z; \varpi^{-\lambda} w_2)$ of
Section~\ref{Iwahoripreliminaries}.

With $r$ fixed, let $\rho = (r-1, \ldots, 1, 0)$ and let $\lambda+\rho=(\lambda_1+r-1,\cdots,\lambda_r)$ be a partition, whose parts are written in weakly decreasing order as usual. In the identification of the weight lattice of $\GL_r$ with $\mathbb{Z}^r$, the corresponding weight $\lambda$ satisfies $\langle \alpha, \lambda \rangle \geqslant -1$ for all simple roots $\alpha$, a necessary condition for the non-vanishing of the Whittaker function according to Lemma~\ref{almostdominantlemma}.

Given the partition $\lambda+\rho$, we form a rectangular lattice consisting
of $N+1$ columns and $r$ rows, where $N$ is any integer at least
$\lambda_1+r-1$.  The columns will be numbered from left to right from $N$ to
$0$ in decreasing order.  The rows are numbered $1$ to $r$, in increasing
order from top to bottom. Given $\z$, each vertex in the $i$-th row receives
the label $z_i$. The Boltzmann weights are the fused weights in
Figure~\ref{coloredalabw}. Unless otherwise stated we will henceforth assume
that the parameter $v$ appearing in the Boltzmann weights (as well as in the
Demazure-Whittaker operators among other places) equals $q^{-1}$ with $q$ the
cardinality of the residue field of $F$. We will prefer the use of $v$ to avoid confusion in later sections where, to follow tradition, $q$ will have another meaning. 

It remains to describe the boundary spins and colors located around the edge of the rectangular grid. They depend on the choice of the weight $\lambda$ and the two Weyl group elements $w_1, w_2$ as follows and summarized in Figure~\ref{fig:boundary}.
\begin{figure}
\newcommand{\plusminus}{\tikz[anchor=base, baseline=-1pt]{\node[scale=0.8, draw, thick, circle, inner sep=0pt] {\phantom{$\pm$}};}\xspace}
\begin{tabular}{rlc}
        {column number} \quad & {$\lambda_i+r-i$}\\
        {color} \quad & {$(w_2\mathbf{c})_i$} \\[0.5em]
  \multicolumn{2}{c}{
  \begin{tabular}{cc|ccc|c}
    \multicolumn{2}{c}{} & \plusminus \plusminus & $\cdots$ & \multicolumn{1}{c}{\plusminus \plusminus} &  \tabularnewline \cline{3-5}
    $z_1$ & \plus & & & & \minus \tabularnewline
    $z_2$ & \plus & & & & \minus \tabularnewline
          & $\vdots$ & & & & $\vdots$ \tabularnewline
    $z_{r-1}$ & \plus & & & & \minus \tabularnewline
    $z_r$ & \plus & & & & \minus \tabularnewline \cline{3-5}
    \multicolumn{6}{c}{} \tabularnewline[-1em]
    \multicolumn{2}{c}{} & \plus \plus & $\cdots$ & \multicolumn{1}{c}{\plus \plus} & 
  \end{tabular}}
  & \begin{tabular}{rl} row & $i$ \\ color & {$(w_1\mathbf{c})_i$} \end{tabular}
\end{tabular}
\caption{\label{fig:boundary} Summary of boundary conditions for
the colored systems. Let $\mathbf{c}=(c_1,\cdots,c_r)$ be a
semistandard flag of colors $c_1\geqslant \cdots \geqslant c_r$;
for the systems $\mathfrak{S}_{\z, \lambda, w_1, w_2}$, if we identify
the colors $c_i$ with integers we take $c_i=r+1-i$, while in
Section~\ref{sec:parahoric_systems} we will take more general
flags. On the top boundary the minus signs are positioned at columns
$\lambda_i + r - i$ with color $(w_2\mathbf{c})_i=c_{w_2^{-1}(i)}$,
and on the right boundary the edge in row $i$ is colored
$(w_1\mathbf{c})_i=c_{w_1^{-1}(i)}$.}

\end{figure}
We have colors numbered $1, \ldots, r$
at our disposal. For the top boundary, we
assign the color~$r+1-w_2^{-1}(i)$ to the edge in the column labeled $\lambda_i+r-i$ for each
$i \in \{1, \ldots, r\}$ and a \plus spin in the remaining columns. That is, we color each edge whose column index is a part of $\lambda+\rho$ and we have multiple colors on a given top boundary edge according to the multiplicity of parts in the partition. Then, we put a \plus spin on
all the left and bottom boundary edges. This leaves the right
boundary edges to be described. These will depend on the choice of permutation $w_1 \in W$ for the system.
The right boundary edge in the $i$-th row is assigned the color $r+1-w_1^{-1}(i)$. 
For $w_2=1$, these boundary conditions are exactly as in~\cite{demice}. A particularly simple admissible state in an Iwahori system is given in Figure~\ref{groundstate}.

\begin{figure}[t]
\begin{tikzpicture}[scale=0.75, font=\small]
    \draw (1,0)--(1,6);
    \draw (3,0)--(3,6);
    \draw (5,0)--(5,6);
    \draw (7,0)--(7,6);
    \draw (9,0)--(9,6);
    \draw (11,0)--(11,6);
    \draw (13,0)--(13,6);
    \draw (15,0)--(15,6);
    \draw (0,1)--(16,1);
    \draw (0,3)--(16,3);
    \draw (0,5)--(16,5);
    \draw [line width=0.5mm,red] (1.00,5.75)--(1,5)--(1.75,5.00);
    \draw [line width=0.5mm,red] (2.35,5.00)--(3.75,5.00);
    \draw [line width=0.5mm,red] (4.35,5.00)--(5.75,5.00);
    \draw [line width=0.5mm,red] (6.35,5.00)--(7.75,5.00);
    \draw [line width=0.5mm,blue] (9,4)--(9,6);
    \draw [line width=0.5mm,red] (8.35,5.00)--(9.75,5.00);
    \draw [line width=0.5mm,red] (10.35,5.00)--(11.75,5.00);
    \draw [line width=0.5mm,red] (12.35,5.00)--(13.75,5.00);
    \draw [line width=0.5mm,darkgreen] (15,4)--(15,6);
    \draw [line width=0.5mm,red] (14.35,5.00)--(15.75,5.00);
    \draw [line width=0.5mm,blue] (9.00,3.75)--(9,3)--(9.75,3.00);
    \draw [line width=0.5mm,blue] (10.35,3.00)--(11.75,3.00);
    \draw [line width=0.5mm,blue] (12.35,3.00)--(13.75,3.00);
    \draw [line width=0.5mm,darkgreen] (15,2)--(15,4);
    \draw [line width=0.5mm,blue] (14.35,3.00)--(15.75,3.00);
    \draw [line width=0.5mm,darkgreen] (15.00,1.75)--(15,1)--(15.75,1.00);
    \draw[line width=0.5mm,red,fill=white] (1,6) circle (.35);
    \draw[fill=white] (3,6) circle (.35);
    \draw[fill=white] (5,6) circle (.35);
    \draw[fill=white] (7,6) circle (.35);
    \draw[line width=0.5mm,blue,fill=white] (9,6) circle (.35);
    \draw[fill=white] (11,6) circle (.35);
    \draw[fill=white] (13,6) circle (.35);
    \draw[line width=0.5mm,darkgreen,fill=white] (15,6) circle (.35);
    \draw[fill=white] (1,4) circle (.35);
    \draw[fill=white] (3,4) circle (.35);
    \draw[fill=white] (5,4) circle (.35);
    \draw[fill=white] (7,4) circle (.35);
    \draw[line width=0.5mm,blue,fill=white] (9,4) circle (.35);
    \draw[fill=white] (11,4) circle (.35);
    \draw[fill=white] (13,4) circle (.35);
    \draw[line width=0.5mm,darkgreen,fill=white] (15,4) circle (.35);
    \draw[fill=white] (1,2) circle (.35);
    \draw[fill=white] (3,2) circle (.35);
    \draw[fill=white] (5,2) circle (.35);
    \draw[fill=white] (7,2) circle (.35);
    \draw[fill=white] (9,2) circle (.35);
    \draw[fill=white] (11,2) circle (.35);
    \draw[fill=white] (13,2) circle (.35);
    \draw[line width=0.5mm,darkgreen,fill=white] (15,2) circle (.35);
    \draw[fill=white] (1,0) circle (.35);
    \draw[fill=white] (3,0) circle (.35);
    \draw[fill=white] (5,0) circle (.35);
    \draw[fill=white] (7,0) circle (.35);
    \draw[fill=white] (9,0) circle (.35);
    \draw[fill=white] (11,0) circle (.35);
    \draw[fill=white] (13,0) circle (.35);
    \draw[fill=white] (15,0) circle (.35);
    \draw[fill=white] (0,5) circle (.35);
    \draw[line width=0.5mm,red,fill=white] (2,5) circle (.35);
    \draw[line width=0.5mm,red,fill=white] (4,5) circle (.35);
    \draw[line width=0.5mm,red,fill=white] (6,5) circle (.35);
    \draw[line width=0.5mm,red,fill=white] (8,5) circle (.35);
    \draw[line width=0.5mm,red,fill=white] (10,5) circle (.35);
    \draw[line width=0.5mm,red,fill=white] (12,5) circle (.35);
    \draw[line width=0.5mm,red,fill=white] (14,5) circle (.35);
    \draw[line width=0.5mm,red,fill=white] (16,5) circle (.35);
    \draw[fill=white] (0,3) circle (.35);
    \draw[fill=white] (2,3) circle (.35);
    \draw[fill=white] (4,3) circle (.35);
    \draw[fill=white] (6,3) circle (.35);
    \draw[fill=white] (8,3) circle (.35);
    \draw[line width=0.5mm,blue,fill=white] (10,3) circle (.35);
    \draw[line width=0.5mm,blue,fill=white] (12,3) circle (.35);
    \draw[line width=0.5mm,blue,fill=white] (14,3) circle (.35);
    \draw[line width=0.5mm,blue,fill=white] (16,3) circle (.35);
    \draw[fill=white] (0,1) circle (.35);
    \draw[fill=white] (2,1) circle (.35);
    \draw[fill=white] (4,1) circle (.35);
    \draw[fill=white] (6,1) circle (.35);
    \draw[fill=white] (8,1) circle (.35);
    \draw[fill=white] (10,1) circle (.35);
    \draw[fill=white] (12,1) circle (.35);
    \draw[fill=white] (14,1) circle (.35);
    \draw[line width=0.5mm,darkgreen,fill=white] (16,1) circle (.35);
    \node at (1,6) {$3$};
    \node at (3,6) {$+$};
    \node at (5,6) {$+$};
    \node at (7,6) {$+$};
    \node at (9,6) {$2$};
    \node at (11,6) {$+$};
    \node at (13,6) {$+$};
    \node at (15,6) {$1$};
    \node at (1,4) {$+$};
    \node at (3,4) {$+$};
    \node at (5,4) {$+$};
    \node at (7,4) {$+$};
    \node at (9,4) {$2$};
    \node at (11,4) {$+$};
    \node at (13,4) {$+$};
    \node at (15,4) {$1$};
    \node at (1,2) {$+$};
    \node at (3,2) {$+$};
    \node at (5,2) {$+$};
    \node at (7,2) {$+$};
    \node at (9,2) {$+$};
    \node at (11,2) {$+$};
    \node at (13,2) {$+$};
    \node at (15,2) {$1$};
    \node at (1,0) {$+$};
    \node at (3,0) {$+$};
    \node at (5,0) {$+$};
    \node at (7,0) {$+$};
    \node at (9,0) {$+$};
    \node at (11,0) {$+$};
    \node at (13,0) {$+$};
    \node at (15,0) {$+$};
    \node at (0,5) {$+$};
    \node at (2,5) {$3$};
    \node at (4,5) {$3$};
    \node at (6,5) {$3$};
    \node at (8,5) {$3$};
    \node at (10,5) {$3$};
    \node at (12,5) {$3$};
    \node at (14,5) {$3$};
    \node at (16,5) {$3$};
    \node at (0,3) {$+$};
    \node at (2,3) {$+$};
    \node at (4,3) {$+$};
    \node at (6,3) {$+$};
    \node at (8,3) {$+$};
    \node at (10,3) {$2$};
    \node at (12,3) {$2$};
    \node at (14,3) {$2$};
    \node at (16,3) {$2$};
    \node at (0,1) {$+$};
    \node at (2,1) {$+$};
    \node at (4,1) {$+$};
    \node at (6,1) {$+$};
    \node at (8,1) {$+$};
    \node at (10,1) {$+$};
    \node at (12,1) {$+$};
    \node at (14,1) {$+$};
    \node at (16,1) {$1$};
    \node at (1,6.8) {$7$};
    \node at (3,6.8) {$6$};
    \node at (5,6.8) {$5$};
    \node at (7,6.8) {$4$};
    \node at (9,6.8) {$3$};
    \node at (11,6.8) {$2$};
    \node at (13,6.8) {$1$};
    \node at (15,6.8) {$0$};
    \node at (-.75,5) {$ 1$};
    \node at (-.75,3) {$ 2$};
    \node at (-.75,1) {$ 3$};
    \path[fill=white] (1,1) circle (.25);
    \node at (1,1) {\scriptsize$z_3$};
    \path[fill=white] (3,1) circle (.25);
    \node at (3,1) {\scriptsize$z_3$};
    \path[fill=white] (5,1) circle (.25);
    \node at (5,1) {\scriptsize$z_3$};
    \path[fill=white] (7,1) circle (.25);
    \node at (7,1) {\scriptsize$z_3$};
    \path[fill=white] (9,1) circle (.25);
    \node at (9,1) {\scriptsize$z_3$};
    \path[fill=white] (11,1) circle (.25);
    \node at (11,1) {\scriptsize$z_3$};
    \path[fill=white] (13,1) circle (.25);
    \node at (13,1) {\scriptsize$z_3$};
    \path[fill=white] (15,1) circle (.25);
    \node at (15,1) {\scriptsize$z_3$};

    \path[fill=white] (1,3) circle (.25);
    \node at (1,3) {\scriptsize$z_2$};
    \path[fill=white] (3,3) circle (.25);
    \node at (3,3) {\scriptsize$z_2$};
    \path[fill=white] (5,3) circle (.25);
    \node at (5,3) {\scriptsize$z_2$};
    \path[fill=white] (7,3) circle (.25);
    \node at (7,3) {\scriptsize$z_2$};
    \path[fill=white] (9,3) circle (.25);
    \node at (9,3) {\scriptsize$z_2$};
    \path[fill=white] (11,3) circle (.25);
    \node at (11,3) {\scriptsize$z_2$};
    \path[fill=white] (13,3) circle (.25);
    \node at (13,3) {\scriptsize$z_2$};
    \path[fill=white] (15,3) circle (.25);
    \node at (15,3) {\scriptsize$z_2$};

    \path[fill=white] (1,5) circle (.25);
    \node at (1,5) {\scriptsize$z_1$};
    \path[fill=white] (3,5) circle (.25);
    \node at (3,5) {\scriptsize$z_1$};
    \path[fill=white] (5,5) circle (.25);
    \node at (5,5) {\scriptsize$z_1$};
    \path[fill=white] (7,5) circle (.25);
    \node at (7,5) {\scriptsize$z_1$};
    \path[fill=white] (9,5) circle (.25);
    \node at (9,5) {\scriptsize$z_1$};
    \path[fill=white] (11,5) circle (.25);
    \node at (11,5) {\scriptsize$z_1$};
    \path[fill=white] (13,5) circle (.25);
    \node at (13,5) {\scriptsize$z_1$};
    \path[fill=white] (15,5) circle (.25);
    \node at (15,5) {\scriptsize$z_1$};
  \end{tikzpicture}
\caption{The ground state. Here $r=3$, $\lambda=(5,2,0)$ so
  $\lambda+\rho=(7,3,0)$, $w_1 = w_2 = 1$ (the identity permutation), and $\z = (z_1, z_2, z_3)$
  is an arbitrary triple of complex numbers. 
  The top row colors read from left to right are are
  $(3,2,1)$. The colors on the right edge, read
  from top to bottom, are also $(3,2,1)$. This is the unique
  state of the system $\mathfrak{S}_{\z,\lambda,1,1}$.
  Its Boltzmann weight is $\z^{\lambda+\rho}$.}
\label{groundstate}
\end{figure}

In any state of the system $\mathfrak{S}_{\z,\lambda,w_1, w_2}$, the
edges of any one particular color form a line or path starting at the
top boundary and ending at the right boundary. This depiction of admissible states as configurations of 
lines is present in many works on lattice models, for example Baxter's book~\cite{Baxter}, 
Chapter~8. The idea of using \textit{colored} lines and refined systems that specify starting and ending points of each colored line is presented in \cite{BorodinWheelerColored}. We exploited this idea in a prior paper~\cite{demice}
to give a new theory of Demazure atoms, nonsymmetric pieces of Schur functions. The colored weights in this paper specialize to those of~\cite{demice} by setting $v=0$, which leads to a vast simplification. In particular, every edge in~\cite{demice} may carry at most one color (even in the fused model) and two colored lines can
cross at most once. In this paper, weights and subsequent Yang-Baxter equations are understood via fusion, and two colored lines can cross more than once.

Let $Z (\mathfrak{S}_{\mathbf{z}, \lambda, w_1,w_2})$ denote the partition
function of the system $\mathfrak{S}_{\z,\lambda,w_1, w_2}$. We will now demonstrate that this partition function satisfies the same functional equation as the Iwahori Whittaker function $\phi_{w_1}$ in Theorem~\ref{mainrecursion} under Demazure-Whittaker operators using the Yang-Baxter equation. 
It will be convenient to conjugate the Demazure-Whittaker operators $\mathfrak{T}_i$ of \eqref{demwhitop} as follows
\begin{equation}
  \label{tconjugation}
  T_i=\z^\rho\mathfrak{T}_i\z^{-\rho}.
\end{equation}
such that  
\begin{align} 
  \label{eq:demazurewhittaker}
  T_i\cdot f(\z) &= \frac{\z^{\alpha_i} - v}{1 - \z^{\alpha_i}} f(s_i \z) + \frac{v-1}{1 - \z^{\alpha_i}} f(\z) \\
  \intertext{and}
  \label{eq:demazurewhittakershort}
  T_i^{-1} \cdot f(\z) &= \frac{\z^{\alpha_i} - v}{v(1 - \z^{\alpha_i})} f(s_i \z) + \frac{(v-1) \z^{\alpha_i}}{v(1 - \z^{\alpha_i})} f(\z) \, .
\end{align}

\begin{proposition}\label{proposition:funceq} For any partition $\lambda+\rho$, simple reflection $s_i$, and any pair of Weyl group elements $w_1, w_2 \in W$,
\begin{equation}
  \label{zwrecur}
  Z (\mathfrak{S}_{\mathbf{z}, \lambda, s_i w_1,w_2}) =
  \begin{cases}
    T_i Z (\mathfrak{S}_{\mathbf{z}, \lambda, w_1,w_2})  & \textrm{if $\ell(s_i w_1) > \ell(w_1)$}, \\ 
    T_i^{-1} Z (\mathfrak{S}_{\mathbf{z}, \lambda, w_1,w_2}) & \textrm{if $\ell(s_i w_1) < \ell(w_1)$.} 
  \end{cases}
\end{equation}

\end{proposition}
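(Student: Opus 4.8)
The plan is to imitate, on the lattice-model side, the derivation of Theorem~\ref{mainrecursion} from the intertwiner identities~\eqref{intertwiningactionexplicit} together with Proposition~\ref{csinterident}: the two-term recursion there will be replaced by a two-term relation produced by the Yang--Baxter equation of Theorem~\ref{thm:coloredybe} via the standard ``train'' argument. Since the quadratic relation~\eqref{heckequadratic} makes $\mathfrak{T}_i$, and hence $T_i$, invertible, it suffices to establish the case $\ell(s_iw_1)>\ell(w_1)$; applying that case with $w_1$ replaced by $s_iw_1$ and then applying $T_i^{-1}$ yields the case $\ell(s_iw_1)<\ell(w_1)$. So I would assume $\ell(s_iw_1)>\ell(w_1)$ from now on.

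First I would augment $\mathfrak{S}_{\z,\lambda,w_1,w_2}$ by attaching, at the far left, an R-vertex with spectral parameters $(z_i,z_{i+1})$ joining rows $i$ and $i+1$, giving both new (outer) left boundary edges the spin $\plus$. By Assumption~\ref{configdet} and Figure~\ref{coloredrmat}, the only configuration of an R-vertex with $\plus$ on both left edges (the $A$ and $B$ slots of Figure~\ref{vertextypes}) is the all-$\plus$ one, of Boltzmann weight $z_{i+1}-vz_i$; hence the partition function of the augmented system equals $(z_{i+1}-vz_i)\,Z(\mathfrak{S}_{\z,\lambda,w_1,w_2})$. Then I would slide this R-vertex across the grid one column at a time, each step being an application of Theorem~\ref{thm:coloredybe} (the R-matrix returns to its original form after each step, so no change-of-label bookkeeping as in Lemma~\ref{lemfusedybe} is needed). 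After the R-vertex has passed all $N+1$ columns, it sits at the right edge, and the grid it has crossed has had rows $i$ and $i+1$ interchanged everywhere; since the top and bottom boundary data are indexed by columns and so are untouched by a row swap, this middle grid is exactly an Iwahori system with parameter vector $s_i\z$ and the same $\lambda$, $w_2$, whose right boundary now feeds the left edges of the displaced R-vertex.

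Next I would expand the displaced R-vertex into its admissible configurations. Its two right edges are pinned to the distinct colors $(w_1\mathbf{c})_i$ and $(w_1\mathbf{c})_{i+1}$ of the original right boundary (Figure~\ref{fig:boundary}), and Figure~\ref{coloredrmat} then permits exactly two configurations: one in which the left edges present these two colors in the same order, forcing the middle grid to be $\mathfrak{S}_{s_i\z,\lambda,w_1,w_2}$, and one presenting them in the opposite order, forcing the middle grid to be $\mathfrak{S}_{s_i\z,\lambda,s_iw_1,w_2}$ --- the relevant identity being $(s_iw_1)^{-1}=w_1^{-1}s_i$, which is precisely the permutation swapping the colors assigned to rows $i$ and $i+1$ under the convention $(w\mathbf{c})_j=c_{w^{-1}(j)}$. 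Reading the two weights off Figure~\ref{coloredrmat} in the case $(w_1\mathbf{c})_i>(w_1\mathbf{c})_{i+1}$ --- which, since $\mathbf{c}$ is strictly decreasing, is equivalent to $w_1^{-1}(i)<w_1^{-1}(i+1)$, i.e.\ to our assumption $\ell(s_iw_1)>\ell(w_1)$ --- and equating the two evaluations of the augmented partition function gives the master identity
\[
(z_{i+1}-vz_i)\,Z(\mathfrak{S}_{\z,\lambda,w_1,w_2})=(1-v)z_i\,Z(\mathfrak{S}_{s_i\z,\lambda,w_1,w_2})+(z_i-z_{i+1})\,Z(\mathfrak{S}_{s_i\z,\lambda,s_iw_1,w_2}).
\]
Solving for the last term, then replacing $\z$ by $s_i\z$ (legitimate since $z_i-z_{i+1}\neq 0$ for generic $\z$, the general case following because both sides are rational in $\z$), expresses $Z(\mathfrak{S}_{\z,\lambda,s_iw_1,w_2})$ as an explicit $\z$-rational combination of $Z(\mathfrak{S}_{s_i\z,\lambda,w_1,w_2})$ and $Z(\mathfrak{S}_{\z,\lambda,w_1,w_2})$; comparing the coefficients with formula~\eqref{eq:demazurewhittaker} for $T_i$ (and~\eqref{eq:demazurewhittakershort} for $T_i^{-1}$ in the descent case) completes the proof.

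The Yang--Baxter equation of Theorem~\ref{thm:coloredybe} carries all the real content, so the main obstacle here is bookkeeping rather than ideas: correctly matching the two configurations of the displaced R-vertex to $w_1$ versus $s_iw_1$ (the index computation with $(s_iw_1)^{-1}=w_1^{-1}s_i$ and the flag convention of Figure~\ref{fig:boundary}), verifying that the branch of the R-weights that is selected is governed by $\ell(s_iw_1)\gtrless\ell(w_1)$, and then the sign-sensitive simplification matching~\eqref{eq:demazurewhittaker}. As a sanity check one can test the identity against the ground state of Figure~\ref{groundstate}, where $Z(\mathfrak{S}_{\z,\lambda,1,1})=\z^{\lambda+\rho}$, so the identity predicts $Z(\mathfrak{S}_{\z,\lambda,s_i,1})=T_i\z^{\lambda+\rho}$, a manifestly polynomial Demazure-type image of $\z^{\lambda+\rho}$.
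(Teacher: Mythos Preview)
Your proposal is correct and is essentially the paper's own train argument: attach the colored R-vertex on the left, use Theorem~\ref{thm:coloredybe} to push it to the right, and read off the two admissible colored configurations at the right boundary to obtain the identity~\eqref{predemazure}, which rearranges to~\eqref{eq:demazurewhittaker}. The only cosmetic differences are that the paper attaches $R(z_{i+1},z_i)$ to $\mathfrak{S}_{s_i\z,\lambda,w_1,w_2}$ (so no $\z\mapsto s_i\z$ substitution is needed at the end) and treats both length cases at once via the two branches of the R-weights in Figure~\ref{coloredrmat}, whereas you attach $R(z_i,z_{i+1})$ to $\mathfrak{S}_{\z,\lambda,w_1,w_2}$ and deduce the descent case from the ascent case via invertibility of $T_i$.
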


\begin{proof}
Repeated use of the Yang-Baxter equation gives the equality of the
partition functions in Figure~\ref{modpf}.
\begin{figure}[htb]
\begin{tikzpicture}[scale=0.7]
\begin{scope}[shift={(-1,0)}]
  \draw (0,1) to [out = 0, in = 180] (2,3) to (4,3);
  \draw (0,3) to [out = 0, in = 180] (2,1) to (4,1);
  \draw (3,0.25) to (3,3.75);
  \draw (7,0.25) to (7,3.75);
  \draw (6,1) to (8,1);
  \draw (6,3) to (8,3);
  \draw[fill=white] (0,1) circle (.3);
  \draw[fill=white] (0,3) circle (.3);
  \draw[line width=0.5mm,red,fill=white] (8,3) circle (.3);
  \draw[line width=0.5mm,blue,fill=white] (8,1) circle (.3);
  \node at (0,1) {$+$};
  \node at (0,3) {$+$};
  \node at (5,3) {$\cdots$};
  \node at (5,1) {$\cdots$};
  \draw[densely dashed] (3,3.75) to (3,4.25);
  \draw[densely dashed] (3,0.25) to (3,-0.25);
  \draw[densely dashed] (7,3.75) to (7,4.25);
  \draw[densely dashed] (7,0.25) to (7,-0.25);
  \node at (8,1) {$c$};
  \node at (8,3) {$d$};
\path[fill=white] (3,3) circle (.5);
\node at (3,3) {\scriptsize$z_{i+1}$};
\path[fill=white] (3,1) circle (.4);
\node at (3,1) {\scriptsize$z_i$};
\path[fill=white] (7,3) circle (.5);
\node at (7,3) {\scriptsize$z_{i+1}$};
\path[fill=white] (7,1) circle (.4);
\node at (7,1) {\scriptsize$z_i$};
\path[fill=white] (1,2) circle (.3);
\node at (1,2) {\scriptsize$z_{i+1},z_i$};
\end{scope}
\begin{scope}[shift={(1,-5.5)}]
  \draw (4,1) to (6,1) to [out = 0, in = 180] (8,3);
  \draw (4,3) to (6,3) to [out = 0, in = 180] (8,1);
  \draw[line width=0.5mm,red,fill=white] (8,3) circle (.3);
  \draw[line width=0.5mm,blue,fill=white] (8,1) circle (.3);
  \draw (0,1) to (2,1);
  \draw (0,3) to (2,3);
  \draw (5,0.25) to (5,3.75);
  \draw (1,0.25) to (1,3.75);
  \draw[fill=white] (0,1) circle (.3);
  \draw[fill=white] (0,3) circle (.3);

  \node at (3,1) {$\cdots$};
  \node at (3,3) {$\cdots$};

  \draw[densely dashed] (1,3.75) to (1,4.25);
  \draw[densely dashed] (1,0.25) to (1,-0.25);
  \draw[densely dashed] (5,3.75) to (5,4.25);
  \draw[densely dashed] (5,0.25) to (5,-0.25);
  \path[fill=white] (1,3) circle (.4);
  \node at (1,3) {\scriptsize$z_{i}$};
  \path[fill=white] (1,1) circle (.5);
  \node at (1,1) {\scriptsize$z_{i+1}$};

  \path[fill=white] (5,3) circle (.4);
  \node (a) at (5,3) {\scriptsize$z_{i}$};
  \path[fill=white] (5,1) circle (.5);
  \node at (5,1) {\scriptsize$z_{i+1}$};

  \path[fill=white] (7,2) circle (.3);
  \node at (7,2) {\scriptsize$z_{i+1},z_i$};
  \node at (8,1) {$c$};
  \node at (8,3) {$d$};
  \node at (0,1) {$+$};
  \node at (0,3) {$+$};
\end{scope}
\end{tikzpicture}
\caption{Top: the system $\mathfrak{S}_{s_i\z,\lambda,w_1,w_2}$ with
  the R-matrix attached. Bottom: after using the Yang-Baxter equation.}
\label{modpf}
\end{figure}

Using the $R$-matrix weights from Figure~\ref{coloredrmat}, we obtain the following identity of partition functions:
\begin{multline}
 \label{predemazure}
 (z_i - v z_{i+1}) Z (\mathfrak{S}_{s_i \mathbf{z}, \lambda, w_1,w_2})  = \\
  \begin{cases}
       (1 - v) z_{i+1} Z (\mathfrak{S}_{\mathbf{z}, \lambda, w_1,w_2})  + (z_{i+1} - z_i)
 Z (\mathfrak{S}_{\mathbf{z}, \lambda, s_i w_1,w_2})  &\quad\text{if } \ell(s_i w_1) > \ell(w_1),\\
       (1 - v) z_{i} Z (\mathfrak{S}_{\mathbf{z}, \lambda, w_1,w_2})  + v(z_{i+1} - z_i)
 Z (\mathfrak{S}_{\mathbf{z}, \lambda, s_iw_1,w_2}) &\quad\text{if } \ell(s_i w_1) < \ell(w_1). \\
     \end{cases}
\end{multline}
Note that $s_iw>w$ is equivalent
to $c>d$ in the notation of Figure~\ref{coloredrmat}. Consulting the table, there is one possible choice for the
R-matrix for the top state in Figure~\ref{modpf}, and two possible choices for the bottom state, accounting for the
three terms in the identity (\ref{predemazure}). Note that we take
$(i,j)$ in Figure~\ref{coloredrmat} to be $(i+1,i)$.

Setting $\z^{\alpha_i} = z_i / z_{i+1}$, and rearranging terms
in~\eqref{predemazure} upon division by $z_{i+1}$, we obtain the desired equality.
\end{proof}

We have noted in Remark~\ref{expandedcollapsed} that we may replace
a system such as $\mathfrak{S}_{\z,\lambda,w_1,w_2}$ made with fused
Boltzmann weights by an equivalent system with $r$ times as many
vertices, using monochrome weights. The expanded monochrome system
will appear in the following proof. See Figure~\ref{oceanmisty} for
an example.

\begin{theorem}
\label{coloredwhittaker}
Given any $w_1, w_2 \in W$, let $\lambda$ be a $w_2$-almost dominant weight and let $\mathfrak{S}_{\z,\lambda,w_1, w_2}$ be the corresponding Iwahori system. Then
\begin{equation}
\label{zisphi} 
 Z (\mathfrak{S}_{\mathbf{z}, \lambda, w_1,w_2}) = \z^{\rho}\phi_{w_1}(\z; \varpi^{-\lambda} w_2).
\end{equation}
\end{theorem}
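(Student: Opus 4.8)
The plan is to prove this by induction on $\ell(w_1)$, using the recursion in Proposition~\ref{proposition:funceq} on the lattice model side and Theorem~\ref{mainrecursion} on the Whittaker function side, with the base case $w_1 = w_2$ handled directly by explicitly identifying the unique admissible state. Both $\phi_{w_1}(\z;\varpi^{-\lambda}w_2)$ and $\z^{-\rho}Z(\mathfrak{S}_{\z,\lambda,w_1,w_2})$ satisfy the identical recursion: applying $\mathfrak{T}_i$ (conjugated to $T_i = \z^\rho \mathfrak{T}_i \z^{-\rho}$) when $\ell(s_iw_1) > \ell(w_1)$ and $\mathfrak{T}_i^{-1}$ (resp.\ $T_i^{-1}$) when $\ell(s_iw_1) < \ell(w_1)$, as $w_1$ ranges over a path in the Weyl group starting from $w_2$. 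Since the $\mathfrak{T}_i$ satisfy the braid relations (hence the recursion is well-defined independent of the chosen reduced path, as noted after Theorem~\ref{mainrecursion}), it suffices to verify the base case and then the recursive step matches.

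For the base case $w_1 = w_2$, I would argue that the system $\mathfrak{S}_{\z,\lambda,w_2,w_2}$ has a unique admissible state, generalizing the ground state of Figure~\ref{groundstate} (which is the case $w_2 = 1$). The idea: since the top-boundary colors and right-boundary colors are permuted by the same element $w_2$, each colored path can travel from its top edge straight down and then straight right to its row without any two distinct-colored paths needing to cross — the $w_2$-almost dominance of $\lambda$ is exactly the combinatorial condition ensuring this configuration is admissible (parts of $\lambda+\rho$ are weakly decreasing, and the descent structure of $w_2^{-1}$ controls where equal parts, i.e.\ doubly-colored edges, can occur). Computing the Boltzmann weight of this unique state from Figure~\ref{coloredalabw}: the horizontal colored-to-colored vertices in row $i$ contribute powers of $z_i$, the turning vertices contribute the appropriate $(1-v)z_i$ or sign factors, and the total should come out to $\z^{\lambda+\rho}\cdot q^{-\ell(w_2)}\cdot(\text{possible }v\text{-factors})$. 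By Proposition~\ref{basecaseprop}, $\phi_{w_2}(\z;\varpi^{-\lambda}w_2) = q^{-\ell(w_2)}\z^\lambda = v^{\ell(w_2)}\z^\lambda$, so I need the state's weight to equal $\z^\rho \cdot v^{\ell(w_2)}\z^\lambda = v^{\ell(w_2)}\z^{\lambda+\rho}$; verifying this bookkeeping — in particular that the $w_2$-dependent crossings and the monochrome weights conspire to produce exactly $v^{\ell(w_2)}$ — is the main computational content of the base case.

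The recursive step is essentially already done: Proposition~\ref{proposition:funceq} gives $Z(\mathfrak{S}_{\z,\lambda,s_iw_1,w_2}) = T_i^{\pm 1} Z(\mathfrak{S}_{\z,\lambda,w_1,w_2})$ depending on the length change, and since $T_i = \z^\rho\mathfrak{T}_i\z^{-\rho}$, dividing through by $\z^\rho$ turns this into the statement that $\z^{-\rho}Z(\mathfrak{S}_{\z,\lambda,s_iw_1,w_2}) = \mathfrak{T}_i^{\pm1}\bigl(\z^{-\rho}Z(\mathfrak{S}_{\z,\lambda,w_1,w_2})\bigr)$, which matches the recursion for $\phi_{w_1}$ in Theorem~\ref{mainrecursion} verbatim (recalling that $\z^\rho\phi_{w_1}$ and $Z$ transform the same way since conjugation by $\z^\rho$ is precisely what relates $\mathfrak{T}_i$ to $T_i$). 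Thus, starting from the base case and applying the recursion along any path from $w_2$ to $w_1$ in the Weyl group — which exists and gives a well-defined answer by the braid relations — we conclude $Z(\mathfrak{S}_{\z,\lambda,w_1,w_2}) = \z^\rho\phi_{w_1}(\z;\varpi^{-\lambda}w_2)$.

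I expect the main obstacle to be the base case: carefully describing the unique admissible state of $\mathfrak{S}_{\z,\lambda,w_2,w_2}$ for general $w_2 \neq 1$ (showing it exists, is admissible precisely under $w_2$-almost dominance, and is unique), and then extracting its Boltzmann weight in closed form from the fused weights of Figure~\ref{coloredalabw} and matching it against $v^{\ell(w_2)}\z^{\lambda+\rho}$ from Proposition~\ref{basecaseprop}. The uniqueness argument likely proceeds by tracing each colored path inductively from the rightmost column leftward (or by a conservation/counting argument on how many colored edges enter each row), and the weight computation requires tracking which of the six vertex types in Figure~\ref{coloredalabw} occurs at each lattice site.
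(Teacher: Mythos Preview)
Your overall strategy matches the paper's proof exactly: reduce to the base case $w_1=w_2$ via the matching recursions (Proposition~\ref{proposition:funceq} and Theorem~\ref{mainrecursion}, related through $T_i=\z^\rho\mathfrak{T}_i\z^{-\rho}$), then show the system $\mathfrak{S}_{\z,\lambda,w_2,w_2}$ has a unique state whose weight is $v^{\ell(w_2)}\z^{\lambda+\rho}$, matching Proposition~\ref{basecaseprop}.

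However, your description of the base case contains a genuine error. You write that in the unique state the colored paths go ``straight down and then straight right \ldots\ without any two distinct-colored paths needing to cross.'' This is false: every pair of colored paths \emph{does} cross exactly once (the horizontal segment of one path meets the vertical segment of another), and the factor $v^{\ell(w_2)}$ arises precisely from these crossings. In the paper's analysis, the key observation is that the $w_2$-almost dominance condition forces the sequence of colors along the top boundary (read left to right in the monochrome expansion) to agree with the sequence along the right boundary (read top to bottom); this forces a unique state in which each colored line crosses every other exactly once. Consulting the second monochrome weight in Figure~\ref{cvmonochrome}, a crossing of a horizontal color $d$ through a vertical color $c$ contributes a factor $v$ when $c>d$ and $1$ otherwise, and the total number of $v$-contributing crossings is the inversion count $\ell(w_2)$. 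The turning vertices (type $\gammaunco{+}{c}{c}{+}{}$) have weight $1$, not $(1-v)z_i$.

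The paper carries out this base-case analysis in the \emph{monochrome} (expanded) model of Remark~\ref{expandedcollapsed}, which makes the color-ordering argument and the crossing count transparent; you would likely find the computation from the fused weights of Figure~\ref{coloredalabw} more awkward, especially in columns where $\lambda_i+r-i=\lambda_{i+1}+r-i-1$ and two colors share a top edge.
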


\begin{proof}
  Comparing Proposition~\ref{proposition:funceq} and
  Theorem~\ref{mainrecursion} while bearing in mind~(\ref{tconjugation}), both
  sides of~(\ref{zisphi}) satisfy the same recursive formula, so if~(\ref{zisphi}) is true for one value of $w_1$, it is true for all
  $w_1$. Thus we may assume that $w_1 = w_2$.
  
  We will show that when $w_1 = w_2 = w$ the system has a unique state. We
  will use the monochrome model, in which each vertical edge has been broken
  into $r$ distinct vertical edges, and the color $c$, if we identify $c$ with
  an integer $1 \leqslant c \leqslant r$ can only be carried by the $c$-th
  such vertical edge. The following argument shows that the condition that $\lambda$ is $w_2$-dominant
  implies that the sequence of colors on the top boundary edges are the same
  as the sequence of colors on the right boundary edges. By definition of
  $\mathfrak{S}_{\mathbf{z}, \lambda, w, w}$ the sequence of colors on the
  right edge are $r + 1 - w^{- 1} (i)$. On the top edge, the color in the
  $\lambda_i + r - i$ column is also $r + 1 - w^{- 1} (i)$, and the sequence
  of integers $\lambda_i + r - i$ is weakly decreasing. Since columns are
  labeled in decreasing order, we see that if the $\lambda_i + r - i$ are
  distinct, then the colors are in the same order on the top boundary and on the
  right boundary, as claimed. But we must consider what happens if several
  $\lambda_i + r - i$ are equal, as in Figure~\ref{oceanmisty}. If $\lambda_i + r - i =
  \lambda_{i + 1} + r - (i + 1)$ then $\langle \lambda, \alpha_i^{\vee}
  \rangle = - 1$ so our condition that $\lambda$ is $w$-almost dominant
  implies that $w^{- 1} \alpha_i$ is a negative root. Therefore $w^{- 1} (i) >
  w^{- 1} (i + 1)$ and so the colors on the right edge in rows $i, i + 1$ are
  $r + 1 - w^{- 1} (i) < r + 1 - w^{- 1} (i + 1)$. Now let us see that this
  agrees with the condition for the top boundary. Indeed, when we split the vertices
  into monochrome vertices as in Convention~\ref{monochromedescriptions}, they are in
  increasing order.
  
  We have shown that the colors of the top boundary edges of the monochrome model are
  in the same order as those of the right boundary edges. From this it is easily
  deduced that there is only one possible state, and that every colored line
  crosses every other colored line (exactly once). We need to consider the Boltzmann weights
  that arise from these crossings. Consulting the second case in Figure~\ref{cvmonochrome}
  we see that when $c > d$, the crossing produces a factor of $v$, otherwise
  it does not. The total number of such crossings is the number of inversions
  of $w^{- 1}$, that is $\ell (w)$. Also in the $i$-th row, using the fused
  (non-monochrome) description, the number of factors $z_i$ will be the
  number of vertices with a colored edge to the left, which will be
  $z_i^{\lambda_i + r - i}$. Therefore the Boltzmann weight of the state is
  therefore $v^{\ell (w)} \mathbf{z}^{\lambda + \rho}$. By Proposition~\ref{basecaseprop}
  this equals $\mathbf{z}^{\rho} \phi_w (\mathbf{z}, \varpi^{- \lambda} w)$,
  and this concludes the proof.
\end{proof}

\begin{figure}[htb]
  \begin{centering}
  \begin{tikzpicture}[xscale=1.2, yscale=0.75, font=\small]
    \icegrid{4}{3}
    \draw[white,line width=0.5mm] (5,3) to (5,6);
    \draw [red, path] 
    (1,6) node {$3$} to [out=-90,in=180,looseness=1.5]
    (2,5) node {$3$} -- 
    (4,5) node {$3$} -- 
    (6,5) node {$3$} -- 
    (8,5) node {$3$} -- 
    (10,5) node {$3$};

    \draw [blue, path] (5+0.075,6) -- (5+0.075,2);
    \draw [darkgreen, path] (5-0.075,6) -- (5-0.075,4) to [out=-90,in=180,looseness=2.0] (6,3);
    
    \draw [BlueGreen, invisiblepath] 
    (5,6) node[draw=BlueGreen] {$\scriptstyle 1,2$} -- 
    (5,4) node[draw=BlueGreen] {$\scriptstyle 1,2$};

    \draw [blue, path]
    (5,2) node{$2$} to [out=-90,in=180,looseness=1.5] 
    (6,1) node{$2$} -- 
    (8,1) node{$2$} -- 
    (10,1) node{$2$};

    \draw[darkgreen, path] 
    (6,3) node {$1$} --
    (8,3) node {$1$} --
    (10,3) node {$1$};

  \end{tikzpicture} \\[2em]

\begin{tikzpicture}[xscale=0.4, yscale=0.75, font=\small]

      \icegridnocolumnlabels{14}{3}

      \draw[red, path]
      (5,6) node {$3$} to [out=-90,in=180,looseness=2]
      (6,5) node {$3$} --
      (8,5) node {$3$} --
      (10,5) node {$3$} -- 
      (12,5) node {$3$} -- 
      (14,5) node {$3$} -- 
      (16,5) node {$3$} -- 
      (18,5) node {$3$} -- 
      (20,5) node {$3$} -- 
      (22,5) node {$3$} -- 
      (24,5) node {$3$} -- 
      (26,5) node {$3$} -- 
      (28,5) node {$3$} -- 
      (30,5) node {$3$};

      \draw[blue, path]
      (15,6) node {$2$} --
      (15,4) node {$2$} --
      (15,2) node {$2$} to [out=-90,in=180,looseness=2]
      (16,1) node {$2$} --
      (18,1) node {$2$} --
      (20,1) node {$2$} --
      (22,1) node {$2$} --
      (24,1) node {$2$} --
      (26,1) node {$2$} --
      (28,1) node {$2$} --
      (30,1) node {$2$};

      \draw[darkgreen, path]
      (13,6) node {$1$} --
      (13,4) node {$1$} to [out=-90,in=180,looseness=2]
      (14,3) node {$1$} --
      (16,3) node {$1$} --
      (18,3) node {$1$} --
      (20,3) node {$1$} --
      (22,3) node {$1$} --
      (24,3) node {$1$} --
      (26,3) node {$1$} --
      (28,3) node {$1$} --
      (30,3) node {$1$};

      \foreach \i in {0,...,4}{
        \pgfmathtruncatemacro\col{int(4-\i)}
        \draw[line width=0.5pt, decoration={brace},decorate] (6*\i+0.2,7) -- node[above=6pt] {$\col$} (6*\i+6-0.2,7);
        \node[darkgreen] at (6*\i+1,6.65) {$1$};
        \node[blue] at (6*\i+3,6.65) {$2$};
        \node[red] at (6*\i+5,6.65) {$3$};

      }
  \end{tikzpicture}
  
  \end{centering}
  
  \caption{
    The unique state of $\mathfrak{S}_{\z,\lambda, w_1, w_2}$ for $G=\GL_3$ with $w_1=w_2=s_2$ in $W=S_3$ and $\lambda=(2,1,2)$ shown with fused vertices (top) and monochrome vertices (bottom). 
    Note that while $\lambda$ is not dominant, it is $w_2$-almost dominant. 
    The partition function $vz_1^4z_2^2z_3^2$ of this system equals $\mathbf{z}^\rho \phi_{s_2}(\varpi^{-\lambda}s_2)$.
    \label{oceanmisty} }
\end{figure}

\begin{proposition}
  \label{bijectprop}
  Let $\mu' \in \mathbb{Z}^r$. There
  exists a unique pair $(w, \lambda)$, with $w \in W$ and $\lambda$ a $w$-almost dominant weight, such that
  \[ w(\mu')= \lambda +\rho. \]
\end{proposition}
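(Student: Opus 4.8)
The plan is to reduce the condition on $\lambda$ to a sorting condition on the permuted vector $\mu := w(\mu')$, and then to observe that this sorting is achieved by a unique permutation once a tie-breaking rule is fixed. Write $\mu = \lambda+\rho$, so that $\mu_i = \lambda_i + (r-i)$ and hence $\mu_i - \mu_{i+1} = \langle\alpha_i^\vee,\lambda\rangle + 1$. Since for $\GL_r$ we have $w^{-1}\alpha_i\in\Delta^+$ precisely when $w^{-1}(i) < w^{-1}(i+1)$, Definition~\ref{def:almost} translates into: $\mu_i > \mu_{i+1}$ whenever $w^{-1}(i) < w^{-1}(i+1)$, while $\mu_i \geqslant \mu_{i+1}$ whenever $w^{-1}(i) > w^{-1}(i+1)$. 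In particular $\mu$ is always weakly decreasing, and $\mu_i = \mu_{i+1}$ forces $w^{-1}(i) > w^{-1}(i+1)$; conversely, a weakly decreasing $\mu$ with this last implication always comes from a $w$-almost dominant $\lambda := \mu-\rho$.

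For existence I would order the index set $\{1,\ldots,r\}$ by declaring $j$ to precede $k$ iff $(\mu'_j, j) > (\mu'_k, k)$ in the lexicographic order on $\mathbb{Z}^2$; this is a strict total order because $j\mapsto(\mu'_j,j)$ is injective. Let $\sigma\in S_r$ enumerate the indices in this order, so $(\mu'_{\sigma(1)},\sigma(1)) > \cdots > (\mu'_{\sigma(r)},\sigma(r))$, and set $w := \sigma^{-1}$, $\mu := w(\mu')$ (so $\mu_i = \mu'_{\sigma(i)}$), and $\lambda := \mu-\rho$. Comparing first coordinates gives $\mu_1 \geqslant \cdots \geqslant \mu_r$, and when $\mu_i = \mu_{i+1}$ the first coordinates agree, so the inequality of pairs forces $\sigma(i) > \sigma(i+1)$, i.e. $w^{-1}(i) > w^{-1}(i+1)$. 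By the translation above, $\lambda$ is $w$-almost dominant, and $w(\mu') = \lambda+\rho$ by construction.

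For uniqueness, suppose $(\tilde w,\tilde\lambda)$ also works. Put $\tilde\sigma := \tilde w^{-1}$ and $\tilde\mu := \tilde w(\mu') = \tilde\lambda+\rho$. The translation shows $\tilde\mu$ is weakly decreasing and that $\tilde\mu_i = \tilde\mu_{i+1}$ implies $\tilde\sigma(i) > \tilde\sigma(i+1)$; in either case one gets $(\mu'_{\tilde\sigma(i)},\tilde\sigma(i)) = (\tilde\mu_i,\tilde\sigma(i)) > (\tilde\mu_{i+1},\tilde\sigma(i+1)) = (\mu'_{\tilde\sigma(i+1)},\tilde\sigma(i+1))$ lexicographically, for every $i$. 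Thus $\tilde\sigma$ enumerates the indices in the very order that defined $\sigma$, hence $\tilde\sigma = \sigma$, so $\tilde w = w$ and $\tilde\lambda = w(\mu')-\rho = \lambda$.

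I do not expect a genuine obstacle here; the only delicate point is the direction of the tie-breaking rule, namely that among equal entries of $\mu'$ the one coming from the \emph{larger} index must be placed first. This asymmetry is exactly what is built into Definition~\ref{def:almost}, where the strict inequality $\langle\alpha_i^\vee,\lambda\rangle\geqslant 0$ is attached to the case $w^{-1}(i) < w^{-1}(i+1)$; getting this orientation right is the whole content of the argument, and everything else is the elementary fact that a lexicographic order on pairs is a total order.
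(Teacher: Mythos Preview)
Your proof is correct. Both you and the paper begin by observing that $\lambda+\rho$ must be the weakly decreasing rearrangement of $\mu'$, and then isolate the unique $w$ among those achieving this rearrangement by the tie-breaking condition forced by Definition~\ref{def:almost}. The difference is one of presentation: you construct $w$ explicitly as the inverse of the permutation that sorts the pairs $(\mu'_j,j)$ in decreasing lexicographic order, whereas the paper characterizes $w$ abstractly as the longest element in the coset $\{w : w(\mu')=\lambda+\rho\}$, using the fact that the $w$-almost dominant condition forces every generator $s_i$ of the stabilizer of $\lambda+\rho$ to be a left descent of $w$. Your approach is more concrete and self-contained for $\GL_r$; the paper's phrasing sits more naturally in Coxeter-group language and makes the connection with length and descents explicit, which resonates with how the condition is used elsewhere (e.g.\ Proposition~\ref{nodupcolors}). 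Either way the content is the same: among equal entries of $\mu'$, the one with the larger original index must be placed first, and this pins down $w$ uniquely.
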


\begin{proof}
  We may find $w$ and $\lambda = (\lambda_1, \cdots,
  \lambda_r)$ such that $w(\mu')= \lambda+\rho$ and $\lambda+\rho$
  is dominant. Clearly $\lambda$ is unique but $w$ may not be if we only
  require $w(\mu')$ to be dominant. However the stronger condition
  that $w(\mu')-\rho$ is $w$-almost dominant will force $w$ to be unique as follows.
  
  We recall that $w^{- 1} \alpha_i \in \Delta^+$ if and only if
  $\ell (s_i w) > \ell (w)$. The $w$ such that $w(\mu')=\lambda+\rho$
  lie in a single left coset of the stabilizer of $\lambda+\rho$,
  which is a Coxeter group generated by the $s_i$ such that $\lambda_i+1=\lambda_{i+1}$.
  But the condition that $w(\mu')-\rho$ is $w$-almost dominant is equivalent to
  the assumption that $\ell (s_i w) < \ell (w)$ whenever $\lambda_i+1 =
  \lambda_{i+1}$. So this condition means that any $s_i$ among
  the generators of this stabilizer is a left
  descent of $w$. Thus clearly there is a unique $w$ in this coset
  such that $\lambda$ is $w$-almost dominant, and that is the longest
  element of~$W$ such that $w (\mu') = \lambda+\rho$.
\end{proof}

\begin{remark}\label{bijectremark}
  As noted above, the `standard basis' of Iwahori Whittaker functions $\phi_{w_1}$ are determined by their values at $\varpi^{-\lambda} w_2$. 
  We have shown in Theorem~\ref{coloredwhittaker}
  that these values are partition functions of
  certain systems $\mathfrak{S}_{\mathbf{z},\lambda,w_1,w_2}$. Proposition~\ref{bijectprop}
  shows that the partition function of every Iwahori system is a value of an Iwahori Whittaker function.
  Indeed, the data describing the system are colorings of
  the top and right boundary edges. In other words, the data are
  two maps from
  the set of colors to the top boundary edges (labeled by
  columns) and to the right boundary edges (labeled by rows). The map to
  rows is bijective but the map to columns can be any map;
  as in Figure~\ref{oceanmisty} it does not need to be
  injective. Let $\mu'_i$ be the column corresponding
  to the $(r+1-i)$-th color. Applying Proposition~\ref{bijectprop}
  to $\mu'=(\mu'_1,\cdots,\mu'_r)$ produces a pair
  $(w_2,\lambda)$ such that $\lambda$ is $w_2$-almost dominant and $w_2 \mu' = \lambda + \rho$. In column $\lambda_i + \rho_i = (w_2 \mu')_i = \mu'_{w_2^{-1}(i)}$ we then have the color $r+1-w_2^{-1}(i)$ exactly as specified for the top boundary edges in Figure~\ref{fig:boundary}. Thus, from every $\mu'$ we obtain a system $\mathfrak{S}_{\mathbf{z},\lambda,w_1,w_2}$
  with $w_1$ determined by the permutation of colors on the right edge.
\end{remark}

\section{\label{sec:parahoric_systems}The parahoric lattice model}

In this section we will generalize the Iwahori lattice model to allow
multiple colored lines of the same color. This allows us to represent
parahoric Whittaker functions as partition functions. See
Figure~\ref{figureparahoricstates} for an example.

We will call a sequence of $r$ colors a \emph{flag}.  The boundary conditions
of the colored systems are represented by two flags: one on the top edge, and
one on the right edge, which is a permutation of the former.  A flag
$\mathbf{c}= (\gamma_1, \ldots, \gamma_r)$ is called {\textit{standard}} if
$\gamma_1 > \cdots > \gamma_r$, and {\textit{semistandard}} if $\gamma_1
\geqslant \cdots \geqslant \gamma_r$. Since we have a palette of $r$ colors there is a
unique standard flag, and if, as in Section~\ref{sec:some-YBEs}, we identify
the colors with integers $1 \leqslant c \leqslant r$, then the unique standard flag is
$(r, r - 1, \ldots, 1)$. Any flag may be represented as $w\mathbf{c}=
(\gamma_{w^{- 1} (1)}, \ldots, \gamma_{w^{- 1} (r)})$, where $\mathbf{c}$ is a
semistandard flag and $w \in W = S_r$. For the Iwahori systems in Section~\ref{coloredsystems} every color
appeared exactly once on the top boundary and on the right boundary,
meaning the boundaries were represented by permutations $w_1, w_2 \in W$ of a
standard flag $\mathbf{c}$.
For the parahoric systems these boundaries will instead be represented by permutations of a semistandard flag.

If $\mathbf{c}$ is a semistandard flag then we may write
\begin{equation}
  \label{eq:semistandard-flag}
  \mathbf{c}= (\gamma_1, \cdots, \gamma_r) = (c_1^{r_1}, c_2^{r_2}, \cdots,
   c_k^{r_k}) 
\end{equation}
with $c_1 > \cdots > c_k$ and $\sum r_i = r$. Here the notation means that
we have $r_1$ copies of $c_1$ followed by $r_2$ copies of $c_2$, and so
forth. The stabilizer of this flag in $W$ is the parabolic subgroup
$W_{\mathbf{J}} = S_{r_1} \times \cdots \times S_{r_k}$, which is
the Weyl group of $\GL_{r_1} \times \cdots \times \GL_{r_k}$. Here, using the
notation of Section~\ref{sec:parahoric}, $\mathbf{J}$ is the index set of simple
reflections generating this subgroup of~$W$, that is, all simple reflections except $s_{r_1}, s_{r_1 +
r_2}, \cdots$. If a general flag is written $w\mathbf{c}$ with $\mathbf{c}=
(c_1^{r_1}, c_2^{r_2}, \ldots, c_k^{r_k})$ then we may choose the representative
$w$ to be in $W^{\mathbf{J}}$, meaning that it is the shortest element of the coset
$w W_{\mathbf{J}}$.

\begin{figure}[tb]
  \begin{centering}
\begin{tikzpicture}[scale=0.7, font=\small]
    \newcommand\eps{0.15}
    \icegrid{4}{3}
    \draw [red, path] 
    (1,6) node {$3$} to [out=-90,in=180,looseness=2.0]
    (2,5) node {$3$} -- 
    (4,5) node {$3$} -- 
    (6,5) node {$3$} to [out=0, in=90, looseness=2.0]
    (7,4) node {$3$} to [out=-90, in=180, looseness=2.0]
    (8,3) node {$3$} --
    (10,3) node {$3$};
    
    \draw [blue, path]
    (5,6) node{$2$} --
    (5,4) node{$2$} to [out=-90, in=180, looseness=2.0]
    (6,3) node{$2$} to [out=0, in=90, looseness=2.0]
    (7,2) node{$2$} to [out=-90, in=180,looseness=2.0]
    (8,1) node{$2$} -- 
    (10,1) node{$2$};

    \draw [red, path]
    (9,6) node{$3$} to [out=-90, in=180, looseness=2.0]
    (10,5) node{$3$};
  \end{tikzpicture}
  \hfill
  \begin{tikzpicture}[scale=0.7, font=\small]
    \newcommand\eps{0.15}
    \icegridnorowlabels{4}{3}
    \draw [red, path] 
    (1,6) node {$3$} to [out=-90,in=180,looseness=2.0]
    (2,5) node {$3$} -- 
    (4,5) node {$3$} -- 
    (6,5) node {$3$} -- 
    (8,5) node {$3$} -- 
    (10,5) node {$3$};

    \draw [blue, path] (5,6) -- (5,2);
    \draw [red, path] (5+\eps,6) -- (5+\eps,4) to [out=-90,in=180,looseness=2.0] (6,3);
    
    \draw [purple, invisiblepath] 
    (5,6) node {$\scriptstyle 2,3$} -- 
    (5,4) node {$\scriptstyle 2,3$};

    \draw [blue, path]
    (5,2) node{$2$} to [out=-90,in=180,looseness=2.0] 
    (6,1) node{$2$} -- 
    (8,1) node{$2$} -- 
    (10,1) node{$2$};

    \draw[red, path] 
    (6,3) node {$3$} --
    (8,3) node {$3$} --
    (10,3) node {$3$};
  \end{tikzpicture}
\end{centering}
  \caption{Two states for parahoric Whittaker functions corresponding to the
  Levi subgroup $\GL_2 \times \GL_1 \subset \GL_3$. The set of
  simple reflections in this Levi subgroup indexed by $\mathbf{J}$ is $\{s_1\}$.
  On the left we have a state for the system
  $\mathfrak{S}_{\mathbf{z}, \lambda, w_1,w_2}^{\mathbf{J}}$ with $\lambda=
  (2,1,0)$, $w_1=1$ and $w_2=s_2$, and on the right we have a state for the system with
  $\lambda=(2,2,1)$, $w_1=1$ and $w_2=s_2$.}
  \label{figureparahoricstates}
\end{figure}    

We now explain the parahoric lattice model generalizing the Iwahori lattice model of Section~\ref{coloredsystems}. 
Let $\mathbf{c}$ be a semistandard flag parametrized as above and let
$w_1, w_2 \in W^{\mathbf{J}}$.
Assume that $\lambda$ is $w_2$-almost dominant. 

To this data we associate a parahoric system $\mathfrak{S}_{\mathbf{z},\lambda,w_1, w_2}^{\mathbf{J}}$ as follows.
We start from the same construction as for the Iwahori lattice model, with the only difference being which boundary conditions we allow.
We take the top boundary edge spin in column $\lambda_i + r - i$ to be
the color $(w_2 \mathbf{c})_i = \gamma_{w_2^{- 1} (i)}$; the remaining top boundary
edge spins are $\plus$. The right boundary spin in row $i$ is $(w_1 \mathbf{c})_i =
\gamma_{w_1^{- 1} (i)}$. The boundary spins on the left and bottom edges are $\plus$.
For the interior, we use the same Boltzmann weights in Figure~\ref{coloredalabw} as before.
Note that if $\mathbf{c}$ is a standard flag we recover the Iwahori lattice model from the parahoric model.

Recall that our `fermionic' Boltzmann weights do not allow more than one instance
of the same color on a given vertical edge.
The following proposition implies that this requirement is satisfied for the top boundary edges (and therefore for all vertical edges).

\begin{proposition}
  \label{nodupcolors}Suppose that $w_2 \in W^{\mathbf{J}}$, and assume that
  $\lambda$ is a $w_2$-almost dominant weight. If $\lambda_i + r - i = \lambda_j + r -
  j$ then $\gamma_{w_2^{- 1} (i)} \neq \gamma_{w_2^{- 1} (j)}$.
\end{proposition}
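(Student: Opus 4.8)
The plan is to reduce the claim to a statement about the Bruhat-order descent structure of $w_2$, exactly parallel to the argument already carried out inside the proof of Theorem~\ref{coloredwhittaker}. First I would observe that the hypothesis $\lambda_i + r - i = \lambda_j + r - j$ with, say, $i < j$, forces $\lambda_i - \lambda_j = j - i$, and by telescoping this means $\langle \alpha_k^\vee, \lambda\rangle = -1$ for every $k$ with $i \leqslant k < j$. Since $\lambda$ is $w_2$-almost dominant, Definition~\ref{def:almost} then gives $w_2^{-1}\alpha_k \in \Delta^-$ for each such $k$, i.e.\ $s_k$ is a left descent of $w_2$ for all $i \leqslant k < j$ (using the standard fact $w^{-1}\alpha_k \in \Delta^- \iff \ell(s_k w) < \ell(w)$, as invoked in Proposition~\ref{bijectprop}).

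Next I would translate these descent conditions into a statement about the values $w_2^{-1}(i), w_2^{-1}(i+1), \ldots, w_2^{-1}(j)$. For a permutation $w_2 \in S_r$, the condition that $s_k$ is a left descent, equivalently $w_2^{-1}\alpha_k \in \Delta^-$, is precisely $w_2^{-1}(k) > w_2^{-1}(k+1)$. Applying this for each $k$ from $i$ to $j-1$ yields the chain $w_2^{-1}(i) > w_2^{-1}(i+1) > \cdots > w_2^{-1}(j)$. In particular $w_2^{-1}(i) \neq w_2^{-1}(j)$, and more is true: the values are strictly decreasing along this block of rows.

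Finally I would combine this with the structure of the semistandard flag $\mathbf{c} = (c_1^{r_1}, \ldots, c_k^{r_k})$ and the assumption $w_2 \in W^{\mathbf{J}}$. The flag entry $\gamma_m$ depends only on which block $m$ lies in; two indices $m, m'$ give $\gamma_m = \gamma_{m'}$ iff they lie in the same block $W_{\mathbf{J}}$-orbit. So I must rule out that $w_2^{-1}(i)$ and $w_2^{-1}(j)$ land in the same block while being distinct. Here is where $w_2 \in W^{\mathbf{J}}$ enters: a minimal-length coset representative $w_2 \in W^{\mathbf{J}}$ satisfies $w_2 s_l > w_2$ for all $l \in \mathbf{J}$, which is equivalent to $w_2$ being increasing on each block of $\{1,\ldots,r\}$ determined by $\mathbf{J}$ — equivalently $w_2^{-1}$ maps each such block to an increasing (within-block) sequence, but more relevantly: if $w_2^{-1}(i)$ and $w_2^{-1}(j)$ were in the same block with $w_2^{-1}(i) > w_2^{-1}(j)$, then $w_2$ restricted to that block would fail to be order-preserving, contradicting $w_2 \in W^{\mathbf{J}}$. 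Since we showed $w_2^{-1}(i) > w_2^{-1}(j)$ strictly, they cannot lie in a common block, hence $\gamma_{w_2^{-1}(i)} \neq \gamma_{w_2^{-1}(j)}$, as desired.

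The main obstacle I anticipate is getting the block/coset-representative bookkeeping exactly right: one must be careful about whether $w_2 \in W^{\mathbf{J}}$ constrains $w_2$ or $w_2^{-1}$ to be block-increasing, and about the direction of the descent inequalities. The cleanest route is probably to phrase everything in terms of the characterization $w \in W^{\mathbf{J}} \iff \ell(ws_l) > \ell(w) \text{ for } l \in \mathbf{J}$, unwind this to "$w(a) < w(b)$ whenever $a < b$ lie in the same $\mathbf{J}$-block," and then apply it to $a = w_2^{-1}(j) < b = w_2^{-1}(i)$ to derive the contradiction $j < i$; the rest is the elementary telescoping of the $\langle\alpha_k^\vee,\lambda\rangle$ computation, which needs no real work.
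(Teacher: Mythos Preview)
Your argument is correct and follows essentially the same route as the paper: both telescope the equality $\lambda_i+r-i=\lambda_j+r-j$ to obtain $\langle\alpha_k^\vee,\lambda\rangle=-1$ for $i\leqslant k<j$, invoke $w_2$-almost dominance to make each $w_2^{-1}\alpha_k$ negative, and then use $w_2\in W^{\mathbf{J}}$ to rule out $w_2^{-1}(i),w_2^{-1}(j)$ landing in the same $\mathbf{J}$-block. The only cosmetic difference is that the paper packages the final step via the transposition $t_{ij}$ and a length inequality (citing Bj\"orner--Brenti), whereas you work directly with the block-increasing characterization of minimal coset representatives in $S_r$; your formulation is slightly more elementary for the symmetric group but logically identical.
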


\begin{proof}
  If $1 \leqslant i, j \leqslant r$ let $t_{ij}$ denote the transposition in
  $W = S_r$ that interchanges $i$ and $j$. Then obviously
  \begin{equation}
    \label{transpocriterion} \gamma_i = \gamma_j \hspace{0.27em}
    \Longleftrightarrow \hspace{0.27em} t_{ij} \in W_{\mathbf{J}}
    \hspace{0.27em} .
  \end{equation}
  Without loss of generality assume $i < j$. If $\lambda$ is $w_2$-almost
  dominant then $\lambda + \rho$ is dominant, so the sequence $\lambda_i + r -
  i$ is monotone nonincreasing. Therefore
  \[ \lambda_i + r - i = \lambda_{i + 1} + r - i - 1 = \cdots = \lambda_j + r
     - j \]
  and so $\langle \lambda, \alpha_i^{\vee} \rangle = \langle \lambda,
  \alpha_{i + 1}^{\vee} \rangle = \cdots = \langle \lambda, \alpha_{j -
  1}^{\vee} \rangle = - 1$. Because $\lambda$ is $w_2$-almost dominant, it
  follows that $w_2^{- 1} (\alpha_i), \cdots, w_2^{- 1} (\alpha_{j - 1})$ are
  all negative roots. Thus $w_2^{- 1}  (\alpha_i + \alpha_{i + 1} + \ldots +
  \alpha_{j - 1})$ is a negative root. Now the reflection in the hyperplane
  orthogonal to the root $\alpha_i + \alpha_{i + 1} + \ldots + \alpha_{j - 1}$
  is $t_{ij}$ so by~{\cite{BjornerBrenti}} Proposition~4.4.6 we have $\ell
  (w_2^{- 1} t_{ij}) < \ell (w_2^{- 1})$, or equivalently $\ell (t_{ij} w_2) <
  \ell (w_2)$. Now $t_{ij} w_2 = w_2 t_{w_2^{- 1} (i), w_2^{- 1} (j)}$. We
  claim that this implies that $t_{w_2^{- 1} (i), w_2^{- 1} (j)} \not\in
  W_{\mathbf{J}}$. Indeed, $w_2 \in W^{\mathbf{J}}$ so $w_2$ is the shortest
  element in the coset $w_2 W_{\mathbf{J}}$. Since we have shown that $\ell
  (w_2 t_{w_2^{- 1} (i), w_2^{- 1} (j)}) < \ell (w_2)$, this would be a
  contradiction if $t_{w_2^{- 1} (i), w_2^{- 1} (j)} \in W_{\mathbf{J}}$. It
  now follows from (\ref{transpocriterion}) that $\gamma_{w_2^{- 1} (i)} \neq
  \gamma_{w_2^{- 1} (j)}$.
\end{proof}

\begin{remark}
  \label{rem:reparam}We observe that the Boltzmann weights of Figure~\ref{coloredalabw} are constructed from the monochrome weights of Figure~\ref{cvmonochrome} and that the monochrome weights 
  involving more than one color only depend on the orders of the colors.
  Therefore if we choose another set of colors such that $c_1' > \cdots >
  c_k'$ and replace each color $c_i$ by $c_i'$, the partition function is
  unchanged. We refer to this type of transformation as
  \textit{reparametrization}. Because of this, we are permitted to omit the
  colors $c_1, \ldots, c_k$ from the notation
  $\mathfrak{S}_{\mathbf{z}, \lambda,w_1, w_2}^{\mathbf{J}}$. 
\end{remark}

We may now state the main theorem of this section that relates a parahoric Whittaker function $\psi_{w_1}^{\mathbf{J}}$ of
Section~\ref{sec:parahoric} to the partition function of a parahoric system.

\begin{theorem}
  \label{thm:parahoric-lattice-model}Assume that $w_1, w_2 \in W^{\mathbf{J}}$
  and that $\lambda$ is $w_2$-almost dominant. Then
  \begin{equation}
    \label{eq:parpar} Z (\mathfrak{S}_{\mathbf{z}, \lambda, w_1,
    w_2}^{\mathbf{J}}) = \mathbf{z}^{\rho} \psi_{w_1}  (\mathbf{z} ; \varpi^{-
    \lambda} w_2) .
  \end{equation}
\end{theorem}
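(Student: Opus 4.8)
The plan is to follow the proof of Theorem~\ref{coloredwhittaker}. Fix $w_2\in W^{\mathbf{J}}$ and a $w_2$-almost dominant $\lambda$, and for $w_1\in W^{\mathbf{J}}$ set $F(w_1)=Z(\mathfrak{S}^{\mathbf{J}}_{\mathbf{z},\lambda,w_1,w_2})$ and $G(w_1)=\sum_{y\in W_{\mathbf{J}}}Z(\mathfrak{S}_{\mathbf{z},\lambda,w_1 y,w_2})$. Since $\lambda$ is $w_2$-almost dominant, Theorem~\ref{coloredwhittaker} applies to each Iwahori system in this sum, so by~\eqref{eq:psidef} one has $G(w_1)=\mathbf{z}^{\rho}\sum_{y\in W_{\mathbf{J}}}\phi_{w_1 y}(\mathbf{z};\varpi^{-\lambda}w_2)=\mathbf{z}^{\rho}\psi^{\mathbf{J}}_{w_1}(\mathbf{z};\varpi^{-\lambda}w_2)$, where $\psi^{\mathbf{J}}_{w_1}$ is as in~\eqref{eq:psidef}. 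Thus the theorem is precisely the assertion that $F=G$ on $W^{\mathbf{J}}$, and I would prove it by showing that $F$ and $G$ satisfy the same recursion in the variable $w_1$ and then checking the single base case $w_1=w_2$.

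For the recursion, suppose $w_1,s_iw_1\in W^{\mathbf{J}}$. By the last assertion of Proposition~\ref{parahoric_firstcase} this forces $w_1^{-1}s_iw_1\notin W_{\mathbf{J}}$, so by the criterion~\eqref{transpocriterion} the right-boundary colors $\gamma_{w_1^{-1}(i)}$ and $\gamma_{w_1^{-1}(i+1)}$ in rows $i$ and $i+1$ of $\mathfrak{S}^{\mathbf{J}}_{\mathbf{z},\lambda,w_1,w_2}$ are distinct. One then runs the argument of Proposition~\ref{proposition:funceq} verbatim: attach the $R$-vertex of Figure~\ref{coloredrmat} between rows $i$ and $i+1$, drag it to the left edge by the colored Yang-Baxter equation (Theorem~\ref{thm:coloredybe}), and read off the three-term identity~\eqref{predemazure}, using only the distinct-color entries of Figure~\ref{coloredrmat}. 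This gives
\[
F(s_iw_1)=\begin{cases}T_iF(w_1)&\text{if }\ell(s_iw_1)>\ell(w_1),\\ T_i^{-1}F(w_1)&\text{if }\ell(s_iw_1)<\ell(w_1),\end{cases}
\]
with $T_i$ as in~\eqref{eq:demazurewhittaker}. On the other side, Proposition~\ref{proposition:funceq} gives $Z(\mathfrak{S}_{\mathbf{z},\lambda,(s_iw_1)y,w_2})=T_i^{\pm1}Z(\mathfrak{S}_{\mathbf{z},\lambda,w_1y,w_2})$ for each $y$, and Lemma~\ref{lem:bruhatfacts} (``$s_iw_1>w_1$ implies $s_iw_1y>w_1y$'') shows the sign is the same for every $y\in W_{\mathbf{J}}$, so $G$ obeys the identical recursion; equivalently this is Proposition~\ref{parahoric_secondcase} together with~\eqref{tconjugation}. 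Finally, a left descent $s_i$ of any $w_1\in W^{\mathbf{J}}\setminus\{1\}$ again satisfies $w_1^{-1}s_iw_1\notin W_{\mathbf{J}}$ (Proposition~\ref{parahoric_firstcase}), hence $s_iw_1\in W^{\mathbf{J}}$ with smaller length (Lemma~\ref{lem:bruhatfacts}); so every element of $W^{\mathbf{J}}$ is joined to $w_2$ by such steps, and it remains only to prove $F(w_2)=G(w_2)$.

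This base case is where the real work lies. Recall from the proof of Theorem~\ref{coloredwhittaker} that the $y=1$ system $\mathfrak{S}_{\mathbf{z},\lambda,w_2,w_2}$ has a unique state, of weight $v^{\ell(w_2)}\mathbf{z}^{\lambda+\rho}$. I would prove $F(w_2)=G(w_2)$ by passing to the expanded monochrome model and resolving the repeated colors: in a state of $\mathfrak{S}^{\mathbf{J}}_{\mathbf{z},\lambda,w_2,w_2}$, recolor the $r_t$ monochrome lines of each repeated color $c_t$ by the distinct colors of the corresponding block of the refined palette. Proposition~\ref{nodupcolors} makes this refinement well defined along the top boundary, and since $w_2\in W^{\mathbf{J}}$ the induced top boundary is exactly that of the Iwahori systems, so following paths identifies the state with a state of $\mathfrak{S}_{\mathbf{z},\lambda,w_2y,w_2}$ for a unique $y\in W_{\mathbf{J}}$, and conversely. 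The main obstacle is that this bijection of state sets is \emph{not} weight-preserving vertex by vertex: two lines of a common color can cross more than once, and a same-color crossing (monochrome weight $z_i$, the $d=c$ entry of Figure~\ref{cvmonochrome}) is sent to a distinct-color crossing carrying a spurious factor $v$ or $v^{-1}$, according to the order of the two refined colors. So one must track how $\prod_{\text{vertices}}\beta$ changes under the resolution and show that these discrepancies cancel once one sums over the fibres of the map, i.e. over $y\in W_{\mathbf{J}}$. I expect this cancellation to be controlled by the explicit powers of $v$ in the closed-form weights of Figure~\ref{coloredalabw}, combined with Proposition~\ref{nodupcolors} restricting where repeated colors can occur, and to be organizable through the Hecke quadratic relation $\mathfrak{T}_i^{2}=(v-1)\mathfrak{T}_i+v$ applied inside $W_{\mathbf{J}}$ — the same mechanism that produces $G(w_2)$ from the single ground state of $\mathfrak{S}_{\mathbf{z},\lambda,w_2,w_2}$.
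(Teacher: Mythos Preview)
Your recursion argument reducing to the base case $w_1=w_2$ is correct and clean. The base case, however, is where the content lies, and your sketch for it has a genuine gap: the claimed bijection does not exist.

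The map ``follow the recolored paths'' is only a \emph{section} of the forgetful map $\pi$ that replaces each refined color by the original repeated one. The trouble is that $\pi$ has large fibres. At a parahoric vertex where two lines of the same color $c$ meet (one entering horizontally, one vertically), a resolution into distinct colors $c>c'$ can exit the vertex in two ways---configurations~II and~III of Figure~\ref{IandIIandIII}---so a single parahoric state has \emph{many} strict Iwahori preimages, not one. Worse, the Iwahori side also contains \emph{non-strict} states in which a vertical edge carries both $c$ and $c'$; these have no parahoric counterpart whatsoever, since the fermionic constraint forbids a doubled~$c$. So your ``conversely'' fails on both counts, and what you describe as a weight discrepancy on a bijection is in fact a mismatch of state sets. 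The Hecke quadratic relation you invoke governs $G$ abstractly, but it does not by itself produce the state-level cancellation you need.

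The paper's argument is organized differently and confronts exactly this issue. Rather than resolving all repeated colors at once, it inducts on $|\mathbf{J}|$, peeling off one new color $c'$ from one repeated color $c$ at each step, and proves the one-step identity~\eqref{zkjred} for \emph{every} $w_1$ simultaneously (so your recursion, while valid, is not used). The combinatorial core is Lemma~\ref{lem:parahoric-1-reduction}: summing over the $\pi$-fibre of a parahoric state among the strict Iwahori states reproduces its Boltzmann weight because $\beta(\text{II})+\beta(\text{III})=\beta(\text{I})$ at every crossing (Property~\ref{prop:colorbwrel}); and the non-strict states cancel in pairs because $\beta(\text{IV})+\beta(\text{V})=0$ (Property~\ref{prop:colorcancel}). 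A tree argument (Lemma~\ref{lem:additivity}) propagates these local identities through the lattice. Handling only two adjacent colors at a time is precisely what makes Properties~\ref{prop:colorbwrel} and~\ref{prop:colorcancel} checkable; your direct jump from $\mathbf{J}$ to $\varnothing$ would need a multi-color analogue of both, which is what you are implicitly asking for but not supplying.
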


The proof follows from Lemmas~\ref{lem:jkind} and~\ref{lem:parahoric-1-reduction} below.
We begin with a reduction. Suppose
that $\mathbf{K} \subseteq \mathbf{J}$.  Then $W_{\mathbf{K}} \subseteq W_{\mathbf{J}}$ and $W^{\mathbf{K}} \supseteq W^{\mathbf{J}}$. From the
definition {\eqref{eq:psidef}} of $\psi^{\mathbf{J}}_w$ we have
\begin{equation}
  \label{kjreduction} \psi_{w_1}^{\mathbf{J}} (\mathbf{z} ; g) = \sum_{y \in
  W_{\mathbf{J}} / W_{\mathbf{K}}} \psi_{w_1 y}^{\mathbf{K}} (\mathbf{z} ; g),
\end{equation}
where we may choose the coset representatives $y$ so that $w_1 y \in
W^{\mathbf{K}}$. Thus if the theorem is true, we must have
\begin{equation}
  \label{zkjred} Z (\mathfrak{S}_{\mathbf{z}, \lambda, w_1, w_2}^{\mathbf{J}})
  = \sum_{y \in W_{\mathbf{J}} / W_{\mathbf{K}}} Z (\mathfrak{S}_{\mathbf{z},
  \lambda, w_1 y, w_2}^{\mathbf{K}}) .
\end{equation}
For the converse we have the following lemma.
\begin{lemma}
  \label{lem:jkind}Suppose that whenever $\mathbf{J}$ is nonempty, there
  exists a proper subset $\mathbf{K}$ of $\mathbf{J}$ such that \eqref{zkjred}
  is true. Then Theorem~\ref{thm:parahoric-lattice-model} is true.
\end{lemma}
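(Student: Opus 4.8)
The plan is to argue by strong induction on the cardinality of $\mathbf{J}$, using the hypothesis of the lemma at each stage to supply a proper parabolic from which to step down. The inductive statement I would carry along is that \eqref{eq:parpar} holds for the parabolic in question and \emph{every} admissible triple $(w_1,w_2,\lambda)$, i.e.\ $w_1,w_2\in W^{\mathbf{J}}$ with $\lambda$ being $w_2$-almost dominant. This is legitimate because the hypothesis of the lemma is stated globally, for every nonempty subset.

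For the base case $\mathbf{J}=\varnothing$, note that $W_{\mathbf{J}}=\{1\}$, so by \eqref{eq:psidef} we have $\psi^{\varnothing}_{w_1}=\phi_{w_1}$; moreover a semistandard flag whose stabilizer is trivial is (after reparametrization, cf.\ Remark~\ref{rem:reparam}) just the standard flag, so $\mathfrak{S}^{\varnothing}_{\mathbf{z},\lambda,w_1,w_2}$ is precisely the Iwahori system $\mathfrak{S}_{\mathbf{z},\lambda,w_1,w_2}$ of Section~\ref{coloredsystems}. Hence \eqref{eq:parpar} in this case is exactly Theorem~\ref{coloredwhittaker}, already established.

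For the inductive step, fix a nonempty $\mathbf{J}$ and assume \eqref{eq:parpar} for all proper subsets of $\mathbf{J}$. The hypothesis of the lemma furnishes a proper subset $\mathbf{K}\subsetneq\mathbf{J}$ for which \eqref{zkjred} holds. Before applying the inductive hypothesis to the summands on the right of \eqref{zkjred} I would check that each system $\mathfrak{S}^{\mathbf{K}}_{\mathbf{z},\lambda,w_1y,w_2}$ has admissible data: since $\mathbf{K}\subseteq\mathbf{J}$ we have $W^{\mathbf{J}}\subseteq W^{\mathbf{K}}$, so $w_2\in W^{\mathbf{K}}$; the coset representatives $y\in W_{\mathbf{J}}/W_{\mathbf{K}}$ appearing in \eqref{zkjred} are exactly the ones chosen in \eqref{kjreduction} so that $w_1y\in W^{\mathbf{K}}$; and the notion of $w_2$-almost dominance does not involve $\mathbf{J}$. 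The inductive hypothesis therefore applies to each summand, and chaining \eqref{zkjred}, the inductive hypothesis, and \eqref{kjreduction} gives
\[
  Z(\mathfrak{S}^{\mathbf{J}}_{\mathbf{z},\lambda,w_1,w_2})
  =\sum_{y\in W_{\mathbf{J}}/W_{\mathbf{K}}}Z(\mathfrak{S}^{\mathbf{K}}_{\mathbf{z},\lambda,w_1y,w_2})
  =\mathbf{z}^{\rho}\sum_{y\in W_{\mathbf{J}}/W_{\mathbf{K}}}\psi^{\mathbf{K}}_{w_1y}(\mathbf{z};\varpi^{-\lambda}w_2)
  =\mathbf{z}^{\rho}\psi^{\mathbf{J}}_{w_1}(\mathbf{z};\varpi^{-\lambda}w_2),
\]
which closes the induction.

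This argument is purely formal bookkeeping; I expect no genuine obstacle inside the lemma itself. The real content of Theorem~\ref{thm:parahoric-lattice-model} is hidden in the hypothesis, namely in producing the single step-down identity \eqref{zkjred} — the place where the fermionic Boltzmann weights and the lattice-model combinatorics actually enter, and which is carried out in Lemma~\ref{lem:parahoric-1-reduction}. The only point in the present proof deserving a word of care is the change of palette when the semistandard flag for $\mathbf{J}$ is refined to the finer one for $\mathbf{K}$, but by Remark~\ref{rem:reparam} this leaves all partition functions unchanged, so it is harmless.
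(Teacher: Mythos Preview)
Your proof is correct and follows essentially the same approach as the paper: induction on $|\mathbf{J}|$ with base case $\mathbf{J}=\varnothing$ given by Theorem~\ref{coloredwhittaker}, and the inductive step obtained by combining \eqref{zkjred}, the inductive hypothesis for $\mathbf{K}$, and \eqref{kjreduction}. The extra care you take in verifying that $w_1y\in W^{\mathbf{K}}$ and $w_2\in W^{\mathbf{K}}$ is a welcome clarification but does not change the structure of the argument.
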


\begin{proof}
  We may prove (\ref{eq:parpar}) by induction. For the base case $\mathbf{J} =
  \varnothing$, (\ref{eq:parpar}) is true by Theorem~\ref{coloredwhittaker}
  because $\psi^{\varnothing}_w = \phi_w$ and $\mathfrak{S}_{\mathbf{z},
  \lambda, w_1, w_2}^{\varnothing} =\mathfrak{S}_{\mathbf{z}, \lambda, w_1,
  w_2}$. Now assume inductively that (\ref{eq:parpar}) is true
  for proper subsets $\mathbf{K}$ of $\mathbf{J}$. By our assumption
  there exists such a $\mathbf{K}$ such that (\ref{zkjred}) is
  satisfied. Then (\ref{eq:parpar}) follows for $\mathbf{J}$ by
  combining (\ref{zkjred}), (\ref{kjreduction}) and (\ref{eq:parpar})
  for $\mathbf{K}$.
\end{proof}

In preparation for applying Lemma~\ref{lem:jkind} assume that $\mathbf{J}$
is nonempty, so $r_i > 1$ for some~$i$. Then, recalling that $W_{\mathbf{J}} = S_{r_1} \times
\cdots \times S_{r_k}$, the last simple reflection that is contained in
$S_{r_i}$ is $s_m$ where $m = r_1 + \ldots + r_i - 1$. 
Let $\mathbf{K}$ be obtained by removing $m$ from $\mathbf{J}$ so that $W_{\mathbf{K}} = S_{r_1}
\times \cdots \times S_{r_i - 1} \times S_1 \times \cdots \times S_{r_k}$ and let
the system $\mathfrak{S}=\mathfrak{S}_{\mathbf{z}, \lambda, w_1,
w_2}^{\mathbf{J}}$ be described by the semistandard flag $(c_1^{r_1}, c_2^{r_2}, \cdots, c_k^{r_k})$ as in \eqref{eq:semistandard-flag}.

Let $c = c_i$. We wish to insert a color between $c_i$ and $c_{i + 1}$.
Reparametrizing by Remark~\ref{rem:reparam} if necessary, we may assume that
there is a color $c'$ such that
\[ c_1 > c_2 > \cdots > c_i = c > c' > c_{i + 1} > \cdots > c_k . \]
We may use these colors to describe~$\mathfrak{S}_{\mathbf{z}, \lambda, w_1 y,
w_2}^{\mathbf{K}}$. Let
\begin{equation}
  \label{eq:Sprime-union}
  \mathfrak{S}' := \bigsqcup_{y \in W_{\mathbf{J}} / W_{\mathbf{K}}} \mathfrak{S}_{\mathbf{z}, \lambda, w_1 y, w_2}^{\mathbf{K}} .
\end{equation}
Note that each state in $\mathfrak{S}'$ has one line of color $c'$ that starts
at a fixed location in the top row, replacing the one of the top vertical
edges colored $c$ in $\mathfrak{S}_{\mathbf{z}, \lambda, w_1,
w_2}^{\mathbf{J}}$. Since $w_2\in W^{\mathbf{J}}$, the edge
containing the instance of the color $c$ that is replaced by $c'$ is the
rightmost top vertical edge. The $c'$ colored line ends up on the right edge,
replacing one of the horizontal vertical edges colored $c$. There are $r_i$
possible such locations and the decomposition of $\mathfrak{S}'$ into the
$r_i = |W_{\mathbf{J}} / W_{\mathbf{K}} |$ ensembles
$\mathfrak{S}_{\mathbf{z}, \lambda, w_1 y, w_2}^{\mathbf{K}}$ corresponds to
these $r_i$ possibilities.

We will now instead decompose $\mathfrak{S}'$ into two parts. We call a state $\mathfrak{s}' \in
\mathfrak{S}'$ \textit{strict} if it has no vertical edge carrying both colors
$c$ and $c'$. Let $\mathfrak{S}'_{\operatorname{strict}}$ consist of strict states in
$\mathfrak{S}'$, and let $\mathfrak{S}'_{\operatorname{ns}}$ be the remaining, nonstrict states.

\begin{lemma}
  \begin{samepage}
  \label{lem:parahoric-1-reduction}
  Let $\mathfrak{S}'_\mathrm{strict}$ and $\mathfrak{S}'_\mathrm{ns}$ be as above.
  Then, 
  \begin{enumerate}[label=\textnormal{(\roman*)}, leftmargin=*]
    \item \label{itm:parahoric-1-reduction-strict}
      $Z(\mathfrak{S}'_{\operatorname{strict}}) = Z(\mathfrak{S}_{\mathbf{z}, \lambda, w_1, w_2}^{\mathbf{J}})$,
    \item \label{itm:parahoric-1-reduction-ns}
      $Z(\mathfrak{S}_{\operatorname{ns}}') = 0$.
  \end{enumerate}
  \end{samepage}
\end{lemma}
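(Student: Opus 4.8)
The plan is to analyse states directly, keeping in mind that the ensemble $\mathfrak{S}'$ of \eqref{eq:Sprime-union} is the refinement of $\mathfrak{S}^{\mathbf{J}}_{\mathbf{z},\lambda,w_1,w_2}$ obtained by promoting one of the $r_i$ equal top and right colours $c$ to the new, slightly smaller colour $c'$, the $r_i=|W_{\mathbf{J}}/W_{\mathbf{K}}|$ pieces of \eqref{eq:Sprime-union} accounting for the $r_i$ possible right endpoints of the $c'$-path. Throughout one uses: that the fused weights of Figure~\ref{coloredalabw} depend on the colours only through their relative order (Remark~\ref{rem:reparam}); that $w_2\in W^{\mathbf{J}}$ forces the promoted $c$-edge to be the rightmost $c$-edge on the top boundary (the content of Proposition~\ref{nodupcolors} and the discussion before \eqref{eq:Sprime-union}); and that for $w_1 y\in W^{\mathbf{K}}$ the $c'$-path of the corresponding piece exits at a prescribed one of the $r_i$ rows carrying that block of colours.

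For part~\ref{itm:parahoric-1-reduction-strict} I would construct a bijection $\Phi\colon\mathfrak{S}'_{\mathrm{strict}}\to\{\text{states of }\mathfrak{S}^{\mathbf{J}}_{\mathbf{z},\lambda,w_1,w_2}\}$ by recolouring every edge carrying $c'$ back to $c$, i.e.\ by merging the $c'$-path into the family of $c$-paths. Strictness guarantees that no vertical edge then carries $c$ twice, so $\Phi(\mathfrak{s}')$ is a legitimate colouring, and since $c,c'$ are adjacent in the colour order the recolouring preserves every order relation used to decide admissibility. For the inverse one reconstructs the $c'$-path inside a state $\mathfrak{s}$ of $\mathfrak{S}^{\mathbf{J}}$ as the unique path issuing from the rightmost top $c$-edge selected by a fixed local rule; its right endpoint ranges over the $r_i$ block rows, matching the decomposition \eqref{eq:Sprime-union} and yielding both surjectivity and injectivity. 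The crux is that $\Phi$ is weight preserving. Since the weights depend only on the relative colour order, $\Phi$ can alter a vertex weight only where both $c$ and $c'$ occur; inspecting Figure~\ref{coloredalabw}, these are precisely (a)~a $c$-line passing horizontally through a vertical edge containing $c'$, where the weight is in fact unchanged, (b)~a $c'$-line passing horizontally through a vertical edge containing $c$, and (c)~a vertex at which a $c'$-line from the left turns downward while a $c$-line from above turns to the right, the latter two changing the weight by a factor $v$ and $1-v$ respectively. Hence part~\ref{itm:parahoric-1-reduction-strict} reduces to the combinatorial claim that in an admissible strict state with the stated boundary data the configurations (b) and (c) never occur; equivalently, the $c'$-path meets each $c$-path only in crossings of type (a), where the larger colour $c$ runs horizontally. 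I would prove this by tracking the $c'$-path against the $c$-paths using $w_2\in W^{\mathbf{J}}$, $w_1 y\in W^{\mathbf{K}}$, strictness, and the fact (from Figure~\ref{coloredalabw}) that the vertex at which a $c$-line from the left turns downward while a $c'$-line from above turns to the right has Boltzmann weight $0$, which prevents the smaller colour from ``overtaking'' the larger by a corner move.

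For part~\ref{itm:parahoric-1-reduction-ns} I would produce a fixed-point-free involution $\iota$ of $\mathfrak{S}'_{\mathrm{ns}}$ with $\beta(\iota(\mathfrak{s}'))=-\beta(\mathfrak{s}')$, which immediately gives $Z(\mathfrak{S}'_{\mathrm{ns}})=0$. In a non-strict state let $e^{\ast}$ be the bottommost, and among those rightmost, vertical edge carrying both $c$ and $c'$; at the vertex directly below $e^{\ast}$ exactly one of $c,c'$ leaves to the right while the other continues downward, and $\iota$ interchanges these two roles, recolouring the forward portions of the two lines accordingly. The local computation behind this is that when a vertical edge with colour set $\Sigma$ contains both $c$ and $c'$, the two configurations in which respectively $c$ or $c'$ turns to the right out of that edge have weights $(-v)^{|\Sigma_{[1,c-1]}|}v^{|\Sigma_{[c+1,r]}|}$ and $(-v)^{|\Sigma_{[1,c'-1]}|}v^{|\Sigma_{[c'+1,r]}|}$, whose ratio is exactly $-1$ because $c$ and $c'$ are order-adjacent and both lie in $\Sigma$. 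One then has to check that $e^{\ast}$ is again the distinguished edge of $\iota(\mathfrak{s}')$ (so that $\iota^{2}=\mathrm{id}$), that the recoloured forward portions create no new vertical edge below $e^{\ast}$ carrying both colours, and that the net effect on the remaining vertex weights is trivial---this last point requiring the same inventory of Figure~\ref{coloredalabw} configurations as in part~\ref{itm:parahoric-1-reduction-strict}, applied now to the transversal crossings of the two forward portions.

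The step I expect to be the main obstacle is the geometric lemma underlying part~\ref{itm:parahoric-1-reduction-strict}: showing that in an admissible strict state the $c'$-path can only cross the $c$-paths ``underneath'', with $c$ horizontal. Once that is in hand, what remains---the configuration-by-configuration verification that $\Phi$ preserves Boltzmann weights and that $\iota$ reverses a single weight by $-1$ with all other weights (or their product) unchanged---is delicate but essentially routine bookkeeping against Figure~\ref{coloredalabw}.
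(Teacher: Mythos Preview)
Your plan for part~(i) rests on a false premise: the recolouring map $\pi\colon\mathfrak{S}'_{\mathrm{strict}}\to\{\text{states of }\mathfrak{S}^{\mathbf{J}}_{\mathbf{z},\lambda,w_1,w_2}\}$ is surjective but not injective, and the ``geometric lemma'' you identify as the main obstacle---that configurations~(b) and~(c) never occur---is not true. The mechanism is a \emph{second} crossing. At a first meeting of the $c'$-line with a $c$-line, $c'$ necessarily enters from above (it starts at the rightmost top $c$-edge) while $c$ enters from the left, and your weight-$0$ observation correctly forces $c'$ to continue downward. But $c$, having passed to the right, may then turn down at some column~$k$ while $c'$ later turns right at a lower row; they meet again at column~$k$, where now $c'$ enters from the \emph{left} and $c$ from above. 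Here both continuations are admissible, with weights $vz_i$ (your type~(b)) and $(1-v)z_i$ (your type~(c)); the two resulting strict lifts land in different pieces of the disjoint union~\eqref{eq:Sprime-union}, neither carries the weight $z_i$ of the all-$c$ crossing, and only their \emph{sum} does. The real content of part~(i) is therefore the additive identity $\beta(\mathfrak{s})=\sum_{\pi(\mathfrak{s}')=\mathfrak{s}}\beta(\mathfrak{s}')$, which the paper establishes by a tree argument branching precisely at such ambiguous crossings, with the relation $vz_i+(1-v)z_i=z_i$ supplying additivity at each branch (Property~\ref{prop:colorbwrel}, Figure~\ref{IandIIandIII}).

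The same issue undermines your involution for part~(ii). The local sign reversal at the vertex below $e^{\ast}$ is correct (Property~\ref{prop:colorcancel}, Figure~\ref{IVandV}), but after swapping which of $c,c'$ exits rightward and recolouring the two forward paths, those paths may cross again; at that later crossing the left/top roles of $c$ and $c'$ are interchanged and the weight there changes (from $z_i$ to $vz_i$, say), so $\iota$ is not weight-reversing on individual states. What actually cancels is not a pair of states but a pair of \emph{sums}: the total weight of all nonstrict states agreeing with a given one up through $e^{\ast}$ and having a fixed $\pi$-image beyond it. The paper shows each such total factors as the sign-carrying weight at $e^{\ast}$ times a quantity depending only on that $\pi$-image---this factorisation again relying on the additive identity above---and it is these products that cancel in pairs.
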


Note that Lemma~\ref{lem:parahoric-1-reduction} implies \eqref{zkjred} for the above chosen subset $\mathbf{K}$ of $\mathbf{J}$, which together with Lemma~\ref{lem:jkind}, proves Theorem~\ref{thm:parahoric-lattice-model}.
We will prove Lemma~\ref{lem:parahoric-1-reduction} later in this section, but first we will need to introduce some terminology.

Let $\mathcal{L}$ be the set of vertices for our lattice models as first introduced in Section~\ref{coloredsystems}. We order
the vertices $\mathcal{L}$ lexicographically from left to right, top to
bottom. We will denote this total order on vertices $\prec$. Thus $x \prec y$
if and only if $x$ is in a row above $y$, or $x$ and $y$ are in the same row
and $x$ is to the left of $y$. An initial segment of $\mathcal{L}$ is either
the empty set $\varnothing$ or $I (x) = \{ y \in \mathcal{L} \mid y \preccurlyeq x
\}$ for some $x \in \mathcal{L}$. Let $\hat{\mathcal{L}}$ be the set of
initial segments of $\mathcal{L}$. If $I \in \hat{\mathcal{L}}$ is not
$\mathcal{L}$ then $I$ has a unique successor $\operatorname{succ} (I)$
in $\hat{\mathcal{L}}$, which is the unique initial segment of
cardinality $| I | + 1$.
If $x$ is a vertex of $\mathcal{L}$ we will call the edges above and left of
$x$ {\textit{inputs}}, and the edges below and to the right {\textit{outputs}}.

For a state $\mathfrak{s}$ in some system with lattice $\mathcal{L}$ and a vertex $x$ in $\mathcal{L}$ we denote by $\mathfrak{s}|_x$ the vertex configuration of $\mathfrak{s}$ at $x$, that is, the sequence of spins for the four adjacent edges.
Similarly, for any sequence $I$ of vertices in $\mathcal{L}$ we denote by
$\mathfrak{s}|_I$ the sequence of vertex configurations of $\mathfrak{s}$ for
vertices in $I$. We may extend our previous notation and denote by
$\beta(\mathfrak{s}|_I)$ the product of the Boltzmann weights for the
state $\mathfrak{s}$ at the vertices in~$I$.

Let $\Gamma$ be a finite directed graph with no cycles. We call $\Gamma$ a
{\textit{tree}} if it has a unique initial node, called the {\textit{root}}; the
terminal nodes are called {\textit{leaves}}. If $X, Y \in \Gamma$ we say that $Y$ is a
{\textit{child}} of $X$ if $X \rightarrow Y$ is an edge of $\Gamma$. Let $R$ be an
abelian group, which in our applications will be $\mathbb{C} (v)$. We call a
function $F : \Gamma \longrightarrow R$ {\textit{additive}} if for every non-leaf $X \in
\Gamma$ we have $F (X) = \sum F (Y)$ where the sum is over
the children of $X$ (but not over further descendants). If $\Gamma$ is a tree with root $X_{\operatorname{root}}$ and an additive function $F$
then clearly
\[ F (X_{\operatorname{root}}) = \sum_{\text{leaves $Y$}} F (Y) . \]

We will now define a tree and an additive function that we will use to prove Lemma~\ref{lem:parahoric-1-reduction}.
Let $\mathfrak{S}'$ be defined as in \eqref{eq:Sprime-union}.
If $I \in \hat{\mathcal{L}}$ define an equivalence relation on states in $\mathfrak{S}'$ where $\mathfrak{s}'_1 \equiv_I \mathfrak{s}'_2$ if $\mathfrak{s}'_1|_I = \mathfrak{s}'_2|_I$, that is, the two states have the same vertex configurations at each vertex in $I$. 
Let $[\mathfrak{s}']_I$ be the equivalence class of a state $\mathfrak{s}'$ under this relation.
We may define a directed graph $\Gamma$ whose nodes are pairs $(I, [\mathfrak{s}']_I)$ and with edges of the form $(I, [\mathfrak{s}'_1]_I) \rightarrow (J, [\mathfrak{s}'_2]_J)$ where $J = \operatorname{succ} (I)$ and $\mathfrak{s}'_2 \in [\mathfrak{s}'_1]_I$.
Note that the equivalence class $[\mathfrak{s}'_1]_I$ is a union of $\equiv_J$ equivalence classes.

We may enumerate the children of a parent node $(I,[\mathfrak{s}'_\text{parent}]_I)$ as follows.
Let $J = \operatorname{succ}(I)=I \cup \{ x \}$ where $x$ is the last vertex in $J$.
A child of $(I,[\mathfrak{s}'_\text{parent}]_I)$ has the form $(J,[\mathfrak{s}'_\text{child}]_J)$ where the class $[\mathfrak{s}'_\text{child}]_J$ is determined by the spins of the four edges adjacent to $x$ in a representative $\mathfrak{s}'_\text{child}$.
Moreover, the input spins for $\mathfrak{s}'_\text{child}$ at $x$ are determined by $\mathfrak{s}'_\text{parent}$ since the input edges at $x$ are either output edges for vertices in $I$ or boundary edges.
Thus, $[\mathfrak{s}'_\text{child}]_J$ is determined by the two output spins at $x$.
We see that the graph $\Gamma$ is a tree, and its branching at the node $(I, [\mathfrak{s}'_\text{parent}]_I)$ is determined by the different ways that the two inputs at the vertex $x$ can be completed with the output spins to an admissible configuration at the vertex $x$.
The root of the tree $\Gamma$ is $(\varnothing, \mathfrak{S}')$ where all states in $\mathfrak{S}'$ are equivalent under $\equiv_\varnothing$, and each leaf of the tree corresponds to an individual state in $\mathfrak{S}'$.

We define a function $F$ on $\Gamma$ as follows
\begin{equation}
  \label{eq:additive-function}
  F (I, [\mathfrak{s}']_I) = \sum_{\mathfrak{s} \in [\mathfrak{s}']_I} \beta(\mathfrak{s}),
\end{equation}
which is additive by the fact that $[\mathfrak{s}']_I$ is a union of its child $\equiv_{\operatorname{succ}(I)}$ equivalence classes.
The root value is the partition function, and the leaf values are the  Boltzmann weights of the individual states.

We will consider subgraphs of the tree $\Gamma$ to prove Lemma~\ref{lem:parahoric-1-reduction}.
These graphs are used to organize the application of the following two properties of the Boltzmann weights, which are explained in Figures~\ref{IandIIandIII} and~\ref{IVandV}. 

\begin{property}
  \label{prop:colorbwrel}
  We consider the situation where, for a state $\mathfrak{s}' \in \mathfrak{S}'$ and a vertex $x$ in $\mathcal{L}$, the vertex configuration $\mathfrak{s}'|_x$ has one input edge carrying the color $c$ while the other carries $c'$, but with no vertical edge carrying both colors $c$ and $c'$.
  In this case one
  output edge must carry the color $c$ and the other must carry $c'$. There are in total four configurations to consider but we group together the configurations that are mapped to each other under the interchange of $c$ and $c'$.
  We denote these groups as~$\text{II}$ and~$\text{III}$ shown in Figure~\ref{IandIIandIII}.
  Because of the allowed vertex configurations, only one of the two configurations within each group is possible for any given pair of input edges.
  If the color $c'$ is replaced by $c$, the four vertex configurations map to a single vertex configuration for $\mathfrak{S}$ that we denote by~$\text{I}$.
  The fact that we need is that the Boltzmann weights satisfy
  \begin{equation}
  \label{betadd}
  \beta_x (\text{II}) + \beta_x (\text{III}) = \beta_x (\text{I}),
  \end{equation}
  for any given input edges on the left-hand side of the equation.
  This is shown in Figure~\ref{IandIIandIII}.
  It is possible that both vertical edges carry other colors besides $c$ and $c'$ but if $c''$ is such a color, since $c$ and $c'$ are adjacent, we have either $c''>c,c'$ or $c,c'>c''$, in other words, other colors cannot distinguish between $c$ and $c''$.
  Using this, one may check that (\ref{betadd}) remains true even with these extra colors by the fusion definition of the Boltzmann weights.
\end{property}

\begin{figure}[htb]
  \[ \begin{array}{|c|c|c|}
       \hline
       \mathfrak{S}&\multicolumn{2}{c|}{\mathfrak{S}'}\\
       \hline
       \begin{tikzpicture}[scale=.75]
       \draw (0,1)--(0,-1);
       \draw (1,0)--(-1,0);
       \draw[fill=white] (0,1) circle (.35);
       \draw[fill=white] (0,-1) circle (.35);
       \draw[fill=white] (1,0) circle (.35);
       \draw[fill=white] (-1,0) circle (.35);
       \path[fill=white] (0,0) circle (.3);
       \node at (-1,1) {I.};
       \node at (0,1) {$c$};
       \node at (0,-1) {$c$};
       \node at (1,0) {$c$};
       \node at (-1,0) {$c$};
       \node at (0,0) {$z_i$};
       \end{tikzpicture}
       &
       \begin{tikzpicture}[scale=.75]
       \draw (0,1)--(0,-1);
       \draw (1,0)--(-1,0);
       \draw[fill=white] (0,1) circle (.35);
       \draw[fill=white] (0,-1) circle (.35);
       \draw[fill=white] (1,0) circle (.35);
       \draw[fill=white] (-1,0) circle (.35);
       \path[fill=white] (0,0) circle (.3);
       \node at (-1.5,1) {II.};
       \node at (-1,0) {$d$};
       \node at (0,1) {$d'$};
       \node at (1,0) {$d$};
       \node at (0,-1) {$d'$};
       \node at (0,0) {$z_i$};
       \end{tikzpicture}
       &
       \begin{tikzpicture}[scale=.75]
       \draw (0,1)--(0,-1);
       \draw (1,0)--(-1,0);
       \draw[fill=white] (0,1) circle (.35);
       \draw[fill=white] (0,-1) circle (.35);
       \draw[fill=white] (1,0) circle (.35);
       \draw[fill=white] (-1,0) circle (.35);
       \path[fill=white] (0,0) circle (.3);
       \node at (-1.5,1) {III.};
       \node at (-1,0) {$d$};
       \node at (0,1) {$d'$};
       \node at (1,0) {$d'$};
       \node at (0,-1) {$d$};
       \node at (0,0) {$z_i$};
       \end{tikzpicture}\\
       \hline
       z_i & \left\{\begin{array}{ll}
         z_i & \text{if $d > d'$}\\
         v z_i & \text{if $d' > d$}
       \end{array}\right. & \left\{\begin{array}{ll}
         0 & \text{if $d > d'$}\\
         (1 - v) z_i & \text{if $d' > d$}
       \end{array}\right.\\
       \hline
     \end{array} \]
\caption{Illustrating Property~\ref{prop:colorbwrel} in the case where the
vertex $x$ carries the colors $c$ and $c'$ and no others. Here $d$
and $d'$ are $c$ and $c'$ in some order.}
\label{IandIIandIII}
\end{figure}

\begin{property}
  \label{prop:colorcancel}
  We also consider the situation where the vertical edge above vertex $x$
  carries both $c$ and $c'$, but one color exits to the right, and the
  other to the bottom. There are two ways this can happen, as shown in
  Figure~\ref{IVandV}, and the Boltzmann weights of these patterns
  cancel:
  \begin{equation}
  \label{eq:colcan}
  \beta_x (\text{IV}) + \beta_x (\text{V}) = 0.
  \end{equation}
  As for Property~\ref{prop:colorbwrel}, this relation also remains true even if the vertical edges carry one or more additional colors.
\end{property}

\begin{figure}[htb]
  \[ \begin{array}{|c|c|}
       \hline
       \multicolumn{2}{|c|}{\mathfrak{S}'}\\
       \hline
       \begin{tikzpicture}[scale=.75]
       \draw (0,-1)--(0,0);
       \draw (-.1,0)--(-.1,1);
       \draw (.1,0)--(.1,1);
       \draw (1,0)--(-1,0);
       \draw[fill=white] (0,1) circle (.35);
       \draw[fill=white] (0,-1) circle (.35);
       \draw[fill=white] (1,0) circle (.35);
       \draw[fill=white] (-1,0) circle (.35);
       \path[fill=white] (0,0) circle (.3);
       \node at (-1.5,1) {IV.};
       \node at (0,1) {$\scriptstyle c,c'$};
       \node at (1,0) {$c$};
       \node at (0,-1) {$c'$};
       \node at (0,0) {$z_i$};
       \end{tikzpicture}
       &
       \begin{tikzpicture}[scale=.75]
       \draw (0,-1)--(0,0);
       \draw (-.1,0)--(-.1,1);
       \draw (.1,0)--(.1,1);
       \draw (1,0)--(-1,0);
       \draw[fill=white] (0,1) circle (.35);
       \draw[fill=white] (0,-1) circle (.35);
       \draw[fill=white] (1,0) circle (.35);
       \draw[fill=white] (-1,0) circle (.35);
       \path[fill=white] (0,0) circle (.3);
       \node at (-1.5,1) {V.};
       \node at (0,1) {$\scriptstyle c,c'$};
       \node at (1,0) {$c'$};
       \node at (0,-1) {$c$};
       \node at (0,0) {$z_i$};
       \end{tikzpicture}\\
       \hline
       -v & v\\
       \hline
     \end{array} \]
     \caption{Illustrating Property~\ref{prop:colorcancel}, 
the Boltzmann weights of these two nonstrict configurations
are negatives of each other.}
\label{IVandV}
\end{figure}

If $\mathfrak{s}'$ is a strict state of the system $\mathfrak{S}'$ then there
is a corresponding state $\pi (\mathfrak{s}')$ of the system $\mathfrak{S}$ in
which every instance of the color $c'$ is replaced by $c$. For each
$\mathfrak{s} \in \mathfrak{S}$ there exists at least one
$\mathfrak{s}' \in \mathfrak{S}'_\text{strict}$ such that $\pi(\mathfrak{s}')
= \mathfrak{s}$ and can be obtained by coloring one $c$-path in the color
$c'$.

For $I \in \hat{\mathcal{L}}$, let $\bar I$ be the complement of $I$ in the
set of vertices for the lattice $\mathcal{L}$.  We say that a state
$\mathfrak{s}'$ is \emph{$\bar I$-strict} if for all $x \in \bar I$ no
vertical edge of $\mathfrak{s}'|_x$ carries both $c$ and $c'$.
That is, if the vertex configurations $\mathfrak{s}'|_{\bar I}$ are strict.
In this case (generalizing the above notation) we may define
$\pi(\mathfrak{s}'|_{\bar I})$ to be the $\bar{I}$-sequence of vertex configurations obtained from the sequence $\mathfrak{s}'|_{\bar I}$ by replacing every instance of $c'$
by $c$. 
Note that these vertex configurations are admissible only when $\mathfrak{s}'$ is $\bar I$-strict.

\begin{lemma}
  \label{lem:additivity}
  Let $\mathfrak{s}'_0 \in \mathfrak{S}'$ and $X_0 = (I_0, [\mathfrak{s}'_0]_{I_0})$ be a node of $\Gamma$ such that $\mathfrak{s}'_0$ is $\bar I_0$-strict.
  Consider the subtree $\Gamma_{X_0}$ of $\Gamma$ with its root at $X_0$ together with all its descendants.
  Let $\Gamma_{X_0}^{(\mathfrak{s}'_0)}$ be the tree obtained from $\Gamma_{X_0}$ by selecting the branches whose leaves $\mathfrak{s}' \in \mathfrak{S}'$ are $\bar I_0$-strict and such that $\pi(\mathfrak{s}'|_{\bar I_0}) = \pi(\mathfrak{s}'_0|_{\bar I_0})$.
  Then
  \begin{equation}
    \label{eq:Fs}
    F_{\mathfrak{s}'_0}(I, [\mathfrak{s}']_I) := \beta(\mathfrak{s}'|_I) \beta(\pi(\mathfrak{s}'_0|_{\bar I}))
  \end{equation}
  is additive on $\Gamma_{X_0}^{(\mathfrak{s}'_0)}$.
\end{lemma}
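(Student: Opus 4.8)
The plan is to verify additivity of $F_{\mathfrak{s}'_0}$ at every non-leaf node of $\Gamma_{X_0}^{(\mathfrak{s}'_0)}$ by reducing it, one vertex at a time, to a single local identity for the Boltzmann weights at the vertex being added, which will be exactly Property~\ref{prop:colorbwrel} (or a trivial instance of it). Property~\ref{prop:colorcancel} will not be needed here; it enters only later, in the proof of Lemma~\ref{lem:parahoric-1-reduction}.

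First I would fix a non-leaf node $(I,[\mathfrak{s}']_I)$ of $\Gamma_{X_0}^{(\mathfrak{s}'_0)}$ and set $J=\operatorname{succ}(I)=I\cup\{x\}$; then $x\notin I\supseteq I_0$, so $x\in\bar I_0$. Since $\mathfrak{s}'_0$ is $\bar I_0$-strict and $c,c'$ are adjacent colors, each recolored configuration $\pi(\mathfrak{s}'_0|_y)$ with $y\in\bar I_0$ is admissible, so the factors $\beta_y(\pi(\mathfrak{s}'_0|_y))$, and hence $\beta(\pi(\mathfrak{s}'_0|_{\bar I}))$ and $\beta(\pi(\mathfrak{s}'_0|_{\bar J}))$, are nonzero; likewise $\beta(\mathfrak{s}'|_I)\neq 0$. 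Using $\bar I=\{x\}\sqcup\bar J$ and the fact that every child $(J,[\mathfrak{s}'']_J)$ has $\mathfrak{s}''|_I=\mathfrak{s}'|_I$, one has $\beta(\mathfrak{s}''|_J)=\beta(\mathfrak{s}'|_I)\,\beta_x(\mathfrak{s}''|_x)$ and $\beta(\pi(\mathfrak{s}'_0|_{\bar I}))=\beta_x(\pi(\mathfrak{s}'_0|_x))\,\beta(\pi(\mathfrak{s}'_0|_{\bar J}))$. Substituting into \eqref{eq:Fs} and cancelling the common nonzero factor $\beta(\mathfrak{s}'|_I)\,\beta(\pi(\mathfrak{s}'_0|_{\bar J}))$, the additivity relation at $(I,[\mathfrak{s}']_I)$ becomes equivalent to
\[
\beta_x\bigl(\pi(\mathfrak{s}'_0|_x)\bigr)=\sum_{\tau}\beta_x(\tau),
\]
where $\tau$ runs over the vertex configurations at $x$ coming from the children of $(I,[\mathfrak{s}']_I)$ in $\Gamma_{X_0}^{(\mathfrak{s}'_0)}$.

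Next I would make those $\tau$ explicit. A child of $(I,[\mathfrak{s}']_I)$ lies in $\Gamma_{X_0}^{(\mathfrak{s}'_0)}$ exactly when it has a leaf of $\Gamma_{X_0}^{(\mathfrak{s}'_0)}$ below it, and since $x\in\bar I_0$ this forces $\tau=\mathfrak{s}''|_x$ to (a) have the input spins prescribed by $\mathfrak{s}'|_I$ and the boundary, (b) be strict (no vertical edge carries both $c$ and $c'$), and (c) satisfy $\pi(\tau)=\pi(\mathfrak{s}'_0|_x)$. Conversely, I would show that any such $\tau$ extends to a leaf of $\Gamma_{X_0}^{(\mathfrak{s}'_0)}$, by completing the state greedily over the remaining vertices $\bar J$, at each vertex choosing a strict admissible configuration whose $\pi$-image equals that of the corresponding configuration of $\mathfrak{s}'_0$: this is possible because the constraint ``$\pi$-image agrees with $\mathfrak{s}'_0$'' propagates along the input edges (on $I_0$ the states $\mathfrak{s}'$ and $\mathfrak{s}'_0$ agree, on $I\setminus I_0\subseteq\bar I_0$ they agree after applying $\pi$), and the relabelling $c'\mapsto c$ carries admissible strict configurations to admissible configurations — here the adjacency of $c$ and $c'$, cf.\ Remark~\ref{rem:reparam}, is essential. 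In particular the inputs of $\mathfrak{s}'|_x$, of $\mathfrak{s}'_0|_x$, and of $\pi(\mathfrak{s}'_0|_x)$ all agree after applying $\pi$, and (being inputs of a $\Gamma_{X_0}^{(\mathfrak{s}'_0)}$-node with $x\in\bar I_0$) no single input edge of $x$ carries both $c$ and $c'$. Then I would finish with a short case analysis, using that all Boltzmann weights depend only on the relative order of the colors present and that, $c$ and $c'$ being consecutive, any other color $c''$ satisfies $c''>c\iff c''>c'$ and $c''<c\iff c''<c'$: if at most one of $c,c'$ occurs among the inputs of $x$, then $\pi$ acts on any admissible $\tau$ with those inputs only through the global relabelling $c'\mapsto c$, so (c) determines $\tau$ uniquely and $\beta_x(\tau)=\beta_x(\pi(\mathfrak{s}'_0|_x))$; if one input carries $c$ and the other $c'$, the strict $\tau$ with those inputs are precisely the two configurations $\text{II}$ and $\text{III}$ of Figure~\ref{IandIIandIII}, both with $\pi$-image the configuration $\text{I}=\pi(\mathfrak{s}'_0|_x)$, and Property~\ref{prop:colorbwrel} gives $\beta_x(\text{II})+\beta_x(\text{III})=\beta_x(\text{I})$, as required.

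The step I expect to be the main obstacle is the middle one: matching the children of $(I,[\mathfrak{s}']_I)$ in the pruned tree exactly with the configurations $\tau$ in the sum — in particular the greedy-completion argument showing that every strict, $\pi$-matching configuration at $x$ genuinely extends to a leaf of $\Gamma_{X_0}^{(\mathfrak{s}'_0)}$ (so the sum is not missing terms) and that non-strict or non-$\pi$-matching completions at $x$ never occur in $\Gamma_{X_0}^{(\mathfrak{s}'_0)}$. Everything else is the bilinear factorization above together with the two-line case analysis powered by Property~\ref{prop:colorbwrel}.
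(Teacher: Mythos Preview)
Your proposal is correct and takes essentially the same approach as the paper: both reduce additivity at a node $(I,[\mathfrak{s}']_I)$ to a local identity at the newly added vertex $x$, then settle the two-child case (one input carries $c$, the other $c'$) with Property~\ref{prop:colorbwrel} and the one-child case by a trivial weight equality. Where you supply an explicit greedy-completion argument to show each valid $\tau$ extends to a leaf, the paper instead chooses node representatives directly from the leaf set $\Xi$ of $\Gamma_{X_0}^{(\mathfrak{s}'_0)}$; your nonzero claims for $\beta(\mathfrak{s}'|_I)$ and $\beta(\pi(\mathfrak{s}'_0|_{\bar J}))$ are unnecessary (if either vanishes the additivity relation holds trivially) but harmless.
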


Note that the representative $\mathfrak{s}' \in \mathfrak{S}'$ in \eqref{eq:Fs} need not be a leaf of $\Gamma_{X_0}^{(\mathfrak{s}'_0)}$; $F_{\mathfrak{s}'_0}$ is still well-defined and independent of the representative in $[\mathfrak{s}']_I$.

\begin{proof}
  Let $X = (I, [\mathfrak{s}'_\text{parent}]_I)$ be a node of
  $\Gamma_{X_0}^{(\mathfrak{s}'_0)}$ and let
  $Y = (J, [\mathfrak{s}'_\text{child}]_J)$ be a child of $X$, where
  $J = \operatorname{succ}(I)=I \cup \{x\}$.
  Denote the set of leaves of $\Gamma_{X_0}^{(\mathfrak{s}'_0)}$ by $\Xi$ and choose the representatives $\mathfrak{s}'_\text{parent}$ and $\mathfrak{s}'_\text{child}$ such that they are in $\Xi$.
  That is, $\mathfrak{s}'_\text{parent}$ and $\mathfrak{s}'_\text{child}$ are $\bar I_0$-strict and $\pi(\mathfrak{s}'_\text{parent}|_{\bar I_0}) = \pi(\mathfrak{s}'_\text{child}|_{\bar I_0}) = \pi(\mathfrak{s}'_0|_{\bar I_0})$.
  
  The fact that $\mathfrak{s}'_\text{child}$ is a child of $\mathfrak{s}'_\text{parent}$ means that
  $\mathfrak{s}'_\text{parent}|_I = \mathfrak{s}'_\text{child}|_I$.
  Furthermore $\mathfrak{s}'_\text{parent}$ and $\mathfrak{s}'_\text{child}$ agree on
  the two input edges of $x$, but they may differ on the two output edges of~$x$.
  Since $\pi(\mathfrak{s}'_\text{parent}|_x) = \pi(\mathfrak{s}'_\text{child}|_x) = \pi(\mathfrak{s}'_0|_x)$, this can happen only if, for both states, one of the input edges carries the color $c$ and the other carries $c'$ while the output edges for the two states have $c$ and $c'$ interchanged. 
  Note that since $\mathfrak{s}'_\text{child}$ is $\bar I_0$-strict, no edge of $x$ carries both $c$ and $c'$.
  If $\mathfrak{s}'_\text{parent}$ and $\mathfrak{s}'_\text{child}$ do
  agree on the two output edges of $x$, then they are $J$-equivalent, so the
  node $(I, [\mathfrak{s}'_\text{parent}]_I)$ has at most two children. 
  
  Thus, $\Gamma_{X_0}^{(\mathfrak{s}'_0)}$ is a binary tree and to prove that $F_{\mathfrak{s}'_0}$ is additive we need to check two cases: $X$ has one or two children.

  Assume first that $X = (I, [\mathfrak{s}'_\text{parent}]_I)$ has two children $Y_1 = (J, [\mathfrak{s}'_1]_J)$ and $Y_2 = (J,[\mathfrak{s}'_2]_J)$ where we have chosen the representatives $\mathfrak{s}'_1$ and $\mathfrak{s}'_2$ in $\Xi$.
  Then,
  \begin{equation}
    \label{eq:Fs-additivity-proof}
    \begin{split}
      F_{\mathfrak{s}'_0}(X) &= \beta(\mathfrak{s}'_\text{parent}|_I) \beta(\pi(\mathfrak{s}'_0|_x)) \beta(\pi(\mathfrak{s}'_0|_{\bar J}))
      \\
      F_{\mathfrak{s}'_0}(Y_i) &= \beta(\mathfrak{s}'_i|_I) \beta(\mathfrak{s}'_i|_x) \beta(\pi(\mathfrak{s}'_0|_{\bar J})) = \beta(\mathfrak{s}'_\text{parent}|_I) \beta(\mathfrak{s}'_i|_x) \beta(\pi(\mathfrak{s}'_0|_{\bar J})) .
    \end{split}
  \end{equation}

  As argued above, $\pi(\mathfrak{s}'_0|_x)$, $\mathfrak{s}'_1|_x$ and $\mathfrak{s}'_2|_x$ are in the situation of Property~\ref{prop:colorbwrel} meaning that $\beta(\pi(\mathfrak{s}'_0|_x)) = \beta(\mathfrak{s}'_1|_x) + \beta(\mathfrak{s}'_2|_x)$ and thus $F(X) = F(Y_1) + F(Y_2)$.
  
  For the case where $X = (I,[\mathfrak{s}'_\text{parent}]_I)$ has a single child $Y_1 = (J, [\mathfrak{s}'_1]_J)$ equation~\eqref{eq:Fs-additivity-proof} is still valid and the vertex configuration $\mathfrak{s}'_1|_x$ is uniquely determined by~$[\mathfrak{s}'_\text{parent}]_I$ and~$\mathfrak{s}'_0$.
  Since $\mathfrak{s}'_0$ is $\bar I_0$-strict this means that the vertex configuration does not contain both $c$ and $c'$.
  Because other colors cannot differentiate between $c$ and $c'$ we then have that $\beta(\mathfrak{s}'_1|_x) = \beta(\mathfrak{s}'_\text{parent}|_x) = \beta(\pi(\mathfrak{s}'_0|_x))$, which concludes the proof.
\end{proof}

If we apply Lemma~\ref{lem:additivity} to the full tree $\Gamma_{X_0} = \Gamma$  we get the following corollary.
\begin{corollary}
  \label{cor:beta-harmonic}
  If $\mathfrak{s}$ is a state of $\mathfrak{S}$ then
  \begin{equation}
    \label{eq:betaharmonic} \beta (\mathfrak{s}) =
    \sum_{\substack{
      \mathfrak{s}' \in \mathfrak{S}_{\operatorname{strict}}'\\
      \pi (\mathfrak{s}')=\mathfrak{s}}}
      \beta (\mathfrak{s}') .
  \end{equation}
\end{corollary}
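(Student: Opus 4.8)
The plan is to derive Corollary~\ref{cor:beta-harmonic} as the special case of Lemma~\ref{lem:additivity} in which the subtree $\Gamma_{X_0}$ is all of $\Gamma$, i.e.\ $X_0$ is the root node $(\varnothing,\mathfrak{S}')$, and then simply to read off what the additive function $F_{\mathfrak{s}'_0}$ evaluates to at the root and at the leaves. Concretely, I would first fix a state $\mathfrak{s}$ of $\mathfrak{S}$ and invoke the remark preceding the corollary — every $\mathfrak{s}\in\mathfrak{S}$ equals $\pi(\mathfrak{s}')$ for at least one strict state $\mathfrak{s}'\in\mathfrak{S}'_{\operatorname{strict}}$ — to choose some $\mathfrak{s}'_0\in\mathfrak{S}'_{\operatorname{strict}}$ with $\pi(\mathfrak{s}'_0)=\mathfrak{s}$. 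Since $I_0=\varnothing$ we have $\bar I_0=\mathcal{L}$, the whole vertex set, so ``$\bar I_0$-strict'' means exactly ``strict,'' and $\mathfrak{s}'_0$ satisfies the hypothesis of Lemma~\ref{lem:additivity}.

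Next I would identify the tree $\Gamma_{X_0}^{(\mathfrak{s}'_0)}$: by definition its leaves are the states $\mathfrak{s}'\in\mathfrak{S}'$ that are $\bar I_0$-strict with $\pi(\mathfrak{s}'|_{\bar I_0})=\pi(\mathfrak{s}'_0|_{\bar I_0})$, and with $\bar I_0=\mathcal{L}$ this is precisely the set $\{\mathfrak{s}'\in\mathfrak{S}'_{\operatorname{strict}}:\pi(\mathfrak{s}')=\mathfrak{s}\}$. Lemma~\ref{lem:additivity} then tells us that $F_{\mathfrak{s}'_0}(I,[\mathfrak{s}']_I)=\beta(\mathfrak{s}'|_I)\,\beta(\pi(\mathfrak{s}'_0|_{\bar I}))$ is additive on this tree, so its root value equals the sum of its leaf values.

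It then remains to evaluate these. At the root, $I=\varnothing$ so $\beta(\mathfrak{s}'|_\varnothing)=1$ (empty product) and $\bar I=\mathcal{L}$, hence $F_{\mathfrak{s}'_0}(X_0)=\beta(\pi(\mathfrak{s}'_0|_{\mathcal{L}}))=\beta(\pi(\mathfrak{s}'_0))=\beta(\mathfrak{s})$; in particular this is independent of the chosen lift $\mathfrak{s}'_0$. At a leaf corresponding to a strict state $\mathfrak{s}'$ over $\mathfrak{s}$, we have $I=\mathcal{L}$, so $\bar I=\varnothing$, giving $\beta(\pi(\mathfrak{s}'_0|_\varnothing))=1$ and $\beta(\mathfrak{s}'|_{\mathcal{L}})=\beta(\mathfrak{s}')$, so the leaf value is $\beta(\mathfrak{s}')$. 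Combining, $\beta(\mathfrak{s})=\sum_{\mathfrak{s}'}\beta(\mathfrak{s}')$ with the sum over $\mathfrak{s}'\in\mathfrak{S}'_{\operatorname{strict}}$ satisfying $\pi(\mathfrak{s}')=\mathfrak{s}$, which is exactly \eqref{eq:betaharmonic}.

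Since all the combinatorial content — the binary branching structure of $\Gamma_{X_0}^{(\mathfrak{s}'_0)}$, Properties~\ref{prop:colorbwrel} and~\ref{prop:colorcancel}, and the well-definedness of $F_{\mathfrak{s}'_0}$ — has already been absorbed into Lemma~\ref{lem:additivity}, the proof of the corollary is essentially bookkeeping. The only point deserving a line of care is the identification of the leaf set together with its nonemptiness; the latter is precisely the statement quoted above that a strict lift of $\mathfrak{s}$ exists, and the former is immediate from the defining condition with $\bar I_0=\mathcal{L}$. I do not expect any genuine obstacle here.
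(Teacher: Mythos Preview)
Your proposal is correct and follows essentially the same approach as the paper: both apply Lemma~\ref{lem:additivity} with $X_0$ the root of $\Gamma$ (so $I_0=\varnothing$, $\bar I_0=\mathcal{L}$), choose a strict lift $\mathfrak{s}'_0$ of $\mathfrak{s}$, identify the leaves of $\Gamma_{X_0}^{(\mathfrak{s}'_0)}$ as the strict states over $\mathfrak{s}$, and read off the root and leaf values of $F_{\mathfrak{s}'_0}$ as $\beta(\mathfrak{s})$ and $\beta(\mathfrak{s}')$ respectively. Your write-up is simply a more expanded version of the paper's argument.
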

\begin{proof}
  Let $X_0$ be the root $X_\text{root} = (\varnothing, \mathfrak{S}')$ of $\Gamma$.
  For each $\mathfrak{s} \in \mathfrak{S}$ there exists $\mathfrak{s}'_0 \in \mathfrak{S}'_\text{strict}$ such that $\pi(\mathfrak{s}'_0) = \mathfrak{s}$.
  Then $\Gamma_{X_0}^{(\mathfrak{s}'_0)}$ contains all branches of $\Gamma$ for which the leaves $\mathfrak{s}'$ are strict and map to $\mathfrak{s}$ under $\pi$ independent of the choice of $\mathfrak{s}'_0$.
  The value of $F_{\mathfrak{s}'_0}$ at the root $X_\text{root}$ equals $\beta(\mathfrak{s})$ while at a leaf $\mathfrak{s}'$ it equals $\beta(\mathfrak{s}')$ also independent of $\mathfrak{s}'_0$, and the statement follows from the additivity of $F_{\mathfrak{s}'_0}$ on $\Gamma_{X_0}^{(\mathfrak{s}'_0)}$.
\end{proof}

\begin{proof}[Proof of Lemma~\ref{lem:parahoric-1-reduction} and Theorem~\ref{thm:parahoric-lattice-model}]
  We have already noted that Lemma~\ref{lem:parahoric-1-reduction} implies
  Theorem~\ref{thm:parahoric-lattice-model}, so we turn to the proof of the Lemma.
  Statement~\ref{itm:parahoric-1-reduction-strict} follows from Corollary~\ref{cor:beta-harmonic} by summing over $\mathfrak{s} \in \mathfrak{S}$.
  
  For statement~\ref{itm:parahoric-1-reduction-ns} we will start with the tree $\Gamma$.
  Let $I$ be a maximal initial segment such that there is a nonstrict state $\mathfrak{s}'_1$ that is $\bar I$-strict.
  This means that all states in $[\mathfrak{s}'_1]_I$ are strict at the vertices in $\bar I$, but if $x$ is the last vertex in $I$, then the colors $c$ and $c'$ of $\mathfrak{s}'_1|_x$ are in one of the two configurations in Figure~\ref{IVandV} of Property~\ref{prop:colorcancel} (disregarding other colors).
  The node $X_1 = (I, [\mathfrak{s}'_1]_I)$ has a single sibling $X_2 = (I, [\mathfrak{s}'_2]_I)$ for which $\mathfrak{s}'_2|_x$ is in the other configuration of Figure~\ref{IVandV}.
  Indeed, we can construct a representative $\mathfrak{s}'_2$ by starting from $\mathfrak{s}'_1$, apply $\pi$ on $\mathfrak{s}'_1|_{\bar I}$ and then apply the color $c'$ to one of the $c$-paths going in the other direction at $x$.
  By construction $\pi(\mathfrak{s}'_1|_{\bar I}) = \pi(\mathfrak{s}'_2|_{\bar I})$ while $\mathfrak{s}'_1|_I$ and $\mathfrak{s}'_2|_I$ only differ at the vertex $x$ with an overall minus sign for their Boltzmann weights.
  Let $\Xi_1$ and $\Xi_2$ be the sets of leaves for $\Gamma_{X_1}^{(\mathfrak{s}'_1)}$ and $\Gamma_{X_2}^{(\mathfrak{s}'_2)}$ respectively.
  By Lemma~\ref{lem:additivity},
  \begin{multline}
    \label{eq:tree-cancellation}
    \sum_{\mathfrak{s}' \in \Xi_1} \beta(\mathfrak{s}') = F_{\mathfrak{s}'_1}(I, [\mathfrak{s}'_1]_I)
    = \beta(\mathfrak{s}'_1|_I) \beta(\pi(\mathfrak{s}'_1|_{\bar I})) = \\[-0.5em]
    = -\beta(\mathfrak{s}'_2|_I) \beta(\pi(\mathfrak{s}'_2|_{\bar I})) 
    = -F_{\mathfrak{s}'_2}(I, [\mathfrak{s}'_2]_I)
    = -\sum_{\mathfrak{s}' \in \Xi_2} \beta(\mathfrak{s}')
  \end{multline}
  Note that the freedom in constructing $\mathfrak{s}'_2$ above is given by $\Xi_2$.
  Any other choice of the pair $(\mathfrak{s}'_1, \mathfrak{s}'_2)$ in $\Xi_1 \times \Xi_2$ would give the same trees $\Gamma_{X_1}^{(\mathfrak{s}'_1)}$ and $\Gamma_{X_2}^{(\mathfrak{s}'_2)}$ with the sames sets of leaves, as well as the same equation~\eqref{eq:tree-cancellation} for these leaves.
  We may thus choose pairs of representatives in $[\mathfrak{s}'_1]_I \times [\mathfrak{s}'_2]_I$ such that the corresponding sets $\Xi_1 \times \Xi_2$ are disjoint and their union equals $\Xi(\Gamma_{X_1}) \times \Xi(\Gamma_{X_2})$ where $\Xi(\Gamma_{X_i})$ is the set of leaves for the tree $\Gamma_{X_i}$.

  Hence, we can remove the nodes $X_1$ and $X_2$ together with their descendants from the tree $\Gamma$ without affecting the values and additivity property of the function $F$ on the remaining nodes of $\Gamma$.
  That is, we may remove these nodes without changing the partition function of the leaves of $\Gamma$.
  Repeating the process with a new maximal initial segment and $I$ and siblings $X_1$ and $X_2$ of the remaining $\Gamma$ we have thus shown that we may remove all branches with non-strict states as leaves without changing the partition function.
  Together with statement~\ref{itm:parahoric-1-reduction-strict} that was shown above, this proves statement~\ref{itm:parahoric-1-reduction-ns}.
\end{proof}

The case of the maximal compact subgroup $K = K_{\mathbf{I}}$ is a special
case of the parahoric system where $\mathbf{J} = \mathbf{I}$ is the set of all
simple reflections. As previously mentioned, the Boltzmann weights for the Tokuyama model, shown for example in the row labeled $S_\Gamma(i)$ of~\cite[Table~2]{hkice},
are a special case of the weights in
Figures~\ref{cvmonochrome} (where $\minus$ is replaced by one
fixed color $c$). The partition function with these Boltzmann weights depends only on the partition $\lambda$ and is given by the
formula
\begin{equation}
\label{tokform}
Z (\mathbf{z}; \lambda) =\mathbf{z}^{\rho} \prod_{\alpha \in
     \Delta^+} (1 - v\mathbf{z}^{- \alpha}) s_{\lambda} (\mathbf{z})
\end{equation}
which is one version of Tokuyama's formula~\cite{Tokuyama,HamelKing,hkice}. It also agrees with the Casselman-Shalika formula for the spherical
Whittaker function.

See arXiv Version~1 of this paper (\cite{ThisPaper}) for further discussion
of the relationship between the uncolored Tokuyama models and the colored
models of this paper.

\section{Whittaker functions and Macdonald polynomials}
\label{sec:nsmac}

The purpose of this section is to give a dictionary between values of Whittaker functions on $\GL_r(F)$ and certain specializations and generalizations of Macdonald polynomials, as introduced in Table~\ref{tab:mini-nsmac} and further detailed here in Table~\ref{tab:nsmac}.
The second and last of these identities are new to our knowledge and will be proved later in this section.
Both sides of the dictionary can be studied either algebraically, usually involving some variations of Demazure-Lusztig operators, or combinatorially which, for the Whittaker functions, can be achieved via the theory of solvable lattice models.

\begin{table}[htpb]
  \centering
  \caption{Relations between different Whittaker functions and associated special polynomials.}
  \label{tab:nsmac}
  \small
  \begin{tabular}{lcl}
    \toprule
    \thead{Whittaker function} &  & \thead{Special polynomial} \\
    \midrule
    \\[-0.5em]
    Spherical Whittaker function & & Schur polynomial \\ 
    $\sum_{w \in W} \phi_w(\z; \varpi^{-\lambda})$ &$=$ & $\prod_{\alpha \in \Delta^+} (1- v\z^{-\alpha}) s_\lambda(\z)$ \\[1.5em]
    Li's Whittaker function & & Hall-Littlewood polynomial \\
    $\sum_{w \in W} (- v)^{- \ell (w)} \phi_w (\mathbf{z};
    \varpi^{- \lambda})$ & $=$ & $\mathbf{z}^{-\rho}P_{\lambda + \rho} (\mathbf{z}, v^{- 1})$ \\[1.5em]
    Iwahori Whittaker function & & \makecell[lb]{Non-symmetric Macdonald polynomial} \\ 
    $\phi_{w_1}(\z; \varpi^{-\lambda})$ &$=$ & $(- v)^{\ell (w)}\mathbf{z}^{- \rho} w_0 E_{w_0 w (\lambda + \rho)} (\mathbf{z}; \infty,v)$ \\[1.5em]
    Parahoric Whittaker function & & \makecell[lb]{Macdonald polynomial with prescribed symmetry}  \\
    $\psi^{\mathbf J}_1(\z; \varpi^{-\lambda})$ & $=$ & $\z^{-\rho} S_{\lambda+\rho}^{(\emptyset,\J)} (\z;0,v^{-1}) a_{\lambda+\rho}^{(\emptyset,\J)}$ \\[1.5em]
    \bottomrule
  \end{tabular}
\end{table}

We start with the well-known case of the spherical Whittaker function, which can be expressed as the product of a quantized Weyl denominator and a Schur polynomial. This result is due to Shintani~\cite{Shintani} and was generalized to all quasi-split reductive groups by Casselman-Shalika~\cite{CasselmanShalika}. It may be proved by studying the more refined Iwahori fixed vectors. One can use the Yang-Baxter equation to give a lattice model interpretation of the spherical function; this was done in~\cite{hkice} based on ideas of Tokuyama~\cite{Tokuyama}. 

Parallel to this work, Li~\cite{JianShuLiUnramified} studied certain Iwahori fixed vectors in the unramified principal series and their associated Whittaker functions which can be used to identify the unique genuine subquotient of $I(\z)$. Li computed a variation of the Casselman-Shalika formula for these Whittaker functions each of which we will express in terms of a Hall-Littlewood polynomial in Proposition~\ref{LiWhittakerHL}. A (bosonic) lattice model called the $q$-boson model exists for Hall-Littlewood polynomials (see~\cite{Tsilevich}); it has successfully been used to study Hall-Littlewood polynomials in both combinatorics~\cite{WheelerZinn-JustinAIM} and representation theory~\cite{KorffVerlinde}. By our results we can then associate this lattice model to Li's Whittaker function.  

To understand both examples presented above, we need to understand the passage to the Iwahori level. As documented earlier, these Whittaker functions 
may be described in terms of certain divided difference operators.
The definition of the Demazure-Lusztig and Demazure-Whittaker operators
$\mathfrak{L}_i$ and $\mathfrak{T}_i$ associated to simple reflections $s_i$ in the Weyl group $W$ were given in (\ref{demazurelusztig})
and (\ref{demwhitop}), respectively. They also arise naturally in certain induced representations of the affine Hecke algebra
made from the trivial and sign characters of the finite Hecke
algebra, respectively (see~\cite{BBL, BrubakerBumpFriedbergMatrix}). 

Given any $w \in W$ and a reduced expression $w = s_{i_1} \cdots s_{i_k}$,
set
$ \mathfrak{T}_w =\mathfrak{T}_{i_1} \cdots \mathfrak{T}_{i_k}$, which is well-defined because the $\mathfrak{T}_i$ satisfy the braid relations. We will similarly write
$\mathfrak{L}_w =\mathfrak{L}_{i_1} \cdots \mathfrak{L}_{i_k}$. Then the following relation holds between these operators, where we add $v$-dependence to the notation for
$\mathfrak{L}_w$ and $\mathfrak{T}_w$.

\begin{proposition}
  \label{tlrelation}
  For any $w \in W$, as operators on the ring $\mathcal{O}(\hat T)$ of regular (polynomial) functions on
  $\hat{T}(\mathbb{C}) \iso (\CC^{\times})^r$,
  \begin{equation}
    \label{eq:L-T} \mathfrak{L}_{w, v} = (- v)^{\ell
    (w)} \mathbf{z}^{\rho} \mathfrak{T}_{w, v^{- 1}} \mathbf{z}^{-
    \rho} .
  \end{equation}
\end{proposition}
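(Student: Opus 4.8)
The plan is to verify the identity \eqref{eq:L-T} first for a simple reflection $w = s_i$, and then bootstrap to arbitrary $w$ using the braid relations, which are shared by both families of operators. For the base case, I would compute both sides explicitly as operators on $\mathcal{O}(\hat T)$. On the left we have $\mathfrak{L}_{i,v}f(\z)$ given by \eqref{demazurelusztig}. On the right, we conjugate $\mathfrak{T}_{i,v^{-1}}$ by the monomial $\z^{\rho}$: since $s_i$ acts on $\z^\rho$ by $s_i(\z^\rho) = \z^{\rho - \alpha_i}$ (because $\langle\alpha_i^\vee,\rho\rangle = 1$ for every simple root $\alpha_i$), we get $\z^{\rho}\mathfrak{T}_{i,v^{-1}}(\z^{-\rho}g)(\z) = \z^{\rho}\mathfrak{T}_{i,v^{-1}}$ applied to the appropriately rescaled argument. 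Substituting the definition \eqref{demwhitop} of $\mathfrak{T}_{i,v^{-1}}$, carefully tracking how the factors $\z^{\rho}$ and $\z^{-\rho}$ interact with the $s_i$-shift and with the denominator $\z^{\alpha_i}-1$, and then multiplying by $(-v)^{\ell(s_i)} = -v$, I expect everything to collapse to exactly \eqref{demazurelusztig} after routine algebra. (This is essentially the content of the remark in Section~\ref{Iwahoripreliminaries} that "the difference is slight but significant.")

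Next, I would extend to general $w$ by induction on $\ell(w)$. Both sides of \eqref{eq:L-T} are products over a reduced word — $\mathfrak{L}_{w,v} = \mathfrak{L}_{i_1,v}\cdots\mathfrak{L}_{i_k,v}$ and likewise for $\mathfrak{T}$ — and both products are well-defined independently of the chosen reduced word because each family satisfies the braid relations (stated in the excerpt for $\mathfrak{T}_i$, and analogously known for $\mathfrak{L}_i$). So it suffices to show that if \eqref{eq:L-T} holds for $w$ then it holds for $s_i w$ whenever $\ell(s_i w) = \ell(w) + 1$. Writing $\mathfrak{L}_{s_i w, v} = \mathfrak{L}_{i,v}\mathfrak{L}_{w,v}$ and applying the inductive hypothesis together with the base case, the right-hand side becomes
\[
(-v)\,\z^{\rho}\mathfrak{T}_{i,v^{-1}}\z^{-\rho}\cdot(-v)^{\ell(w)}\z^{\rho}\mathfrak{T}_{w,v^{-1}}\z^{-\rho}
= (-v)^{\ell(w)+1}\z^{\rho}\mathfrak{T}_{i,v^{-1}}\mathfrak{T}_{w,v^{-1}}\z^{-\rho},
\]
where the two interior copies of $\z^{-\rho}$ and $\z^{\rho}$ cancel; and $\mathfrak{T}_{i,v^{-1}}\mathfrak{T}_{w,v^{-1}} = \mathfrak{T}_{s_i w,v^{-1}}$ since $s_{i_1}\cdots s_{i_k}$ prepended with $s_i$ is a reduced word for $s_i w$. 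This is precisely $(-v)^{\ell(s_i w)}\z^{\rho}\mathfrak{T}_{s_i w, v^{-1}}\z^{-\rho}$, closing the induction.

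The main obstacle is the base-case computation: one must be careful that the conjugation $\z^{\rho}(-)\z^{-\rho}$ and the swap $v \leftrightarrow v^{-1}$ between $\mathfrak{T}$ and $\mathfrak{L}$ interact correctly, in particular that the factor $\z^{-\alpha_i}$ appearing in one operator is converted to the $\z^{\alpha_i}$ appearing in the other precisely by the $s_i$-shift of $\z^{\rho}$. A clean way to organize this is to write $\mathfrak{T}_{i,v^{-1}}$ in the form $\partial_i^{\circ} - v^{-1}\partial_i$ where $\partial_i^\circ f = \frac{f - s_i f}{\z^{\alpha_i}-1}$ and $\partial_i f = \frac{f - \z^{-\alpha_i}s_i f}{\z^{\alpha_i}-1}$ (as in the notation introduced in the proof of Theorem~\ref{thm:paracs}), note that $\partial_i^\circ$ commutes with conjugation by $\z^{\rho}$ up to the shift, and verify the monomial case $f = \z^{\mu}$ directly, extending by linearity since $\mathcal{O}(\hat T)$ is spanned by monomials. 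Everything else is formal.
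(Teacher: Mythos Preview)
Your proposal is correct and follows essentially the same approach as the paper: verify the identity $\mathfrak{L}_{i,v} = -v\,\mathbf{z}^{\rho}\mathfrak{T}_{i,v^{-1}}\mathbf{z}^{-\rho}$ for simple reflections directly from the definitions \eqref{demwhitop} and \eqref{demazurelusztig}, and then the general case follows by multiplicativity along a reduced word. The paper's proof simply states the base case and leaves the induction implicit, whereas you have spelled out both steps in more detail.
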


\begin{proof}
  Using~\eqref{demwhitop} and~\eqref{demazurelusztig} it follows that $- v\mathfrak{L}_{i, v^{- 1}} =\mathbf{z}^{\rho}
  \mathfrak{T}_{i, v} \mathbf{z}^{- \rho}$ which implies (\ref{eq:L-T}).
\end{proof}

In order to understand the spherical Whittaker functions (which are sums of standard Iwahori Whittaker functions), we must compute how the corresponding 
spherical idempotents in the Hecke
algebra act on a dominant weight. Define
\[ \Theta = \mathbf{z}^{- \rho} \prod_{\alpha \in \Delta^+} (1 -
    \mathbf{z}^{- \alpha})^{- 1} \left( \sum_{w \in W} (-1)^{\ell (w)}
   w \right) \mathbf{z}^{\rho} \hspace{0.17em} . \] 
\begin{proposition}
  \label{TLspherical}
  The following identities of operators hold in $\mathcal{O}(\hat T)$:
  \[ \sum_{w \in W} \mathfrak{\mathfrak{T}}_w = \left( \prod_{\alpha \in
     \Delta^+} (1 - v \mathbf{z}^{- \alpha}) \right) \Theta, \hspace{2em}
     \sum_{w \in W} \mathfrak{L}_w = \Theta \prod_{\alpha \in \Delta^+} (1 - v
     \mathbf{z}^{- \alpha})\,. \]
\end{proposition}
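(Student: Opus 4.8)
The plan is to recognize both operators $\sum_{w}\mathfrak{T}_w$ and $\sum_{w}\mathfrak{L}_w$ as the image of the symmetrizer of the finite Hecke algebra, and to identify the two right-hand sides with the same images. Since the $\mathfrak{T}_i$ (resp.\ the $\mathfrak{L}_i$) satisfy the braid relations and the quadratic relation $\mathfrak{T}_i^2=(v-1)\mathfrak{T}_i+v$, the rule $T_i\mapsto\mathfrak{T}_i$ (resp.\ $T_i\mapsto\mathfrak{L}_i$) makes $\mathcal{O}(\hat T)$ a module over the finite Hecke algebra $\mathcal{H}$ of parameter $v$, which I treat as an indeterminate so that $\mathcal{H}$ is semisimple (the identities being polynomial in $v$, this loses nothing). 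The symmetrizer $\mathbf{1}_+=\sum_{w\in W}T_w$ satisfies $T_w\mathbf{1}_+=v^{\ell(w)}\mathbf{1}_+=\mathbf{1}_+T_w$, hence $\mathbf{1}_+^2=\bigl(\sum_w v^{\ell(w)}\bigr)\mathbf{1}_+$, so $e:=\bigl(\sum_w v^{\ell(w)}\bigr)^{-1}\mathbf{1}_+$ is idempotent. Over the fraction field $\mathbb{C}(\mathbf{z})$ this gives $\mathbb{C}(\mathbf{z})=V_+\oplus\sum_i\operatorname{im}(\mathfrak{T}_i-v)$, where $V_+$ is the common $v$-eigenspace of all the $\mathfrak{T}_i$; moreover $\sum_w\mathfrak{T}_w$ acts on $V_+$ by the scalar $\sum_w v^{\ell(w)}$ and annihilates each $\operatorname{im}(\mathfrak{T}_i-v)$. (The equality $\ker(e)=\sum_i\operatorname{im}(\mathfrak{T}_i-v)$ is the statement that each nontrivial irreducible $\mathcal{H}$-module $U$ satisfies $\sum_i\operatorname{im}(T_i-v)=U$.)

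Next I would make the relevant subspaces explicit. The Weyl denominator formula $\sum_w(-1)^{\ell(w)}\mathbf{z}^{w\rho}=\mathbf{z}^{\rho}\prod_{\alpha\in\Delta^+}(1-\mathbf{z}^{-\alpha})$ shows $\Theta(f)=\delta^{-1}\sum_w(-1)^{\ell(w)}w(\mathbf{z}^{\rho}f)$ with $\delta$ the Weyl denominator, so $\Theta$ extends to $\mathbb{C}(\mathbf{z})$, is $\mathbb{C}(\mathbf{z})^W$-linear, restricts to the identity on $\mathbb{C}(\mathbf{z})^W$, and annihilates every $f$ for which $\mathbf{z}^{\rho}f$ is $s_i$-invariant for some $i$ (pair $w$ with $ws_i$ in the antisymmetrization). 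Writing $D:=\prod_{\alpha\in\Delta^+}(1-v\mathbf{z}^{-\alpha})$, I claim $V_+=D\cdot\mathbb{C}(\mathbf{z})^W$: the inclusion $\supseteq$ holds because $D/(1-v\mathbf{z}^{-\alpha_i})$ is $s_i$-invariant (as $s_i$ permutes $\Delta^+\setminus\{\alpha_i\}$), and $\subseteq$ because $f\in V_+$ forces $f(s_i\mathbf{z})=\tfrac{1-v\mathbf{z}^{\alpha_i}}{1-v\mathbf{z}^{-\alpha_i}}f(\mathbf{z})$, and combined with $s_i(D)=\tfrac{1-v\mathbf{z}^{\alpha_i}}{1-v\mathbf{z}^{-\alpha_i}}D$ this gives $s_i(f/D)=f/D$ for all $i$, so $f\in D\cdot\mathbb{C}(\mathbf{z})^W$. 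Similarly $\operatorname{im}(\mathfrak{T}_i-v)$ (the $(-1)$-eigenspace of $\mathfrak{T}_i$) equals $\{f:f(s_i\mathbf{z})=\mathbf{z}^{\alpha_i}f(\mathbf{z})\}$, and for such $f$ the element $\mathbf{z}^{\rho}f$ is $s_i$-invariant, whence $\Theta(f)=0$. The corresponding statements for $\mathfrak{L}$ are obtained the same way: $\mathfrak{L}_i$ acts by $v$ on $\mathbb{C}(\mathbf{z})^W$ (so the common $v$-eigenspace of the $\mathfrak{L}_i$ is $\mathbb{C}(\mathbf{z})^W$ itself), its $(-1)$-eigenspace is $(1-v\mathbf{z}^{\alpha_i})$ times that of $\mathfrak{T}_i$, and on it $\mathbf{z}^{\rho}Df$ turns out $s_i$-invariant, so $\Theta(Df)=0$.

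Finally I would conclude by comparison. For $f\in\mathcal{O}(\hat T)$ write $f=ef+(1-e)f$ with $ef\in V_+=D\cdot\mathbb{C}(\mathbf{z})^W$ and $(1-e)f\in\sum_i\operatorname{im}(\mathfrak{T}_i-v)$. On $ef$ both $\sum_w\mathfrak{T}_w$ and $g\mapsto D\,\Theta(g)$ act by the scalar $\sum_w v^{\ell(w)}$ — for the latter, writing $ef=Dh$ with $h\in\mathbb{C}(\mathbf{z})^W$, one has $D\,\Theta(Dh)=D\,\Theta(D)\,h=\bigl(\sum_w v^{\ell(w)}\bigr)Dh$, using $\mathbb{C}(\mathbf{z})^W$-linearity of $\Theta$ and $\Theta(D)=\sum_w v^{\ell(w)}$, a classical $v$-deformation of the Weyl denominator identity (equivalently, the Hall--Littlewood polynomial $P_0$ is $1$). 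On $(1-e)f$, decomposed over $\mathbb{C}(\mathbf{z})$ as a sum of elements of the $\operatorname{im}(\mathfrak{T}_i-v)$, both operators vanish by the previous paragraph and linearity. Hence $\sum_w\mathfrak{T}_w=\bigl(\prod_{\alpha\in\Delta^+}(1-v\mathbf{z}^{-\alpha})\bigr)\Theta$, and the identity for $\sum_w\mathfrak{L}_w$ follows in exactly the same way, with the operator $g\mapsto\Theta(Dg)$, the $(-1)$-eigenspaces of the $\mathfrak{L}_i$, and the space $\mathbb{C}(\mathbf{z})^W$ in place of $D\cdot\mathbb{C}(\mathbf{z})^W$. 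I expect the crux to be this uniqueness argument, and in particular the two eigenspace computations — the identification $V_+=D\cdot\mathbb{C}(\mathbf{z})^W$ and the vanishing of $\Theta$ on the $(-1)$-eigenspaces — together with the bookkeeping needed to pass between Laurent polynomials and rational functions; once these are in hand, the identities are formal consequences of the Hecke relations and the Weyl denominator formula.
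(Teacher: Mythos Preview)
Your argument is correct and constitutes a genuine self-contained proof, whereas the paper simply cites Theorem~14 of~\cite{BrubakerBumpFriedbergMatrix} after a change of notation. So the approaches are quite different in character.

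Your route exploits the Hecke-module structure directly: you identify $\sum_w\mathfrak{T}_w$ (resp.\ $\sum_w\mathfrak{L}_w$) as the symmetrizer, compute the common $v$-eigenspace explicitly as $D\cdot\mathbb{C}(\mathbf{z})^W$ (resp.\ $\mathbb{C}(\mathbf{z})^W$), and check that $D\Theta$ (resp.\ $\Theta D$) matches on that eigenspace via the classical identity $\Theta(D)=\sum_w v^{\ell(w)}$ and vanishes on the complementary piece $\sum_i\operatorname{im}(\mathfrak{T}_i-v)$. The two eigenspace computations and the identity $P_0=1$ are the real content, and you have them right; the quotient argument showing $\ker(e)=\sum_i\operatorname{im}(\mathfrak{T}_i-v)$ (since on the quotient each $\mathfrak{T}_i$ acts by $v$, forcing triviality) is the cleanest way to justify that equality. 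The paper's approach has the virtue of brevity and of situating the identity within a broader framework of Hecke operators on principal series, but yours is more transparent about \emph{why} the two operator identities hold and makes the parallel between the $\mathfrak{T}$ and $\mathfrak{L}$ cases structurally clear: they are the same Hecke symmetrizer acting through two different module structures whose $v$-eigenspaces differ precisely by the factor~$D$.
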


\begin{proof}
  The operator (9) of \cite{BrubakerBumpFriedbergMatrix} becomes our $\mathfrak{L}_{i, v}$ under the specialization $\pi^{\lambda} \mapsto \mathbf{z}^{- \lambda}$ and
  $\epsilon (T_i) = q$, and taking $q$ to be our $v$.
  Therefore Theorem~14 of~\cite{BrubakerBumpFriedbergMatrix} gives both formulas.
\end{proof}

By the Weyl character formula if $\lambda$ is a partition then $\Theta
\mathbf{z}^{\lambda} = s_{\lambda} (\mathbf{z})$ is the corresponding Schur function. Thus the sum over $\mathfrak{T}_w$ produces a Schur function
times a deformed Weyl denominator matching the Casselman-Shalika formula for the spherical Whittaker function, as expected.
The sum over $\mathfrak{L}_w$ produces a (symmetric) Hall-Littlewood polynomial. While Proposition~\ref{tlrelation} gives a relationship between $\mathfrak{L}_w\z^\lambda$ and $\mathfrak{T}_w\z^\lambda$ for any $w$, it is remarkable that their sums over all $w \in W$ result in such different functions.

Next we explain precisely how these operators $\mathfrak{T}_i$ and $\mathfrak{L}_i$ may be used to compute
various specializations of non-symmetric Macdonald polynomials. In general, these polynomials depend on two parameters
which are usually denoted $q$ and $t$. There are differing
notations in the literature, but in this paper we will
follow the notation $E_\lambda(\z; q, t)$ of Haglund, Haiman and Loehr~\cite{HaglundHaimanLoehrCombinatorial}. Note that this $q$ is not the cardinality of the residue field of $F$ as in earlier sections; in this section, the notation $v^{-1}$ will be used for this quantity, where the $t$ of \cite{HaglundHaimanLoehrCombinatorial} equals our $v$.

If $q=0$ or $\infty$ the polynomials $E_\lambda(\z; q, t)$ are non-symmetric variants of Hall-Littlewood
polynomials. According to \cite{BBL}, one such specialization arises from successively applying the operators
$\mathfrak{T}_i$ to a dominant weight $\lambda$.
The notation of Haglund, Haiman and
Loehr~\cite{HaglundHaimanLoehrCombinatorial} that we follow here differs from the notation
in~\cite{BBL} by the variable change $(q, t) \mapsto (q^{- 1}, t^{- 1})$, so
Theorem~7 of~{\cite{BBL}} will now be written
\begin{equation}
  \label{eq:bblnsm} \phi_w (\mathbf{z}; \varpi^{-\lambda}) = \mathfrak{T}_{w,v}(\z^\lambda)=
(- v)^{\ell (w)}
  \mathbf{z}^{- \rho} w_0 E_{w_0 w (\lambda + \rho)} (\mathbf{z}; \infty,
  v) .
\end{equation}
The next result is an analog of this for the $\mathfrak{L}_{w,v}$ using Proposition~\ref{tlrelation}.

\begin{proposition}
  \label{lnsmaceval}
  If $\lambda$ is dominant, then
  \begin{equation}
    \label{eq:L-E}
    \mathfrak{L}_{w, v} (\mathbf{z}^{\lambda + \rho}) = w_0 E_{w_0 w
     (\lambda + \rho)} (\mathbf{z}; \infty, v^{- 1}) . 
  \end{equation} 
\end{proposition}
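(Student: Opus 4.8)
The plan is to deduce Proposition~\ref{lnsmaceval} directly from the relationship between the two families of operators established in Proposition~\ref{tlrelation}, combined with the known specialization~\eqref{eq:bblnsm} of nonsymmetric Macdonald polynomials in terms of the $\mathfrak{T}_{w,v}$ operators. The point is that~\eqref{eq:L-E} is essentially~\eqref{eq:bblnsm} seen through the conjugation-and-twist identity~\eqref{eq:L-T}, after the substitution $v \mapsto v^{-1}$.

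First I would start from Proposition~\ref{tlrelation} in the form
\[
  \mathfrak{L}_{w,v} = (-v)^{\ell(w)}\,\z^{\rho}\,\mathfrak{T}_{w,v^{-1}}\,\z^{-\rho},
\]
and apply both sides to $\z^{\lambda+\rho}$ to get
\[
  \mathfrak{L}_{w,v}(\z^{\lambda+\rho}) = (-v)^{\ell(w)}\,\z^{\rho}\,\mathfrak{T}_{w,v^{-1}}(\z^{\lambda}).
\]
Next I would invoke~\eqref{eq:bblnsm}, which reads $\mathfrak{T}_{w,v}(\z^\lambda) = (-v)^{\ell(w)}\z^{-\rho}w_0 E_{w_0 w(\lambda+\rho)}(\z;\infty,v)$, but now with the parameter $v$ replaced by $v^{-1}$ throughout (this is legitimate since $v$ is a free nonzero complex parameter in the definition~\eqref{demwhitop} of $\mathfrak{T}_{i,v}$ and in Theorem~7 of~\cite{BBL}). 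This gives
\[
  \mathfrak{T}_{w,v^{-1}}(\z^\lambda) = (-v^{-1})^{\ell(w)}\,\z^{-\rho}\,w_0 E_{w_0 w(\lambda+\rho)}(\z;\infty,v^{-1}).
\]
Substituting this into the previous display, the factor $(-v)^{\ell(w)}(-v^{-1})^{\ell(w)} = 1$ and the $\z^{\rho}\z^{-\rho}$ cancels, leaving exactly $\mathfrak{L}_{w,v}(\z^{\lambda+\rho}) = w_0 E_{w_0 w(\lambda+\rho)}(\z;\infty,v^{-1})$, which is~\eqref{eq:L-E}.

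The only genuine subtlety—and the step I would be most careful about—is bookkeeping around which parameter plays the role of $q$ versus $t$ and how the conventions of~\cite{HaglundHaimanLoehrCombinatorial} versus~\cite{BBL} interact with the substitution $v\mapsto v^{-1}$; the excerpt already flags the variable change $(q,t)\mapsto(q^{-1},t^{-1})$ between the two notations, so I would double-check that the $\infty$ argument in $E_{\bullet}(\z;\infty,v^{-1})$ is indeed the correct image of the $q=\infty$ (equivalently $q=0$) specialization under the relevant conventions, and that $\lambda$ dominant is exactly the hypothesis under which~\eqref{eq:bblnsm} holds. Beyond this, there is no real obstacle: the argument is a formal manipulation, and no new lattice-model or representation-theoretic input is needed.
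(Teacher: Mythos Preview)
Your proof is correct and follows exactly the approach the paper takes: the paper's proof simply says the result follows by comparing \eqref{eq:L-T} and \eqref{eq:bblnsm}, and you have carried out that comparison explicitly with the correct cancellations. Your caution about the parameter conventions is appropriate but, as you suspected, resolves cleanly under the substitution $v\mapsto v^{-1}$.
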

\begin{proof}
  This follows by comparing \eqref{eq:L-T} and \eqref{eq:bblnsm}. Another
  proof may be based on the Knop-Sahi recurrence
  (\cite{KnopIntegrality,SahiInterpolation,CherednikIntertwining}) and other
  facts that can be found in~\cite{HaglundHaimanLoehrCombinatorial}. 
   For brevity we will not give this alternative proof.
\end{proof}

Having made this connection, we can now relate Li's Whittaker functions to the
Hall-Littlewood polynomials $P_\lambda$ (\cite{MacdonaldHall}, Chapter~III). If we
denote by $\mathcal{W}_\text{Li}(\z,\varpi^{- \lambda})$ the Whittaker
function described by Jian-Shu Li in~{\cite{JianShuLiUnramified}}, we have the
following result:

\begin{proposition}\label{LiWhittakerHL}
  Let $\lambda$ be a dominant weight. Then
  \[ \mathbf{z}^{\rho} \mathcal{W}_\text{\rm Li}(\z,\varpi^{- \lambda}) := \mathbf{z}^{\rho} \sum_{w \in W} (- v)^{- \ell (w)} \phi_w (\mathbf{z};
     \varpi^{- \lambda}) = P_{\lambda + \rho} (\mathbf{z}, v^{- 1}) . \]
\end{proposition}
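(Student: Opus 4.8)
The plan is to combine Proposition~\ref{lnsmaceval} with the relation between $\mathfrak{L}_w$ and $\mathfrak{T}_w$ from Proposition~\ref{tlrelation}, and then recognize the resulting sum as a Hall-Littlewood polynomial via the known Demazure-Lusztig formula for Hall-Littlewood polynomials. Concretely, first I would use Proposition~\ref{tlrelation} to rewrite each term: since $\mathfrak{L}_{w,v} = (-v)^{\ell(w)} \z^\rho \mathfrak{T}_{w,v^{-1}} \z^{-\rho}$, and since $\phi_w(\z;\varpi^{-\lambda}) = \mathfrak{T}_{w,v}(\z^\lambda)$ by Corollary~\ref{phialgorithm} (with $w_2 = 1$), the left-hand side $\z^\rho \sum_{w\in W} (-v)^{-\ell(w)}\phi_w(\z;\varpi^{-\lambda})$ needs to be matched against $\sum_{w\in W}\mathfrak{L}_{w,v^{-1}}(\z^{\lambda+\rho})$. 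The cleanest route is: apply Proposition~\ref{lnsmaceval} with the parameter $v^{-1}$ in place of $v$ (legitimate since that proposition holds for any nonzero $v$), giving $\mathfrak{L}_{w,v^{-1}}(\z^{\lambda+\rho}) = w_0 E_{w_0 w(\lambda+\rho)}(\z;\infty, v)$; but actually the more efficient approach avoids non-symmetric Macdonald polynomials entirely.

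The efficient approach: use Proposition~\ref{TLspherical}, which says $\sum_{w\in W}\mathfrak{L}_{w,v} = \Theta \prod_{\alpha\in\Delta^+}(1-v\z^{-\alpha})$, where $\Theta$ is the Weyl-symmetrization operator defined just before that proposition. First I would establish, via Proposition~\ref{tlrelation} applied termwise, that
\[
\z^\rho \sum_{w\in W} (-v)^{-\ell(w)} \mathfrak{T}_{w,v}(\z^\lambda) = \sum_{w\in W} \mathfrak{L}_{w, v^{-1}}(\z^{\lambda+\rho}).
\]
Indeed, from $\mathfrak{L}_{w,u} = (-u)^{\ell(w)}\z^\rho \mathfrak{T}_{w,u^{-1}}\z^{-\rho}$ with $u = v^{-1}$ we get $\mathfrak{L}_{w,v^{-1}}(\z^{\lambda+\rho}) = (-v)^{-\ell(w)}\z^\rho \mathfrak{T}_{w,v}(\z^{-\rho}\z^{\lambda+\rho}) = (-v)^{-\ell(w)}\z^\rho \mathfrak{T}_{w,v}(\z^\lambda)$, and summing over $w$ gives the displayed identity. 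Then by Proposition~\ref{TLspherical} (with $v$ replaced by $v^{-1}$), the right-hand side equals $\Theta\bigl(\prod_{\alpha\in\Delta^+}(1-v^{-1}\z^{-\alpha})\bigr)\z^{\lambda+\rho}$ — but care is needed since $\Theta$ and the product do not commute; the correct statement from Proposition~\ref{TLspherical} is $\sum_w \mathfrak{L}_w = \Theta \cdot \prod_{\alpha\in\Delta^+}(1-v^{-1}\z^{-\alpha})$ as a composition of operators, so applying it to $\z^{\lambda+\rho}$ gives $\Theta\bigl(\z^{\lambda+\rho}\prod_{\alpha\in\Delta^+}(1-v^{-1}\z^{-\alpha})\bigr)$.

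Finally I would identify this with the Hall-Littlewood polynomial. Writing out $\Theta$, this is
\[
\z^{-\rho}\Bigl(\prod_{\alpha\in\Delta^+}(1-\z^{-\alpha})\Bigr)^{-1} \sum_{w\in W}(-1)^{\ell(w)} w\Bigl(\z^{\lambda+\rho}\prod_{\alpha\in\Delta^+}(1-v^{-1}\z^{-\alpha})\Bigr),
\]
and for $G = \GL_r$ with $\mu = \lambda+\rho$ a strict partition, the standard formula (Macdonald, \emph{Symmetric functions and Hall polynomials}, Ch.~III, (2.2)) expresses $P_\mu(\z; t)$ precisely as $\bigl(\prod_{\alpha}(1-\z^{-\alpha})\bigr)^{-1}\sum_{w\in W}(-1)^{\ell(w)} w\bigl(\z^\mu \prod_{\alpha>0}(1-t\,\z^{-\alpha})\bigr)$ up to the normalization constant $v_\lambda(t)$, which is $1$ when $\mu$ is strict (all parts distinct), as is the case here since $\mu = \lambda+\rho$ with $\lambda$ dominant. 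Setting $t = v^{-1}$ then yields $\z^\rho\,\z^{-\rho}P_{\lambda+\rho}(\z, v^{-1})$ after pulling the $\z^{-\rho}$ through — wait, one must track the overall $\z^{-\rho}$ in $\Theta$: the left-hand side of the proposition carries an explicit $\z^\rho$, which cancels the $\z^{-\rho}$ in $\Theta$, leaving exactly $P_{\lambda+\rho}(\z, v^{-1})$. The main obstacle I anticipate is bookkeeping: correctly handling the non-commutativity of $\Theta$ with the Weyl-denominator product in Proposition~\ref{TLspherical}, tracking the inversion $v \mapsto v^{-1}$ consistently through Proposition~\ref{tlrelation}, and verifying that the classical Hall-Littlewood formula applies with trivial normalization because $\lambda+\rho$ is strictly decreasing. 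I would also need to note that $\Theta(\z^\mu \prod(1-v^{-1}\z^{-\alpha}))$ is genuinely a polynomial (the apparent denominator divides the numerator), which follows since the alternating sum is divisible by the Weyl denominator.
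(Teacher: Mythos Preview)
Your strategy is correct and matches the paper's: both arguments arrive at the identity $\z^{\rho}\sum_{w}(-v)^{-\ell(w)}\phi_w(\z;\varpi^{-\lambda}) = \sum_{w}\mathfrak{L}_{w,v^{-1}}(\z^{\lambda+\rho})$ and then identify the right-hand side with $P_{\lambda+\rho}(\z;v^{-1})$ using that $v_{\lambda+\rho}=1$ for a strict partition. You reach the left-to-right conversion more directly via Proposition~\ref{tlrelation} (the paper detours through \eqref{eq:bblnsm} and Proposition~\ref{lnsmaceval}), while the paper finishes more directly by citing Macdonald III~(1.1), namely $\sum_w\mathfrak{L}_{w,t}\z^{\mu}=v_\mu(t)P_\mu(\z;t)$, rather than unpacking $\Theta$.

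However, your explicit computation via Proposition~\ref{TLspherical} has two compensating-looking errors that do \emph{not} actually cancel. First, when you ``write out $\Theta$'' you drop the trailing $\z^{\rho}$ in its definition: applied to $f=\z^{\lambda+\rho}\prod_{\alpha}(1-v^{-1}\z^{-\alpha})$ one gets $\z^{-\rho}\prod_{\alpha}(1-\z^{-\alpha})^{-1}\sum_w(-1)^{\ell(w)}w\bigl(\z^{\lambda+2\rho}\prod_{\alpha}(1-v^{-1}\z^{-\alpha})\bigr)$, not what you wrote. Second, the formula you attribute to Macdonald is off by the same $\rho$-shift: the correct identity (obtained from III~(2.1) by pulling the denominator outside the $W$-sum) is $v_\mu(t)P_\mu(\z;t)=\z^{-\rho}\prod_{\alpha}(1-\z^{-\alpha})^{-1}\sum_w(-1)^{\ell(w)}w\bigl(\z^{\mu+\rho}\prod_{\alpha}(1-t\z^{-\alpha})\bigr)$, which is precisely $\Theta\bigl(\z^{\mu}\prod_{\alpha}(1-t\z^{-\alpha})\bigr)$. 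Your attempt to absorb the residual $\z^{-\rho}$ using ``the explicit $\z^{\rho}$ on the left-hand side of the proposition'' fails because that $\z^{\rho}$ was already consumed in the step $\z^{\rho}(-v)^{-\ell(w)}\mathfrak{T}_{w,v}\z^{\lambda}=\mathfrak{L}_{w,v^{-1}}\z^{\lambda+\rho}$. With the corrected formulas (or by skipping the expansion of $\Theta$ and citing Macdonald III~(1.1) directly, as the paper does), the argument goes through cleanly.
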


\begin{proof}
By (1.1) in Chapter~III of Macdonald~{\cite{MacdonaldHall}}, 
if $\lambda$ is a partition then
\begin{equation}
\label{lishlp}\sum_{w \in W} \mathfrak{L}_w \mathbf{z}^{\lambda} = R_{\lambda}
   (\mathbf{z} ; v) = v_{\lambda} (v) P_{\lambda} (\mathbf{z} ; v)
\end{equation}
where $R_{\lambda}$, $v_{\lambda}$ and $P_{\lambda}$ are as
in~{\cite{MacdonaldHall}} Section~III.1. Multiplying (\ref{eq:bblnsm}) by $\mathbf{z}^{\rho} (- v)^{- \ell (w)}$
  and summing over $w$, then using (\ref{eq:L-E}) and (\ref{lishlp}), the
  left-hand side of the desired identity equals
  \[ \sum_{w \in W} w_0 E_{w_0 w (\lambda + \rho)} (\mathbf{z}; \infty, v) = \sum_{w \in W}
     \mathfrak{L}_{w, v^{- 1}} (\mathbf{z}^{\lambda + \rho}) = v_{\lambda +
     \rho} (v^{- 1}) P_{\lambda + \rho} (\mathbf{z} ; v^{- 1}) . \]
  Because $\lambda + \rho$ is strongly dominant $v_{\lambda + \rho} = 1$ and
  the statement follows.
\end{proof}

To conclude this section, we relate parahoric Whittaker functions to generalizations of Macdonald polynomials called \emph{Macdonald polynomials with prescribed symmetry}. They were introduced by Baker, Dunkl and Forrester~\cite{BakerDunklForrester} and studied further by Marshall~\cite{Marshall} and Baratta~\cite{Baratta}. We shall follow the conventions in~\cite{Baratta}.

For $I, J$ disjoint subsets of $\{1,\cdots,r-1 \}$ such that $i\pm 1 \notin J$ for $i \in I$ and $j \pm 1 \notin I$ for $j \in J$, we define
\begin{equation}
  \label{eq:nsmac-prescribed}
  S_{\eta^*}^{(I,J)}(\z;q,t) := \frac{1}{a_\eta^{(I,J)}} \sum_{w \in W_{I \cup J}} (-t)^{-\ell(w)} \mathfrak{L}_{w,t} E_\eta(\z;q,t),
\end{equation}
where $\eta^*$ is a composition such that $\eta^*_i \geq \eta^*_{i+1}$ for all $i \in I$ and $\eta^*_j > \eta^*_{j+1}$ for all $j \in J$, $\eta$ is any composition in $W_{I\cup J} \eta^*$, and $a_\eta^{(I,J)}$ is an explicit normalization factor.

\begin{proposition}
  Let $\lambda$ be a dominant weight, $I = \emptyset$, $J = \J$ and $t = v^{-1}$. Then 
  \[ \psi_1^\J(\lambda, \z) =  \z^{-\rho} S_{\lambda+\rho}^{(\emptyset,\J)} (\z;0,t) a_{\lambda+\rho}^{(\emptyset,\J)}. \]
\end{proposition}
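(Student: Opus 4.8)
The plan is to recognize that the proposition is essentially a restatement of Theorem~\ref{thm:paracs} combined with the definition~\eqref{eq:nsmac-prescribed} and the operator identities from Propositions~\ref{tlrelation} and~\ref{TLspherical}. First I would unwind the right-hand side: by definition \eqref{eq:nsmac-prescribed} with $I = \emptyset$, $J = \mathbf{J}$, $t = v^{-1}$ and $\eta^* = \lambda + \rho$ (which is a partition, hence strictly dominant, so the condition $\eta^*_j > \eta^*_{j+1}$ for $j \in \mathbf{J}$ is satisfied), we have
\[
  S_{\lambda+\rho}^{(\emptyset,\mathbf{J})}(\z;0,v^{-1})\, a_{\lambda+\rho}^{(\emptyset,\mathbf{J})}
  = \sum_{w \in W_{\mathbf{J}}} (-v^{-1})^{-\ell(w)}\, \mathfrak{L}_{w,v^{-1}}\, E_{\lambda+\rho}(\z; 0, v^{-1}).
\]
Since $\lambda + \rho$ is dominant, the nonsymmetric Macdonald polynomial $E_{\lambda+\rho}(\z;0,v^{-1})$ should reduce to the monomial $\z^{\lambda+\rho}$ (the specialization $q \to 0$ at a dominant index gives the leading monomial; this is standard and can be cited from \cite{HaglundHaimanLoehrCombinatorial} or deduced from Proposition~\ref{lnsmaceval} with $w = 1$). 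So the right-hand side becomes $\z^{-\rho} \sum_{w \in W_{\mathbf{J}}} (-v^{-1})^{-\ell(w)} \mathfrak{L}_{w,v^{-1}} \z^{\lambda+\rho}$.

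Next I would handle the left-hand side. By Theorem~\ref{thm:paracs}, $\psi_1^{\mathbf{J}}(\z; \varpi^{-\lambda}) = \prod_{\alpha \in \Delta_{\mathbf{J}}^+}(1 - v\z^{-\alpha})\, \chi_\lambda^{\mathbf{J}}(\z)$. Alternatively, and more usefully here, $\psi_1^{\mathbf{J}} = \sum_{w \in W_{\mathbf{J}}} \phi_w$ and $\phi_w(\z; \varpi^{-\lambda}) = \mathfrak{T}_{w,v}(\z^\lambda)$. Conjugating by $\z^\rho$ and using Proposition~\ref{tlrelation} in the form $\mathfrak{L}_{w,v^{-1}} = (-v^{-1})^{\ell(w)} \z^\rho \mathfrak{T}_{w,v} \z^{-\rho}$ — applied with $v$ replaced by $v^{-1}$, so that $\mathfrak{T}_{w,(v^{-1})^{-1}} = \mathfrak{T}_{w,v}$ — gives $\z^\rho \mathfrak{T}_{w,v}\, \z^{-\rho} = (-v^{-1})^{-\ell(w)} \mathfrak{L}_{w,v^{-1}}$. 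Hence
\[
  \z^\rho \psi_1^{\mathbf{J}}(\z;\varpi^{-\lambda}) = \sum_{w \in W_{\mathbf{J}}} \z^\rho \mathfrak{T}_{w,v}(\z^\lambda) = \sum_{w \in W_{\mathbf{J}}} (-v^{-1})^{-\ell(w)} \mathfrak{L}_{w,v^{-1}}(\z^{\lambda+\rho}),
\]
which matches $\z^\rho$ times the expression obtained for the right-hand side above. Combining the two computations proves the identity.

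The main obstacle will be pinning down the specialization $E_{\lambda+\rho}(\z;0,v^{-1}) = \z^{\lambda+\rho}$ with the correct normalization convention of \cite{HaglundHaimanLoehrCombinatorial}/\cite{Baratta} and verifying that the normalization factor $a_{\lambda+\rho}^{(\emptyset,\mathbf{J})}$ appearing in \eqref{eq:nsmac-prescribed} is exactly the one that cancels when we move it to the other side — i.e., that the definition \eqref{eq:nsmac-prescribed} is set up so that $S_{\eta^*}^{(I,J)}\, a_\eta^{(I,J)}$ is literally the unnormalized sum $\sum_{w} (-t)^{-\ell(w)} \mathfrak{L}_{w,t} E_\eta$. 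Once those conventions are aligned, the remaining manipulations are the routine operator identities already established in the excerpt. A secondary point to check carefully is the direction of the substitution $t = v^{-1}$ versus $t = v$ in Proposition~\ref{tlrelation}, to make sure the powers of $v$ in $(-v^{-1})^{-\ell(w)}$ and $(-v)^{\ell(w)}$ match up; this is bookkeeping but easy to get wrong.
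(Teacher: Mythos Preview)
Your proposal is correct and follows essentially the same route as the paper: express $\phi_w(\z;\varpi^{-\lambda})$ as $(-t)^{-\ell(w)}\z^{-\rho}\mathfrak{L}_{w,t}(\z^{\lambda+\rho})$ via the $\mathfrak{T}$--$\mathfrak{L}$ relation (the paper routes this through \eqref{eq:bblnsm} and \eqref{eq:L-E}, which amounts to the same thing), invoke $E_{\lambda+\rho}(\z;0,t)=\z^{\lambda+\rho}$, and sum over $W_{\mathbf{J}}$ to match \eqref{eq:nsmac-prescribed}. One small correction: Proposition~\ref{lnsmaceval} concerns the $q\to\infty$ specialization, not $q\to 0$, so it does not justify $E_{\lambda+\rho}(\z;0,t)=\z^{\lambda+\rho}$; rely on the Haglund--Haiman--Loehr reference (or the standard fact that the nonsymmetric Macdonald polynomial at a dominant index with $q=0$ is the monomial) instead.
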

\begin{proof}
  From~\eqref{eq:bblnsm} and~\eqref{eq:L-E} with $t = v^{-1}$ we have that
  \begin{equation}
    \phi_w(\z; \varpi^{-\lambda}) = (-t)^{-\ell(w)} \z^{-\rho} \mathfrak{L}_{w,t}(\z^{\lambda+\rho}).
  \end{equation}
  Since $E_{\lambda+\rho}(\z; 0, t) = \z^{\lambda+\rho}$ for dominant weights $\lambda$, the result follows from~\eqref{eq:psidef} and~\eqref{eq:nsmac-prescribed}.
\end{proof}

The following corollary follows immediately from Theorem~\ref{thm:paracs}:

\begin{corollary} 
  Let $\lambda$ be a dominant weight and $t = v^{-1}$. Then
\[ S_{\lambda+\rho}^{(\emptyset,\J)} (\z;0,t) = \frac{\z^\rho}{a_{\lambda+\rho}^{(\emptyset,\J)}}  \prod_{\alpha \in \Delta^+_{\mathbf{J}}}
     (1 - v\mathbf{z}^{- \alpha}) \chi_{\lambda}^{\mathbf{J}} (\mathbf{z}). \]
\end{corollary}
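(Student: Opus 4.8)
The plan is to deduce the identity directly from the Casselman--Shalika formula of Theorem~\ref{thm:paracs} together with the preceding Proposition identifying the parahoric Whittaker function $\psi^{\J}_1$ with a Macdonald polynomial with prescribed symmetry. No new computation is required beyond rearranging two displayed formulas.

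Concretely, I would first invoke the preceding Proposition: with $I=\emptyset$, $J=\J$, $t=v^{-1}$, and $\lambda$ a dominant weight, it states
\[
  \psi^{\J}_1(\z;\varpi^{-\lambda}) = \z^{-\rho}\, S^{(\emptyset,\J)}_{\lambda+\rho}(\z;0,t)\, a^{(\emptyset,\J)}_{\lambda+\rho}
\]
(the object written $\psi_1^{\J}(\lambda,\z)$ there being the same as the $\psi^{\J}_1(\z;\varpi^{-\lambda})$ of Theorem~\ref{thm:paracs}). Next I would quote equation~\eqref{cshop} of Theorem~\ref{thm:paracs}, which for the same dominant $\lambda$ gives
\[
  \psi^{\J}_1(\z;\varpi^{-\lambda}) = \prod_{\alpha\in\Delta^+_{\mathbf J}}(1-v\z^{-\alpha})\,\chi^{\mathbf J}_\lambda(\z).
\]
Equating the two right-hand sides and multiplying through by $\z^{\rho}/a^{(\emptyset,\J)}_{\lambda+\rho}$ yields exactly the asserted formula for $S^{(\emptyset,\J)}_{\lambda+\rho}(\z;0,t)$.

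The only points I would take care to check --- and these are really the substance of writing the proof out --- are that the normalization constant $a^{(\emptyset,\J)}_{\lambda+\rho}$ is nonzero, so that the division is legitimate (this holds because $\lambda+\rho$ is strongly dominant, i.e.\ a strict partition, exactly as used in the proof of Proposition~\ref{LiWhittakerHL}), and that the parameter conventions match up consistently: the specialization $q=0$ on the Macdonald side is the one compatible with $E_{\lambda+\rho}(\z;0,t)=\z^{\lambda+\rho}$ for dominant $\lambda$, which is precisely what the preceding Proposition already relies on, and $t=v^{-1}$ is applied throughout. There is no genuine obstacle here; the corollary is immediate once Theorem~\ref{thm:paracs} and the preceding Proposition are in hand.
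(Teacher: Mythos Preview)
Your proposal is correct and matches the paper's own argument: the corollary is stated as following immediately from Theorem~\ref{thm:paracs}, which of course also uses the preceding Proposition identifying $\psi^{\J}_1(\z;\varpi^{-\lambda})$ with $\z^{-\rho} S^{(\emptyset,\J)}_{\lambda+\rho}(\z;0,t)\, a^{(\emptyset,\J)}_{\lambda+\rho}$. Your extra remarks on the nonvanishing of $a^{(\emptyset,\J)}_{\lambda+\rho}$ and the parameter conventions are reasonable sanity checks but not essential to the paper's one-line justification.
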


At this point in time, we have a good understanding of the dictionary relating Whittaker functions to special polynomials when the group we are working with is $\GL_r$. It would be interesting for both combinatorial and number theoretic reasons to understanding generalizations of this dictionary to the metaplectic cover of $\GL_r$ and to other reductive groups. In the metaplectic setting, earlier results suggest these questions merit further inquiry: in~\cite{BBBGu}, metaplectic spherical Whittaker functions are related to supersymmetric LLT polynomials, while in~\cite{SahiStokmanVenkateswaran} a new family of special polynomials is introduced that generalizes metaplectic Iwahori Whittaker functions and non-symmetric Macdonald polynomials. 


\section{Intertwining integrals and \texorpdfstring{$R$}{R}-matrices\label{sec:intertwiningR}}

In this section, we will explore the dictionary between $p$-adic representation theory
and R-matrices of quantum groups, using lattice models for a pictorial interpretation of either
side.  
Roughly speaking, we will show that parts of the R-matrix for the quantum superalgebra $U_q(\widehat{\mathfrak{gl}}(r|1))$ 
corresponding to the smaller quantum groups $U_q(\widehat{\mathfrak{gl}}(r))$ and $U_q(\widehat{\mathfrak{gl}}(1))$ neatly
express the action of intertwining operators on standard Iwahori fixed vectors (Theorem~\ref{RmatrixintertwiningIwahori}) and on Whittaker functionals (Remark~\ref{intWRmatrix}), respectively.  
To prove Theorem~\ref{RmatrixintertwiningIwahori}, we identify the R-matrix
for $U_q(\widehat{\mathfrak{gl}}(r))$ and the intertwining integral acting on
standard Iwahori fixed vectors with a part of the colored R-matrix in
Figure~\ref{coloredrmat}. This allows us to give a pictorial interpretation of
the functional equations used to prove Theorem~\ref{mainrecursion} (see
equations~\eqref{pictorial},~\eqref{pictorial2}).

Before proving these facts, we will first make several comments related to Theorem~\ref{RmatrixintertwiningIwahori}. 
A common principle in the theory of symmetric
functions (related to Schur duality) is to consider the coefficient of $z_1 z_2 \cdots z_r$
in the $r$ variables $z_i$ as having some combinatorial significance.
Applying this to Schur functions gives the representation degrees of the
irreducible representations of the symmetric group, and this principle was
also used by Stanley~\cite{StanleyReduced} in counting the number of reduced words for
the longest element of~$S_r$.

A somewhat analogous procedure (related to Schur-Jimbo duality~\cite{JimboHecke})
is to consider the space of vectors of the form
(\ref{distinctv}) below in a tensor representation of
$U_q(\widehat{\mathfrak{gl}}(r))$.  These vectors are like the monomials
$z_1 z_2 \cdots z_r$, because there are no repetitions allowed among the indexing
set. The R-matrix acts on these vectors and we will relate this fact to the
action of the intertwining operators on the Iwahori fixed vectors.

The larger quantum group $U_q(\widehat{\mathfrak{gl}}(r|1))$ will not appear in 
Theorem~\ref{RmatrixintertwiningIwahori}, only $U_q(\widehat{\mathfrak{gl}}(r))$.
Concretely, the reason for this is that $\plus$ spins do not appear on the right
boundary of our systems. We relate the $\plus$ spins with the Whittaker
functional in Remark~\ref{intWRmatrix}.  We therefore want to think of the
boundary condition for the left side of our lattice model as indexing the
unique Whittaker functional of the unramified principal series, while the
boundary conditions for the right side index the basis of the space of Iwahori
fixed vectors. This can be generalized to the parahoric case as well, per
Remark~\ref{remark:parahoric}.

In this section, $q$ will \textit{not} denote the cardinality of the residue field; instead it will stand for the quantum parameter $q$ in $U_q(\widehat{\mathfrak{gl}}(r))$ as is customary in the theory of quantum groups. We will continue to denote the cardinality of the residue field of $F$ by $v^{-1}$. With these conventions, we set $q^2=v$ for Theorem~\ref{RmatrixintertwiningIwahori}, consistent with our relation between quantum groups and residue field cardinalities in earlier sections (where we wrote~$U_{\sqrt{v}}$).

Consider the quantum loop group $U_q(\widehat{\mathfrak{gl}}(r))$, which is a quantization of a central extension of the loop algebra of $\mathfrak{gl}(r)$; for its formal definition see Section 12.2 in \cite{ChariPressleyBook}. The quantum loop group acts on the evaluation representation $V_r(z)$ for $z\in \mathbb{C}^\times$. 
The evaluation representation has a basis $\{v_i(z), 1 \leqslant i \leqslant r\}$. Denote $V_r(\z) := V_r(z_1) \otimes \cdots \otimes V_r(z_r)$.

There is an affine R-matrix, initially due to Jimbo~\cite{Jimbo}, that intertwines between tensor products of evaluation representations. We denote it by $R_q(\z^{\alpha_k}):V_r(z_k) \otimes V_r(z_{k+1}) \to V_r(z_{k+1}) \otimes V_r(z_k)$ and it is given by the following formula:
\begin{equation}\label{JimboRmatrix}
\begin{split}
& R_q(\z^{\alpha_k})= \frac{1}{1-q^2\z^{\alpha_k}} \sum_{1 \leqslant i \leqslant r} (q-\z^{\alpha_k}q^{-1}) e_{ii}\otimes e_{ii} +{} \\
& \frac{1}{1-q^2\z^{\alpha_k}} \sum_{i>j} \bigl( (-q^{-1})(1-\z^{\alpha_k})e_{ij}\otimes e_{ji} +(-q)(1-\z^{\alpha_k})e_{ji}\otimes e_{ij} \bigr) +{} \\ 
& \frac{1}{1-q^2\z^{\alpha_k}} \sum_{i>j} \bigl( (q-q^{-1})e_{jj}\otimes e_{ii} + \z^{\alpha_k}(q-q^{-1}) e_{ii} \otimes e_{jj} \bigr).
\end{split} 
\end{equation}
In the above, $e_{ij}$ stands for the $r \times r$ matrix with a $1$ in the $(i,j)$ entry and all other entries equal to $0$. It is a map $V_r(z_k) \to V_r(z_{k+1})$ if it is on the left of the tensor product and $V_r(z_{k+1}) \to V_r(z_k)$ if it is on the right of the tensor product. 

\begin{remark}
This is not exactly the R-matrix in \cite{Jimbo}; it is a Drinfeld
twist by $-q$. See \cite{BBBF} for a definition of the Drinfeld twist and
details on how it modifies an R-matrix. This particular Drinfeld twist
appears very often when one deals with $U_q(\widehat{\mathfrak{gl}}(r))$
lattice models.  Let us consider the weights in our
Figure~\ref{allcoloredrmat} in which we restrict to configurations with
all edges colored. This is the $U_q(\widehat{\mathfrak{gl}}(r))$ portion
of the larger $U_q(\widehat{\mathfrak{gl}}(r|1))$ R-matrix. These are
the same as the weights in Figure~2.1.8 of \cite{BorodinWheelerColored}
(up to a factor, and their $q$ is our $q^2$). Both R-matrices come from
the same Drinfeld twist of $U_q(\widehat{\mathfrak{gl}}(r))$. Throughout
this section, when we write $U_q(\widehat{\mathfrak{gl}}(r))$, we will in
fact refer to a Drinfeld twist of the usual affine quantum group that
produces the R-matrix $R_q(\z^{\alpha_k})$. 
\end{remark}

It is a standard fact in the theory of quantum groups that
$R_q(\z^{\alpha_k})$ is a $U_q(\widehat{\mathfrak{gl}}(r))$-module homomorphism. We will also denote by
$(R_q(\z^{\alpha_k}))_{k,k+1}:V_r(\z) \to V_r(s_k \z)$ the map that acts as
$R_q(\z^{\alpha_k})$ on the $k$ and $k+1$ tensor components of $V_r(\z)$ and the identity elsewhere.
 
Consider the R-matrix in Figure~\ref{coloredrmat} restricted to vertices where all edges are colored. It is preferable to normalize the weights of $R(z_{k+1}, z_{k})$ by dividing by $z_{k+1}$ so that they may be expressed in terms of $\z^{\alpha_k} = z_{k} / z_{k+1}$. We further normalize by a factor of $1-v\z^{\alpha_k}$. Denote the resulting restricted R-matrix by $R_\text{col}(\z^{\alpha_k})$, and similarly the restricted version of $R(z_{k}, z_{k+1})$ is then $R_\text{col}(\z^{-\alpha_k})$. The vertices and weights are pictured in Figure~\ref{allcoloredrmat}.

\begin{figure}[htb]
  \[\begin{array}{|c|c|c|}
  \hline
  \begin{tikzpicture}[scale=0.6]
      \path[fill=white] (0,0) circle (.4);\node at (0,0) {${\scriptstyle \z^{\alpha_k}}$};
      \spokea{red}{\scriptstyle c}\spokeb{red}{\scriptstyle c}\spokec{red}{\scriptstyle c}\spoked{red}{\scriptstyle c}
  \end{tikzpicture} &
 \begin{tikzpicture}[scale=0.6]
      \path[fill=white] (0,0) circle (.4);\node at (0,0) {${\scriptstyle \z^{\alpha_k}}$};
      \spokea{blue}{\scriptstyle c}\spokeb{red}{\scriptstyle d}\spokec{red}{\scriptstyle d}\spoked{blue}{\scriptstyle c}
  \end{tikzpicture} &
  \begin{tikzpicture}[scale=0.6]
      \path[fill=white] (0,0) circle (.4);\node at (0,0) {${\scriptstyle \z^{\alpha_k}}$};
      \spokea{blue}{\scriptstyle c}\spokeb{red}{\scriptstyle d}\spokec{blue}{\scriptstyle c}\spoked{red}{\scriptstyle d}
  \end{tikzpicture}\\
  \hline
  1 & \begin{array}{ll}\frac{(1-v)}{1-v\z^{\alpha_k}} &\text{if $c<d$}\\ \rule{0pt}{3ex}\frac{(1-v)\z^{\alpha_k}}{1-v\z^{\alpha_k}} &\text{if
      $c>d$}\end{array} &
  \begin{array}{ll} \frac{1-\z^{\alpha_k}}{1-v\z^{\alpha_k}} &\text{if $c>d$}\\ \rule{0pt}{3ex}\frac{v(1-\z^{\alpha_k})}{1-v\z^{\alpha_k}} &\text{if $c<d$}\end{array} \\
  \hline
  \end{array}\]
  \caption{The colored R-matrix 
  $R_\text{col}(\z^{\alpha_k})$. The colors $c,d \in \mathfrak{P}$ which we will represent by $\{1,2,\cdots,r \}$ are always distinct.}
  \label{allcoloredrmat}
\end{figure}

Let $\ii_0:=(1,2,\cdots,r)$ and let $\mathbb{I}$ be the set of all permutations of $\ii_0$.
Given $w\in S_r$ and $i \in \{1,\cdots,r\}$, let $w_i = w^{-1}(i)$ which equals the $i$-th component of the $r$-tuple $w(\ii_0)$. For example, if $s_1 s_2 \in S_3$, then $(s_1s_2)(\ii_0)=(312)$ and therefore $(s_1s_2)_3=2$. The following lemma is well-known:

\begin{lemma}\label{lemmaw(i)}
$\ell(s_k w) > \ell(w)$ if and only if $w_{k+1}>w_k$. 
\end{lemma}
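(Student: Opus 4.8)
The statement is the standard combinatorial fact relating left descents of $w$ to the relative order of the entries in the one-line notation of $w$. I would first unwind the definition: $s_k w$ is the permutation obtained from $w$ by swapping the two positions whose values are $k$ and $k+1$ --- that is, $s_k w = w$ with the entries $w^{-1}(k)$ and $w^{-1}(k+1)$ interchanged in the \emph{value} $k, k+1$ slots. Concretely, writing $w$ in one-line notation as $(w^{-1}(1), \ldots, w^{-1}(r))$, the permutation $s_k w$ has one-line notation obtained by replacing every occurrence of the value $k$ by $k+1$ and vice versa. Thus $s_k w$ and $w$ differ in exactly the positions $k$ and $k+1$ of $w(\ii_0)$, namely $w_k$ and $w_{k+1}$ get swapped --- wait, more precisely the \emph{positions} in $\ii_0$ occupied by $k$ and $k+1$.

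The cleanest route is to count inversions. Recall $\ell(w) = \#\{(a,b) : a < b,\ w(a) > w(b)\}$, or equivalently in terms of the one-line notation $w(\ii_0) = (w_1, \ldots, w_r)$ with $w_i = w^{-1}(i)$, one has $\ell(w) = \#\{(i,j) : i < j,\ w_i > w_j\}$ (since $\ell(w) = \ell(w^{-1})$). Left-multiplying by $s_k$ swaps the values $k$ and $k+1$ in the sequence $(w_1, \ldots, w_r)$. Since $k$ and $k+1$ are consecutive integers, swapping their positions in the sequence changes the number of inversions by exactly $\pm 1$: the only pair affected is the pair of positions holding the values $k$ and $k+1$ themselves --- every other position $m$ has $w_m$ either $< k$ (so it is less than both $k$ and $k+1$, contributing the same inversions before and after) or $> k+1$ (likewise). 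So the pair $\{$position of $k$, position of $k+1\}$ in $(w_1,\ldots,w_r)$ is an inversion of $s_k w$ iff it is not an inversion of $w$, and all other inversions are unchanged; hence $\ell(s_k w) = \ell(w) + 1$ iff that pair is not an inversion of $w$.

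Now I translate "that pair is not an inversion of $w$" into the desired condition. The value $k$ appears in position $w^{-1}(k) = \cdots$; hmm, let me instead use the $w_i$ directly: the value $k$ sits at position $j$ where $w_j = \cdots$ --- actually the cleanest phrasing: in the sequence $(w_1,\ldots,w_r)$, consider which of the two consecutive values $k, k+1$ appears to the left. We want this in terms of positions $k$ and $k+1$ of the sequence, i.e.\ $w_k$ and $w_{k+1}$. But $w_k$ and $w_{k+1}$ are the \emph{values} at positions $k, k+1$, and $s_k$ acting on the \emph{left} permutes values, not positions; so I should instead note $s_k w$ in one-line notation at positions $k$ and $k+1$ reads $(w_{k+1}, w_k)$ --- no. Let me just do it by the conjugate statement: $\ell(s_k w) > \ell(w) \iff \ell(w^{-1} s_k) > \ell(w^{-1})$, and right-multiplication by $s_k$ swaps positions $k$ and $k+1$ of the one-line notation of $w^{-1}$, which is $(w^{-1})(\ii_0) = (w(1), \ldots, w(r)) = (w_1^{-1\text{-entry}}\ldots)$ --- getting tangled. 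I would in the writeup simply fix conventions once and carefully track one chain of equivalences: $\ell(s_k w) > \ell(w) \iff \ell((s_k w)^{-1}) > \ell(w^{-1}) \iff \ell(w^{-1} s_k) > \ell(w^{-1})$, then use the well-known right-descent criterion (Bjorner--Brenti, as cited elsewhere in the paper): $\ell(u s_k) > \ell(u) \iff u(k) < u(k+1)$, applied to $u = w^{-1}$, giving $w^{-1}(k) < w^{-1}(k+1)$, i.e.\ $w_k < w_{k+1}$.

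\textbf{Main obstacle.} There is no real mathematical difficulty here --- the only risk is a bookkeeping slip with left vs.\ right multiplication and with the convention $w_i = w^{-1}(i)$, which inverts the naive expectation. The safest writeup is the three-line chain above reducing to the cited right-descent fact from \cite{BjornerBrenti}, so that essentially no computation is exposed; alternatively one gives the self-contained inversion-counting argument, whose only subtle point is observing that positions carrying values outside $\{k, k+1\}$ contribute unchanged inversion counts precisely because $k$ and $k+1$ are \emph{consecutive} integers. I would present the reduction to \cite{BjornerBrenti} as the primary proof and perhaps remark on the inversion-count interpretation.
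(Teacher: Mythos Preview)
Your final argument is correct: the chain $\ell(s_k w) > \ell(w) \iff \ell(w^{-1} s_k) > \ell(w^{-1}) \iff w^{-1}(k) < w^{-1}(k+1)$, i.e.\ $w_k < w_{k+1}$, is exactly right. The paper gives no proof at all, simply declaring the lemma ``well-known,'' so there is nothing to compare against; your reduction to the standard right-descent criterion (or, equivalently, directly invoking the left-descent criterion $s_k w > w \iff w^{-1}(\alpha_k) \in \Delta^+$, which for $S_r$ unwinds to the same inequality) is a perfectly adequate justification.

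One remark on presentation: the middle portion of your proposal, where you attempt the inversion-counting argument and get tangled over whether left multiplication by $s_k$ swaps positions or values in the one-line sequence $(w_1,\ldots,w_r)$, should be cut entirely from any writeup. It is not wrong so much as unresolved in the draft, and since your clean three-line reduction suffices, there is no need to expose that bookkeeping.
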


In the notation of Section~\ref{Iwahoripreliminaries}, let $\bar{\mathcal{A}}^\z_{s_k} := \frac{(1-\z^{\alpha_k})}{1-v\z^{\alpha_k}} \mathcal{A}^\z_{s_k}$ where $\mathcal{A}^\z_{w} : I(\z) \to I(w\z)$ is the standard intertwining integral~\eqref{eq:intertwiner}.

\begin{proposition}\label{porposition:intertwiningwithweights}
Equation~\eqref{intertwiningactionexplicit} can be rewritten as 
\begin{equation}\label{intertwiningwithweights}
\bar{\mathcal{A}}_{s_k}^\z (\Phi^\z_{w}) =     
\wt\left(\begin{tikzpicture}[scale=0.5,baseline=-1.5mm]
      \path[fill=white] (0,0) circle (.4);\node at (0,0) {${\scriptstyle \z^{\alpha_k}}$};
      \bspokea{blue}{w_k}\bspokeb{red}{\scriptstyle
      w_{k+1}}\bspokec{red}{\scriptstyle w_{k+1}}\bspoked{blue}{w_k}
  \end{tikzpicture}\right)\Phi_w^{s_k \z} + 
  \wt\left(\begin{tikzpicture}[scale=0.5,baseline=-1.5mm]
      \path[fill=white] (0,0) circle (.4);\node at (0,0) {${\scriptstyle \z^{\alpha_k}}$};
      \bspokea{blue}{\scriptstyle w_{k+1}}\bspokeb{red}{w_k}\bspokec{blue}{\scriptstyle w_{k+1}}\bspoked{red}{w_k}
  \end{tikzpicture}\right)\Phi^{s_k \z}_{s_k w}.
\end{equation}
\end{proposition}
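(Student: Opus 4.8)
The plan is to verify Proposition~\ref{porposition:intertwiningwithweights} by a direct comparison of the two sides, case by case according to whether $\ell(s_k w)>\ell(w)$ or $\ell(s_k w)<\ell(w)$, matching the Boltzmann weights read off from Figure~\ref{allcoloredrmat} against the explicit coefficients in \eqref{intertwiningactionexplicit}. First I would use Lemma~\ref{lemmaw(i)} to translate the Bruhat-order condition into the order of $w_k$ and $w_{k+1}$: $\ell(s_k w)>\ell(w)$ is equivalent to $w_{k+1}>w_k$, and the opposite inequality to $w_{k+1}<w_k$. This is the key bookkeeping fact that lets us select which line of the piecewise weight in Figure~\ref{allcoloredrmat} applies, since the colors appearing on the four legs of the $R$-vertices in \eqref{intertwiningwithweights} are exactly $w_k$ and $w_{k+1}$.

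Next I would treat the ascent case $\ell(s_k w)>\ell(w)$, i.e.\ $w_{k+1}>w_k$. The first $R$-vertex in \eqref{intertwiningwithweights} has $c=w_k$ on the left/bottom and $d=w_{k+1}$ on the top/right, with the two top-and-bottom colors of type ``$ccdd$'' (columns $2$ of Figure~\ref{allcoloredrmat}); since $c=w_k<w_{k+1}=d$, its weight is $\frac{1-v}{1-v\z^{\alpha_k}}$. The second $R$-vertex has the ``$cdcd$'' pattern (column $3$) with $c=w_{k+1}>w_k=d$, so its weight is $\frac{1-\z^{\alpha_k}}{1-v\z^{\alpha_k}}$. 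Comparing with the first branch of \eqref{intertwiningactionexplicit}, the coefficients of $\Phi_w^{s_k\z}$ and $\Phi^{s_k\z}_{s_k w}$ are precisely $\frac{1-v}{1-v\z^{\alpha_k}}$ and $\frac{1-\z^{\alpha_k}}{1-v\z^{\alpha_k}}$, as desired. Then I would do the descent case $\ell(s_k w)<\ell(w)$, where now $w_{k+1}<w_k$, so the same two $R$-vertices fall into the other lines of their respective piecewise weights, giving $\frac{(1-v)\z^{\alpha_k}}{1-v\z^{\alpha_k}}$ and $\frac{v(1-\z^{\alpha_k})}{1-v\z^{\alpha_k}}$; these match the coefficients $\z^{\alpha_i}\frac{1-v}{1-v\z^{\alpha_i}}$ and $v\frac{1-\z^{\alpha_i}}{1-v\z^{\alpha_i}}$ in the second branch of \eqref{intertwiningactionexplicit} after recalling $\bar{\mathcal{A}}^\z_{s_k}=\frac{1-\z^{\alpha_k}}{1-v\z^{\alpha_k}}\mathcal{A}^\z_{s_k}$ and that $v=q^{-1}$. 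This completes the verification.

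The only genuine subtlety — and the step I expect to require the most care — is the color/index bookkeeping: one must be consistent about the clockwise ordering $(A,B,C,D)$ of the legs of the $R$-vertex as fixed in Figure~\ref{vertextypes}, about which color sits on which leg in the pictures in \eqref{intertwiningwithweights} (the colors $w_k$ and $w_{k+1}$ with $w_i=w^{-1}(i)$), and about the direction of the spectral parameter, namely that $R_\text{col}(\z^{\alpha_k})$ is the normalization of $R(z_{k+1},z_k)$ with $\z^{\alpha_k}=z_k/z_{k+1}$. Once these conventions are pinned down, the proof is a finite check. I would therefore phrase the argument as: ``By Lemma~\ref{lemmaw(i)} and inspection of the weights in Figure~\ref{allcoloredrmat}, the two Boltzmann weights on the right-hand side of \eqref{intertwiningwithweights} equal the coefficients of $\Phi_w^{s_k\z}$ and $\Phi^{s_k\z}_{s_k w}$ in \eqref{intertwiningactionexplicit} in both the ascent and descent cases; hence the two displays agree.''
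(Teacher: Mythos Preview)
Your proposal is correct and follows essentially the same approach as the paper's proof: both use Lemma~\ref{lemmaw(i)} to translate the length condition into an inequality between $w_k$ and $w_{k+1}$, then read off the two Boltzmann weights from Figure~\ref{allcoloredrmat} and match them against the coefficients in \eqref{intertwiningactionexplicit}. The paper only writes out the ascent case $\ell(s_kw)>\ell(w)$ and declares the other case similar, whereas you spell out both; your additional remark about needing $\bar{\mathcal{A}}^\z_{s_k}$ is superfluous since \eqref{intertwiningactionexplicit} is already stated for the normalized intertwiner, but this does not affect the argument.
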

\begin{proof}
Let $s_k, w \in S_r$ and let us prove the statement for the case $\ell(s_k w) > \ell(w)$; the opposite case is similar. Lemma~\ref{lemmaw(i)} implies that $w_{k+1} > w_k$. By consulting the weights in Figure~\ref{allcoloredrmat}, we see that 
\[\wt\left(\begin{tikzpicture}[scale=0.5,baseline=-1.5mm]
      \path[fill=white] (0,0) circle (.4);\node at (0,0) {${\scriptstyle \z^{\alpha_k}}$};
      \bspokea{blue}{w_k}\bspokeb{red}{\scriptstyle w_{k+1}}\bspokec{red}{\scriptstyle w_{k+1}}\bspoked{blue}{w_{k}}
  \end{tikzpicture}\right) = \frac{1-v}{1-v\z^{\alpha_k}}, \qquad \wt\left(\begin{tikzpicture}[scale=0.5,baseline=-1.5mm]
      \path[fill=white] (0,0) circle (.4);\node at (0,0) {${\scriptstyle \z^{\alpha_k}}$};
      \bspokea{blue}{\scriptstyle w_{k+1}}\bspokeb{red}{w_k}\bspokec{blue}{\scriptstyle w_{k+1}}\bspoked{red}{w_k}
  \end{tikzpicture}\right) = \frac{1-\z^{\alpha_k}}{1-v\z^{\alpha_k}}.   \]
and the equivalence follows immediately by comparison with equation~\eqref{intertwiningactionexplicit}. 
\end{proof}

\begin{remark}
\label{ordercaveat}
We will represent the colors of the palette $\mathfrak{P}$ by the integers
$\{1, \ldots, r\}$. In this section, our color ordering will be opposite to
the ordering we used in previous sections, so $1>2>\cdots>r$. We use this
ordering because we want to match the largest color (in this case $1$) with
the highest weight vector of a quantum group representation which is
customarily denoted by $v_1$.
\end{remark}

Since the edges of the R-matrix $R_\text{col}(\z^{\alpha_k})$ are specified by these colors,
we can then think of it as a map $R_\text{col}(\z^{\alpha_k}) :U_r(z_k) \otimes U_r(z_{k+1}) \to U_r(z_{k+1}) \otimes U_r(z_k)$, where $U_r(z)$ is a vector space with formal basis elements $u_{i}(z)$ associated to the colors $i$ for $1 \leqslant i \leqslant r$. 
One can write the R-matrix in Figure~\ref{allcoloredrmat} in matrix form as follows:
\begin{multline}\label{eq:coloredRmatrix} 
R_\text{col}(\z^{\alpha_k})=
\sum_{1 \leq i \leq r} \wt\left(\begin{tikzpicture}[scale=0.5,baseline=-1.5mm]
      \path[fill=white] (0,0) circle (.4);\node at (0,0) {${\scriptstyle \z^{\alpha_k}}$};
      \bspokea{red}{i}\bspokeb{red}{i}\bspokec{red}{i}\bspoked{red}{i}
  \end{tikzpicture}\right) e_{ii}\otimes e_{ii} + \sum_{i>j} \wt\left(\begin{tikzpicture}[scale=0.5,baseline=-1.5mm]
      \path[fill=white] (0,0) circle (.4);\node at (0,0) {${\scriptstyle \z^{\alpha_k}}$};
      \bspokea{red}{i}\bspokeb{blue}{j}\bspokec{red}{i}\bspoked{blue}{j}
  \end{tikzpicture}\right) e_{ij}\otimes e_{ji} \\
   + \sum_{i>j} \wt\left(\begin{tikzpicture}[scale=0.5,baseline=-1.5mm]
      \path[fill=white] (0,0) circle (.4);\node at (0,0) {${\scriptstyle \z^{\alpha_k}}$};
      \bspokea{blue}{j}\bspokeb{red}{i}\bspokec{blue}{j}\bspoked{red}{i}
  \end{tikzpicture}\right) e_{ji}\otimes e_{ij}  
+ \sum_{i>j} \wt\left(\begin{tikzpicture}[scale=0.5,baseline=-1.5mm]
      \path[fill=white] (0,0) circle (.4);\node at (0,0) {${\scriptstyle \z^{\alpha_k}}$};
      \bspokea{blue}{j}\bspokeb{red}{i}\bspokec{red}{i}\bspoked{blue}{j}
  \end{tikzpicture}\right) e_{jj}\otimes e_{ii} + \sum_{i>j} \wt\left(\begin{tikzpicture}[scale=0.5,baseline=-1.5mm]
      \path[fill=white] (0,0) circle (.4);\node at (0,0) {${\scriptstyle \z^{\alpha_k}}$};
      \bspokea{red}{i}\bspokeb{blue}{j}\bspokec{blue}{j}\bspoked{red}{i}
  \end{tikzpicture}\right) e_{ii} \otimes e_{jj}.
\end{multline} 
Define $U_r(\z):=U_r(z_1) \otimes \cdots \otimes U_r(z_r)$ and, for $\ii =(i_1,\cdots,i_r)\in\mathbb{I}$, let
\begin{equation}
\label{distinctu}
u_\ii(\z):= u_{i_1}(z_1) \otimes \cdots \otimes u_{i_r}(z_r) \in U_r(\z).
\end{equation}
Let $U^\mathrm{alt}_r(\z)$ the subspace of $U_r(\z)$ with basis $\{u_{\ii}(\z), \ii \in \mathbb{I}\}$. The R-matrix $R_\text{col}(\z^{\alpha_k})_{k,k+1}: U_r(\z) \to U_r(s_k\z)$ restricts to $R_\text{col}(\z^{\alpha_k})_{k,k+1}: U^\mathrm{alt}_r(\z) \to U^\mathrm{alt}_r(s_k\z)$ by removing the first term of~\eqref{eq:coloredRmatrix}.
We similarly define
\begin{equation}
\label{distinctv}
v_\ii(\z):= v_{i_1}(z_1) \otimes \cdots \otimes v_{i_r}(z_r) \in V_r(\z),
\end{equation}
and denote by $V^\mathrm{alt}_r(\z)$ the subspace of $V_r(\z)$ with basis $\{v_{\ii}(\z), \ii \in \mathbb{I}\}$. Note that this is $\textit{not}$ a $U_q(\widehat{\mathfrak{gl}}(r))$ submodule of $V_r(\z)$. Even so, the restriction of $R_q(\z^{\alpha_k})_{k,k+1}$ to $V^\mathrm{alt}_r(\z)$ maps into $V^\mathrm{alt}_r(s_k \z)$ because $R_q(\z^{\alpha_1})$ maps $v_1(z_1) \otimes v_{2}(z_2)$ to a linear combination of $v_1(z_2) \otimes v_{2}(z_1)$ and $v_2(z_2) \otimes v_{1}(z_1)$ as seen in~\eqref{JimboRmatrix}.

Consider the following isomorphisms of vector spaces $\theta_{\z}:I(\z)^J \to U^\mathrm{alt}_r(\z)$ and $\xi_\z:U^\mathrm{alt}_r(\z) \to V^\mathrm{alt}_r(\z)$ defined by
\begin{equation}\label{vsiso}
\theta_\z(\Phi_{w}^\z):= u_{w (\ii_0)}(\z), \quad \xi_\z(u_{w (\ii_0)}(\z)):= v_{w (\ii_0)}(\z).
\end{equation}
In this basis we have that 
\begin{equation}
  \label{eq:theta-phi-sk}
  \theta_{s_k\z}(\Phi_{w}^{s_k\z}) = (e_{w_k,w_k} \otimes e_{w_{k+1},w_{k+1}}) \theta_\z(\Phi_{w}^{\z}), \quad\;
  \theta_{s_k\z}(\Phi_{s_kw}^{s_k\z}) = (e_{w_{k+1},w_{k}} \otimes e_{w_{k},w_{k+1}}) \theta_\z(\Phi_{w}^{\z}),
\end{equation}
where we have suppressed the notation that the operators $(e_{ab} \otimes e_{cd})$ here act on the $k$ and $k+1$ factors of the tensor product of $U_r^\mathrm{alt}(\z)$.
We may use this to rewrite the action of $\bar{\mathcal{A}}_{s_k}^\z (\Phi^\z_{w})$ in~\eqref{intertwiningwithweights} in terms of the action of $R_\text{col}(\z^{\alpha_k})$ in~\eqref{eq:coloredRmatrix}, or similarly the action of $R_q(\z^{\alpha_k})$ in~\eqref{JimboRmatrix}, to obtain the following result.

\begin{theorem}\label{RmatrixintertwiningIwahori}
The following diagram commutes assuming $v=q^2$:

\begin{center}
\begin{tikzcd}
I(\z)^J \arrow[r, "\theta_\z"] \arrow[d, "\bar{\mathcal{A}}^{\z}_{s_k}"]
& U^\mathrm{alt}_r(\z) \arrow[d, "R_{\operatorname{col}}(\z^{\alpha_k})_{k,k+1}"] \arrow[r, "\xi_\z"]
& V^\mathrm{alt}_r(\z) \arrow[d, "-q R_q(\z^{\alpha_k})_{k,k+1}"]
\\
I(s_k \z)^J  \arrow[r, "\theta_{s_k \z}"]
& U^\mathrm{alt}_r(s_k \z) \arrow[r, "\xi_{s_k \z}"]
& V^\mathrm{alt}_r(s_k \z)
\end{tikzcd}
\end{center}
\end{theorem}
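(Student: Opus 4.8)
The plan is to verify that the diagram commutes square by square, starting from the left square, then the right square, since the outer rectangle then follows automatically. The left square is the substantive one, while the right square will follow from a direct comparison of matrix entries.

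\textbf{Left square.} First I would fix $w \in S_r$ and apply both composites to the basis vector $\Phi_w^\z$. Going down then right: by definition $\theta_\z(\Phi_w^\z) = u_{w(\ii_0)}(\z)$, and applying $\bar{\mathcal{A}}^\z_{s_k}$ first we use Proposition~\ref{porposition:intertwiningwithweights}, which expresses $\bar{\mathcal{A}}^\z_{s_k}(\Phi^\z_w)$ as a linear combination of $\Phi_w^{s_k\z}$ and $\Phi_{s_kw}^{s_k\z}$ with the two pictured Boltzmann weights as coefficients. Applying $\theta_{s_k\z}$ and then using~\eqref{eq:theta-phi-sk} to rewrite $\theta_{s_k\z}(\Phi_w^{s_k\z})$ and $\theta_{s_k\z}(\Phi_{s_kw}^{s_k\z})$ in terms of $\theta_\z(\Phi_w^\z)$ via the elementary-matrix operators $e_{w_k,w_k}\otimes e_{w_{k+1},w_{k+1}}$ and $e_{w_{k+1},w_k}\otimes e_{w_k,w_{k+1}}$, I get an explicit element of $U^{\mathrm{alt}}_r(s_k\z)$. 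Going right then down: apply $\theta_\z$ to get $u_{w(\ii_0)}(\z)$, then act by $R_{\operatorname{col}}(\z^{\alpha_k})_{k,k+1}$ using the matrix form~\eqref{eq:coloredRmatrix}; since $u_{w(\ii_0)}(\z)$ has the entry $w_k$ in slot $k$ and $w_{k+1}$ in slot $k+1$ with $w_k \neq w_{k+1}$, only the terms of~\eqref{eq:coloredRmatrix} with $e_{ij}\otimes e_{ji}$ and $e_{jj}\otimes e_{ii}$ (in whichever order matches $w_k$ vs $w_{k+1}$) survive, and these are precisely the two Boltzmann weights appearing in~\eqref{intertwiningwithweights}. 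Matching the two expressions term by term — being careful that $\ell(s_kw)>\ell(w)\iff w_{k+1}>w_k$ by Lemma~\ref{lemmaw(i)}, so that the case division in~\eqref{eq:coloredRmatrix} aligns with that in~\eqref{intertwiningactionexplicit} — shows the left square commutes.

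\textbf{Right square.} This is a computation entirely inside the quantum-group side. Both $R_{\operatorname{col}}(\z^{\alpha_k})$ (restricted to $U^{\mathrm{alt}}_r$, i.e. dropping the first diagonal term) and $-qR_q(\z^{\alpha_k})$ (restricted to $V^{\mathrm{alt}}_r$, i.e. keeping only the off-diagonal and mixed-diagonal terms, since the pure $e_{ii}\otimes e_{ii}$ term never maps $v_\ii$ with distinct entries back into $V^{\mathrm{alt}}_r$ unless combined appropriately — in fact on $v_i\otimes v_j$ with $i\neq j$ only the four terms with $i>j$ or $j>i$ act) are $4$-term operators on each $2$-dimensional coordinate subspace $\operatorname{span}\{v_i\otimes v_j, v_j\otimes v_i\}$. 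Under $\xi_\z$, which identifies $u_{w(\ii_0)}$ with $v_{w(\ii_0)}$, I just need to check that the scalar coefficients of $e_{ij}\otimes e_{ji}$, $e_{ji}\otimes e_{ij}$, $e_{jj}\otimes e_{ii}$, $e_{ii}\otimes e_{jj}$ agree between $R_{\operatorname{col}}(\z^{\alpha_k})$ as listed in~\eqref{eq:coloredRmatrix}/Figure~\ref{allcoloredrmat} and $-q$ times the corresponding coefficients read off from Jimbo's formula~\eqref{JimboRmatrix}, after substituting $v=q^2$. Concretely: the coefficient $\tfrac{(1-v)\z^{\alpha_k}}{1-v\z^{\alpha_k}}$ (for $i>j$, the $e_{ij}\otimes e_{ji}$ entry) should match $-q\cdot(-q^{-1})\tfrac{1-\z^{\alpha_k}}{1-q^2\z^{\alpha_k}}$ — wait, that needs the Weyl-denominator normalization to be tracked — so in fact one must also recall that $R_{\operatorname{col}}$ was obtained from $R(z_{k+1},z_k)$ by dividing by $z_{k+1}$ \emph{and} by $1-v\z^{\alpha_k}$, as stated just before Figure~\ref{allcoloredrmat}, which is exactly what produces the denominators matching those in~\eqref{JimboRmatrix}. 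Matching all four entries (two of which split into $c<d$/$c>d$ subcases matching the $i>j$ convention) completes the right square.

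\textbf{Main obstacle.} The genuinely delicate point is bookkeeping of conventions: the opposite color ordering introduced in Remark~\ref{ordercaveat} (so that color $1$ is largest and matches the highest weight vector $v_1$), the identification of which of $w_k, w_{k+1}$ is the larger color in each Bruhat case, and the two normalizations (dividing by $z_{k+1}$ and by $1-v\z^{\alpha_k}$) that were applied to pass from the Boltzmann-weight $R$-matrix of Figure~\ref{coloredrmat} to $R_{\operatorname{col}}$. One must check that with $v=q^2$ and the $-q$ Drinfeld twist, the signs and powers of $q$ line up. I expect no conceptual difficulty — everything reduces to finite linear algebra on $2$-dimensional spaces plus the already-established Proposition~\ref{porposition:intertwiningwithweights} — but the sign/normalization chase is where an error could hide, so I would present the coefficient comparison in an explicit small table rather than in prose. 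Once both squares commute, the commutativity of the outer rectangle (the claim of Theorem~\ref{RmatrixintertwiningIwahori}) is immediate by pasting.
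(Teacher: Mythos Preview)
Your proposal is correct and follows essentially the same approach as the paper: verify the left square via Proposition~\ref{porposition:intertwiningwithweights} together with the identifications~\eqref{eq:coloredRmatrix} and~\eqref{eq:theta-phi-sk}, and verify the right square by a direct entry-by-entry comparison of~\eqref{JimboRmatrix} and~\eqref{eq:coloredRmatrix} on the off-diagonal terms (the $e_{ii}\otimes e_{ii}$ term being irrelevant on the alternating subspace), using $v=q^2$. The paper's proof is much terser and gives only one sample coefficient check rather than the full table you propose, but the logic is identical.
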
 
\begin{proof}
  The commutativity of the left side of the diagram follows from Proposition~\ref{porposition:intertwiningwithweights} noting that after the identification between $I(\z)^J$ and $U^\mathrm{alt}_r(\z)$ via $\theta_\z$, the action of $\bar{\mathcal{A}}^{\z}_{s_k}$ and $R_{\operatorname{col}}(\z^{\alpha_k})_{k,k+1}$ are the same as seen from equations~\eqref{intertwiningwithweights}, \eqref{eq:coloredRmatrix} and~\eqref{eq:theta-phi-sk}.  
The right half of the diagram commutes because the restriction of the two R-matrices are equal.
Indeed we have matching entries of the R-matrices in equations~\eqref{JimboRmatrix} and \eqref{eq:coloredRmatrix} \textit{except} for the first term which does not occur in the restriction.
For example, the last entry in equation~\eqref{eq:coloredRmatrix} has weight $\frac{(1-v)\z^{\alpha_k}}{1-v\z^{\alpha_k}}$, while the last entry in equation~\eqref{JimboRmatrix} multiplied by $-q$ has weight $\frac{(1-q^2)\z^{\alpha_k}}{1-q^2\z^{\alpha_k}}$. 
\end{proof}

\begin{remark}
The theorem above can also be proved if we set $q^2=v^{-1}$ (as opposed to $q^2 = v$). In that setting the quantum group needs to be Drinfeld twisted by $q^{-1}$ (as opposed to by $(-q)$). We made this choice to be in agreement with \cite[Theorem 1]{BBB}, but both choices might be useful when considering representation theoretic applications of this result.    
\end{remark}

\begin{remark}\label{remark:parahoric}
One can generalize Theorem~\ref{RmatrixintertwiningIwahori} to the parahoric setting by choosing elements of the form in equation~\eqref{distinctv} with repetitions determined by the chosen Levi subgroup or by the possible right boundary conditions of the lattice model associated to the parahoric Whittaker function. For example, if the Levi subgroup is 
$\GL_2 \times \GL_1$, the space of $K_\J$-fixed vectors in $I(\z)$ has a basis
indexed by elements in $W^\J =\{1,s_2,s_1s_2\}$ which is in bijection with
$\{v_{112}, v_{121}, v_{211}\}$. For each basis element we also have a
corresponding right boundary condition for the colored lattice model in
Section~\ref{sec:parahoric_systems}. Assume
$\textcolor{red}{R} > \textcolor{blue}{B}$. If (in accord with Remark~\ref{ordercaveat})
we let the integers $1$ and $2$ be identified with
the colors $\textcolor{red}{R}$ and $\textcolor{blue}{B}$ respectively,
$v_{112}$ corresponds to right boundary condition
$(\textcolor{red}{R},\textcolor{red}{R},\textcolor{blue}{B})$, $v_{121}$ to
$(\textcolor{red}{R},\textcolor{blue}{B}, \textcolor{red}{R})$, and $v_{211}$
to $(\textcolor{blue}{B},\textcolor{red}{R},\textcolor{red}{R})$.
\end{remark}

\begin{remark}\label{intWRmatrix}
Note that Proposition \ref{csinterident} can be rewritten as 
\[ 
\Omega_{s_k \z} \circ \mathcal{\bar A}_{s_k}^\z = \frac{1- v \z^{-\alpha_k}}{1-v\z^{\alpha_k}} \Omega_\z. 
\]
The factor above agrees up to a scalar with the fully uncolored Boltzmann weight for $R(z_{k+1}, z_k)$ in Figure~\ref{coloredrmat} and should be thought of as an entry of the R-matrix for the evaluation module of $U_q(\widehat{\mathfrak{gl}}(r|1))$. This remark and the previous theorems realize the action of intertwiners on both the space of Whittaker functionals and Iwahori (or parahoric) fixed vectors as entries in the $R$-matrix of $U_q(\widehat{\mathfrak{gl}}(r|1))$. 
\end{remark}

The purpose of this section was to give a dictionary between objects related to different areas of mathematics: intertwiners for $p$-adic groups, $R$-matrices for quantum groups, and $R$-matrices for lattice models. Let us now upgrade this dictionary by matching two techniques used in the theory of $p$-adic groups and lattice models. For lattice models we used the train argument to derive functional equations for the partition function. This argument was first used by Baxter to prove commutativity of transfer matrices in the early 80's and in our setting can be summarized as follows:
\begin{equation}\label{pictorial}
\begin{split}
  & \wt\left(\begin{tikzpicture}[scale=0.5,baseline=-1.5mm]
      \draw[line width=0.5mm] (-.75,-.75) node[circle,draw,fill=white,inner sep=0pt] {$\scriptstyle +$} -- (.75,.75) node[circle,draw,fill=white,inner sep=0pt] {$\scriptstyle +$};
      \draw[line width=0.5mm] (-.75,.75) node[circle,draw,fill=white,inner sep=0pt] {$\scriptstyle +$} -- (.75,-.75) node[circle,draw,fill=white,inner sep=0pt] {$\scriptstyle +$};
      \path[fill=white] (0,0) circle (.4);
      \node at (0,0) {${\scriptstyle \z^{\alpha_k}}$};
  \end{tikzpicture} \right)  
   Z (\mathfrak{S}_{\mathbf{z}, \lambda, w_1,w_2}) = \text{intermediate states appearing in the train argument} \\
    & =\wt\left(\begin{tikzpicture}[scale=0.5,baseline=-1.5mm]
      \path[fill=white] (0,0) circle (.4);\node at (0,0) {${\scriptstyle \z^{\alpha_k}}$};
      \bspokea{blue}{w_k}\bspokeb{red}{\scriptstyle
      w_{k+1}}\bspokec{red}{\scriptstyle w_{k+1}}\bspoked{blue}{w_k}
  \end{tikzpicture}\right)
  \  Z (\mathfrak{S}_{s_k \mathbf{z}, \lambda, w_1,w_2}) +   
  \wt\left(\begin{tikzpicture}[scale=0.5,baseline=-1.5mm]
      \path[fill=white] (0,0) circle (.4);\node at (0,0) {${\scriptstyle \z^{\alpha_k}}$};
      \bspokea{blue}{w_k}\bspokeb{red}{\scriptstyle
      w_{k+1}}\bspokec{red}{\scriptstyle w_{k+1}}\bspoked{blue}{w_k}
  \end{tikzpicture}\right) 
   Z (\mathfrak{S}_{s_k \z, \lambda, s_k w_1,w_2}).
  \end{split}
\end{equation}

For $p$-adic groups we used the intertwiner to derive functional equations for Iwahori Whittaker functions based on results and ideas of Casselman and Shalika. This process can be reimagined using Proposition~\ref{porposition:intertwiningwithweights} and Remark~\ref{intWRmatrix} as follows (we denote $\varpi^{-\lambda} w_2$ by $g$):  
\begin{equation}\label{pictorial2}
\begin{split}
  & \wt\left(\begin{tikzpicture}[scale=0.5,baseline=-1.5mm]
      \draw[line width=0.5mm] (-.75,-.75) node[circle,draw,fill=white,inner sep=0pt] {$\scriptstyle +$} -- (.75,.75) node[circle,draw,fill=white,inner sep=0pt] {$\scriptstyle +$};
      \draw[line width=0.5mm] (-.75,.75) node[circle,draw,fill=white,inner sep=0pt] {$\scriptstyle +$} -- (.75,-.75) node[circle,draw,fill=white,inner sep=0pt] {$\scriptstyle +$};
      \path[fill=white] (0,0) circle (.4);
      \node at (0,0) {${\scriptstyle \z^{\alpha_k}}$};
  \end{tikzpicture} \right)   \Omega_{\z}( \pi(g) \Phi_{w_1}^{\z} ) = \Omega_{s_k \z} \circ \bar{\mathcal{A}}_{s_k}^{\z} (  \pi(g) \Phi_{w_1}^{\z} )
    = \Omega_{s_k \z} (  \pi(g)  \bar{\mathcal{A}}_{s_k}^{\z} \Phi_{w_1}^{\z} )   \\
 &=  \Omega_{s_k \z} \left(  \pi(g) \left(
\wt\left(\begin{tikzpicture}[scale=0.5,baseline=-1.5mm]
      \path[fill=white] (0,0) circle (.4);\node at (0,0) {${\scriptstyle \z^{\alpha_k}}$};
      \bspokea{blue}{w_k}\bspokeb{red}{\scriptstyle
      w_{k+1}}\bspokec{red}{\scriptstyle w_{k+1}}\bspoked{blue}{w_k}
  \end{tikzpicture}\right)\Phi_{w_1}^{s_k \z} + 
  \wt\left(\begin{tikzpicture}[scale=0.5,baseline=-1.5mm]
      \path[fill=white] (0,0) circle (.4);\node at (0,0) {${\scriptstyle \z^{\alpha_k}}$};
      \bspokea{blue}{\scriptstyle w_{k+1}}\bspokeb{red}{w_k}\bspokec{blue}{\scriptstyle w_{k+1}}\bspoked{red}{w_k}
  \end{tikzpicture}\right)\Phi^{s_k \z}_{s_k w_1} \right) \right) \\
&=   
 \wt\left(\begin{tikzpicture}[scale=0.5,baseline=-1.5mm]
      \path[fill=white] (0,0) circle (.4);\node at (0,0) {${\scriptstyle \z^{\alpha_k}}$};
      \bspokea{blue}{w_k}\bspokeb{red}{\scriptstyle
      w_{k+1}}\bspokec{red}{\scriptstyle w_{k+1}}\bspoked{blue}{w_k}
  \end{tikzpicture}\right)
 \Omega_{s_k \z} (  \pi(g) \Phi_{w_1}^{s_k \z} ) +
\wt\left(\begin{tikzpicture}[scale=0.5,baseline=-1.5mm]
      \path[fill=white] (0,0) circle (.4);\node at (0,0) {${\scriptstyle \z^{\alpha_k}}$};
      \bspokea{blue}{w_k}\bspokeb{red}{\scriptstyle
      w_{k+1}}\bspokec{red}{\scriptstyle w_{k+1}}\bspoked{blue}{w_k}
  \end{tikzpicture}\right) 
  \Omega_{s_k \z} (  \pi(g)
\Phi_{s_k w_1}^{s_k \z} ). 
  \end{split}
\end{equation}
We see that the beginning and the end of the equations~\eqref{pictorial},~\eqref{pictorial2} correspond to each other by use of Theorem~\ref{coloredwhittaker} and Proposition~\ref{porposition:intertwiningwithweights}. The idea of both arguments is also the same. In the $p$-adic setting one moves the intertwiner from the `Whittaker side' to the `Iwahori side', while in the lattice model setting one moves the R-matrix from the left side (which corresponds to the Whittaker functionals as we argued before) to the right side (corresponding to the space of Iwahori fixed vectors).

This phenomena also appears in the theory of metaplectic spherical Whittaker functions for an $n$-fold metaplectic cover of $\GL_r(F)$, which can also be realized as partition functions of a solvable lattice model~\cite{BBB}. In that case the action of the intertwining integral on the space of Whittaker functionals is the Kazhdan-Patterson scattering matrix, which has been interpreted (up to a Drinfeld twist) as the $U_q(\widehat{\mathfrak{gl}}(n))$ R-matrix in \cite[Theorem 1]{BBB}, while the action of the intertwining integral on the spherical vector is the Gindinkin-Karpelevich factor which can be interpreted as the spin \plus part of a larger R-matrix. The train argument and the $p$-adic argument for producing functional equations work in the same way. This compelling connection between two a~priori different methods of argument should be useful in further relating the representation theories of $p$-adic and quantum groups.


\bibliographystyle{habbrv}
\bibliography{iwahori}

\end{document}